    \newtheorem{Lem}{Lemma}[section]
    \newtheorem{Prop}[Lem]{Proposition}
      \newtheorem*{thma}{Theorem A}      
      \newtheorem*{thmb}{Theorem B}   
      \newtheorem*{thmc}{Theorem C}
    \newtheorem{Thm}[Lem]{Theorem}  
    \newtheorem{Cor}[Lem]{Corollary}
        \newtheorem{Lem-Def}[Lem]{Lemma-Definition}
		\newtheorem*{Thm*}{Theorem}
\theoremstyle{definition}
    \newtheorem{Def}[Lem]{Definition}
    \newtheorem{Exa}[Lem]{Example}
    \newtheorem{Rem}[Lem]{Remark}
    \newtheorem{Not}[Lem]{Notation}
\newcommand{\ra}{\rightarrow}
\newcommand{\mb}{\mathbb}
\newcommand{\mc}{\mathcal}
\newcommand{\E}{\mathcal E}
\newcommand{\T}{\mathcal T}
\newcommand{\col}{\colon}
\newcommand{\ora}{\overrightarrow}
\newcommand{\ol}{\overline}
\newcommand{\wh}{\widehat}
\newcommand{\wt}{\widetilde}
\newcommand{\D}{\mathcal{D}}
\newcommand{\PS}{\mathbf{PSD}}
\newcommand{\ps}{\mathcal{PSD}}
\newcommand{\qs}{\mathcal{QD}}
\newcommand{\J}{\mathcal{J}}
\newcommand{\R}{\mathbb{R}}
\newcommand{\trop}{\textnormal{trop}}
\newcommand{\Aut}{\textnormal{Aut}}
\DeclareMathOperator{\id}{id}
\DeclareMathOperator{\Div}{Div}
\DeclareMathOperator{\codim}{codim}
\DeclareMathOperator{\pol}{pol}
\DeclareMathOperator{\ord}{ord}
\DeclareMathOperator{\Pic}{Pic}
\DeclareMathOperator{\cone}{cone}
\DeclareMathOperator{\CYC}{{\bf CYC}}
\DeclareMathOperator{\supp}{supp}
\DeclareMathOperator{\Sym}{Sym}
\DeclareMathOperator{\Spec}{Spec}
\DeclareMathOperator{\Proj}{Proj}
\DeclareMathOperator{\st}{st}
\DeclareMathOperator{\leg}{leg}
\DeclareMathOperator{\Ex}{Ex}
\begin{document}

\thanks{The second author was supported by CNPq-PQ, 301671/2019-2. The third author was supported by Capes.}

\newcommand\encircle[1]{
  \tikz[baseline=(X.base)] 
    \node (X) [draw, shape=circle, inner sep=0] {\strut #1};}

\setcounter{tocdepth}{1}

 \title[The moduli space of quasistable spin curves]{The moduli space of quasistable spin curves}

\author{Alex Abreu, Marco Pacini, and Danny Taboada}


\maketitle

\begin{abstract}
We study a compactification of the moduli space of theta characteristics, giving a modular interpretation of the geometric points and describing the boundary stratification. This space is different from the moduli space of spin curves. The modular description and the boundary stratification of the new compactification are encoded by a tropical moduli space. We show that this tropical moduli space is a refinement of the moduli space of spin tropical curves. We describe explicitly the induced decomposition of its cones.
\end{abstract}

\bigskip

MSC (2010): 14T05, 14H10, 14H40

Key words: spin curve, spin tropical curve, moduli spaces, refinement.

\tableofcontents

\section{Introduction}

\subsection{Background and motivation}

The goal of this paper is the study of a  compactification, denoted $\ol{\mc Q}_{g,n}$, of the moduli space $\mc S_{g,n}$ of theta characteristics
on $n$-pointed smooth curves. 
A distinguished compatification, $\ol{\mc S}_{g,n}$, of $\mc S_{g,n}$ over the moduli space $\ol {\mc M}_{g,n}$ of $n$-pointed stable curves was introduced by Cornalba in \cite{Co}. The stack $\ol{\mc S}_{g,n}$ is finite over $\ol{\mc M}_{g,n}$ and parametrizes \emph{spin curves}, which are certain pairs of  nodal curve and invertible sheaves on it. The construction was later generalized in \cite{CCC} for  higher roots of any sheaf.  There are at least two other different approaches to this compactification problem. The first uses torsion-free rank-1 sheaves on stable curves and is introduced by Jarvis (see \cite{J1} and \cite{J2}). The second involves the use of twisted curves (nodal curves with a stacky structure at the nodes) and is employed by Abramovich and Jarvis in \cite{AJ} and by Chiodo in \cite{Ch}.  
All these compactifications of $\mc S_{g,n}$ are isomorphic to $\ol{\mc S}_{g,n}$.


We define $\ol{\mc Q}_{g,n}$  as the closure of $\mc S_{g,n}$ in the Esteves universal compactified Jacobian of  quasistable torsion-free rank-1 sheaves on stable curves. We call $\ol{\mc Q}_{g,n}$ the moduli space of \emph{quasistable spin curves}.  
 We give a modular description of the geometric points of $\ol{\mc Q}_{g,n}$ and study its  boundary stratification. The upshot is that $\ol{\mc Q}_{g,n}$ is a compactification of $\mc S_{g,n}$ which is not isomorphic to $\ol{\mc S}_{g,n}$. In a way, this is a natural reflection of the existence of two distinguished and different universal Jacobians: Caporaso universal Picard variety, constructed in \cite{C}, and the Esteves universal compactified Jacobian, introduced by Melo in \cite{M15} bulding upon the work of Esteves  \cite{Es01}. These compactified Jacobians are different. For instance, using the terminology in \cite{Capo}, the first one, in general, is  not of N\'eron type, while the second one is always of N\'eron type. Caporaso compactified Jacobian contains $\ol{\mc S}_{g,n}$ (at least set-theoretically), so it is natural to expect a difference between $\ol{\mc Q}_{g,n}$ and $\ol{\mc S}_{g,n}$.

We achieve our results by resorting to the tropical counterpart of the problem. In the last few years, tropical geometry became a powerful tool to encode moduli problems and describe the boundary stratifications of compactified moduli spaces. The tropical setting often helps in understanding which combinatorial data are relevant to describe the strata. The associated tropical moduli space is a generalized cone complex which can be regarded as a ``dual" of the algebro-geometric space. Usually, we have a reverse-inclusion bijection between the strata of the moduli space and the cells of the cone complex switching dimension and codimension.

This phenomenon appears in the study of  $\ol{\mc M}_{g,n}$, the moduli space of $n$-pointed stable curves, and of its tropical counterpart $M_{g,n}^\trop$, the moduli space of $n$-pointed tropical curves. In fact, a deep connection between $\ol{\mc M}_{g,n}$ and $M_{g,n}^\trop$ was revealed in \cite{ACP}, where Abramovich, Caporaso, and Payne showed that $\ol{M}_{g,n}^\trop$ is identified with the Berkovich skeleton of $\ol{\mc M}_{g,n}$. In \cite{CMP1}, Caporaso, Melo, and the second author, did a similar analysis for the moduli space of spin curves $\ol{\mc S}_{g,n}$. It is worth mentioning that the new insight of the boundary stratification given by tropical geometry was recently used in \cite{CGP} and \cite{CFGP} to give strong results about the rational cohomology of $\mc M_g$ and the $S_n$-equivariant Euler characteristic of the top weight cohomology of $\mc{M}_{g,n}$.

 We proceed in a similar way: we define the relevant tropical moduli space and we use it to describe the geometric points and the boundary stratification of $\ol{\mc Q}_{g,n}$.

\subsection{Outline of the results}

Let us give more details about our results. We start with the tropical setting.
In \cite{AP1} and \cite{AAPT} two universal tropical Jacobians have been introduced. The first one, denoted $J^\trop_{\mu,g,n}$, parametrizes $(\mu,p_0)$-quasistable divisors on tropical curves of genus $g$. The second one, denoted $P^\trop_{\mu,g,n}$, parametrizes $\mu$-polystable divisors on tropical curves of genus $g$. In both cases, $\mu$ denotes a universal polarization.  The two universal tropical Jacobians are different. In fact, in Theorem \ref{thm:refiJ}, we show that $J^\trop_{\mu,g,n}$ is a refinement of $P^\trop_{\mu,g,n}$, and we describe the induced decomposition of the cones of $P^\trop_{\mu,g,n}$. This generalizes to the universal setting a similar result for the Jacobian of a tropical curve, proved in \cite[Proposition 3.7]{AAPT} and \cite[Corollay 5.10]{CPS}.   

Motivated by the algebro-geometric picture, we can consider the locus inside the tropical Jacobian parametrizing square roots of the canonical divisor. This leads to a notion of \emph{tropical theta characteristics} of a tropical curve, first studied by Zharkov in \cite{Zha}.  A moduli space, $T_{g,n}^{\trop}$, of theta characteristics on $n$-pointed tropical curves of genus $g$, is constructed in \cite[Section 4]{CMP2}, based on the existence of certain representatives of theta characteristics that are well-behaved under specializations.  
 In Proposition \ref{prop:equiv} we show that in the equivalence class of a theta characteristic there is exactly one $\mu$-polystable divisor, where $\mu$ is the canonical polarization of degree $g-1$. It is possible to write explicitly this distinguished representative (see equation \eqref{eq:DPoly}). This gives rise to an explicit injection $T_{g,n}^{\trop}\hookrightarrow P^\trop_{\mu,g,n}$. 

However $T^\trop_{g,n}$ is not the tropical counterpart of the moduli space $\ol{\mc S}_{g,n}$ of spin curves. In fact, $T^\trop_{g,n}$ is connected, since all tropical theta characteristic admits a specialization to the unique theta characteristic on the tropical curve consisting of one vertex, while the stack $\ol{\mc S}_{g,n}$ is not (it has two connected components, corresponding to the parity of the space of sections of a spin curve). The right tropical counterpart of $\ol{\mc S}_{g,n}$ is the space $S^\trop_{g,n}$ of spin tropical curves constructed in \cite{CMP1}, parametrizing tropical theta characteristics together with the datum of a sign function. We have a forgetful map $S^\trop_{g,n}\ra T^\trop_{g,n}$, and hence a chain of maps
\[
S^\trop_{g,n}\ra T^\trop_{g,n}\hookrightarrow P^\trop_{\mu,g,n}.
\]

The idea is to pull-back the refinement  $J^\trop_{\mu,g,n}\ra P^\trop_{\mu,g,n}$ of $P^\trop_{\mu,g,n}$ to $S^\trop_{g,n}$ via the chain of maps above. We look for a generalized cone complex, $Q^\trop_{g,n}$, whose associated poset could encode the data of the induced refinement of $S^\trop_{g,n}$. This amounts to impose the right condition on the poset associated to $P^\trop_{\mu,g,n}$. The last one is the poset  of triples $(\Gamma,\E,D)$ where $\Gamma$ is a stable graph of genus $g$ with $n$ legs, $\E$ is a subset of the set of edges of $\Gamma$, and $D$ is a $\mu$-polystable divisor on the graph $\Gamma^\E$  obtained from $\Gamma$ by inserting a vertex in the interior of any edge $e\in \E$. The key condition is:  
 \begin{equation}\label{eq:key-cond}
 2D+\Div(\phi)=K_{\Gamma^\E},
 \end{equation}
 where $\Div(\phi)$ is the divisor associated to some acyclic flow $\phi$ on $\Gamma^\E$ and $K_{\Gamma^\E}$ is the canonical divisor of $\Gamma^\E$. The flow $\phi$ is uniquely determined if $D$ is $\mu$-semistable and plays an important role for us. We refer to  Theorem \ref{thm:uniquephi} for more properties of the flow $\phi$ (for example, $|\phi(e)|\le1$ for every oriented edge $e$ of $\Gamma^\E$).
 Using this terminology, the poset underlying $T^\trop_{g,n}$ is   given by the triples $(\Gamma,\E,D)$, where $D$ is $\mu$-polystable and satisfies equation  \eqref{eq:key-cond}. We can also reinterpret the poset underlying $S^\trop_{g,n}$ simply adding a sign function to these triples. The poset associated to $Q^\trop_{g,n}$ is formed by tuples $(\Gamma,\E,D,s)$ where $D$ is $\mu$-quasistable and satisfies equation \eqref{eq:key-cond}, and $s$ is a sign function. An important step in the construction of the generalized cone complex $Q^\trop_{g,n}$ is the explicit description of the equations of its cones. These are obtained by intersecting the cone $\mathbb R^{E(\Gamma^\E)}_{\ge0}$ with the linear subspace:
\[
\Lambda_{(\Gamma,\E,D,s)}\col \sum_{e\in \gamma} \phi(e) x_e=0
\]
where $\gamma$ runs over the oriented cycles of $\Gamma^\E$ and $x_e$ is the coordinates of $\mathbb R^{E(\Gamma^\E)}$ corresponding to an edge $e$ of $\Gamma^\E$
(see equation \eqref{eq:phieq}).
 The following result is Theorem \ref{thm:refiS}.

\begin{thma}\label{thm:A}
We have a refinement $Q_{g,n}^\trop\ra S_{g,n}^\trop$ of generalized cone complexes. There is an explicit description of the associated decomposition of the cones of $S_{g,n}^\trop$, and of their interiors.
\end{thma}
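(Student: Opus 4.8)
The plan is to identify $Q^\trop_{g,n}$ with the pullback $S^\trop_{g,n}\times_{P^\trop_{\mu,g,n}}J^\trop_{\mu,g,n}$, the refinement map $Q^\trop_{g,n}\to S^\trop_{g,n}$ being the projection to $S^\trop_{g,n}$; here $\mu$ is the canonical polarization of degree $g-1$ and the refinement $J^\trop_{\mu,g,n}\to P^\trop_{\mu,g,n}$ of Theorem~\ref{thm:refiJ} is pulled back along the composition
\[
S^\trop_{g,n}\longrightarrow T^\trop_{g,n}\hookrightarrow P^\trop_{\mu,g,n},
\]
the second arrow being the injection provided by the distinguished polystable representatives \eqref{eq:DPoly} (Proposition~\ref{prop:equiv}). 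First I would record the general fact that pulling back a refinement along a morphism of generalized cone complexes that is harmless cell by cell --- each cone mapping onto a cone, or onto the slice of a cone by a rational linear subspace, possibly up to a finite automorphism quotient --- again produces a refinement. Both maps in the chain are of this kind: $S^\trop_{g,n}\to T^\trop_{g,n}$ merely records the discrete datum of a sign function, while $T^\trop_{g,n}\hookrightarrow P^\trop_{\mu,g,n}$ realizes the cone of a $\mu$-polystable divisor $D_0$ as the slice of the corresponding cone of $P^\trop_{\mu,g,n}$ by the rational linear subspace $\Lambda_{(\Gamma,\E,D_0)}$, that is, by the condition cutting out the metrics for which $D_0$ is a theta characteristic. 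Hence the pullback, with the cone structure inherited from $J^\trop_{\mu,g,n}$, is a refinement $Q^\trop_{g,n}\to S^\trop_{g,n}$ of generalized cone complexes; the only point requiring care is the interaction with the automorphism quotients of the three spaces, which is routine.

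The crux is to identify the cells of the pullback with the poset underlying $Q^\trop_{g,n}$. By Theorem~\ref{thm:refiJ}, the cone of $P^\trop_{\mu,g,n}$ associated to a $\mu$-polystable divisor $D_0$ is subdivided into the cones associated to those $\mu$-quasistable divisors $D$ whose $\mu$-polystable representative is $D_0$, and any such $D$ is linearly equivalent to $D_0$. Now, by Theorem~\ref{thm:uniquephi}, whether a $\mu$-semistable (in particular $\mu$-quasistable) divisor satisfies \eqref{eq:key-cond} --- i.e.\ admits an acyclic flow $\phi$ with $2D+\Div(\phi)=K_{\Gamma^\E}$, necessarily the unique one with $|\phi(e)|\le1$ --- depends only on its linear equivalence class; hence $D$ satisfies \eqref{eq:key-cond} if and only if $D_0$ does. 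Therefore the preimage in $J^\trop_{\mu,g,n}$ of the image of $T^\trop_{g,n}$ is precisely the subcomplex of triples $(\Gamma,\E,D)$ with $D$ $\mu$-quasistable satisfying \eqref{eq:key-cond}, and by Proposition~\ref{prop:equiv} the $\mu$-polystable divisor beneath such a $D$ is its distinguished representative \eqref{eq:DPoly}. Forming the pullback over $T^\trop_{g,n}$, which only decorates these triples with a sign function, we conclude that the cells of the pullback are in bijection with the tuples $(\Gamma,\E,D,s)$ in which $D$ is $\mu$-quasistable, satisfies \eqref{eq:key-cond}, and $s$ is a sign function, the map to $S^\trop_{g,n}$ forgetting the passage from $D$ to its $\mu$-polystable representative and keeping $s$. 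This is exactly the poset of $Q^\trop_{g,n}$, so the pullback is $Q^\trop_{g,n}$ and the refinement $Q^\trop_{g,n}\to S^\trop_{g,n}$ is the asserted one.

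It then remains to make the induced decomposition of the cones of $S^\trop_{g,n}$ explicit, which is a transcription of the explicit form of Theorem~\ref{thm:refiJ}. A cone of $S^\trop_{g,n}$, attached to $(\Gamma,\E,D_0,s)$ with $D_0$ $\mu$-polystable, is $\mathbb R^{E(\Gamma^\E)}_{\ge0}\cap\Lambda_{(\Gamma,\E,D_0,s)}$; by the previous paragraph it is subdivided into the cones of $Q^\trop_{g,n}$ attached to the tuples $(\Gamma,\E',D,s)$ with $D$ $\mu$-quasistable, of $\mu$-polystable representative $D_0$, and $s$ unchanged, each of the form $\mathbb R^{E(\Gamma^{\E'})}_{\ge0}\cap\Lambda_{(\Gamma,\E',D,s)}$, where $\Lambda_{(\Gamma,\E',D,s)}$ is the linear subspace \eqref{eq:phieq} cut out by the cycle relations $\sum_{e\in\gamma}\phi(e)x_e=0$ attached to the flow $\phi$ of \eqref{eq:key-cond}. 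Since the oriented cycles and the flow $\phi$ depend only on $(\Gamma,\E',D)$, the subspace $\Lambda_{(\Gamma,\E',D,s)}$ does not involve $s$, which is exactly why the pullback along $S^\trop_{g,n}\to T^\trop_{g,n}$ is transparent: the sign function is merely carried along. Finally, the relative interior of each such cell is transported from $J^\trop_{\mu,g,n}$, where it already carries the modular description of Theorem~\ref{thm:refiJ}, namely the locus on which $D$ is strictly the $\mu$-quasistable representative of its class.

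The step I expect to be the main obstacle is the crux of the second paragraph: that condition \eqref{eq:key-cond}, with its uniquely determined bounded acyclic flow, is an invariant of the linear equivalence class of a $\mu$-semistable divisor, so that the refinement $J^\trop_{\mu,g,n}\to P^\trop_{\mu,g,n}$ does not lead out of the theta-characteristic locus. This is exactly where Theorem~\ref{thm:uniquephi} and Proposition~\ref{prop:equiv} do the real work; once it is in place, the formal pullback of subdivisions, the bookkeeping with automorphism quotients, and reading the cone equations off \eqref{eq:phieq} are routine.
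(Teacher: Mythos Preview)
Your overall strategy---realize $Q^\trop_{g,n}$ as the fiber product $S^\trop_{g,n}\times_{P^\trop_{\mu,g,n}}J^\trop_{\mu,g,n}$ and read off the decomposition from Theorem~\ref{thm:refiJ}---is exactly the viewpoint the paper takes, made explicit in the commutative square following Proposition~\ref{prop:modularQ}. The paper proves the unsigned statement first (Proposition~\ref{prop:refiT}, the refinement $R^\trop_{g,n}\to T^\trop_{g,n}$) and then adds the sign function; your one-step pullback is equivalent. So the architecture is right, but two points in your execution are genuine gaps.

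First, the cone of $S^\trop_{g,n}$ attached to a polystable $(\Gamma,\E,D_0,s)$ is not $\mathbb R^{E(\Gamma^\E)}_{\ge0}\cap\Lambda_{(\Gamma,\E,D_0,s)}$ (that is $\lambda_{(\Gamma,\E,D_0,s)}$), but its image $\kappa_{(\Gamma,\E,D_0,s)}=\mathcal T_\E(\lambda_{(\Gamma,\E,D_0,s)})$ in $\mathbb R^{E(\Gamma^\E)}/\mathcal L_\E$. For polystable pseudo-roots $\mathcal L_\E$ has dimension $b_0(\Gamma_\E)-1$, which is typically positive, so $\lambda$ and $\kappa$ differ; Lemma~\ref{lem:poly-pseudo-root} identifies $\kappa_{(\Gamma,\E,D_0,s)}$ with $\mathbb R^{E(\Gamma)}_{\ge0}$, not with $\lambda$. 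Your explicit description in the third paragraph therefore does not match the cones you are decomposing, and the pullback must really be taken inside $\sigma_{(\Gamma,\E,D_0)}\subset\mathbb R^{E(\Gamma^\E)}/\mathcal L_\E$, where the quasistable subcones $\tau_{(\Gamma,\E',D')}$ embed via Remark~\ref{rem:inclu}. The paper handles this via Remarks~\ref{rem:simple-root} and~\ref{rem:kappa-inclusion} and the computation in the proof of Proposition~\ref{prop:refiT}.

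Second, the sentence ``by Theorem~\ref{thm:uniquephi}, whether a $\mu$-semistable divisor satisfies~\eqref{eq:key-cond} depends only on its linear equivalence class'' is not supported by that theorem: Theorem~\ref{thm:uniquephi} gives uniqueness and boundedness of $\phi$ once~\eqref{eq:key-cond} holds, not invariance of its solvability. What you actually need is Proposition~\ref{prop:minimal-root}: if $(\E',D')$ is a semistable pseudo-root then $\pol(\E',D')$ is a polystable pseudo-root, and moreover $P_{(\E',D')}=P_{\pol(\E',D')}$. The first part gives one direction; the converse (that a quasistable $(\E',D')$ with $\pol(\E',D')$ a pseudo-root is itself a pseudo-root) comes from the modular interpretation, namely Lemma~\ref{lem:comb-type} together with Proposition~\ref{prop:open-cone}, not from Theorem~\ref{thm:uniquephi}. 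The equality $P_{(\E',D')}=P_{\pol(\E',D')}$ is also what justifies your ``keeping $s$'': without it the domain of the sign function would change and the phrase has no meaning. The paper packages all of this as Proposition~\ref{prop:minimal-signed-root} and Lemma~\ref{lem:signed-poly-root-spec}; those are the results you should be invoking at the crux, not Theorem~\ref{thm:uniquephi}.
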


Next, we come back to algebraic geometry. The moduli space, $\mc S_{g,n}$, of theta characteristics on $n$-pointed smooth curves of genus $g$ naturally sits inside $\ol{\mc J}_{\mu,g,n}$, the Esteves universal compactified Jacobian. As we already mentioned, we define  $\ol{\mc Q}_{g,n}$ as  the closure of $\mc S_{g,n}$ in $\ol{\mc J}_{\mu,g,n}$. For our purposes, it is convenient to interpret $\ol{\mc J}_{\mu,g,n}$ as the moduli space of pair $(X,L)$, where $X$ is a quasistable curve and $L$ is a $(\mu,p_0)$-quasistable invertible sheaf on $X$ with degree $-1$ on exceptional components (see Proposition \ref{prop:isoJP}).  The tropical setting suggests a modular interpretation of  $\ol{\mc Q}_{g,n}$. In fact, given a triple $(\Gamma,\E,D)$ satisfying equation \eqref{eq:key-cond},
we can consider the subgraph $P_{(\E,D)}$ of $\Gamma^\E$ over which  $\phi$ is zero. If $X$ is a nodal curve with dual graph $\Gamma^\E$, we can consider the subcurves of $X$ corresponding to the connected component of $P_{(\E,D)}$, which we call the \emph{$\phi$-subcurves} of $X$. The following result is Theorem \ref{thm:modularQgn}.

\begin{thmb}
Let $(X,L)$ be a pair parametrized by a geometric point of $\ol{\mc J}_{\mu,g,n}$.
The following conditions are equivalent:
\begin{itemize}
    \item[(1)] The point parametrizing $(X,L)$ is contained in $\ol{\mc Q}_{g,n}$.
     \item[(2)] 
The combinatorial type $(\Gamma,\E,D)$ of $(X,L)$ satisfies equation  \eqref{eq:key-cond} and 
for every $\phi$-subcurve $Z$ of $X$ with corresponding subgraph $P_Z$ of $\Gamma^\E$, we have an isomorphism: 
   \[
   \Big(L|_Z\otimes \mc O_Z\Big(-\sum p_e\Big)\Big)^{\otimes 2}\cong \omega_Z
   \]
   where $p_e$ runs over the nodes of $X$ corresponding to the oriented edges $e$ of $\Gamma^\E$ with target  contained in $P_Z$ and source not contained in $P_Z$, such that  $\phi(e)=-1$.
   \end{itemize}
\end{thmb}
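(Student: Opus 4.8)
The plan is to establish the two implications separately, using throughout the interpretation of $\ol{\mc J}_{\mu,g,n}$ as a moduli space of pairs $(X,L)$ (Proposition~\ref{prop:isoJP}), the valuative criterion --- legitimate here since $\mc S_{g,n}$ is by definition dense in $\ol{\mc Q}_{g,n}$ and $\ol{\mc J}_{\mu,g,n}$ is of finite type over $\ol{\mc M}_{g,n}$ --- to reduce membership in $\ol{\mc Q}_{g,n}$ to the existence of a one-parameter \emph{smoothing}, and the combinatorial input of Theorem~\ref{thm:uniquephi} (uniqueness of the acyclic flow $\phi$ with $2D+\Div(\phi)=K_{\Gamma^\E}$, together with $|\phi(e)|\le 1$). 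Concretely, $(X,L)\in\ol{\mc Q}_{g,n}$ if and only if there is a discrete valuation ring $R$ with residue field the given base field, a family of quasistable curves $\mc X\to\Spec R$ with $\mc X_k=X$ and $\mc X_K$ smooth of genus $g$, and an invertible sheaf $\mc L$ on $\mc X$ with $\mc L|_X=L$ and $\mc L_K^{\otimes 2}\cong\omega_{\mc X_K/K}$.

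For (1)$\Rightarrow$(2): given such a smoothing, pass to a regular model $\wt{\mc X}\to\Spec R$, obtained by subdividing each edge $e$ of $\Gamma^\E$ into a chain of length equal to the thickness $m_e$ of the corresponding node. The invertible sheaf $\mc L^{\otimes 2}\otimes\omega_{\wt{\mc X}/R}^{-1}$ is trivial on the generic fibre, hence equals $\mc O_{\wt{\mc X}}\big(\sum_v t_v\wt C_v\big)$ for integers $t_v$ indexed by the components of the special fibre; since $\mc L$ has degree zero on each chain component, comparing multidegrees component by component forces $t$ to be linear along every subdivided edge, so $t$ induces a flow $\phi$ on $\Gamma^\E$ with $\phi(e)=(t_w-t_v)/m_e$. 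This $\phi$ is acyclic (along any directed cycle $\sum_e m_e\phi(e)=0$), and restricting the isomorphism $\mc L^{\otimes 2}\cong\omega_{\wt{\mc X}/R}\otimes\mc O_{\wt{\mc X}}\big(\sum_v t_v\wt C_v\big)$ to the special fibre yields $2D+\Div(\phi)=K_{\Gamma^\E}$; by Theorem~\ref{thm:uniquephi} this $\phi$ is the canonical flow, so \eqref{eq:key-cond} holds. For the $\phi$-subcurve statement, fix a $\phi$-subcurve $Z$ with subgraph $P_Z$: since $\phi$ vanishes on $P_Z$ the function $t$ is constant there, so the restriction to $Z$ of $\mc O_{\wt{\mc X}}\big(\sum_v t_v\wt C_v\big)$ is a divisor supported only at the nodes joining $Z$ to $\ol{X\setminus Z}$, with local contribution governed by the value $\phi(e)=\pm1$ of the flow on the relevant edge; combining this with the adjunction formula relating $\omega_X|_Z$ and $\omega_Z$ and sorting out the signs produces exactly the isomorphism $\big(L|_Z\otimes\mc O_Z(-\sum p_e)\big)^{\otimes 2}\cong\omega_Z$ of the statement. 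This direction is a careful but essentially routine computation once the regular model is in hand.

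For (2)$\Rightarrow$(1): the plan is to exhibit $(X,L)$ as a specialization of a point of $\mc S_{g,n}$. First, the locus $\mc Z\subseteq\ol{\mc J}_{\mu,g,n}$ of pairs satisfying (2) is closed --- equation \eqref{eq:key-cond} and the $\phi$-subcurve isomorphisms pass to further degenerations --- so (1)$\Rightarrow$(2) already gives $\ol{\mc Q}_{g,n}\subseteq\mc Z$, and it remains to show $\mc Z\subseteq\ol{\mc Q}_{g,n}$. We argue by induction on the number of non-exceptional nodes of $X$, the base case being a point of $\mc S_{g,n}$; for the inductive step we construct a one-parameter deformation of $(X,L)$ inside $\mc Z$ that smooths one chosen non-exceptional node. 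Concretely, choose the node-thicknesses $(m_e)$ in the cone $\mathbb R^{E(\Gamma^\E)}_{\ge0}\cap\Lambda_{(\Gamma,\E,D,s)}$ of \eqref{eq:phieq} --- possible by acyclicity of $\phi$, and this is precisely what allows an integer function $t$ with $\phi(e)=(t_w-t_v)/m_e$ to exist --- form the resulting family $\mc X\to\Spec R$ with special fibre $X$ and the associated vertical divisor $T=\sum_v t_v\wt C_v$, and look for $\mc L$ on $\mc X$ with $\mc L^{\otimes 2}\cong\omega_{\mc X/R}(T)$ and $\mc L|_X=L$; since $T$ is trivial on $\mc X_K$, such an $\mc L$ makes $\mc L_K$ a theta characteristic and completes the smoothing. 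Condition (2) enters precisely here: it guarantees both that $\omega_{\mc X/R}(T)|_X\cong L^{\otimes 2}$ --- on each $\phi$-subcurve $Z$ this is a reformulation of the isomorphism in (2), and the connectedness of $P_{(\E,D)}$ propagates it over all of $X$ --- and that a square root exists, the remaining ambiguity being a $\mathbb Z/2$-valued obstruction on the dual graph of $X$ which is trivialized using the acyclicity of $\phi$ together with the freedom in $T$ modulo $\mc O_{\mc X}(\mc X_k)$ and, if necessary, a ramified base change. Hence $(X,L)$ is a specialization of a pair of $\mc Z$ with strictly fewer non-exceptional nodes, and by induction of a point of $\mc S_{g,n}$.

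The main obstacle is the square-root construction in (2)$\Rightarrow$(1): assembling the finitely many local data supplied by condition (2) --- a genuine square root $L|_Z(-\sum p_e)$ of $\omega_Z$ on each $\phi$-subcurve $Z$ --- into a single invertible sheaf $\mc L$ on the total space of the smoothing with the prescribed square $\omega_{\mc X/R}(T)$ and the prescribed restriction $L$ to $X$, while controlling the $\mathbb Z/2$-valued gluing obstruction by means of the combinatorics of the acyclic flow $\phi$ and the geometry of $P_{(\E,D)}$. It is the precise form of condition (2), with the boundary correction $\sum p_e$ dictated by the orientation of $\phi$, that makes this possible; a pair failing (2) cannot be smoothed this way, which is exactly why $\ol{\mc Q}_{g,n}$ is a proper closed subset of the locus cut out by \eqref{eq:key-cond} alone. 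A secondary technical point, on the (1)$\Rightarrow$(2) side, is the bookkeeping of node thicknesses and signs when restricting the vertical twist to a $\phi$-subcurve.
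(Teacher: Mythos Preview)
Your direction $(1)\Rightarrow(2)$ is essentially the paper's argument: a smoothing gives a twister $T$ with $\mc L^{\otimes 2}\otimes T\cong\omega_{\mc X}$, whose multidegree is $\Div(\phi)$ for an acyclic flow $\phi$ (this is the content of Lemma~\ref{lem:twister}), and restricting to each $\phi$-subcurve yields the stated isomorphism. The regular-model bookkeeping you describe is exactly how Lemma~\ref{lem:twister} is proved.

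For $(2)\Rightarrow(1)$ your strategy is also the paper's --- choose thicknesses making $\Div(\phi_{(\E,D)})$ principal (this is Lemma~\ref{lem:refi-flow}), build a smoothing with a twister $T$, and extract a square root $\mc L$ extending $L$ --- but two steps are not justified, and the induction is superfluous. First, you write ``form the resulting family $\mc X\to\Spec R$'' and then claim condition~(2) guarantees $\omega_{\mc X/R}(T)|_X\cong L^{\otimes2}$. Condition~(2) only pins down $\omega_X\otimes L^{-2}$ on each $\phi$-subcurve; to get a \emph{global} isomorphism with the restriction of a twister you must choose the smoothing so that the gluing data of $T|_X$ match those of $\omega_X\otimes L^{-2}$. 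This is not automatic for an arbitrary smoothing with the prescribed thicknesses. The paper handles it by passing to the refinement $\wh\Gamma$, pulling back $\omega_X\otimes L^{-2}$ to the corresponding semistable curve $Y$, and invoking \cite[Corollary~6.9]{EM}, which produces a smoothing of $Y$ together with a twister whose restriction is \emph{exactly} that line bundle. Second, once you have such a $T$, the existence of $\mc L$ with $\mc L|_Y\cong\pi_Y^*(L)$ and $\mc L^{\otimes2}\otimes T\cong\omega$ is precisely \cite[Remark~3.0.6]{CCC}; your description of a ``$\mathbb Z/2$-valued obstruction trivialized by acyclicity and base change'' gestures at this but does not supply it. With these two citations in hand the smoothing goes directly to a smooth generic fiber, so the induction on the number of nodes, and the closedness of $\mc Z$, play no role.
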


Finally we study the boundary stratification of the open immersion $\mc S_{g,n}\hookrightarrow \ol{\mc Q}_{g,n}$.  We define $\mc Q_{(\Gamma,\E,D,s)}$ as the locus of points of $\ol{\mc Q}_{g,n}$ whose combinatorial type is a fixed tuple $(\Gamma,\E,D,s)$. 

\begin{thmc}
The following decomposition:
\[
\ol{\mc Q}_{g,n}=\coprod_{(\Gamma,\E,D,s)}\mc Q_{(\Gamma,\E,D,s)}.
\]
 is a stratification of $\ol{\mc Q}_{g,n}$ indexed by the poset underlying $Q^\trop_{g,n}$.
\end{thmc}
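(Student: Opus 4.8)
The plan is to combine the known stratification of the Esteves universal compactified Jacobian with Theorems~A and~B. Recall that $\ol{\mc J}_{\mu,g,n}$ admits a stratification into locally closed substacks indexed by the triples $(\Gamma,\E,D)$, where $\Gamma$ is a stable graph of genus $g$ with $n$ legs, $\E\subseteq E(\Gamma)$, and $D$ is a $(\mu,p_0)$-quasistable divisor on $\Gamma^\E$; under the identification of Proposition~\ref{prop:isoJP} the stratum $\ol{\mc J}_{(\Gamma,\E,D)}$ consists of the pairs $(X,L)$ whose stable model has dual graph $\Gamma$, whose exceptional components lie over $\E$, and whose multidegree is $D$, and the closure order of this stratification is the poset underlying $J^\trop_{\mu,g,n}$ (see \cite{AP1}). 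Since $\ol{\mc Q}_{g,n}$ is closed in $\ol{\mc J}_{\mu,g,n}$, intersecting gives a decomposition of $\ol{\mc Q}_{g,n}$ into the locally closed loci $\mc Q_{(\Gamma,\E,D)}:=\ol{\mc Q}_{g,n}\cap\ol{\mc J}_{(\Gamma,\E,D)}$. By Theorem~B the only triples with $\mc Q_{(\Gamma,\E,D)}\ne\emptyset$ are those satisfying equation~\eqref{eq:key-cond}, and for such a triple every $(X,L)\in\mc Q_{(\Gamma,\E,D)}$ endows each $\phi$-subcurve $Z$ with a theta characteristic $\mc L_Z:=L|_Z\otimes\mc O_Z(-\sum p_e)$; set $s(Z):=h^0(Z,\mc L_Z)\bmod 2$. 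This is the sign function attached to $(X,L)$, and one checks that it agrees with the sign function appearing in the definition of $S^\trop_{g,n}$, and hence of $Q^\trop_{g,n}$ (\cite{CMP1}). Thus every geometric point of $\ol{\mc Q}_{g,n}$ acquires a well-defined combinatorial type $(\Gamma,\E,D,s)$, and $\mc Q_{(\Gamma,\E,D,s)}$ is the locus where it equals a fixed such tuple.

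First I would prove that the $\mc Q_{(\Gamma,\E,D,s)}$ are locally closed and that the index set is exactly the poset underlying $Q^\trop_{g,n}$. On $\mc Q_{(\Gamma,\E,D)}$ the topological type is constant, so the $\phi$-subcurves vary in a family and each $\mc L_Z$ is an invertible square root of the relative dualizing sheaf of that family; for such square roots the parity $h^0\bmod 2$ is deformation invariant (Mumford; in the present nodal generality see \cite{Co}), so $(X,L)\mapsto s$ is locally constant on $\mc Q_{(\Gamma,\E,D)}$. Hence $\mc Q_{(\Gamma,\E,D,s)}$ is open and closed in $\mc Q_{(\Gamma,\E,D)}$, therefore locally closed in $\ol{\mc Q}_{g,n}$, and the $\mc Q_{(\Gamma,\E,D,s)}$ are pairwise disjoint and cover $\ol{\mc Q}_{g,n}$ by construction. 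That every tuple $(\Gamma,\E,D,s)$ of the poset underlying $Q^\trop_{g,n}$ actually occurs is seen by reversing this construction: build a quasistable curve $X$ with dual graph $\Gamma^\E$ and, using equation~\eqref{eq:key-cond}, glue theta characteristics of the prescribed parities on the $\phi$-subcurves into an invertible sheaf $L$ of multidegree $D$ with degree $-1$ on exceptional components (on the positive-genus $\phi$-subcurves both parities are available, and on the genus-zero ones the parity is forced and matches the value already imposed by the poset); then $(X,L)$ is a point of $\ol{\mc J}_{\mu,g,n}$ of combinatorial type $(\Gamma,\E,D,s)$ satisfying condition~(2) of Theorem~B, hence $(X,L)\in\ol{\mc Q}_{g,n}$. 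Together with the evident injectivity this identifies the index set of the decomposition with the poset underlying $Q^\trop_{g,n}$.

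It remains to match the closure order with the poset order, i.e.\ to show that $\mc Q_{(\Gamma',\E',D',s')}\subseteq\overline{\mc Q_{(\Gamma,\E,D,s)}}$ if and only if $(\Gamma',\E',D',s')\ge(\Gamma,\E,D,s)$ in the poset underlying $Q^\trop_{g,n}$, with the convention that larger tuples correspond to larger cones and deeper strata. For the ``only if'' direction, take a family over the spectrum of a discrete valuation ring with generic fibre of type $(\Gamma,\E,D,s)$ and special fibre $(X',L')$; since $\ol{\mc Q}_{g,n}$ is closed, $(X',L')$ lies in some stratum $\mc Q_{(\Gamma',\E',D',s')}$. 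The specialization behaviour of $(\mu,p_0)$-quasistable multidegrees in $\ol{\mc J}_{\mu,g,n}$ gives $(\Gamma',\E',D')\ge(\Gamma,\E,D)$ in the poset underlying $J^\trop_{\mu,g,n}$; Theorem~\ref{thm:uniquephi} (in particular $|\phi(e)|\le1$) controls how $\phi$ and the $\phi$-subcurve decomposition specialize across the degenerating nodes, and the parity invariance recalled above shows that $s'$ is the specialization of $s$ in the sense used to define $S^\trop_{g,n}$; hence $(\Gamma',\E',D',s')\ge(\Gamma,\E,D,s)$. For the ``if'' direction one realizes each covering relation of the poset by an explicit smoothing: by Theorem~A the relation $(\Gamma',\E',D',s')\ge(\Gamma,\E,D,s)$ means that the dual graph of a curve of type $(\Gamma,\E,D)$ is obtained from that of a curve of type $(\Gamma',\E',D')$ by contracting a set $S$ of edges, along which $(\Gamma,\E,D,s)$ is the contraction of $(\Gamma',\E',D',s')$ in the sense described by Theorem~A. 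Given $(X',L')\in\mc Q_{(\Gamma',\E',D',s')}$, smooth exactly the nodes of $X'$ corresponding to $S$ and extend $L'$ across them in the way making the generic fibre have multidegree $D$ and degree $-1$ on exceptional components --- this is possible precisely because $(\Gamma',\E',D')\ge(\Gamma,\E,D)$; then equation~\eqref{eq:key-cond} holds on the generic fibre because $\phi$ is the corresponding uncontraction of $\phi'$, the generic fibre stays $(\mu,p_0)$-quasistable, and the sign stays equal to $s$ by parity invariance. The resulting family has special point $(X',L')$ and generic point in $\mc Q_{(\Gamma,\E,D,s)}$, so $(X',L')\in\overline{\mc Q_{(\Gamma,\E,D,s)}}$, and composing such smoothings along a chain yields the general relation.

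I expect the main obstacle to be the ``if'' direction of the closure relation: one must produce, for an arbitrary relation in the poset, a smoothing of $(X',L')$ whose generic fibre is quasistable, still satisfies equation~\eqref{eq:key-cond}, and carries the prescribed sign. This requires careful bookkeeping of how the flow $\phi$ and the $\phi$-subcurve decomposition behave under edge contractions --- matched against the cone decomposition of Theorem~A --- together with the (standard, but delicate in the nodal setting) invariance of the theta-characteristic parity. Everything else is either a formal consequence of the ambient stratification of $\ol{\mc J}_{\mu,g,n}$ or a direct application of Theorems~A and~B.
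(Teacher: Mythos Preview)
Your overall strategy matches the paper's, but there is a genuine gap: you never argue that each $\mc Q_{(\Gamma,\E,D,s)}$ is \emph{irreducible}, and the paper's definition of a stratification requires this (condition~(1) in the definition just before Section~3.1). Showing that $\mc Q_{(\Gamma,\E,D,s)}$ is open and closed in $\mc Q_{(\Gamma,\E,D)}$ does not suffice, since $\mc Q_{(\Gamma,\E,D)}$ itself is cut out of the irreducible Jacobian stratum $\mc J_{(\Gamma,\E,D)}$ by the closed conditions $\wt L_u^{\otimes 2}\cong\omega_{Z_u}$, one for each $\phi$-subcurve, and such an intersection can in principle acquire extra components of too large dimension. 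The paper handles this in two steps: a lower bound on the dimension of every component of $\widetilde{\mc Q}_{(\Gamma,\E,D,s)}$ via a relative-Jacobian codimension count (Lemma~\ref{lem:Porteus}), and then a map $\widetilde{\mc Q}_{(\Gamma,\E,D,s)}\to\prod_u \mc S_{(P_u,P_u,s_u)}$ with $(k^*)^{b_1(\E,D)}$-fibers onto an irreducible target (Proposition~\ref{prop:irred}, using the irreducibility of the Cornalba strata from \cite{CMP1}); comparing the two forces irreducibility. Your gluing construction produces points in the stratum but does not rule out extra components.

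A second, more subtle point concerns the ``only if'' direction of the closure order. You invoke the specialization behaviour of quasistable multidegrees in $\ol{\mc J}_{\mu,g,n}$ to get $(\Gamma',\E',D')\ge(\Gamma,\E,D)$ as \emph{triples}, and then say Theorem~\ref{thm:uniquephi} ``controls how $\phi$ specializes''. But a specialization of triples need not be a specialization of semistable root-graphs: one must verify the extra condition $\wh\iota_*(\phi_{(\E',D')})=\phi_{(\E,D)}$, and the paper gives an explicit example (the remark following Remark~\ref{rem:proper}) where this fails. The paper extracts this compatibility from the geometry of the one-parameter family by analyzing the twister $T=\omega_{\mc X}\otimes\mc L^{-2}$ (Lemma~\ref{lem:twister}) to show that the flow coming from $T$ is acyclic and vanishes on the generic edges, hence agrees with $\phi_{(\E',D')}$ by uniqueness; the sign compatibility is then obtained, in the stable case, by passing to a genuine family of Cornalba spin curves and quoting the stratification of $\ol{\mc S}_{g,n}$ (Proposition~\ref{prop:pol-spin}). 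Your appeal to parity invariance alone does not explain why the $\phi$-subcurve decomposition itself specializes in the way required by the poset order on $\mathcal{SPQ}_{g,n}$.
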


We have an explicit formula for the codimension of the strata $\mc Q_{(\Gamma,\E,D,s)}$. This allows us to show that the natural forgetful map $\ol{\mc Q}_{g,n}\ra\ol{\mc M}_{g,n}$ is finite exactly over the locus of tree-like curves. In particular,  $\ol{\mc Q}_{g,n}$ is not isomorphic to $\ol{\mc S}_{g,n}$. We believe that Theorem A should help in the construction of an explicit blowup $\ol{\mc Q}_{g,n}\ra \ol{\mc S}_{g,n}$, locally given by a toric morphism induced by the decomposition of cones associated to the refinement $Q^\trop_{g,n}\ra S^\trop_{g,n}$. We plan to investigate this problem in a different paper. We can also give a combinatorial characterization of the codimension 1 strata $\mc Q_{(\Gamma,\E,D,s)}$: there are more boundary divisors in $\ol{\mc Q}_{g,n}$ that in $\ol{\mc S}_{g,n}$. Finally, we have a reverse-inclusion bijection between the strata of $\ol{\mc Q}_{g,n}$ and the cells of  $Q_{g,n}^{\trop}$  switching dimension and codimension, proving, as expected, an interesting ``duality" between $Q^\trop_{g,n}$ and $\ol{\mc Q}_{g,n}$. We refer to Theorem \ref{thm:Q-strat} for the precise statement of the results.

\section{Preliminaries}

\subsection{Graphs}

We denote by $\Gamma$ a weighted graph (or, simply, a \emph{graph}) with legs, where $V(\Gamma)$ and $E(\Gamma)$ are the sets of vertices and edges, $w\col V(\Gamma)\ra \mathbb Z_{\ge0}$ is the weight function, and $L_\Gamma(v)$ (or, simply, $L(v)$) is the number of legs incident to $v\in V(\Gamma)$.

Let $\Gamma$ be a graph. For every subset $\E\subset E(\Gamma)$, we let $\deg_{\E}(v)$ be the number of edges in $\E$ incident to $v$ (with loops counting twice). We set $\deg(v)=\deg_{E(\Gamma)}(v)$. 
Given subsets $V,W\subset V(\Gamma)$, we let $E(V,W)\subset E(\Gamma)$ be the set of edges with one end-vertex in $V$ and the other in $W$. If $\Gamma_1,\Gamma_2$ are subgraphs of $\Gamma$, we abuse notation writing $E(\Gamma_1,\Gamma_2)=E(V(\Gamma_1),V(\Gamma_2))$. We set $E(V)=E(V,V)$.
For every subset $V\subset V(\Gamma)$, we let  $\Gamma(V)$ be the subgraph of $\Gamma$ whose set of vertices is $V(\Gamma)$ and with set of edges $E(V)$. For $V\subset V(\Gamma)$ we set $V^c=V(\Gamma)\setminus V$. 

We denote by $\Aut(\Gamma)$ be the group of automorphism of $\Gamma$. We also let $b_0(\Gamma)$ be the number of connected components of $\Gamma$ and we set $b_1(\Gamma)=\#E(\Gamma)-\#V(\Gamma)+b_0(\Gamma)$. The \emph{genus} of $\Gamma$ is defined as $g(\Gamma)=\sum_{v\in V(\Gamma)} w(v)+b_1(\Gamma)$. We say that $\Gamma$ is \emph{stable} 
if $2w(v)-2+\deg_\Gamma(v)+L_\Gamma(v)>0$ 
for every $v\in V(\Gamma)$.
Given a subset $\E\subset E(\Gamma)$, we let  $\Gamma_\E$ be the graph obtained by removing $\E$, so that $\Gamma$ and $\Gamma_\E$ have the same vertices and the same legs. We say that $\E$ is \emph{non-disconnecting} if $\Gamma_\E$ is connected. We denote by $\Gamma_{\E,\leg}$ the graph obtained by replacing every edge $e\in \E$ with end-vertices $v_1,v_2$ by a pair of legs, the first incident to $v_1$ and the second to $v_2$. 
A \emph{cycle} of $\Gamma$ is a connected subgraph of $\Gamma$ all of whose vertices $v$ satisfy $\deg_\Gamma(v)=2$. We will often identify a cycle and its set of edges.  A \emph{cyclic subgraph} of $\Gamma$ is a (possibly empty) union of cycles of $\Gamma$ with no common edges. 
 
We define the set $\ora{E}(\Gamma)=E_1\sqcup E_2$, whose elements are called \emph{oriented edges}, where $E_i=E(\Gamma)$. We consider the function $u\col\ora{E}(\Gamma)\ra E(\Gamma)$ such that $u(e)=e$, and  the functions $s,t\col \ora{E}(\Gamma)\ra V(\Gamma)$ (the \emph{source and target}), such that
\begin{itemize}
    \item[(1)] $\{s(e),t(e)\}$ is the set of vertices incident to $u(e)$, $\forall\;e\in \ora{E}(\Gamma)$;
    \item[(2)] $s(e_1)=t(e_2)$ for oriented edges $e_1\in E_1$, $e_2\in E_2$ such that  $u(e_1)=u(e_2)\in E(\Gamma)$.
\end{itemize}  
For every subset $\E\subset E(\Gamma)$, we let $\ora{\E}=u^{-1}(\E)\subset \ora{E}(\Gamma)$.

An \emph{oriented cycle} of $\Gamma$ is a sequence of oriented edges $e_1,\dots,e_n$ such that $t(e_n)=s(e_1)$ and $t(e_i)=s(e_{i+1})$ for every $i=1,\dots,n-1$ and whose associated set of edges forms a cycle of $\Gamma$.

A flow on a graph $\Gamma$ is a function $\phi\col \ora{E}(\Gamma)\ra \mathbb Z$ such that $\phi(e_1)+\phi(e_2)=0$  for different $e_1,e_2\in \ora{E}(\Gamma)$ such that $u(e_1)=u(e_2)\in E(\Gamma)$. Notice that $\phi(e_1)=0$ if and only if $\phi(e_2)=0$, so, sometimes, we will abuse notation writing $\phi(e)=0$, where $e=u(e_1)=u(e_2)\in E(\Gamma)$. If $v\col E(\Gamma)\ra \ora{E}(\Gamma)$ is a section of $u$, we will often define $\phi$ just defining it over the image of $v$.

Given a flow $\phi$ on a graph $\Gamma$ and vertices $v_1,v_2\in V(\Gamma)$, a \emph{$\phi$-path} from $v_1$ to $v_2$ is a sequence $e_1,\dots,e_n\in\ora{E}(\Gamma)$ such that $s(e_1)=v_1$, $t(e_n)=v_2$, $t(e_i)=s(e_{i+1})$ for $i=1,\dots,n-1$ and $\phi(e_i)\ge0$ for $i=1,\dots,n$. A $\phi$-path is called \emph{positive} (\emph{null}) if $\phi(e_i)>0$ (if $\phi(e_i)=0$) for every $i$. A \emph{path} is a $\phi$-path for $\phi=0$. A \emph{$\phi$-cycle} is a $\phi$-path whose edges form a cycle of $\Gamma$; it is called \emph{positive} ({\em null}) if it is so as a $\phi$-path.  A \emph{$\phi$-loop} is a $\phi$-cycle consisting of a single oriented edge.
A flow $\phi$ is \emph{acyclic} if there are no  non-null $\phi$-cycles in $\Gamma$. If $\phi$ is acyclic and $\phi(e)\ne 0$ for every $e\in \ora{E}(\Gamma)$, then there is at least a \emph{source} $v$ and a \emph{sink} $v'$ of $\phi$, that is, $\phi(e)>0$ if $s(e)=v$ and $\phi(e)>0$ if $t(e)=v'$.

A \emph{$\phi$-cut} of $\Gamma$ is a subset $E(W_1,W_2)\subset E(\Gamma)$ for some disjoint subsets $W_1,W_2\subset V(\Gamma)$ such that for every  $v_1\in W_1$ and $v_2\in W_2$ there is no $\phi$-path from $v_1$ to $v_2$ and from $v_2$ to $v_1$ whose edges are  contained in $\ora{E}(\Gamma)\setminus\ora{E}(W_1,W_2)$. 

 We define the graph $\Gamma/\E$ obtained by the contraction of the edges of $\E$. Notice that there is a natural surjection $V(\Gamma)\ra V(\Gamma/\E)$ and an identification $E(\Gamma/\E)=E(\Gamma)\setminus \E$. 
A graph $\Gamma$ \emph{specializes} to a graph $\Gamma'$, and we write $\iota\col\Gamma\ra \Gamma'$, if there is an isomorphism $\Gamma'\cong \Gamma/\E$. A specialization $\Gamma\ra \Gamma'$ is equipped with a surjective map $\iota\col V(\Gamma)\ra V(\Gamma')$ and an injective map $E(\Gamma')\ra E(\Gamma)$.

Let $\Gamma$ be a graph. 
A \emph{divisor} $D$ on $\Gamma$ is a function $D\col V(\Gamma)\ra \mathbb Z$. We often denote a divisor by $D=\sum_{v\in V(\Gamma)}D(v)v$. The degree of $D$ is the integer $\deg D=\sum_{v\in V(\Gamma)} D(v)$. We denote by $\Div(\Gamma)$ the group of divisors of $\Gamma$. A \emph{polarization} $\mu$ on $\Gamma$ is a function $\mu\col V(\Gamma)\ra \mathbb{R}$ such that $\sum_{v\in V(\Gamma)} \mu(v)$ is an integer, called the \emph{degree} of the polarization. Given a flow $\phi$ on $\Gamma$, the \emph{divisor of $\phi$} is defined as $\Div(\phi)=\sum_{e\in \ora{E}(\Gamma)} \phi(e) t(e)$. 
For a vertex $v\in V(\Gamma)$, we define 
the acyclic flow $\phi_v$ such that for $e\in \ora{E}(\Gamma)$ we have
\[
\phi_v(e)=\begin{cases}
\begin{array}{cl}
1     & \text{ if } s(e)=v\ne t(e) \\
 -1    & \text{ if } t(e)=v\ne s(e) \\
 0 & \text{ otherwise}.
\end{array}
\end{cases}
\]
 We set $\Div(v):=\Div(\phi_v)$. A \emph{principal divisor} on $\Gamma$ is a divisor of type $\sum_{v\in V(\Gamma)} a_v\Div(v)$, for $a_v\in \mathbb Z$. Of course, this divisor is equal to $\Div(\phi)$, where $\phi$ is the 
acyclic flow $\phi=\sum_{v\in V(\Gamma)} a_v \phi_v$.

Given a specialization $\iota \col \Gamma\ra \Gamma'$ of graphs, a divisor $D$ and a polarization $\mu$ on $\Gamma$, we define the divisor and polarization $\iota_*(D)$ and $\iota_*(\mu)$ on $\Gamma'$ taking every $v'\in V(\Gamma')$ to
\[
\iota_*(D)(v')=\sum_{v\in \iota^{-1}(v')} D(v)
\;\text{ and }\;
\iota_*(\mu)(v')=\sum_{v\in \iota^{-1}(v')} \mu(v).
\]
Given a flow $\phi$ on $\Gamma$, 
we let $\iota_*\phi$ be the flow on $\Gamma'$ such that $\iota_*\phi(e)=\phi(e)$, $\forall\; e\in E(\Gamma')\subset E(\Gamma)$.

Let $D$ and $\mu$ be a divisor and a polarization on $\Gamma$ of degree $d$. For every subset $W\subset V(\Gamma)$, we set $\delta_{\Gamma,W}:=\#E(W,W^c)$ (or, simply, $\delta_W$) and $\mu(W)=\sum_{v\in W} \mu(v)$, and we define:
\[
\beta_D(W):=\deg D|_W-\mu(W)+\frac{\delta_{\Gamma,W}}{2}.
\]
We say that the divisor $D$ on a connected graph $\Gamma$ is \emph{$\mu$-stable} (respectively, \emph{$\mu$-semistable}), if $\beta_D(W)>0$ (respectively, $\beta_D(W)\ge0$) for every $W\subsetneq V(\Gamma)$. Moreover, $D$ is  \emph{$(v_0,\mu)$-quasistable}, for some $v_0\in V(\Gamma)$, if $\beta_D(W)\ge0$ for every $W\subsetneq V(\Gamma)$, with strict inequality if $v_0\in W$.

 If $\Gamma$ is a non-connected graph and  $\mu$ is  a degree-$d$ polarization on $\Gamma$, the notion of $\mu$-semistability naturally extends for divisors on $\Gamma$ (see \cite[Remark 3.1]{AAPT}).  We say that a divisor $D$ of degree $d$ on $\Gamma$ is $\mu$-\emph{stable} if it is $\mu$-semistable and $D|_{\Gamma'}$ is $\mu|_{\Gamma'}$-stable for every connected component $\Gamma'$ of $\Gamma$.

\begin{Prop}\label{prop:stab-contract}
Let $\iota\col \Gamma\ra \Gamma'$ be a specialization of graphs. Let $D$ and $\mu$ be a divisor and a polarization of degree $d$ on $\Gamma$. We have $\beta_{\iota_*(D)}(W)=\beta_D(\iota^{-1}(W))$ for every subset $W\subset V(\Gamma')$. In particular, if $D$ is $\mu$-stable, $\mu$-semistable, $(v_0,\mu)$-quasistable for some $v_0\in V(\Gamma)$, then $\iota_*(D)$ is $\iota_*(\mu)$-stable, $\iota_*(\mu)$-semistable, $(\iota(v_0),\iota_*(\mu))$-quasistable, respectively. 
\end{Prop}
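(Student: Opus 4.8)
The plan is to reduce everything to the identity $\beta_{\iota_*(D)}(W)=\beta_D(\iota^{-1}(W))$, which is the technical heart of the statement; the stability assertions then follow formally. First I would fix $W\subset V(\Gamma')$ and unwind the definitions. By definition of $\iota_*$, we have $\deg\iota_*(D)|_W=\sum_{v'\in W}\iota_*(D)(v')=\sum_{v'\in W}\sum_{v\in\iota^{-1}(v')}D(v)=\deg D|_{\iota^{-1}(W)}$, and identically $\iota_*(\mu)(W)=\mu(\iota^{-1}(W))$ since $\iota^{-1}$ of a set partitions into the fibers over its elements. So the only nontrivial point is the comparison of boundary terms, namely $\delta_{\Gamma',W}=\delta_{\Gamma,\iota^{-1}(W)}$.

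For that, recall that a specialization comes with an injection $E(\Gamma')\hookrightarrow E(\Gamma)$ whose image is $E(\Gamma)\setminus\E$, together with the surjection $\iota\col V(\Gamma)\to V(\Gamma')$. An edge $e\in E(\Gamma')$ lies in $E(W,W^c)$ precisely when its two end-vertices map to $W$ and to $W^c$; under the identification with the corresponding edge of $\Gamma$, this is exactly the condition that that edge of $\Gamma\setminus\E$ has one end in $\iota^{-1}(W)$ and the other in $\iota^{-1}(W^c)=\iota^{-1}(W)^c$. Conversely, any edge of $\Gamma$ joining $\iota^{-1}(W)$ to its complement cannot lie in $\E$: an edge in $\E$ is contracted, so both of its end-vertices have the same image in $V(\Gamma')$, hence both lie in $\iota^{-1}(W)$ or both in $\iota^{-1}(W)^c$. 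Therefore $E_{\Gamma'}(W,W^c)$ is in bijection with $E_\Gamma(\iota^{-1}(W),\iota^{-1}(W)^c)$, giving $\delta_{\Gamma',W}=\delta_{\Gamma,\iota^{-1}(W)}$ and hence $\beta_{\iota_*(D)}(W)=\beta_D(\iota^{-1}(W))$. I expect this edge bookkeeping — in particular being careful that contracted edges never contribute to a cut of $\iota^{-1}(W)$, and that loops created by contraction are irrelevant since they contribute to neither side — to be the main (though still routine) obstacle.

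Finally I would deduce the stability claims. The map $W\mapsto\iota^{-1}(W)$ is an injection from proper subsets of $V(\Gamma')$ to proper subsets of $V(\Gamma)$ (proper because $\iota$ is surjective, so $\iota^{-1}(W)\subsetneq V(\Gamma)$ whenever $W\subsetneq V(\Gamma')$), and it preserves membership of a distinguished vertex in the sense that $v_0\in\iota^{-1}(W)\iff \iota(v_0)\in W$. Hence: if $\beta_D(W')>0$ for all proper $W'\subsetneq V(\Gamma)$, then in particular $\beta_D(\iota^{-1}(W))>0$, i.e.\ $\beta_{\iota_*(D)}(W)>0$ for all proper $W$, so $\iota_*(D)$ is $\iota_*(\mu)$-stable; the semistable case is identical with $>$ replaced by $\ge$; and for $(v_0,\mu)$-quasistability, $\beta_{\iota_*(D)}(W)=\beta_D(\iota^{-1}(W))\ge 0$ always, with strict inequality when $\iota(v_0)\in W$ since then $v_0\in\iota^{-1}(W)$. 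For the non-connected case one applies the connected statement componentwise, noting that $\iota$ carries connected components of $\Gamma$ onto connected components of $\Gamma'$ and that the degree of the polarization is preserved on each component, so the definition of $\mu$-stability for non-connected graphs transports verbatim. This completes the proof. \qed
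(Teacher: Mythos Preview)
Your proof is correct and follows exactly the same approach as the paper: unwind the definition of $\beta$, observe that the degree and polarization terms match by definition of $\iota_*$, and that $\delta_{\Gamma',W}=\delta_{\Gamma,\iota^{-1}(W)}$ since contracted edges cannot cross the cut. The paper's proof is in fact just the three-line computation you describe, leaving the edge bookkeeping and the stability deductions implicit; you have spelled out precisely the details the paper omits.
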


\begin{proof}
For every subset $W\subset V(\Gamma')$ we have
\begin{align*}
\beta_{\iota_*(D)}(W) & =\deg(\iota_*(D)|_W)-\iota_*(\mu)(W)+\frac{\delta_W}{2}\\
& = \deg(D|_{\iota^{-1}(W)})-\mu(\iota^{-1}(W))+\frac{\delta_{\iota^{-1}(W)}}{2}\\
& = \beta_D(\iota^{-1}(W)).
\end{align*}
The result follows.
\end{proof}

Given a graph $\Gamma$ and a subset $\E\subset E(\Gamma)$, we let $\Gamma^\E$ be the graph obtained by inserting exactly one vertex in the interior of every edge $e\in \E$. Notice that there is a natural inclusion $V(\Gamma)\subset V(\Gamma^\E)$. We call \emph{exceptional} a vertex in $V(\Gamma^\E)\setminus V(\Gamma)$. 
An edge which is incident to an exceptional vertex in the interior of an edge $e\in E(\Gamma)$ is called \emph{exceptional} and we say that it is an edge \emph{over} $e$. 
A \emph{pseudo-divisor} on $\Gamma$ is a pair $(\E,D)$, where $\E\subset E(\Gamma)$ and $D$ is a divisor on $\Gamma^\E$ such that $D(v)=-1$ for every exceptional vertex $v$. 

More generally, a \emph{refinement} of a graph $\Gamma$ is a graph $\Gamma'$ obtained by inserting a certain number of vertices in the interior of each edge of $\Gamma$. We have a natural injective map $\Div(\Gamma)\ra \Div(\Gamma')$ induced by the inclusion $V(\Gamma)\subset V(\Gamma')$.

We say that a pseudo-divisor  $(\E,D)$ on $\Gamma$ is \emph{simple} if $\Gamma_\E$ is connected. We say that $(\E,D)$ is $\mu$-stable, $\mu$-semistable, $(v_0,\mu)$-quasistable for some $v_0\in V(\Gamma)$, if so is $D$ on $\Gamma^\E$ with respect to the polarization $\mu^\E$ on $\Gamma^\E$ extending $\mu$ such that $\mu^\E(v)=0$ for every exceptional vertex $v\in V(\Gamma^\E)\setminus V(\Gamma)$.

Let $D$ be a divisor and $\mu$ a polarization on a graph $\Gamma$. For a subset $\E\subset E(\Gamma)$, we let $D_\E$  and $\mu_\E$ be the divisor and the polarization on $\Gamma_\E$ such that $D_\E(v)=D(v)$ and $\mu_\E(v)=\mu(v)+\frac{1}{2}\deg_\E(v)$. For every subset $W\subset V(\Gamma_\E)=V(\Gamma)$, we set
\[
\beta_{\E,D}(W)=\deg(D|_W)-\mu_\E(W)+\frac{\delta_{\Gamma_\E,W}}{2}.
\]
 We say that $(\E, D)$ is $\mu$-polystable
if $\beta_{\E,D}(V)\ge0$ for every subset $V\subset V(\Gamma)$, with strict inequality if $E(V,V^c)\not\subset \E$. Equivalently, $(\E, D)$ is $\mu$-polystable if $D_\E$ is $\mu_\E$-stable on $\Gamma_\E$.

Given a specialization $\iota\col \Gamma\ra \Gamma'$ and a subset $\E\subset E(\Gamma)$ we have a specialization $\wh\iota\col\Gamma^\E\ra \Gamma'^{\E'}$, where $\E'=\E\cap E(\Gamma')$; we say that $\wh\iota$ is \emph{induced} by $\iota$. If $(\E,D)$ is a pseudo-divisor on $\Gamma$, we define the pseudo-divisor  $\iota_*(\E,D)=(\E',\wh\iota_*(D))$ on $\Gamma'$. 

Given a specialization of graphs $\iota\col \Gamma\ra \Gamma'$ and subsets $\E\subset E(\Gamma)$, $\E'\subset E(\Gamma')$, we say that a specialization $\wh \iota\col \Gamma^\E\ra {\Gamma'}^{\E'}$ is $\iota$-compatible, if the  diagrams:
\[
\SelectTips{cm}{11}
\begin{xy} <16pt,0pt>:
\xymatrix{
  V(\Gamma)  \ar[d] \ar[r]^{\iota}     & V(\Gamma') \ar[d]  \\
V(\Gamma^\E)\ar[r]^{\wh\iota}&V({\Gamma'}^{\E'})
 }
\end{xy}
\;\;\;\;\;\;\;\;
\SelectTips{cm}{11}
\begin{xy} <16pt,0pt>:
\xymatrix{
  E(\Gamma')   \ar[r]^{\iota}     & E(\Gamma)   \\
E({\Gamma'}^{\E'})\ar[r]^{\wh\iota} \ar[u] &E(\Gamma^{\E})\ar[u]
 }
\end{xy}
\]
are commutative. If $\iota$ is the identity, we simply say that $\wh\iota$ is compatible. 
  
Given pseudo-divisors $(\E,D)$ on $\Gamma$ and $(\E',D')$ on $\Gamma'$, we say that the triple $(\Gamma,\E,D)$ \emph{specializes} to the triple $(\Gamma',\E',D')$, and we write $(\Gamma,\E,D)\ge (\Gamma',\E',D')$, if there is a specialization $\iota\col \Gamma\ra \Gamma'$ such that $\E'\subset \E\cap E(\Gamma')$, and a $\iota$-compatible specialization $\wh\iota\col \Gamma^\E\ra \Gamma^{\E'}$ such that $\wh\iota_*(D)=D'$. When $\Gamma=\Gamma'$, we will omit $\Gamma$ in the notation $(\Gamma,\E,D)\ge (\Gamma',\E',D')$.

Notice that a specialization of triple $(\Gamma,\E,D)\ge (\Gamma',\E',D')$ is determined by the associated specialization 
$\wh \iota\col\Gamma^\E\ra {\Gamma'}^{\E'}$. Indeed, the unique specialization $\iota\col \Gamma\ra \Gamma'$ for which $\wh\iota$ is $\iota$-compatible is the one contracting the edges $e\in E(\Gamma)$ such that all the edges of $\Gamma^\E$ over $e$ are contracted by $\wh\iota$.

\begin{Rem}\label{rem:minimal-pol}
Given a $\mu$-semistable pseudo-divisor $(\E,D)$ on a graph $\Gamma$, there is a unique minimal $\mu$-polystable pseudo-divisor on $\Gamma$, denoted $\pol(\E,D)$, such that $\pol(\E,D)\ge(\E,D)$ (see \cite[Proposition 5.4]{AAPT}).  
\end{Rem}

\subsection{Tropical curves}

An $n$-pointed tropical curve is a pair $X = (\Gamma, \ell)$ where $\Gamma$ is a graph with $n$ legs and $\ell\col E(\Gamma)\ra \mathbb R_{>0}$ is a function (the \emph{length function}). The graph $\Gamma$ is called a \emph{model} of $X$. We will consider tropical curves up to isometries. Given a point $p\in X$, we define $\deg_X(p)=\deg_\Gamma(p)$,  $w_X(p)=w_\Gamma(p)$ and $L_X(p)=L_\Gamma(p)$ if $p\in V(\Gamma)$, otherwise we set $\deg_X(p)=2$,  $w_X(p)=0$ and $L_X(p)=0$. The \emph{genus} of $X$ is $g=\sum_{v\in X} w_X(v)+b_1(\Gamma)$. We say that $X$ is \emph{stable} if, for every $p\in X$ such that $\deg_X(p)\le1$, we have $\deg_X(p)+2w_X(p)+L_X(p)\ge3$.

Let $X$ be a tropical curve. A \emph{divisor} on  $X$ is a function $\mc D\col X\ra \mathbb Z$ such that $\mc D(p)\ne 0$ only for a finite set of points $p\in X$ (called the \emph{support} of $\mc D$ and written $\supp(\mc D)$). We often denote a divisor by $\mc D=\sum_{p\in X}\mc D(p)p$. The \emph{degree} of $\mc D$ is the integer $\sum_{p\in X} \mc D(v)$.  The \emph{canonical divisor} of $X$ is the divisor  $K_X=\sum_{p\in X}(w_X(p)-2+\deg_X(p))p$. The degree of $K_X$ is $2g-2$. We will implicitly consider a fixed model $\Gamma$ of $X$ such that the support of $K_X$ is contained in $V(\Gamma)$.

A divisor $\mc D$ on $X$ is \emph{$\Gamma$-unitary} (or simply \emph{unitary}), if
for every $e\in E(\Gamma)$ we have $\mc D(p)=0$ for each point $p\in e^\circ$, except for at most one point $p_0\in e^\circ$ for which $\mc D(p_0)=-1$. Given a $\Gamma$-unitary divisor $\mc D$ on $X$, the \emph{combinatorial type} of $\mc D$ is the pseudo-divisor $(\E,D)$ on $\Gamma$ such that
\[
\E=\{e\in E(\Gamma) | D(p)=-1, \text{ for some } p\in e^\circ\},
\]
while $D(v)=\mathcal D(v)$, for every $v\in V(\Gamma)$, and $D(v)=-1$, $\forall$ exceptional vertex $v\in V(\Gamma^\E)$. 

One can introduce stability conditions on divisors on a tropical curve (see \cite[Section 5]{AAPT} and \cite[Section 5]{AAPT}). It is easy to see that a quasistable or polystable divisor is always unitary.  In this paper we 
use the equivalent definition that a unitary divisor $\mc D$ on $X$ is \emph{$\mu$-stable}, \emph{$\mu$-semistable}, \emph{$(p_0,\mu)$-quasistable} for a point $p_0\in V(\Gamma)$, \emph{$\mu$-polystable} if so is its combinatorial type $(\E,D)$, respectively (see \cite[Proposition 5.3]{AP1} and \cite[Definition 5.8]{AAPT}).

A \emph{rational function} on $X$ is a piecewise linear continuous function $f\col X\ra \mathbb R$ with integer slopes. Given a point $p\in X$, we let $\ord_p(f)$ be the sum of the slopes of $f$ outgoing $p$. 
A \emph{principal divisor} on $X$ is a divisor $\Div(f)=\sum_{p\in X} \ord_p(f)p$, for some rational function $f$ on $X$. Given divisors $D$ and $D'$ on $X$, we say that $D$ is equivalent to $D'$, written $D\sim D'$, if $D-D'$ is a principal divisor. Given a rational function $f$ on $X$ and a model $\Gamma$ such that the support of $\Div(f)$ is contained in $V(\Gamma)$, we define a flow $\phi_f$ on $\Gamma$ such that $\phi_f(e)$ is the slope of $f$ on $e$, for every $e\in \ora{E}(\Gamma)$.

\begin{Rem}\label{rem:stab-equiv}
By \cite[Theorem 5.6]{AP1} and \cite[Theorem 5.9]{AAPT} given a divisor $\mc D$ on $X$ there is a $(p_0,\mu)$-quasistable divisor $\mc D'$ and a $\mu$-polystable divisor $\mc D''$ on $X$ such that $\mc D$, $\mc D'$, $\mc D''$ are equivalent; the divisor $\mc D'$ is uniquely determined.
\end{Rem}

\section{Tropical moduli spaces}

In this section we recall the tropical moduli space we will deal with. First of all we need to recall the notion of generalized cone complex.

Given a finite set $S\subset \mathbb R^n$, we let
\[
\cone(S):=\Big\{\sum_{s\in S} \lambda_s s | \lambda_s\in \mathbb R_{\ge0}\Big\}.
\]
A \emph{polyhedral cone} is a subset $\sigma\subset \mathbb R^n$ such that $\sigma=\cone(S)$ for some finite set $S\subset \mathbb R^n$. A cone $\sigma=\cone(S)$ is \emph{rational} if $S\subset \mathbb Z^n$.  Every polyhedral cone $\sigma\subset \mathbb R^n$ is the intersection of finitely many closed half spaces of $\mathbb R^n$. We denote by $\dim(\sigma)$ the \emph{dimension} of $\sigma$, that is, the dimension of the minimal linear subspace of $\mathbb R^n$ containing $\sigma$.  
 The relative interior $\sigma^\circ$ of a cone $\sigma$ is the interior of $\sigma$ inside this minimal linear subspace.
A \emph{face} of a cone $\sigma$ is either $\sigma$ or the intersection of $\sigma$ with some linear subspace $H\subset \mathbb R^n$ of codimension
one such that $\sigma$ is contained in one of the two closed half-spaces determined by $H$. A
face of $\sigma$ is also a polyhedral cone. A \emph{facet} is a face of codimension $1$. We will simply use the terminology \emph{cone} to mean \emph{rational polyhedral cone}.

A morphism $f\col \sigma\ra \sigma'$ between cones $\sigma\subset\mathbb R^n$ and $\sigma'\subset \mathbb R^m$ is the restriction to $\sigma$
of an integral linear transformation $T\col \mathbb R^n\ra \mathbb R^m$ such that $T(\sigma)\subset \sigma'$. We say that
$f$ is an isomorphism if there exists an inverse morphism of cones $f^{-1}\col \sigma'\ra \sigma$. 
 A morphism $f\col \sigma\ra \sigma'$ is called a \emph{face morphism} if $f$ is an isomorphism between $\sigma$ and a (not
necessarily proper) face of $\sigma'$.

A \emph{generalized cone complex} $\Sigma$ is the colimit (as a topological space) of a finite diagram of cones, denoted $D$, with face morphisms. A generalized cone complex is written as  $\Sigma=\underset{\longrightarrow}{\lim}\, \sigma_x$, where $x\ra \sigma_x$ is a contravariant functor from a category $\mathbf C$ to the category of rational polyhedral cones. The group $\Aut(x)$ acts on $\sigma_x$ as a subgroup of automorphisms of $\sigma_x$. Notice that the set $\mc P$ of  isomorphism classes of objects of $\mathbf C$ is a poset. 

A \emph{morphism} $\Sigma\ra \Sigma'$ between generalized cone complexes $\Sigma$ and $\Sigma'$ with diagrams of cones $D$ and $D'$ is a continuous map of topological spaces  such that for every cone $\sigma\in D$ there exists a cone $\sigma'\in D'$ such that the induced map $\sigma\ra\Sigma'$  factors through a cone morphism $\sigma\ra \sigma'$.
We say that a morphism of generalized cone complexes $ \Sigma\ra \Sigma'$ is a \emph{refinement} if it is a bijection.

\begin{Def}
Let $\mc M$ be a Deligne-Mumford stack and $\mc P$ be a poset.  A \emph{stratification of $\mc M$ indexed by $\mc P$} is a decomposition $\mc M=\coprod_{x\in \mc P} \mc M_x$ of $\mc M$ such that 
\begin{enumerate}
    \item $\mc M_x$ is an irreducible substack of $\mc M$;
    \item the closure $\ol{\mc M}_x$ of $\mc M_x$ has a decomposition $\ol{\mc M}_x=\coprod_{x'\ge x} \mc M_{x'}$.
\end{enumerate}
 We call $\mc M_x$ a \emph{stratum} of the stratification. 
\end{Def}


\subsection{The moduli space of tropical curves}

Generalized cone complexes are often used to parametrize tropical objects. The prototype example is the moduli space of pointed tropical curve $M_{g,n}^{\trop}$. 
The moduli space $M_{g,n}^{\trop}$, first constructed
in \cite{BMV}, has the structure of a generalized cone complex (see \cite{ACP}).


We let $\mathbf G_{g,n}$ be the category whose objects are stable graphs of genus $g$ with $n$ legs and whose arrows are specialization of graphs. We let  $\mathcal{G}_{g,n}=\mathbf G_{g,n}/\sim$ be the poset whose elements are isomorphism classes of graphs in $\mathbf G_{g,n}$.

Given a graph $\Gamma$ we define the cone $\tau_\Gamma:=\mathbb R^{E(\Gamma)}_{\ge0}$ and its open subcone $\tau_\Gamma^\circ:=\mathbb R^{E(\Gamma)}_{>0}$. Notice that given a specialization of graphs $\Gamma\ra \Gamma'$, then we have a natural face morphism $\tau_{\Gamma'}\ra \tau_\Gamma$
whose image is the face of $\tau_\Gamma$ where the coordinates
 corresponding to $E(\Gamma) \setminus E(\Gamma')$ vanish.
We define the generalized cone complex:
\[
M_{g,n}^{\trop}=\underset{\longrightarrow}{\lim}\;\tau_\Gamma=\coprod_{[\Gamma]}\tau^\circ_\Gamma/\Aut(\Gamma)
\]
where the limit is taken over the category $\mathbf{G}_{g,n}$ and the union is taken over all equivalence classes $[\Gamma]$ in $\mc G_{g,n}$.
The points in $M_{g,n}^{\trop}$ are in bijective correspondence with (isomorphism classes of) tropical curves of genus $g$.

\smallskip

Let $\ol{\mc M}_{g,n}$ be the moduli stack of $n$-pointed stable curves, and $\mc M_{g,n}$ be the locus corresponding to smooth curves. Given an $n$-pointed  stable curve $X$, we let $\Gamma_X$ be the usual dual graph of $X$, whose vertices and edges are the components and the nodes of $X$, respectively. For every graph $\Gamma$ with $n$ legs, we define the locus $\mc M_\Gamma$ in $\ol{\mc M}_{g,n}$ given by:
\[
\mc M_\Gamma:=\{[X]\in \ol{\mc M}_{g,n} \big| \Gamma_X\cong \Gamma\}.
\]
 
    We have a stratification $\ol{\mc M}_{g,n}=\sqcup_{\Gamma\in \mc G_{g,n}} \mc M_\Gamma$ indexed by $\mc G_{g,n}$.
Recall that $\mc M_\Gamma$ is irreducible thanks to the existence of a natural \'etale finite map $\widetilde{\mc M}_\Gamma\ra \mc M_\Gamma$, where  
\begin{equation}\label{eq:MGamma-tilde}
\widetilde{\mc M}_\Gamma=\prod_{v\in V(\Gamma)} \mc M_{w_\Gamma(v),\deg_\Gamma(v)+L_\Gamma(v)}.
\end{equation}

\subsection{Universal tropical Jacobians}

A \emph{universal genus-$g$ polarization $\mu$ (of degree $d$)} is the datum of a polarization $\mu_{\Gamma}$ (of degree $d$) for every stable genus-$g$ graph with $n$ legs such that $\iota_*(\mu_{\Gamma})=\mu_{\Gamma'}$, for every specialization of graphs $\iota\col\Gamma\ra \Gamma'$. The \emph{universal genus-$g$ canonical polarization of degree $d$} takes every graph $\Gamma$ and a vertex $v\in V(\Gamma)$ to:  
\[
\mu_\Gamma(v):=\frac{d(2w_\Gamma(v)-2+\deg_\Gamma(v))}{2g-2}=\frac{d}{2g-2}K_\Gamma(v).
\]

Let $\mu$ be a universal genus-$g$ polarization.
We denote by $\mathbf{QD}_{\mu,g,n}$ the category whose objects are triples $(\Gamma,\E,D)$, where $\Gamma$ is a stable graph of genus $g$ with $n$ legs, and $(\E,D)$ is a $(v_0,\mu)$-quasistable pseudo-divisor on $\Gamma$ (the first leg is incident to $v_0$), and whose arrows are given by specialization of triples. We let $\mathcal{QD}_{\mu,g,n}=\mathbf{QD}_{\mu,g,n}/\sim$ be the poset of the isomorphism classes of triples $(\Gamma,\E,D)$.

Given a graph 
 $\Gamma$ and a $v_0$-quasistable  
  pseudo-divisor $(\E,D)$ on $\Gamma$, we define
\[
\tau_{(\Gamma,\E,D)}:=\R^{E(\Gamma^\E)}_{\geq0}\quad\text{and}\quad\tau^\circ_{(\Gamma,\E,D)}:=\R^{E(\Gamma^\E)}_{>0}.
\]
Notice that, given a specialization $i\col(\Gamma,\E,D)\ge(\Gamma',\E',D')$, we have a natural face morphism  $\tau_{(\Gamma',\E',D')}\ra\tau_{(\Gamma,\E,D)}$.

\begin{Def}
Let $\mu$ be a universal genus-$g$  polarization. The \emph{universal tropical Jacobian of $\mu$-quasistable divisors} is the generalized cone complex:
\[
J^\trop_{\mu,g,n}:=\underset{\longrightarrow}{\lim}\;\tau_{(\Gamma,\E,D)}=\coprod_{[\Gamma,\E,D]}\tau^\circ_{(\Gamma,\E,D)}/\Aut(\Gamma,\E,D),
\]
where the limit is taken over the category $\mathbf{QD}_{\mu,g,n}$ and the union is taken over all equivalence classes $[\Gamma,\E,S]$ in $\qs_{\mu,g,n}$ (see  \cite[Definition 5.11]{AP1}).
\end{Def}

By \cite[Proposition 5.12]{AP1} the cone 
complex $J^\trop_{\mu,g,n}$ parametrizes equivalence classes $(X, \mc D)$, where X is a stable $n$-pointed tropical curve of genus $g$ and $\mc D$ is a 
$(p_0,\mu)$-quasistable divisor on $X$. 
By \cite[Theorem 5.14]{AP1} there is a morphism of generalized cone complexes
\[
\pi^\trop_J\col J_{\mu,g,n}^\trop\ra M_{g,n}^\trop
\]
taking a class $(X,\D)$ to the tropical curve $X$. Indeed, we have the linear map: 
\begin{equation}\label{eq:sE}
\mc S_\E\col \mathbb R^{E(\Gamma^\E)}\ra\mathbb R^{E(\Gamma)}
\end{equation}
taking $(x_e)_{e\in E(\Gamma^\E)}$ to $(y_e)_{e\in E(\Gamma)}$, where $y_e=x_{e_1}+x_{e_2}$ if $e_1,e_2\in E(\Gamma^\E)$ are over $e$, and $y_e=x_e$, otherwise. This map induces a morphism of cones $\tau_{(\Gamma,\E,D)}\ra\tau_\Gamma$  factoring the natural composition
\[
\tau_{(\Gamma,\E,D)}\ra J_{\mu,g,n}^\trop\stackrel{\pi^\trop_J}{\ra} M_{g,n}^\trop.
\]

Consider a graph $\Gamma$ and a subset $\E\subset E(\Gamma)$. For each subset $V \subset V (\Gamma)$ such that
$E(V, V^c)\subset \E$, we define $\alpha_V\col E(\Gamma^\E) \ra \{0, \pm1\}$ as $\alpha_V (e) = 1$ (respectively, $-1$), if $e$ is
incident to $V$ (respectively, to $V^c$) and lies over an edge in $E(V, V^c)$. Otherwise, we
define $\alpha_V (e) = 0$. Denote by $u_e$ the vectors of the canonical basis of $\R^{E(\Gamma^\E)}$, for $e\in E(\Gamma^\E)$. We define the vector in $\R^{E(\Gamma^\E)}$:
\begin{equation}\label{eq:wV}
u_V =\sum_{e\in E(\Gamma^\E )}
\alpha_V (e)u_e.
\end{equation}
 We let $\mathcal{L}_\E\subset \R^{E(\Gamma^\E)}$
be the subspace generated by all vectors $u_V$, as $V$ runs through all subsets of $V (\Gamma)$
such that $E(V,V^c) \subset \E$. Notice that $\dim \mc L_\E=b_0(\Gamma_\E)-1$.  Of course, if $(\E,D)$ is simple, then $\mathcal{L}_\E=0$.  We also denote by
\begin{equation}\label{eq:proy}
   \mathcal{T}_{\E}\col\R^{E(\Gamma^\E)}\ra\R^{E(\Gamma^\E)}/\mathcal{L}_\E 
\end{equation}
the quotient linear map.  
For a pseudo-divisor $(\E,D)$ on $\Gamma$, we define the cone:
$$\sigma_{(\Gamma,\E,D)}=\mathcal{T}_\E(\mathbb R^{E(\Gamma^\E)}_{\ge0})\subset\R^{E(\Gamma^\E)}/\mathcal{L}_\E.$$

\begin{Rem}\label{rem:inclu}
If  $(\Gamma,\E,D)\geq(\Gamma',\E',D')$ is a specialization, then $\mathcal{L}_{\E'}=\mathcal{L}_{\E}\cap\R^{E(\Gamma^{\E'})}$, and hence we have a natural inclusion $\sigma_{(\Gamma',\E',D')}\subset\sigma_{(\Gamma,\E,D)}$ which is a face morphism. By \cite[Proposition 6.10]{AAPT}, every face of $\sigma_{(\Gamma,\E,D)}$ arises in this way.
\end{Rem}

If $\mu$ is a genus-$g$ universal polarization, we denote by $\PS_{\mu,g,n}$ the category
whose objects are triples $(\Gamma, \E, D)$ where $\Gamma$ is a genus-$g$ stable graph with $n$ legs and $(\E, D)$ is a $\mu$-polystable pseudo-divisor on $\Gamma$, and morphisms are given by specializations of triples. As usual, we let $\ps_{\mu,g,n}$ be the poset $\PS_{\mu,g,n}/ \sim$, where $\sim$ means isomorphism.

\begin{Def}
Let $\mu$ be a genus-$g$ universal polarization. The \emph{universal tropical Jacobian of $\mu$-polystable divisors} is the generalized cone complex:
\[
P^\trop_{\mu,g,n} = \underset{\longrightarrow}{\lim}\;\sigma_{(\Gamma,\E,D)} =\coprod_{[\Gamma,\E,D]}\sigma^0_{(\Gamma,\E,D)}/\Aut(\Gamma, \E, D),
\]
where the limit is taken over the
category $\PS_{\mu,g,n}$ and the union is taken over all equivalence classes $[\Gamma, \E, D]$ in
$\ps_{\mu,g,n}$.
\end{Def}

\begin{Prop}\label{prop:modularP}
The generalized cone complex $P^\trop_{\mu,g,n}$ parametrizes equivalence classes of pairs $(X, \D)$, where $X$ is a stable $n$-pointed tropical curve of genus $g$ and $\D$ is a $\mu$-polystable
divisor on $X$, and two pairs $(X,\D)$ and $(X', \D')$ are equivalent if there is an isomorphism $i\col X\ra X'$ such that $i_*(\D)$ is linearly equivalent to $\D'$.
\end{Prop}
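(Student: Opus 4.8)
The plan is to reduce the statement to the analogous modular description of $J^\trop_{\mu,g,n}$ (from \cite[Proposition 5.12]{AP1}) together with Remark \ref{rem:minimal-pol}. First I would set up the correspondence between points of the cone complex and pairs on tropical curves. A point of $P^\trop_{\mu,g,n}$ lies in the relative interior $\sigma^\circ_{(\Gamma,\E,D)}/\Aut(\Gamma,\E,D)$ for a unique isomorphism class $[\Gamma,\E,D]$ in $\ps_{\mu,g,n}$. Such a point is represented by a point of $\sigma^\circ_{(\Gamma,\E,D)}=\mathcal T_\E(\mathbb R^{E(\Gamma^\E)}_{\ge0})^\circ$, i.e. by the image under $\mathcal T_\E$ of a point $\ell\in\mathbb R^{E(\Gamma^\E)}_{>0}$. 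The tuple $(\Gamma^\E,\ell)$ determines a tropical curve $X$ (after forgetting the length data this has model $\Gamma$ with a chosen refinement), and $D$ extended by $-1$ on exceptional vertices, viewed as a divisor on $X$, is $\mu$-polystable by definition of $\PS_{\mu,g,n}$; here one uses that polystability of $(\E,D)$ on $\Gamma$ is equivalent to polystability of the corresponding unitary divisor on $X$, as recorded in the discussion preceding Remark \ref{rem:stab-equiv}. This defines the assignment from points of $P^\trop_{\mu,g,n}$ to pairs $(X,\D)$ with $\D$ $\mu$-polystable.

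Next I would check this assignment is well-defined on equivalence classes and a bijection. The ambiguity in choosing $\ell$ is precisely the fiber of $\mathcal T_\E$, i.e. translation by $\mathcal L_\E$; I would show that adding a vector $u_V\in\mathcal L_\E$ (with $E(V,V^c)\subset\E$) to $\ell$ changes $(X,\D)$ by a tropical linear equivalence. Concretely, the rational function obtained by assigning slope $\alpha_V(e)$ along each edge—this is exactly $\phi_f$ for the characteristic-type function supported on the $\E$-edges separating $V$ from $V^c$—realizes the divisor move $\Div(f)$ that is absorbed by the length change $\ell\mapsto\ell+u_V$, so the pair $(X,\D)$ changes only by an isomorphism of tropical curves composed with a linear equivalence of divisors. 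Quotienting additionally by $\Aut(\Gamma,\E,D)$ accounts exactly for isomorphisms $i\colon X\to X'$. Conversely, given a pair $(X,\D)$ with $\D$ $\mu$-polystable, its combinatorial type $(\E,D)$ on a suitable model $\Gamma$ is a $\mu$-polystable pseudo-divisor and the length function gives a point of $\sigma^\circ_{(\Gamma,\E,D)}$; choosing a different model (a refinement) or a different representative in the linear equivalence class lands in the same point of the colimit by Remark \ref{rem:inclu} and the linear-equivalence computation above. Surjectivity onto equivalence classes is then immediate, and injectivity follows because the pair $(X,\D)$ up to equivalence determines $[\Gamma,\E,D]$ (the combinatorial type of the polystable representative is an invariant) and the point of $\sigma^\circ_{(\Gamma,\E,D)}/\Aut(\Gamma,\E,D)$.

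The main obstacle I anticipate is the linear-equivalence bookkeeping in the middle step: one must verify precisely that the lattice $\mathcal L_\E$—spanned by the $u_V$ for $V$ with $E(V,V^c)\subset\E$—matches the sublattice of length deformations that move a $\mu$-polystable divisor within its linear equivalence class while keeping it $\mu$-polystable of the same combinatorial type, and that no other linear equivalences between $\mu$-polystable divisors of the same type occur (this is where $\dim\mathcal L_\E=b_0(\Gamma_\E)-1$ enters, matching the rank of the relevant space of rational functions with integer slopes supported on $\E$ that preserve the divisor). This is essentially a translation, via the map $\mathcal S_\E$ and the contraction $\Gamma_\E$, of the uniqueness statements in Remark \ref{rem:stab-equiv} and \cite[Theorem 5.9]{AAPT}; once that identification is in place, the rest is the formal colimit argument used for $J^\trop_{\mu,g,n}$.
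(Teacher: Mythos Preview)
Your proposal is correct and essentially reconstructs the argument the paper defers to: the paper's proof is a one-line citation to \cite[Proposition~6.14]{AAPT}, and what you have sketched---length data on $\Gamma^\E$ modulo $\mathcal L_\E$ giving the pair $(X,\D)$, identification of $\mathcal L_\E$-translates with linear equivalence of the associated polystable divisors, and the $\Aut$-quotient handling isomorphisms---is precisely the content of that result. Your flagged ``main obstacle,'' that $\mathcal L_\E$ exactly matches the linear equivalences among $\mu$-polystable divisors of fixed combinatorial type, is indeed the substantive step, and it is handled in \cite{AAPT} rather than in the present paper.
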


\begin{proof}
The argument is the same as in \cite[Proposition 6.14]{AAPT}.
\end{proof}

By Proposition \ref{prop:modularP}, we can define a natural forgetful morphism of cone complexes 
\[
\pi^\trop_P\col P_{\mu,g,n}^\trop\ra M_{g,n}^\trop
\]
taking the class of a pair $(X,\D)$ to the class of the tropical curve $X$. Indeed, the map of cones  
$\tau_{(\Gamma,\E,D)}\ra \tau_\Gamma$ induced by equation \eqref{eq:sE} factors through a map of cones $\sigma_{(\Gamma,\E,D)}\ra \tau_\Gamma$ and the last map factors the natural composition:
\[
\sigma_{(\Gamma,\E,D)}\ra P_{\mu,g,n}^\trop\stackrel{\pi^\trop_P}{\ra} M_{g,n}^\trop.
\]

We can also define the map
\[
\rho^\trop_P\col J^\trop_{\mu,g,n}\ra P^\trop_{\mu,g,n}
\]
taking the point parametrizing the class of a pair $(X,\D)$ to the point parametrizing the class of the pair $(X,\D')$, where $\D'$ is a polystable divisor linearly equivalent to $\D$ (the existence follows by \cite[Theorem 5.9]{AAPT}).

The map  $\rho^\trop_P$ is not only a morphism of generalized cone complexes: it is a refinement. Before giving the precise statement, we need an easy lemma.

\begin{Lem}\label{lem:polyspec}
Let $(\Gamma,\E,D)\ge(\Gamma',\E',D')$ be a specialization of triples. Let $\iota\col \Gamma\ra \Gamma'$ be the induced specialization of graphs. If $(\E,D)$ is $\mu$-polystable on $\Gamma$ and $(\E',D')$ is $(v_0,\iota_*(\mu))$-quasistable on  $\Gamma'$, then there is a specialization of triples $(\Gamma,\E,D)\ge(\Gamma',\pol(\E',D'))$ factoring $(\Gamma,\E,D)\ge (\Gamma',\E',D')$.
\end{Lem}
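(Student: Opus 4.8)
The plan is to analyze the specialization $\wh\iota\col \Gamma^\E\ra {\Gamma'}^{\E'}$ inducing $(\Gamma,\E,D)\ge(\Gamma',\E',D')$ and to replace the target pseudo-divisor $(\E',D')$ by its polystabilization $\pol(\E',D')$, checking that the same underlying specialization of graphs still produces a valid specialization of triples. First I would recall from Remark \ref{rem:minimal-pol} that $\pol(\E',D')$ makes sense: since $(\E',D')$ is $(v_0,\iota_*(\mu))$-quasistable, it is in particular $\iota_*(\mu)$-semistable, so the unique minimal $\iota_*(\mu)$-polystable pseudo-divisor $\pol(\E',D')\ge(\E',D')$ on $\Gamma'$ exists. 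Concretely $\pol(\E',D')=(\E'',D'')$ with $\E''\supset\E'$, obtained by moving some chips from the vertices of $\Gamma'$ onto newly-inserted exceptional vertices along edges that are "borderline" for semistability, i.e.\ along edges $e\in E(W,W^c)$ with $\beta_{D'}(W)=0$ for the relevant $W$; the specialization ${\Gamma'}^{\E''}\ge{\Gamma'}^{\E'}$ is the one contracting all exceptional edges over $\E''\setminus\E'$.

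The key step is to produce a specialization $(\Gamma,\E,D)\ge(\Gamma',\E'',D'')$. Since $(\E,D)$ is $\mu$-polystable on $\Gamma$, it is itself one of these "maximally subdivided" pseudo-divisors; the idea is that the chips of $D$ are already distributed on exceptional vertices in a way at least as fine as that of $\pol(\E',D')$. More precisely, I would argue as follows. The composite specialization $(\Gamma,\E,D)\ge(\Gamma',\E',D')$ is determined, by the remark following the definition of specialization of triples, by $\wh\iota\col\Gamma^\E\ra{\Gamma'}^{\E'}$. I want to lift this to $\Gamma^\E\ra{\Gamma'}^{\E''}$. For each exceptional edge of ${\Gamma'}^{\E''}$ lying over some $e'\in\E''\setminus\E'$, the edge $e'$ of $\Gamma'$ pulls back under $\iota$ to an edge $e$ of $\Gamma$, and I claim $e\in\E$, with the unique exceptional vertex of $\Gamma^\E$ over $e$ being a vertex where $D$ takes value $-1$. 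This is where $\mu$-polystability of $(\E,D)$ on $\Gamma$ is used together with Proposition \ref{prop:stab-contract}: if $e'\in\E''\setminus\E'$ arises because $\beta_{D'}(W')=0$ for $W'\subset V(\Gamma')$ with $e'\in E(W',W'^c)$, then setting $W=\iota^{-1}(W')$ we get $\beta_D(W)=\beta_{D'}(W')=0$ by Proposition \ref{prop:stab-contract}, forcing (by $\mu$-polystability, which demands strict inequality whenever $E(W,W^c)\not\subset\E$) that $E(W,W^c)\subset\E$, in particular $e\in\E$. One then checks $D''$ and the corresponding pushforward of $D$ agree on this new exceptional vertex (both equal $-1$) and on all old vertices, so that the induced specialization $\wh\iota''\col\Gamma^\E\ra{\Gamma'}^{\E''}$ satisfies $\wh\iota''_*(D)=D''$ and is compatible with the same $\iota\col\Gamma\ra\Gamma'$. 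This yields $(\Gamma,\E,D)\ge(\Gamma',\pol(\E',D'))$, and by construction it factors through $(\Gamma',\E'',D'')\ge(\Gamma',\E',D')=(\Gamma',\E',D')$, i.e.\ through the given specialization $(\Gamma,\E,D)\ge(\Gamma',\E',D')$, as desired.

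The main obstacle I anticipate is the bookkeeping in the claim that every exceptional edge created in passing from $(\E',D')$ to $\pol(\E',D')$ already "exists" in $(\Gamma,\E,D)$ with the correct chip value, i.e.\ carefully matching the combinatorial recipe for $\pol$ (from \cite[Proposition 5.4]{AAPT}) against the structure of a $\mu$-polystable pseudo-divisor and verifying the compatibility of the two commuting squares in the definition of a $\iota$-compatible specialization for $\wh\iota''$. I expect this to reduce, after unwinding definitions, to the $\beta$-equality of Proposition \ref{prop:stab-contract} applied to the distinguished subsets $W'$ witnessing non-strictness of semistability for $(\E',D')$, plus the observation that polystability of $(\E,D)$ upgrades $\beta_D(W)=0$ to $E(W,W^c)\subset\E$; once that is in place the remaining verifications are routine.
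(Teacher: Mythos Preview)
Your approach is in the right spirit but takes a noticeably harder road than the paper, and the execution has gaps. The paper's proof is a three-line argument that avoids all of your bookkeeping: first factor the given specialization as
\[
(\Gamma,\E,D)\ \ge\ (\Gamma',\iota_*(\E,D))\ \ge\ (\Gamma',\E',D'),
\]
then observe that $\iota_*(\E,D)$ is already $\iota_*(\mu)$-polystable on $\Gamma'$ (this is \cite[Lemma~5.2]{AAPT}), and finally invoke the \emph{minimality} of $\pol(\E',D')$ among polystable pseudo-divisors specializing to $(\E',D')$ (Remark~\ref{rem:minimal-pol}) to obtain $\iota_*(\E,D)\ge\pol(\E',D')$. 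Composing gives the desired factorization. No edge-by-edge analysis of $\pol$ is needed.

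Your direct approach tries to match each new exceptional edge produced by $\pol(\E',D')$ with an edge already in $\E$, but as written it conflates two distinct $\beta$-functions: the one controlling polystability is $\beta_{\E,D}$ on $V(\Gamma)$ (with polarization $\mu_\E$ on $\Gamma_\E$), whereas Proposition~\ref{prop:stab-contract} concerns $\beta_D$ on $V(\Gamma^\E)$, and the two differ by $\sum_{v\in W}\deg_\E(v)$. Your application of Proposition~\ref{prop:stab-contract} also mixes $\iota^{-1}(W')\subset V(\Gamma)$ with $\wh\iota^{-1}(W')\subset V(\Gamma^\E)$, which are not the same set in general. Finally, the construction of $\pol$ is iterative (see the chain in the proof of Proposition~\ref{prop:minimal-root}), so a single subset $W'$ with $\beta_{\E',D'}(W')=0$ does not in general account for every edge of $\E''\setminus\E'$; you would need an inductive argument along the chain. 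All of this is avoidable: routing through the pushforward $\iota_*(\E,D)$ and the universal property of $\pol$ is both shorter and conceptually cleaner.
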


\begin{proof}
We have specializations $(\Gamma,\E,D)\ge (\Gamma',\iota_*(\E,D))\ra (\Gamma',\E',D')$. By \cite[Lemma 5.2]{AAPT} the pseudo-divisor $\iota_*(\E,D)$ is $\iota_*(\mu)$-polystable on $\Gamma'$. By the minimal property of
$\pol(\E',D')$ in Remark \ref{rem:minimal-pol}, we have a specialization $\iota_*(\E,D)\ge\pol(\E',D')$, concluding the proof.
\end{proof}

\begin{Thm}\label{thm:refiJ}
The map $\rho^\trop_P\col J^\trop_{\mu,g,n}\ra P^\trop_{\mu,g,n}$ is a refinement of generalized cone complexes. For every $\mu$-polystable pseudo-divisor $(\E,D)$ on a stable graph $\Gamma$ with $n$ legs, we have decompositions
\[
\sigma^\circ_{(\Gamma,\E,D)}=\underset{(\E,D)=\pol(\E',D')}{\coprod_{(\Gamma,\E',D')\in\mathbf{QD}_{\mu,g,n}}} \tau^\circ_{(\Gamma,\E',D')}
\quad\text{ and }
\quad
\sigma_{(\Gamma,\E,D)}=\underset{(\Gamma,\E,D)\ge(\Gamma',\E',D')}{\coprod_{(\Gamma',\E',D')\in\mathbf{QD}_{\mu,g,n}}} \tau^\circ_{(\Gamma',\E',D')}.
\]
\end{Thm}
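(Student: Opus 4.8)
The plan is to reduce the statement to a purely combinatorial bijection between the cone structures on the two sides, exploiting the fact that both $J^\trop_{\mu,g,n}$ and $P^\trop_{\mu,g,n}$ parametrize the same set of equivalence classes of pairs $(X,\mc D)$ — on the $J$-side via the unique $(p_0,\mu)$-quasistable representative (Remark \ref{rem:stab-equiv}), on the $P$-side via $\mu$-polystable representatives up to linear equivalence (Proposition \ref{prop:modularP}). First I would check that $\rho^\trop_P$ is a bijection on points: given a class $(X,\mc D)$, the quasistable representative is unique by \cite[Theorem 5.9]{AAPT}/Remark \ref{rem:stab-equiv}, and the polystable class is well-defined, so the map is a set-theoretic bijection; since it is already a morphism of generalized cone complexes by construction, being a bijection is exactly the definition of a refinement. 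The content of the theorem is therefore the \emph{explicit description} of the induced decomposition of each cone $\sigma_{(\Gamma,\E,D)}$.

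Next I would establish the first decomposition, of the open cone $\sigma^\circ_{(\Gamma,\E,D)}$. A point of $\sigma^\circ_{(\Gamma,\E,D)}$ is the image under $\mc T_\E$ of a point in the interior $\R^{E(\Gamma^\E)}_{>0}$ — equivalently, it is a tropical curve $X$ with model $\Gamma^\E$ (all edge lengths positive) together with the $\mu$-polystable divisor $\mc D$ of combinatorial type $(\E,D)$. Its unique quasistable representative $\mc D'$ has some combinatorial type $(\E',D')$ with $\Gamma^{\E'}$ again a refinement of the same underlying $\Gamma$; by Remark \ref{rem:minimal-pol} and the minimality characterization of $\pol$, the polystable divisor linearly equivalent to $\mc D'$ has combinatorial type $\pol(\E',D')$, which must be $(\E,D)$ since $\mc D$ was polystable to begin with. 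Conversely, for each $(\Gamma,\E',D')\in\mathbf{QD}_{\mu,g,n}$ with $\pol(\E',D')=(\E,D)$, the cone $\tau^\circ_{(\Gamma,\E',D')}$ maps into $\sigma^\circ_{(\Gamma,\E,D)}$; disjointness across distinct $(\E',D')$ follows because the quasistable representative is unique. This gives the first equality. For the second, I would use Remark \ref{rem:inclu}: $\sigma_{(\Gamma,\E,D)}$ is the disjoint union of the relative interiors of its faces, and every face is $\sigma_{(\Gamma',\E',D')}$ for a specialization $(\Gamma,\E,D)\ge(\Gamma',\E',D')$; applying the already-proved open decomposition to each such face (noting, via Lemma \ref{lem:polyspec}, that the quasistable triples appearing over a face are precisely those $(\Gamma'',\E'',D'')$ with $(\Gamma,\E,D)\ge(\Gamma'',\E'',D'')$ and whose $\pol$ is the corresponding polystable face) yields the full decomposition.

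The main obstacle I expect is the bookkeeping in the second decomposition: one must verify that each quasistable $\tau^\circ_{(\Gamma',\E',D')}$ with $(\Gamma,\E,D)\ge(\Gamma',\E',D')$ lands inside $\sigma_{(\Gamma,\E,D)}$ \emph{and} appears in the open decomposition of exactly one polystable face, with no overlaps and no omissions — i.e. that the passage to $\pol$ interacts correctly with specialization. This is exactly what Lemma \ref{lem:polyspec} is designed to handle: if $(\E',D')$ is quasistable and $(\Gamma,\E,D)\ge(\Gamma',\E',D')$ with $(\E,D)$ polystable, then $(\Gamma,\E,D)\ge(\Gamma',\pol(\E',D'))$ as well, so $\pol(\E',D')$ is a polystable face of $(\Gamma,\E,D)$ and $\tau^\circ_{(\Gamma',\E',D')}$ sits in its open cell by the first part. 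Uniqueness of the polystable face containing a given quasistable cell again comes from uniqueness of the quasistable representative of a divisor class. Once these compatibilities are in place, the two displayed equalities follow formally, and the fact that $\rho^\trop_P$ is a refinement is then immediate since it restricts to an isomorphism on each $\tau^\circ_{(\Gamma',\E',D')}$ onto its image and the images partition $P^\trop_{\mu,g,n}$.
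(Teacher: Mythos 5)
Your proposal is correct and follows essentially the same route as the paper's proof: bijectivity from the uniqueness of the $(p_0,\mu)$-quasistable representative (Remark \ref{rem:stab-equiv}), the open decomposition from the fact that the polystable divisor equivalent to a quasistable one has combinatorial type $\pol(\E',D')$ together with the cell inclusion $\tau^\circ_{(\Gamma,\E',D')}=\sigma^\circ_{(\Gamma,\E',D')}\subset\sigma^\circ_{(\Gamma,\pol(\E',D'))}$ (for which the paper cites \cite{AAPT}), and the closed decomposition by splitting $\sigma_{(\Gamma,\E,D)}$ into the relative interiors of its polystable faces and matching index sets via Lemma \ref{lem:polyspec}. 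The only gloss is that the morphism-of-generalized-cone-complexes property is not literally ``by construction'' but is verified, as in the paper, precisely by the cone inclusions $\tau_{(\Gamma,\E',D')}=\sigma_{(\Gamma,\E',D')}\hookrightarrow\sigma_{(\Gamma,\pol(\E',D'))}$ that you already invoke for the first decomposition.
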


\begin{proof}
First of all, if $\D$ is a $\mu$-polystable divisor on a tropical curve $X$ equivalent to a $(p_0,\mu)$-quasistable divisor $\D'$ on $X$, then the combinatorial type of $\D$ is equal to $\pol(\E',D')$, where $(\E',D')$ is the combinatorial type of $\D'$ (see the proof of \cite[Theorem 5.9 (1)]{AAPT}).  Thus we can describe locally the map $\rho^\trop_P$ as follows. 
Let $(\E',D')$ be a $(v_0,\mu)$-quasistable pseudo-divisor on a stable graph $\Gamma$ with $n$ legs, and set $(\E,D)=\pol(\E',D')$. 
Since  $(\E',D')$ is simple, we have that $\tau_{(\Gamma,\E',D')}=\sigma_{(\Gamma,\E',D')}$. Since $(\E,D)\geq (\E',D')$, by Remark \ref{rem:inclu} we have an inclusion $\tau_{(\Gamma,\E',D')}=\sigma_{(\Gamma,\E',D')}\hookrightarrow\sigma_{(\Gamma,\E,D)}$ which factors the composition
\[
\tau_{(\Gamma,\E',D')}=\sigma_{(\Gamma,\E',D')}\ra J_{\mu,g,n}^\trop\stackrel{\rho^\trop_P}{\ra} P_{\mu,g,n}^\trop.
\]
Then $\rho^\trop_P$ is a morphism of generalized cone complexes. The morphism $\rho^\trop_P$ is also bijective by Remark \ref{rem:stab-equiv},
hence it is a refinement of generalized cone complexes.

Denote $\mathbf{PSD}:=\mathbf{PSD}_{\mu,g,n}$ and $\mathbf{QD}:=\mathbf{QD}_{\mu,g,n}$.
 Given $(\Gamma,\E',D')\in \mathbf{QD}$, by \cite[Proposition 6.9]{AAPT} we have $\tau^\circ_{(\Gamma,\E',D')}=\sigma^\circ_{(\Gamma,\E',D')}\subset \sigma^\circ_{(\Gamma,\pol(\E',D'))}$. Thus, the decomposition of $\sigma^\circ_{(\Gamma,\E,D)}$ is as stated.
 Consider $(\Gamma,\E,D)\in \mathbf{PSD}$. By the definition of $P_{\mu,g,n}^\trop$ and the decomposition just proved, we have:
\begin{align*}
\sigma_{(\Gamma,\E,D)}& =\underset{(\Gamma,\E,D)\ge(\Gamma',\E'',D'')}{\coprod_{(\Gamma',\E'',D'')\in \mathbf{PSD}}} \sigma^\circ_{(\Gamma',\E'',D'')}\\ &=\underset{(\Gamma,\E,D)\ge(\Gamma',\E'',D'')}{\coprod_{(\Gamma',\E'',D'')\in \mathbf{PSD}}}\;\; \underset{(\E'',D'')=
 \pol(\E',D')}{\coprod_{(\Gamma',\E',D')\in \mathbf{QD}}} \tau^\circ_{(\Gamma',\E',D')}.
\end{align*}
By Lemma \ref{lem:polyspec} and by the existence of a specialization $\pol(\E',D')\ge(\E',D')$, the decomposition of $\sigma_{(\Gamma,\E,D)}$ is as stated.
\end{proof}

\section{Root-graphs and their categories}

In this section we introduce the notion of pseudo-root and root-graph, and study  stability conditions on these objects. We will define the category of root-graphs, which will be important in defining later some tropical moduli spaces. 

\subsection{Pseudo-roots on graphs}

Let $g$ be a nonnegative integer.
From now on, $\mu$ will be the canonical polarization of degree $g-1$, defined over a graph $\Gamma$ as the polarization taking $v\in V(\Gamma)$ to
\[
    \mu(v)=\frac{2w(v)-2+\deg(v)}{2}.
\]
 All the stability conditions will be taken with respect to $\mu$.

\begin{Def}
Let $\Gamma$ be a graph. A \emph{pseudo-root} on $\Gamma$ is a pseudo-divisor $(\E,D)$ on $\Gamma$ such that $2D+\Div(\phi)=K_{\Gamma^\E}$, for some acyclic flow $\phi$ on $\Gamma^\E$. 
 A pseudo-root $(\E,D)$ is, respectively, \emph{stable}, \emph{semistable},  \emph{polystable}, \emph{$v_0$-quasistable} for some vertex $v_0$ of $\Gamma$, if so is the pseudo-divisor $(\E,D)$. (Notice that a pseudo-root has degree $g-1$, where $g$ is the genus of its underlying graph.)
\end{Def}

 We will see in Proposition \ref{thm:uniquephi} that $\phi$ is uniquely determined if $(\E,D)$ is semistable. 
Given a pseudo root $(\E,D)$ on a graph $\Gamma$ with associated acyclic flow $\phi$, we define
\begin{equation}\label{eq:PED}
P_{(\E,D)}=\{e\in E(\Gamma^\E)| \phi(e)=0\}.
\end{equation}

\begin{Rem}\label{rem:iota-pseudo-root}
Let $(\E,D)$ be a pseudo-root on a graph $\Gamma$ and $\phi$ be an acyclic flow on $\Gamma^\E$ such that $2D+\Div(\phi)=K_{\Gamma^\E}$. 
Consider the specialization $\iota\col \Gamma^\E\ra \Gamma^\E/P_{(\E,D)}$. Since $\iota_*$ is linear on divisors, we have  $2\iota_*(D)+\Div(\iota_*(\phi))=K_{\Gamma^\E/P_{(\E,D)}}$. Notice that $\iota_*(\phi)$ is acyclic by definition. Then $(\emptyset,\iota_*(D))$ is a pseudo-root on $\Gamma^\E/P_{(\E,D)}$. 
\end{Rem}

\begin{Def}
A \emph{root-graph} is a triple  $(\Gamma,\E,D)$, where $\Gamma$ is a graph and $(\E,D)$ is a pseudo-root on $\Gamma$. A root-graph is \emph{stable}, \emph{semistable},  \emph{polystable}, \emph{$v_0$-quasistable} for some vertex $v_0$ of $\Gamma$, if so is its underlying pseudo-root.
\end{Def}

\begin{Lem}\label{lem:stable}
Let $(\Gamma,\E,D)$ be a root-graph. Let $\phi$ be an acyclic flow on $\Gamma^\E$ such that $2D+\Div(\phi)=K_{\Gamma^\E}$. The pseudo-divisor $(\E,D)$ is stable if and only if $\phi=0$.
\end{Lem}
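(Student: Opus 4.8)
The statement claims that for a root-graph $(\Gamma,\E,D)$ with witnessing acyclic flow $\phi$, the pseudo-divisor $(\E,D)$ is stable if and only if $\phi=0$. First I would set $\Gamma'=\Gamma^\E$ and recall that stability of $(\E,D)$ means $\beta_D(W)>0$ for every proper nonempty $W\subsetneq V(\Gamma')$ (with respect to $\mu^\E$, which vanishes on exceptional vertices), where $\beta_D(W)=\deg D|_W-\mu^\E(W)+\tfrac{1}{2}\delta_{\Gamma',W}$. The natural thing is to rewrite $\beta_D(W)$ using the defining relation $2D+\Div(\phi)=K_{\Gamma'}$. Since $\mu^\E(W)=\tfrac{1}{2}K_{\Gamma'}(W)-$ (a correction coming from exceptional vertices where $\mu^\E=0$ but $K_{\Gamma'}=0$ as well, so actually $\mu^\E(W)=\tfrac12 K_{\Gamma'}(W)$ on the nose because exceptional vertices have degree $2$ and weight $0$, giving $K_{\Gamma'}(v)=0=2\mu^\E(v)$), one gets $\deg D|_W=\tfrac12 K_{\Gamma'}(W)-\tfrac12\deg(\Div(\phi)|_W)$. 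Substituting, the $K_{\Gamma'}$ terms cancel and I expect
\[
\beta_D(W)=\frac{1}{2}\left(\delta_{\Gamma',W}-\sum_{v\in W}\Div(\phi)(v)\right)=\frac{1}{2}\left(\delta_{\Gamma',W}-\sum_{e\in\ora{E}(\Gamma'),\,t(e)\in W}\phi(e)\right).
\]

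For the edge-sum, split oriented edges by whether their source is in $W$ or $W^c$: edges internal to $W$ contribute $\phi(e_1)+\phi(e_2)=0$, so only the edges in $E(W,W^c)$ survive. For each such edge, orienting it into $W$ contributes $\phi(e)$ where $e$ is that orientation, so $\sum_{t(e)\in W}\phi(e)=\sum_{e\in E(W,W^c)}\phi(e_W)$ where $e_W$ denotes the orientation pointing into $W$. Hence
\[
\beta_D(W)=\frac{1}{2}\sum_{e\in E(W,W^c)}\bigl(1-\phi(e_W)\bigr).
\]
Now if $\phi=0$, then $\beta_D(W)=\tfrac12\delta_{\Gamma',W}>0$ for every $W$ with $E(W,W^c)\ne\emptyset$; and for a connected graph every proper nonempty $W$ has $E(W,W^c)\ne\emptyset$, so $(\E,D)$ is stable. (If $\Gamma^\E$ is disconnected one argues componentwise, invoking the definition of $\mu$-stable on non-connected graphs from the Preliminaries.) Conversely, suppose $\phi\ne0$. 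Here I want to exhibit a witnessing $W$ with $\beta_D(W)\le 0$. Since $\phi$ is acyclic and not identically zero, pick any oriented edge $e_0$ with $\phi(e_0)>0$ (possible after flipping orientation). I'd like to take $W$ to be the set of vertices reachable by a $\phi$-path starting from $t(e_0)$ — or rather, I want the set $W$ to be "downstream" so that every edge of $E(W,W^c)$ has $\phi(e_W)\ge 1$. The clean formulation: let $W$ be a minimal nonempty set of vertices of $\Gamma^\E$ that is "closed under $\phi$-successors," i.e. whenever $v\in W$ and $\phi(e)>0$ with $s(e)=v$ then $t(e)\in W$, and such that $W$ contains a sink of $\phi$; acyclicity guarantees such a $W$ exists and is proper (it cannot be all of $V(\Gamma^\E)$ because a source of $\phi$ is not a sink, or more carefully because $\phi\ne0$ forces the existence of a vertex not in the downstream closure of a sink). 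For such $W$, every edge in $E(W,W^c)$ must be oriented into $W$ with $\phi\ge1$ (an edge oriented out of $W$ with positive flow would violate closure; an edge with $\phi=0$ can be handled by enlarging $W$ along null $\phi$-paths, or by choosing $W$ to also be closed under null successors). Then $1-\phi(e_W)\le 0$ for all $e\in E(W,W^c)$, and since at least one such edge exists (because $W$ contains a sink that has an incoming positive edge, as $\phi\ne 0$ and the graph is connected to the rest), $\beta_D(W)\le 0$, so $(\E,D)$ is not stable.

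\textbf{Main obstacle.} The delicate point is the converse direction: producing a vertex set $W$ witnessing non-stability when $\phi\ne 0$, and in particular handling the interaction between positive-flow edges and zero-flow edges so that no edge of $E(W,W^c)$ carries flow $0$ (which would contribute $+\tfrac12$ and could spoil the inequality). I expect the right fix is to work on the contracted graph $\Gamma^\E/P_{(\E,D)}$ as in Remark \ref{rem:iota-pseudo-root}, where $\iota_*(\phi)$ has no zero edges and is acyclic, choose a sink $v'$ there, let $W'$ be the set of $\iota_*(\phi)$-ancestors of $v'$ (including $v'$) — no wait, I want $v'$ together with everything that flows \emph{into} it eventually to be $W^c$ and $\{v'\}$-downstream to be small; concretely take $W'=\{v'\}$ itself if $v'$ is a sink, giving $\beta(W')=\tfrac12\sum_{e\text{ into }v'}(1-\phi(e))\le 0$ since every edge at a sink flows in with $\phi\ge 1$ and there is at least one such edge. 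Then pull $W'$ back via $\iota$ to $W=\iota^{-1}(W')$ in $\Gamma^\E$ and use Proposition \ref{prop:stab-contract} (or the direct computation $\beta_D(\iota^{-1}(W'))=\beta_{\iota_*D}(W')$) to conclude. This reduces the whole converse to the single clean statement "a sink of a nowhere-zero acyclic flow violates stability," which is immediate. The forward direction and the algebraic identity $\beta_D(W)=\tfrac12\sum_{E(W,W^c)}(1-\phi(e_W))$ are routine.
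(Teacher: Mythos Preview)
Your proof is correct and follows essentially the same route as the paper: for $\phi=0\Rightarrow$ stable you compute $\beta_D(W)=\tfrac12\delta_W$, and for $\phi\ne0\Rightarrow$ not stable you pass to $\Gamma^\E/P_{(\E,D)}$ (Remark~\ref{rem:iota-pseudo-root} and Proposition~\ref{prop:stab-contract}) so the flow is nowhere zero, then take $W$ to be a single sink. The one difference is cosmetic: you first derive the closed formula $\beta_D(W)=\tfrac12\sum_{e\in E(W,W^c)}(1-\phi(e_W))$ and read both directions off it, whereas the paper only carries out this computation implicitly at the sink (and later, in the proof of Lemma~\ref{lem:beta-phi}, for general $W$); your formulation is slightly cleaner but not a different idea.
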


\begin{proof}
If $(\E,D)$ is stable, then $\E=\emptyset$. By Proposition \ref{prop:stab-contract} and Remark \ref{rem:iota-pseudo-root}, we can assume $\phi(e)\ne 0$, $\forall\;e\in \ora{E}(\Gamma)$. Assume that  $E(\Gamma)\ne\emptyset$ and let $v\in V(\Gamma)$ be a sink of $\phi$. Thus $\Div(\phi)(v)\ge \deg_{\Gamma}(v)$. The relation $2D(v)+\Div(\phi)(v)=K_{\Gamma}(v)$ gives us 
\[
D(v)\le -\frac{\deg_{\Gamma}(v)}{2}+w_\Gamma(v)-1+\frac{\deg_{\Gamma}(v)}{2}=\mu(v)-\frac{\deg_{\Gamma}(v)}{2}.
\]
Hence $\beta_D(v)\le 0$, and $D$ is not  $\mu$-stable, a contradiction.

Conversely, if $\phi=0$, then for every proper subset $W\subset V(\Gamma^\E)$ we have 
\[
\beta_D(W)=
\deg D|_W-\mu(W)+\frac{\delta_W}{2}=\frac{\deg K_{\Gamma^\E}}{2}-\mu(W)+\frac{\delta_W}{2}=\frac{\delta_W}{2}.
\]
Hence $\beta_D(W)>0$ and $(\E,D)$ is stable.
\end{proof}

\begin{Lem}\label{lem:phi-lessone}
Let $(\Gamma,\E,D)$ be a semistable root graph. Let $\phi$ be an acyclic flow on $\Gamma^\E$ such that $2D+\Div(\phi)=K_{\Gamma^\E}$. Then $|\phi(e)|\le 1$ for every $e\in \ora E(\Gamma^\E)$.
\end{Lem}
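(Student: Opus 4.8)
The plan is to argue by contradiction, so suppose there is an oriented edge $e_0 \in \ora E(\Gamma^\E)$ with $\phi(e_0) \geq 2$. The key structural fact I would exploit is that $\phi$ is acyclic, so following edges with positive $\phi$-value from $t(e_0)$ one must eventually reach a sink $v$ of $\phi$ (there are no non-null $\phi$-cycles to trap the path); similarly, tracing backwards from $s(e_0)$ along positive edges one reaches a source. I will focus on a sink $v$ that is reachable from $e_0$ by a positive $\phi$-path, since that sink will carry a large positive contribution from $\Div(\phi)$.

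First I would set up the divisor equation at a cleverly chosen subset $W$. Working at a single sink $v$ as in the proof of Lemma \ref{lem:stable} only yields $\beta_D(v) \geq 0$ with no contradiction in general; the point of the $\phi(e_0) \geq 2$ hypothesis is that along the positive $\phi$-path from $e_0$ to the sink $v$, every edge on the path carries $\phi \geq 1$ and at least one carries $\phi \geq 2$ (namely $e_0$, if $e_0$ lies on such a path — if not, one can still propagate the value $2$ forward since $\phi$ cannot decrease arbitrarily along a path without creating a source in between, and sources have all outgoing edges positive). So I would let $W$ be the vertex set of the positive $\phi$-path from $s(e_0)$ to a sink $v$ (equivalently, take $W$ to be a set on whose boundary the flow $\phi$ is "net outgoing" with total at least $\delta_W + 2$ counted with orientation), and compute
\[
2\deg(D|_W) + \sum_{e \in \ora E(\Gamma^\E),\, t(e) \in W,\, s(e)\notin W} \phi(e) - \sum_{e,\, s(e)\in W,\, t(e)\notin W}\phi(e) = \sum_{v\in W} K_{\Gamma^\E}(v).
\]
The right-hand side equals $2\sum_{v\in W} w(v) + \sum_{v\in W}\deg(v) - 2\#W$, and $\sum_{v\in W}\deg(v) = 2\#E(W) + \delta_W$. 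Meanwhile the signed flow sum across the boundary of $W$, given that $v$ is a sink and the path is chosen so the flow exits nowhere and enters with total $\geq \delta_W + 2$ — here I use $|\phi(e)| \leq$ nothing yet, only positivity and acyclicity plus the $\geq 2$ at one edge — is at least $\delta_W + 2$ in absolute terms, but with the correct sign it contributes $-(\delta_W + 2)$ or so to the left side after moving it over. Rearranging gives $\deg(D|_W) \leq \mu(W) - \frac{\delta_W}{2} - 1$, i.e.\ $\beta_D(W) \leq -1 < 0$, contradicting $\mu$-semistability.

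The main obstacle, and the step I'd spend the most care on, is choosing $W$ so that the boundary flow balance is genuinely net-outgoing with deficit at least $2$ relative to $\delta_W$; a naive choice of $W$ could have the excess $\phi(e_0) \geq 2$ "cancelled" by other boundary edges with $\phi$ pointing inward. The clean fix is to take $W$ to be the set of all vertices $u$ such that there exists a positive $\phi$-path from $u$ to the sink $v$, together with handling the interaction at $s(e_0)$. For such a $W$, every boundary edge either has its target in $W$ (so it could point in), but by maximality of $W$ under "positive $\phi$-path to $v$", any edge entering $W$ from outside with positive $\phi$ would put its source into $W$ — contradiction — so all boundary edges point outward or have $\phi \leq 0$; combined with $e_0$ (which I must ensure is a boundary edge of this $W$, reducing to that case by first contracting $P_{(\E,D)}$ and any edges not on a path to $v$ via Proposition \ref{prop:stab-contract} and Remark \ref{rem:iota-pseudo-root}) contributing $\geq 2$, the balance works out. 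I would also note the symmetric argument using a source of $\phi$ in case the natural reduction makes that cleaner. Once $W$ is correctly identified the computation is the same one-line manipulation of the equation $2D + \Div(\phi) = K_{\Gamma^\E}$ used in Lemma \ref{lem:stable}.
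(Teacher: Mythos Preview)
Your overall strategy is the paper's: reduce to $\phi(e)\ne 0$ everywhere via Proposition \ref{prop:stab-contract} and Remark \ref{rem:iota-pseudo-root}, assume $\phi(e_0)\ge 2$, choose a subset on which the net inflow of $\phi$ strictly exceeds the number of boundary edges, and contradict $\beta_D\ge 0$ using $2D+\Div(\phi)=K_{\Gamma^\E}$.

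The gap is in your choice of $W$. If $W=\{u:\text{there is a positive $\phi$-path from }u\text{ to }v\}$ for a sink $v$ reachable from $t(e_0)$, then $t(e_0)\in W$, but also $s(e_0)\in W$ via the path $e_0$ followed by a positive path from $t(e_0)$ to $v$. So $e_0$ is \emph{not} a boundary edge of $W$, and your ``reducing to that case by first contracting\ldots'' does not change this. Worse, for this $W$ every boundary edge entering $W$ has $\phi\le 0$, so $\deg(\Div(\phi)|_W)\le -\delta_W$ and hence $\beta_D(W)\ge \delta_W$, which is the wrong sign for a contradiction.

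The paper's fix is to anchor at $s(e_0)$ rather than at a sink: set
\[
V=\{v\in V(\Gamma^\E):\text{there is a positive $\phi$-path from }s(e_0)\text{ to }v\}.
\]
Then $t(e_0)\in V$ and, by acyclicity, $s(e_0)\notin V$, so $e_0\in E(V,V^c)$. Maximality of $V$ forces $\phi(e)\ge 1$ for every $e\in\ora E(V,V^c)$ with $t(e)\in V$ (after the reduction $\phi\ne 0$), and since $\phi(e_0)\ge 2$ one gets $\deg(\Div(\phi)|_V)>\delta_V$. Combined with $\deg(\Div(\phi)|_V)=2\mu(V)-2\deg(D|_V)$ this yields $\beta_D(V)<0$, the desired contradiction. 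Once you swap your $W$ for this $V$, your computation is exactly the paper's.
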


\begin{proof}
By Proposition \ref{prop:stab-contract} and  Remark \ref{rem:iota-pseudo-root},
we can assume $\phi(e)\ne0$, for every $e\in \ora{E}(\Gamma^\E)$.
By contradiction, suppose that $\phi(e_0)\ge2$ for some $e_0\in\ora{E}(\Gamma^\E)$. 
We let $V$ be the set of vertices $v\in V(\Gamma^\E)$ for which   
there is a positive $\phi$-path from $s(e_0)$ to $v$.
Then $t(e_0)\in V$ and, since $\phi$ is acyclic, we have $s(e_0)\in V^c$, so $V$ is a proper nonempty subset of $V(\Gamma)$. By the definition of $V$, we have $\phi(e)\ge1$ for every $e\in \ora{E}(V,V^c)$ with $t(e)\in V$. Hence:
\begin{equation}\label{eq:degphi}
\deg(\Div(\phi)|_V)=\phi(e_0)+\underset{t(e)\in V}{\sum_{e\in \ora{E}(V,V^c)\setminus\{e_0\}}}\phi(e)>\#E(V,V^c)=\delta_{\Gamma^\E,V}.
\end{equation}
On the other hand, 
we have 
\[
\deg(\Div(\phi)|_V)=\deg (K_{\Gamma^\E}|_V)-2\deg(D|_V)=2\mu(V)-2\deg (D|_V)\leq\delta_{\Gamma^\E,V},
\]
where we use that $\beta_D(V)\ge0$ by the $\mu$-semistability of $D$, and this contradicts equation \eqref{eq:degphi}.
\end{proof}

\begin{Lem}\label{lem:beta-phi}
Let $(\Gamma,\E,D)$ be a semistable root-graph. Let $\phi$ be an acyclic flow on $\Gamma^\E$ such that $2D+\Div(\phi)=K_{\Gamma^\E}$. Given  a subset $V\subset V(\Gamma^\E)$, we have $\beta_D(V)=0$ if and only if $\phi(e)=1$ for every $e\in \ora{E}(V,V^c)$ such that $t(e)\in V$.
\end{Lem}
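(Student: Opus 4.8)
The plan is to compute $\deg(\Div(\phi)|_V)$ in two independent ways and compare the results. On the one hand, since $\mu$ is the canonical polarization of degree $g-1$, we have $\mu(v)=\tfrac12 K_{\Gamma^\E}(v)$ for every $v\in V(\Gamma^\E)$, so restricting the identity $2D+\Div(\phi)=K_{\Gamma^\E}$ to $V$ and summing gives
\[
\deg(\Div(\phi)|_V)=\deg(K_{\Gamma^\E}|_V)-2\deg(D|_V)=2\mu(V)-2\deg(D|_V).
\]
Rewriting the right-hand side through the definition $\beta_D(V)=\deg(D|_V)-\mu(V)+\tfrac{\delta_V}{2}$ (all taken in $\Gamma^\E$) yields
\[
\deg(\Div(\phi)|_V)=\delta_V-2\beta_D(V).
\]

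Next, I would compute the same quantity directly from $\Div(\phi)=\sum_{e\in\ora E(\Gamma^\E)}\phi(e)\,t(e)$, so that $\deg(\Div(\phi)|_V)=\sum_{e\in\ora E(\Gamma^\E),\,t(e)\in V}\phi(e)$. For an edge of $E(V)$ (including loops at vertices of $V$) both of its orientations have target in $V$, and they contribute $\phi(e_1)+\phi(e_2)=0$; hence only edges of $E(V,V^c)$ survive, and
\[
\deg(\Div(\phi)|_V)=\sum_{\substack{e\in\ora E(V,V^c)\\ t(e)\in V}}\phi(e).
\]
The key combinatorial observation is that the number of oriented edges appearing in this sum is exactly $\delta_V=\#E(V,V^c)$, since each unoriented edge of $E(V,V^c)$ determines a unique orientation with target in $V$.

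Finally, combining the two computations gives the numerical identity $\sum_{e\in\ora E(V,V^c),\,t(e)\in V}\phi(e)=\delta_V-2\beta_D(V)$. By Lemma \ref{lem:phi-lessone} each of the $\delta_V$ summands satisfies $\phi(e)\le 1$, so their sum is at most $\delta_V$ (this re-proves $\beta_D(V)\ge 0$), and the sum attains the value $\delta_V$ — equivalently $\beta_D(V)=0$ — if and only if every summand equals its maximal value $1$, that is, $\phi(e)=1$ for every $e\in\ora E(V,V^c)$ with $t(e)\in V$. This is precisely the asserted equivalence; the converse direction is read off the same identity.

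I do not expect a serious obstacle here: the statement reduces to a bookkeeping identity, and the only genuine inputs are the cancellation of the interior edges in $\Div(\phi)|_V$ and the bound $|\phi(e)|\le 1$ supplied by Lemma \ref{lem:phi-lessone}, which is exactly what upgrades the equality of degrees into the stated characterization. The main point requiring care is keeping the orientation conventions straight, in particular that the oriented edges with target in $V$ lying over $E(V,V^c)$ are in bijection with $E(V,V^c)$ itself.
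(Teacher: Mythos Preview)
Your proof is correct and follows essentially the same approach as the paper: both compute $\deg(\Div(\phi)|_V)=2\mu(V)-2\deg D|_V$ from the defining identity, rewrite this as $\beta_D(V)=0\iff \deg(\Div(\phi)|_V)=\delta_V$, and then invoke Lemma~\ref{lem:phi-lessone} to conclude. You have simply made explicit the step the paper leaves implicit, namely that the interior contributions to $\deg(\Div(\phi)|_V)$ cancel so that it is a sum of $\delta_V$ terms each bounded by $1$.
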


\begin{proof}
First of all, we have 
\begin{equation}\label{eq:deg-div-phi}
\deg(\Div(\phi)|_V)=\deg(K_{\Gamma^\E}|_V)-2\deg D|_V=2\mu(V)-2\deg D|_V.
\end{equation}
Thanks to equation \eqref{eq:deg-div-phi}, the condition $\beta_D(V)=\deg D|_V-\mu(V)+\frac{\delta_{\Gamma^\E,V}}{2}=0$ is equivalent to the condition 
$\deg(\Div(\phi)|_V)=\delta_{\Gamma^\E,V}$. 
We are done by Lemma \ref{lem:phi-lessone}.
\end{proof}

\begin{Thm}\label{thm:uniquephi}
Let $(\Gamma,\E,D)$ be a semistable root-graph. Let $\phi$ be an acyclic flow on $\Gamma^\E$ such that $2D+\Div(\phi)=K_{\Gamma^\E}$. Then
\begin{enumerate}
  \item[(1)] $\phi(e)=1$ for every $e\in \ora{E}(\Gamma^\E)$ such that $t(e)$ is an exceptional vertex of $\Gamma^\E$;
  \item[(2)] $\phi$ is the unique acyclic flow such that $2D+\Div(\phi)=K_{\Gamma^\E}$.
  \end{enumerate}
\end{Thm}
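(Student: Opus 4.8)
The plan is to prove (1) first and then deduce (2) from it together with the earlier lemmas. For part (1), let $e\in\ora E(\Gamma^\E)$ have target $t(e)=v$ an exceptional vertex. Since $v$ is exceptional, $D(v)=-1$ and $K_{\Gamma^\E}(v)=w(v)-2+\deg_{\Gamma^\E}(v)=0-2+2=0$. Evaluating the relation $2D+\Div(\phi)=K_{\Gamma^\E}$ at $v$ gives $\Div(\phi)(v)=0-2D(v)=2$. Now $v$ has exactly two incident oriented edges entering it, say $e$ and $e'$ (with $t(e)=t(e')=v$), so $\Div(\phi)(v)=\phi(e)+\phi(e')=2$. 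By Lemma~\ref{lem:phi-lessone} we have $\phi(e),\phi(e')\le 1$, hence $\phi(e)=\phi(e')=1$. This proves (1). (Note we must handle the degenerate case where the two exceptional edges over an edge of $E(\Gamma)$ with a loop could coincide as unoriented edges, but they are still distinct oriented edges into $v$, so the count $\phi(e)+\phi(e')=2$ is unaffected; if $e$ is a loop at $v$ the relation $\phi(e_1)+\phi(e_2)=0$ from the flow axiom would force $\Div(\phi)(v)=0\ne 2$, so no exceptional vertex carries a loop and this case does not occur.)

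For part (2), suppose $\phi$ and $\psi$ are both acyclic flows on $\Gamma^\E$ with $2D+\Div(\phi)=K_{\Gamma^\E}=2D+\Div(\psi)$. Then $\Div(\phi)=\Div(\psi)$, so $\Div(\phi-\psi)=0$, i.e.\ $\phi-\psi$ is a flow whose divisor vanishes: at every vertex $v$, the net flow $\sum_{t(e)=v}(\phi-\psi)(e)=0$. A flow with zero divisor is a \emph{circulation}, hence (by the standard decomposition of circulations on a graph) a $\mathbb Z$-linear combination of oriented cycles. The key point is then to upgrade "$\phi-\psi$ is a circulation" to "$\phi-\psi=0$" using acyclicity of both $\phi$ and $\psi$ together with the bound $|\phi(e)|,|\psi(e)|\le 1$ from Lemma~\ref{lem:phi-lessone}.

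The main obstacle is exactly this last step, so here is how I would carry it out. Suppose $\chi:=\phi-\psi\ne 0$; since $\chi$ is a nonzero circulation it is supported on a nonempty cyclic subgraph, and in particular there is an oriented cycle $\gamma=(e_1,\dots,e_m)$ on which $\chi$ is constant of some sign, say $\chi(e_i)=c>0$ for all $i$ after orienting $\gamma$ appropriately (a nonzero circulation contains such a "conformal" cycle in its support, with all values of one sign along it). Since $|\phi(e_i)|\le 1$ and $|\psi(e_i)|\le 1$, the only way to have $\phi(e_i)-\psi(e_i)=c>0$ for every $i$ with $c$ not depending on $i$ is $c\in\{1,2\}$; if $c=2$ then $\phi(e_i)=1$ and $\psi(e_i)=-1$ for all $i$, making $\gamma$ a positive $\phi$-cycle, contradicting that $\phi$ is acyclic (no non-null $\phi$-cycles). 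If $c=1$, then along $\gamma$ each $e_i$ has $\phi(e_i)\in\{0,1\}$ and $\psi(e_i)=\phi(e_i)-1\in\{-1,0\}$. Look at the orientation of $\gamma$: whenever $\phi(e_i)\ge 0$ for all $i$ the cycle $\gamma$ is a $\phi$-cycle, and it is non-null (some $\phi(e_i)=1$ unless all are $0$, but if all $\phi(e_i)=0$ then all $\psi(e_i)=-1$, i.e.\ the reverse orientation $\bar\gamma$ is a positive $\psi$-cycle), again contradicting acyclicity of $\phi$ (resp.\ $\psi$). Either way we reach a contradiction, so $\chi=0$ and $\phi=\psi$. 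I would write this circulation/conformal-cycle argument carefully, as it is the only non-formal ingredient; everything else is bookkeeping with the defining relation and Lemma~\ref{lem:phi-lessone}.

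Finally I would remark that part~(1) is not logically needed for part~(2) but is the natural companion statement and will be used downstream (e.g.\ to identify $P_{(\E,D)}$ with a subgraph of $\Gamma$, since no exceptional edge lies in $P_{(\E,D)}$).
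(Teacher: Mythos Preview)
Your proof of (1) is essentially identical to the paper's.

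Your proof of (2) is correct in spirit but takes a genuinely different route from the paper, and it contains one imprecision you should clean up. You claim there is an oriented cycle $\gamma$ along which $\chi=\phi-\psi$ takes a \emph{constant} positive value $c$. Conformal decomposition (or the standard ``follow positive edges until you close a cycle'' argument) only gives you an oriented cycle with $\chi(e_i)>0$ for all $i$; the values need not be equal. Fortunately you do not need constancy: from $\chi(e_i)>0$ and $|\psi(e_i)|\le 1$ you get $\phi(e_i)\ge 0$ for every $i$, so $\gamma$ is a $\phi$-cycle. If some $\phi(e_i)>0$ this is a non-null $\phi$-cycle, contradicting acyclicity of $\phi$; if all $\phi(e_i)=0$ then all $\psi(e_i)\le -1$, hence $\psi(e_i)=-1$, and the reversed cycle is a positive $\psi$-cycle, contradicting acyclicity of $\psi$. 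So drop the case split on $c\in\{1,2\}$ and argue directly as above.

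For comparison, the paper proves (2) by induction on $\#E(\Gamma)$: if $D$ is stable then $\phi=0$ by Lemma~\ref{lem:stable}; otherwise one picks $V\subsetneq V(\Gamma)$ with $\beta_D(V)=0$, uses Lemma~\ref{lem:beta-phi} to pin down $\phi$ on $E(V,V^c)$, and splits $(\Gamma,\emptyset,D)$ into two smaller semistable root-graphs $(\Gamma(V),\emptyset,D_1)$ and $(\Gamma(V^c),\emptyset,D_2)$ to which induction applies. Your circulation argument is shorter and more self-contained; the paper's argument is longer but the splitting $(D_1,D_2)$ it introduces is reused verbatim later (see Remark~\ref{rem:D-split} and the proofs of Propositions~\ref{prop:pol-spin} and~\ref{prop:strata-cont}), so that extra structure is not wasted.
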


\begin{proof}
Let us prove item (1). Let $e\in \ora{E}(\Gamma^\E)$ such that $t(e)$ is an  exceptional vertex of $\Gamma^\E$. The relation $2D+\Div(\phi)=K_{\Gamma^\E}$ implies that $\Div(\phi)(t(e))=2$. Since, by Lemma \ref{lem:phi-lessone}, we have $\phi(e)\leq1$, we deduce that $\phi(e)=1$.

Let us prove item (2). Notice that 
$(\Gamma^\E,\emptyset,D)$ is a semistable root-graph. So assume $\E=\emptyset$. We proceed by induction on $\#E(\Gamma)$, being the statement clear for $\#E(\Gamma)=0$. 
If $(\emptyset,D)$ is stable, then we are done by Lemma \ref{lem:stable}. So we can assume that there is a proper subset $V\subset V(\Gamma)$ such that $\beta_D(V)=0$.
By Lemma \ref{lem:beta-phi} we know that $\phi(e)=1$ for every $e\in \ora{E}(V,V^c)$ such that $t(e)\in V$.
Set $\Gamma_1=\Gamma(V)$ and $\Gamma_2=\Gamma(V^c)$. 
Define:
\begin{equation}\label{eq:D1D2}
D_1:=D|_{\Gamma_1}
\quad\text{ and }\quad
D_2:=D|_{\Gamma_2}-\sum_{v\in V^c}\deg_{E(V,V^c)}(v)v.
\end{equation}

We claim that $(\Gamma_i,\emptyset,D_i)$ is a semistable root-graph for $i=1,2$. Indeed,  consider the flow $\phi_i:=\phi|_{\Gamma_i}$ on $\Gamma_i$, for $i\in\{1,2\}$. First of all,
we have:
\begin{align*}
2 D_1 & = 2 D|_{\Gamma_1^{}}\\
& = K_{\Gamma}|_{\Gamma_1^{}}-\Div(\phi)|_{\Gamma_1^{}}\\
& = K_{\Gamma_1^{}}+\sum_{v\in V}\deg_{E(V,V^c)}(v)v-\Div(\phi_1)-\underset{t(e)\in V}{\sum_{e\in  \ora{E}(V,V^c)}} \phi(e)t(e)\\
& = K_{\Gamma_1^{}}-\Div(\phi_1)
\end{align*}
and
\begin{align*}
2D_2 & = 2D|_{\Gamma_2^{}}-2\sum_{v\in V^c}\deg_{E(V,V^c)}(v)v\\
& =K_{\Gamma}|_{\Gamma_2^{}}-\Div(\phi)|_{\Gamma_2^{}}-2\sum_{v\in V^c}\deg_{E(V,V^c)}(v)v\\
& = K_{\Gamma_2^{}}-\sum_{v\in V^c}\deg_{E(V,V^c)}(v)v-\Div(\phi_2)-\underset{t(e)\in V^c}{\sum_{e\in  \ora{E}(V,V^c)}} \phi(e)t(e)\\
& = K_{\Gamma_2^{}}-\Div(\phi_2).
\end{align*} 

We now check the semistability of $(\Gamma_i,\emptyset,D_i)$.
Let $\mu_i$ be the canonical polarization of degree $g_{\Gamma_i}-1$ on $\Gamma_i$, which is the polarization taking a vertex $v\in V(\Gamma_i)$ to
\[
\mu_i(v)=\mu(v)-\frac{\deg_{E(V,V^c)}(v)}{2}.
\]

 For every subset $W\subset V(\Gamma_1^{})$, we have
 \begin{align*}
     \beta_{D_1}(W)& =\deg(D_1|_W)-\mu_1(W)+\frac{\delta_{\Gamma_1,W}}{2}\\
     & = \deg(D|_W)-\mu(W)+\frac{\#E(V^c,W)}{2}+\frac{\delta_{\Gamma,W}-\#E(V^c,W)}{2}\\
     & = \beta_D(W)\ge0,
 \end{align*}
 where the last inequality follows by the $\mu$-semistability of $D$ on $\Gamma^{}$.
 So the root-graph $(\Gamma_1,\emptyset,D_1)$ is
 $\mu_1$-semistable. For every subset $W\subset V(\Gamma_2^{})$, we have
\begin{align*}
    \beta_{D_2}(W)& = \deg(D_2|_W)-\mu_2(W)+\frac{\delta_{\Gamma_2,W}}{2}\\
 & =  \deg(D|_W)-\#E(V,W)-\mu(W)+\frac{\#E(V,W)}{2}+\frac{\delta_{\Gamma,W}-\#E(V,W)}{2}\\
 &=\beta_D(W)-\#E(V,W)\\ &=\beta_D(V\cup W)-\beta_D(V)+\#E(V,W)-\#E(V,W)\\
 &=\beta_D(V\cup W)\ge0,
\end{align*}
where we use \cite[Lemma 4.1]{AP1} and the $\mu$-semistability of $D$ on $\Gamma$. So 
 the root-graph $(\Gamma_2,\emptyset,D_2)$ is
 $\mu_2$-semistable. This concludes the proof of the claim.
 
 By the induction hypothesis, $\phi_1$ and $\phi_2$ are uniquely determined, so is $\phi$.
\end{proof}

\begin{Not}\label{not:unique-flow}
Given a semistable root-graph $(\Gamma,\E,D)$, we denote by $\phi_{(\E,D)}$ the unique acyclic flow on $\Gamma^\E$ such that $2D+\Div(\phi_{(\E,D)})=K_{\Gamma^\E}$. 
In Figure \ref{fig:pos} we see the possibilities for the value $\phi_{(\E,D)}(e)$ for $e\in \ora{E}(\Gamma^\E)$.
\end{Not}
\begin{figure}[h]
    \centering
    \begin{tikzpicture}
    \draw (-1.5,0) -- (1.5,0);
     \draw (-5.5,0) -- (-2.5,0);
      \draw (2.5,0) -- (5.5,0);
    \draw[fill] (-1.5, 0) node {} (-1.5, 0) circle (0.1);
    \draw[fill] (1.5, 0) node {} (1.5, 0) circle (0.1);
    \draw[fill] (-5.5, 0) node {} (-5.5, 0) circle (0.1);
    \draw[fill] (5.5, 0) node {} (5.5, 0) circle (0.1);
     \draw[fill] (2.5, 0) node {} (2.5, 0) circle (0.1);
    \draw[fill] (-2.5, 0) node {} (-2.5, 0) circle (0.1);
     \draw[fill] (-4, 0) node {} (-4, 0) circle (0.1);
    \draw[->, line  width=0.3mm] (0,0) to (0.01,0);
        \draw[->, line  width=0.3mm] (-4.75,0) to (-4.749,0);
         \draw[->, line  width=0.3mm] (-3.25,0) to (-3.251,0);
         \node at (4,-0.2) {$0$};
          \node at (0,-0.3) {$1$};
           \node at (-4.75,-0.3) {$1$};
            \node at (-3.25,-0.3) {$1$};
            \node at (-4,0.4) {exceptional vertex};
         \draw[->, line  width=0.3mm] (-4,0.3) to (-4,0.15);
    \end{tikzpicture}
\caption{Possible values for the flow $\phi_{(\E,D)}$.}
\label{fig:pos}
\end{figure}
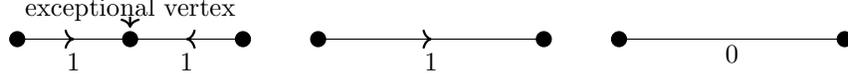

\begin{Rem}
Since $\phi(e')\ne 0$ for every $e'$ lying over an edge $e\in \ora{\E}$, the graph $P_{(\E,D)}$ defined in equation \eqref{eq:PED} can be seen as a subgraph of either one of the following graphs: $\Gamma_\E$, $\Gamma$, $\Gamma^\E$. 
\end{Rem}


We denote by $\ol {P}_{(\E,D)}=\Gamma^\E_{\mc F,\leg}$, where $\mc F=E(\Gamma^\E)\setminus P_{(\E,D)}$.
The set of connected components of 
$\ol {P}_{(\E,D)}$ 
are in natural bijection with the set  $V(\Gamma^\E/P_{(\E,D)})$.

\begin{Exa}\label{exa:Pbar}
Let $\Gamma$ be the graph with two vertices $v_0$ and $v_1$ of weight zero, connected by two edges $e_0,e_1$. Let $\E=\{e_0\}$ and $D=v_0-w_0\in \Div(\Gamma^\E)$, where $w_0$ is the exceptional vertex of $e_0$. Then $(\E,D)$ is a $v_0$-quasistable pseudo-root (hence semistable), and $\phi_{(\E,D)}$ has value $1$ over every $e\in \ora{E}(\Gamma^{\E})$ oriented as in Figure \ref{fig:notspec}. In this case, $P_{(\E,D)}=0$, and $\ol{P}_{(\E,D)}$ consists of the disjoint union of the vertices $v_0,v_1,w_0$, with two legs attached to each vertex and no edge.
  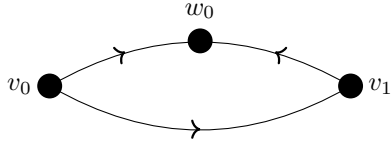
\begin{figure}[h]
    \centering
    \begin{tikzpicture}
    \draw (0,0) to [bend right] (4,0);
    \draw (0,0) to [bend left] (4,0);
    \draw[->, line  width=0.3mm] (0.9975,0.449) to (1.0025,0.4514);
    \draw[->, line  width=0.3mm] (3.0025,0.449) to (2.9975,0.4514);
     \draw[->, line  width=0.3mm] (1.9975,-0.575) to (2.0025,-0.575);
    \node at (0,0) [circle, fill]{};
    \node at (-0.4,0) {$v_0$};
    \node at (4.4,0) {$v_1$};
    \node at (4,0) [circle, fill]{};
    \node at (2,0.6) [circle, fill]{};
    \node at (2,1) {$w_0$};
    \end{tikzpicture}
\caption{The orientation on the graph $\Gamma^\E$.}
\label{fig:notspec}
\end{figure}
\end{Exa}

\begin{Def}\label{def:phi-subgraph}
Given a semistable pseudo-root $(\Gamma,\E,D)$, the  \emph{$\phi_{(\E,D)}$-graphs} of $\Gamma^\E$ are the connected components $P_u$ of $\ol {P}_{(\E,D)}$, for $u\in V(\Gamma^\E/P_{(\E,D)})$.
\end{Def}

\begin{Rem}\label{rem:PED}
Let $(\Gamma,\E,D)$ be a semistable root-graph.
Consider a vertex $u\in V(\Gamma^\E/P_{(\E,D)})$. Using that $2D+\Div(\phi_{(\E,D)})=K_{\Gamma^\E}$, we deduce that
\[
2D|_{P_u}+\sum_{w\ne u}\underset{t(e)\in V(P_u)}{\sum_{e\in \ora{E}(P_u,P_w)}}(\phi_{(\E,D)}(e)-1)t(e)=K_{P_v}.
\]
\end{Rem}






\begin{Def}
We say that a semistable root-graph  $(\Gamma,\E,D)$ \emph{specializes} to a semistable root-graph $(\Gamma',\E',D')$, and we write $(\Gamma,\E,D)\ge (\Gamma',\E',D')$, if there is a specialization of triples $(\Gamma,\E,D)\ge (\Gamma',\E',D')$ with associated specialization $\wh{\iota}\col \Gamma^{\E}\ra {\Gamma'}^{\E'}$ such that 
$\wh{\iota}_*(\phi_{(\E,D)})=\phi_{(\E',D')}$. 
\end{Def}


We consider the category $\mathbf{SSR}_{g,n}$ whose objects are semistable roots-graphs $(\Gamma,\E,D)$, where $\Gamma$ is a stable genus-$g$ graph with $n$ legs, and the arrows are given by the specializations of semistable roots-graphs. 
We let $\mathcal{SSR}_{g,n}$ be the poset $\mathbf{SSR}_{g,n}/\sim$, where the relation $\sim$ is isomorphism. 


\begin{Rem}\label{rem:proper}
 Not every semistable pseudo divisor $(\E,D)$ on a graph $\Gamma$ is such that $(\Gamma,\E,D)$ is an object of $\mathbf{SSR}_{g,n}$. 
 Let $\Gamma$ be the graph with two vertices $v_0$ and $v_1$ of weight zero, and four edges connecting them. Then $(\emptyset,2v_0)$ is a $v_0$-quasistable pseudo-divisor (and hence it is semistable), but there is no acyclic flow $\phi$ on $\Gamma$ such that $2D+\Div(\phi)=K_\Gamma$.  
\end{Rem}

\begin{Rem}
It is not true that every specialization $(\Gamma,\E,D)\ge (\Gamma',\E',D')$ of triples is a morphism in $\mathbf{SSR}_{g,n}$, for objects $(\Gamma,\E,D)$ and $(\Gamma',\E',D')$  in $\mathbf{SSR}_{g,n}$. Let $\Gamma$ and $(\E,D)$ be the graph and the pseudo-divisor on it as in Example \ref{exa:Pbar}.
The pseudo-divisor $(\E',D')$ on $\Gamma$, where $\E'=\emptyset$ and $D'=0$, is a $v_0$-quasistable pseudo-root, with $\phi_{(\E',D')}=0$. We have a specialization $\iota\col (\Gamma,\E,D)\ge (\Gamma,\E',D')$ but, if $\wh \iota\col \Gamma^{\E}\ra {\Gamma'}^{\E'}$ is the associated specialization, then $\wh{\iota}_*(\phi)\ne 0$: this specialization is not a morphism in $\mathbf{SSR}_{g,n}$.
\end{Rem}
  
We denote by $\mathbf{SR}_{g,n}$, $\mathbf{PSR}_{g,n}$ and $\mathbf{QR}_{g,n}$, respectively, the subcategory of $\mathbf{SSR}_{g,n}$ of simple semistable, polystable and $v_0$-quasistable root-graphs. We let $\mathcal{SR}_{g,n}$, $\mathcal{PSR}_{g,n}$ and $\mathcal{QR}_{g,n}$ be respectively the associated posets. Notice that $\mathbf{PSR}_{g,n}$ is a subcategory of $\mathbf{PSD}_{g,n}$, while $\mathbf{QR}_{g,n}$ is a sub-category of both $\mathbf{QD}_{g,n}$ and of $\mathbf{SR}_{g,n}$ (by \cite[Proposition 4.6]{AP1}). 




\subsection{Polystable root-graphs}

In this section we analyze more closely polystable root-graphs. We show that the category of polystable pseudo-roots  can be identified with the one of cyclic subgraphs.

\begin{Lem}\label{lem:phizero}
Let $(\E,D)$ be a polystable pseudo-root on a graph $\Gamma$. We have $\phi_{(\E,D)}(e)=0$ if and only if $e\in \ora{E}(\Gamma^\E)$ is a non-exceptional oriented edge. In particular, $\Div(\phi_{(\E,D)})$ is a principal divisor on $\Gamma^\E$, with  $\Div(\phi_{(\E,D)})=-\sum_{e\in \E} \Div(w_e)$, where $w_e$ is the exceptional vertex in the interior of $e$.
\end{Lem}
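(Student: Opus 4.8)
The plan is to exploit the structure theorem for $\mu$-polystable pseudo-divisors, namely that $(\E,D)$ polystable means $D_\E$ is $\mu_\E$-stable on $\Gamma_\E$, together with the uniqueness of the acyclic flow from Theorem \ref{thm:uniquephi} and the bound $|\phi_{(\E,D)}(e)|\le 1$ from Lemma \ref{lem:phi-lessone}. First I would observe that item (1) of Theorem \ref{thm:uniquephi} already gives $\phi_{(\E,D)}(e)=1$ for every oriented edge $e$ over an edge of $\E$, i.e. whenever $t(e)$ is exceptional; in particular $\phi_{(\E,D)}(e)\ne 0$ on exceptional edges. So the content is the converse: if $e\in\ora{E}(\Gamma^\E)$ is non-exceptional then $\phi_{(\E,D)}(e)=0$.

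For that direction, the key is to reduce to the simple case. Write $\phi=\phi_{(\E,D)}$ and suppose, for contradiction, that $\phi(e_0)=\pm1$ (using Lemma \ref{lem:phi-lessone}) for some non-exceptional $e_0$. By Proposition \ref{prop:stab-contract} and Remark \ref{rem:iota-pseudo-root} I would contract the subgraph $P_{(\E,D)}$ where $\phi$ vanishes, so that after relabelling we may assume $\phi(e)\ne 0$ for every oriented edge. Now I want to derive a contradiction with polystability. The idea is to run the argument in the proof of Lemma \ref{lem:phi-lessone} / Lemma \ref{lem:stable}: take $V$ to be the set of vertices reachable by a positive $\phi$-path from $s(e_0)$ (or, if $e_0$ happens to be a $\phi$-loop, handle that degenerate situation directly). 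Since $\phi$ is acyclic, $s(e_0)\notin V$ and $t(e_0)\in V$, so $V$ is a proper nonempty subset, and by construction every oriented edge of $\ora E(V,V^c)$ with target in $V$ has $\phi$-value $\ge 1$, hence $=1$ by Lemma \ref{lem:phi-lessone}. The degree computation $\deg(\Div(\phi)|_V)=2\mu(V)-2\deg(D|_V)=2\beta_D(V)-\delta_{\Gamma^\E,V}+\#E(V,V^c)$ combined with $\deg(\Div(\phi)|_V)\le \#\{e\in\ora E(V,V^c): t(e)\in V\}\le \delta_{\Gamma^\E,V}$ shows $\beta_D(V)=0$, i.e. $\beta_D$ vanishes on a set $V$ with $E(V,V^c)$ meeting a non-exceptional edge (the edge $e_0$ is non-exceptional, and one of its endpoints, being reachable from the other, forces a non-exceptional edge in the cut — this needs a small check). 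But polystability says $\beta_{\E,D}(V)>0$, equivalently $\beta_D(V)>0$, whenever $E(V,V^c)\not\subset\E$; since $E(V,V^c)$ contains a non-exceptional edge, it is not contained in $\E$, contradiction. Then the last sentence follows immediately: $\phi=\sum_{e\in\E}\phi'$ where on each exceptional edge $e$ the flow equals the elementary flow $\phi_{w_e}$ (value $1$ out of $w_e$ on both sides, which is exactly $\phi(e)=1$ with the stated orientation after reconciling signs), hence $\Div(\phi)=\sum_{e\in\E}\Div(\phi_{w_e})$; the sign in the displayed formula is a matter of matching the source/target convention, since $K_{\Gamma^\E}(w_e)=0$ and $D(w_e)=-1$ forces $\Div(\phi)(w_e)=2$, consistent with $-\Div(w_e)$ evaluated at $w_e$ being $+2$.

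The main obstacle I anticipate is the bookkeeping around exceptional versus non-exceptional edges in the cut $E(V,V^c)$: one must be careful that the contradiction set $V$ really certifies $E(V,V^c)\not\subset\E$, which is where the hypothesis that $e_0$ is non-exceptional is used — and one must also treat the possibility that, after contracting $P_{(\E,D)}$, the offending edge $e_0$ becomes part of a multi-edge all of whose $\phi$-values are nonzero but which is ``sandwiched'' so that the naive reachability set $V$ is empty or improper; in that case I would instead pick $V$ to be the source set of a sink of $\phi$ restricted near $e_0$, exactly as in Lemma \ref{lem:stable}, and argue $\beta_D(V)\le 0$ there. A second, milder point is getting the signs right in the final identity $\Div(\phi_{(\E,D)})=-\sum_{e\in\E}\Div(w_e)$, which is purely a convention check using $\Div(w_e)=\Div(\phi_{w_e})$ and the definition $\phi_v(e)=1$ when $s(e)=v$; since the stated orientation in Figure \ref{fig:pos} has $\phi(e)=1$ with the exceptional vertex as target on each side, one sees $\phi_{(\E,D)}|_{\text{edges over }e}$ is $-\phi_{w_e}$ in the source-convention, giving the minus sign.
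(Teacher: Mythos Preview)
Your outline gets the easy direction right (Theorem~\ref{thm:uniquephi}(1) gives $\phi_{(\E,D)}(e)=1$ on exceptional edges) and the final sign check for $\Div(\phi_{(\E,D)})=-\sum_{e\in\E}\Div(w_e)$ is fine. The gap is in the converse. Your reachability set $V$ lives in $V(\Gamma^\E)$ and you deduce $\beta_D(V)=0$ there, but polystability is the condition $\beta_{\E,D}(V_0)>0$ for $V_0\subset V(\Gamma)$ whenever $E_\Gamma(V_0,V_0^c)\not\subset\E$. Your claim that these are ``equivalently'' the same is not automatic: for a general $W\subset V(\Gamma^\E)$ one does \emph{not} have $\beta_D(W)=\beta_{\E,D}(W\cap V(\Gamma))$. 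It happens to hold for your particular $V$ because exceptional vertices are $\phi$-sinks, so $w_e\in V$ iff an endpoint of $e$ lies in $V_0:=V\cap V(\Gamma)$; with that structure one can compute directly that $\beta_D(V)=\beta_{\E,D}(V_0)$ and that $e_0\in E_\Gamma(V_0,V_0^c)\setminus\E$, yielding the desired contradiction. That computation is the missing step, and without it the ``contradiction with polystability'' does not follow. (Separately, the displayed identity $\deg(\Div(\phi)|_V)=2\beta_D(V)-\delta_{\Gamma^\E,V}+\#E(V,V^c)$ has a sign error---the right-hand side should be $-2\beta_D(V)+\delta_{\Gamma^\E,V}$---and the subsequent inequality points the wrong way.)

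The paper sidesteps all of this by restricting to $\Gamma_\E$ rather than contracting $P_{(\E,D)}$. Set $D_1:=D_\E-\sum_{v\in V(\Gamma)}\deg_\E(v)\,v$; a direct polarization shift shows that $D_1$ is stable on $\Gamma_\E$ with respect to the canonical polarization of degree $g_{\Gamma_\E}-1$ (this is exactly the content of ``$(\E,D)$ polystable''). Using Theorem~\ref{thm:uniquephi}(1) to account for the exceptional contributions, one checks $2D_1+\Div(\phi_{(\E,D)}|_{\Gamma_\E})=K_{\Gamma_\E}$, and then Lemma~\ref{lem:stable} immediately gives $\phi_{(\E,D)}|_{\Gamma_\E}=0$. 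No reachability set, no translation between $\beta_D$ and $\beta_{\E,D}$, no preliminary contraction is needed.
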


\begin{proof}
Since $(\E,D)$ is polystable, we have that the divisor $D_\E$ is $\mu_\E$-stable on $\Gamma_\E$. Thus $D_1=D_\E-\sum_{v\in V(\Gamma^\E)}\text{deg}_\E(v)$ is $\mu_1$-stable, where 
$\mu_1$ is the canonical divisor of degree $g_{\Gamma^\E}-1$ of $\Gamma^\E$. Set $\phi=\phi_{(\E,D)}$. By Theorem \ref{thm:uniquephi} (1), we have  $2D_1+\Div(\phi|_{\Gamma_\E})=K_{\Gamma_\E}$, then by Lemma \ref{lem:stable} we have that $\phi|_{\Gamma_\E}=0$, that is, $\phi(e)=0$ for every non-exceptional oriented edge $e\in \ora{E}(\Gamma^\E)$. We can conclude using Theorem \ref{thm:uniquephi} (1).
\end{proof}



As in \cite[Section 4]{CMP2},
 we let $\CYC_{g,n}$ be the category  whose objects are pairs $(\Gamma,P)$, where $\Gamma$ is a stable genus-$g$ graph with $n$ legs and $P$ is a cyclic subgraph of $\Gamma$, and whose arrows  $(\Gamma,P)\ra (\Gamma',P')$ are given by a specialization $\iota\col \Gamma\ra \Gamma'$ of graphs with $n$ legs such that $\iota_*(P)=P'$.

For every graph $\Gamma$ and a cyclic subgraph $P$ of $\Gamma$, we denote by $(\E_P,D_P)$ the pseudo-divisor on $\Gamma$ such that  $\E_P=E(\Gamma)\setminus P$ and 
\begin{equation}\label{eq:DP}
D_P=\sum_{v\in V(\Gamma)} \left(\deg_\Gamma(v)-\frac{\deg_P(v)}{2}-1+w_\Gamma(v)\right)v-\sum_{e\in \E_P}w_e
\end{equation}
where $w_e$ is the exceptional vertex inside $e$.

\begin{Prop}\label{prop:EP}
Let $\Gamma$ be a graph. The pseudo-divisors $(\E_P,D_P)$, for $P$ a cyclic subgraph of $\Gamma$, are all the polystable pseudo-roots on $\Gamma$. 
In particular, $\Gamma$ admits exactly $2^{b_1(\Gamma)}$ polystable pseudo-roots.
\end{Prop}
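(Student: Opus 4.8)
The plan is to show two things: first, that each pair $(\E_P, D_P)$ is indeed a polystable pseudo-root on $\Gamma$; and second, that every polystable pseudo-root on $\Gamma$ is of this form, with the correspondence $P \mapsto (\E_P, D_P)$ being a bijection onto the set of polystable pseudo-roots. The count $2^{b_1(\Gamma)}$ then follows from the well-known fact that a graph $\Gamma$ has exactly $2^{b_1(\Gamma)}$ cyclic subgraphs (the cyclic subgraphs form a group under symmetric difference, namely the cycle space over $\mathbb{F}_2$, of dimension $b_1(\Gamma)$).

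\textbf{Step 1: $(\E_P, D_P)$ is a pseudo-root.} First I would verify that $D_P$ as defined in \eqref{eq:DP} has degree $g-1$ and that it is a pseudo-divisor, i.e. $D_P(w_e) = -1$ for every exceptional vertex, which is immediate from the formula. Then I need to exhibit an acyclic flow $\phi$ on $\Gamma^{\E_P}$ with $2D_P + \Div(\phi) = K_{\Gamma^{\E_P}}$. The natural candidate, guided by Lemma \ref{lem:phizero}, is the flow that is $0$ on all non-exceptional edges and $1$ on every exceptional edge oriented toward its exceptional vertex; equivalently $\phi = -\sum_{e \in \E_P} \phi_{w_e}$, so $\Div(\phi) = -\sum_{e \in \E_P}\Div(w_e)$. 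This flow is acyclic because exceptional vertices all have degree $2$ and are sinks, so there are no non-null cycles. It then remains to check the divisor identity vertex-by-vertex: at an exceptional vertex $w_e$ one gets $2(-1) + 2 = 0 = K_{\Gamma^{\E_P}}(w_e)$ (weight $0$, degree $2$); at a vertex $v \in V(\Gamma)$ the contributions are $2D_P(v)$ plus $\Div(\phi)(v) = -\deg_{\E_P}(v) = \deg_P(v) - \deg_\Gamma(v)$, and one verifies this equals $K_{\Gamma^{\E_P}}(v) = 2w_\Gamma(v) - 2 + \deg_{\Gamma^{\E_P}}(v)$, using that $\deg_{\Gamma^{\E_P}}(v) = \deg_\Gamma(v)$. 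This is routine arithmetic.

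\textbf{Step 2: polystability of $(\E_P, D_P)$.} Here I would use the characterization that $(\E, D)$ is $\mu$-polystable iff $\beta_{\E,D}(W) \ge 0$ for all $W \subset V(\Gamma)$, with strict inequality when $E(W, W^c) \not\subset \E$. Equivalently I can use Lemma \ref{lem:beta-phi} together with Lemma \ref{lem:phizero}: since the associated flow $\phi_{(\E_P,D_P)}$ is zero exactly on non-exceptional edges, and semistability needs to be checked first. An efficient route: $(\E_P, D_P)$ is polystable iff $D_{\E_P} := (D_P)_{\E_P}$ (the induced divisor on $\Gamma_{\E_P} = P$, with the polarization shifted by $\frac12\deg_{\E_P}$) is $\mu_{\E_P}$-stable. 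A computation shows $D_{\E_P}$ restricted to $P$ has $D_{\E_P}(v) = w_\Gamma(v) - 1 + \frac{\deg_P(v)}{2}$ (wait—I need to recompute: $D_P(v) + \frac12\deg_{\E_P}(v) \cdot$ correction, so care is needed), which should be precisely such that $2D_{\E_P} = K_P$ and hence, by Lemma \ref{lem:stable} applied to the root-graph $(P, \emptyset, D_{\E_P})$, it is stable. This reduces polystability of $(\E_P, D_P)$ to the tautology that $P$ with its canonical-square-root divisor is stable, which is Lemma \ref{lem:stable} in the direction $\phi = 0 \Rightarrow$ stable.

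\textbf{Step 3: surjectivity and injectivity.} Conversely, let $(\E, D)$ be any polystable pseudo-root on $\Gamma$. Set $P = \Gamma_\E$ (the subgraph on all of $V(\Gamma)$ with edges $E(\Gamma) \setminus \E$); by Lemma \ref{lem:phizero}, $\phi_{(\E,D)}$ vanishes exactly on the non-exceptional edges, and polystability forces $D_\E$ to be $\mu_\E$-stable on $\Gamma_\E$, hence $\Gamma_\E = P$ together with the induced divisor is a stable root-graph with zero flow; by Lemma \ref{lem:stable} the divisor is forced, which pins down $D$ on $V(\Gamma)$ to be exactly $D_P$, and $D(w_e) = -1$ on exceptional vertices by definition of pseudo-divisor. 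Moreover $\mu_\E$-stability of $D_\E$ on $\Gamma_\E = P$ forces every vertex of $P$ to satisfy the stability inequality, which—via the genus formula—forces $P$ to be a cyclic subgraph (each connected component of $P$ has the canonical divisor divisible by $2$, and combined with Lemma \ref{lem:stable}'s sink argument this is exactly the condition that $P$ is a disjoint union of cycles, i.e. $\deg_P(v) \in \{0, 2\}$ at each vertex; here I should double-check that $\deg_P(v) \le 2$ genuinely follows and is not merely $\deg_P(v)$ even). So $(\E, D) = (\E_P, D_P)$, giving surjectivity; and since $P$ is recovered as $\Gamma_\E$, the map $P \mapsto (\E_P, D_P)$ is injective. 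Finally, counting cyclic subgraphs of $\Gamma$ gives $2^{b_1(\Gamma)}$.

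\textbf{Main obstacle.} I expect the delicate point to be Step 3, specifically the claim that $\mu_\E$-stability of $D_\E$ on $\Gamma_\E$ forces $\Gamma_\E$ to be a \emph{cyclic} subgraph rather than merely a subgraph on which the canonical divisor has even values. The argument mirrors the ``sink'' computation in the proof of Lemma \ref{lem:stable}: if some vertex of $\Gamma_\E$ had $\deg_{\Gamma_\E}(v) \ge 3$ while still carrying an even canonical value, one needs to rule this out, but in fact it is not ruled out by evenness alone—rather it is ruled out because the induced divisor must equal $\frac12 K$, and then a vertex of degree $\ge 3$ would violate $\beta \ge 0$ at that vertex unless... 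Actually the cleanest formulation: $D_\E$ is $\mu_\E$-stable and satisfies $2D_\E = K_{\Gamma_\E}$, so by Lemma \ref{lem:stable} applied with $\phi = 0$ there is no constraint—wait, Lemma \ref{lem:stable} says $\phi = 0 \Leftrightarrow$ stable, and here $\phi = 0$ already, so stability is automatic and gives \emph{no} extra constraint forcing cyclicity. The resolution must be that being a pseudo-root already requires $\phi_{(\E,D)}$ acyclic on \emph{all} of $\Gamma^\E$, and the exceptional vertices being sinks is fine, but if $\Gamma_\E$ had a vertex of degree $\ge 3$ we would need $2D(v) + (\text{non-exceptional } \Div(\phi)(v)=0) + (\text{exceptional contributions}) = K(v)$; and exceptional contributions to $v$ equal $-\deg_\E(v)$, forcing $\deg_\E(v)$ to have fixed parity. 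I will need to carefully reconcile this—most likely the correct statement is that \emph{every} even subgraph (Eulerian) works combinatorially but stability on each component of $\Gamma_\E$ is what cuts it down, and I should instead invoke that each connected component $P'$ of $\Gamma_\E$ must, via $\mu_\E$-stability restricted to subsets $W \subsetneq V(P')$, be $2$-regular. Resolving this parity/regularity subtlety rigorously is the crux of the proof; everything else is bookkeeping that follows the pattern of Lemmas \ref{lem:stable}--\ref{lem:beta-phi}.
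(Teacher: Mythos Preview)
Your Steps 1 and 2 are essentially what the paper does: exhibit the flow $\phi=-\sum_{e\in\E_P}\phi_{w_e}$, verify $2D_P+\Div(\phi)=K_{\Gamma^{\E_P}}$ vertex by vertex, and check polystability by the direct computation $\mu_{\E_P}(v)=(D_P)_{\E_P}(v)$, so $(D_P)_{\E_P}$ is trivially $\mu_{\E_P}$-stable.

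Your Step 3, however, is tangled because you have misidentified both what ``cyclic'' means and where the constraint comes from. A cyclic subgraph here is a union of edge-disjoint cycles, equivalently a subgraph in which every vertex has \emph{even} degree; it need not be $2$-regular. So you do not need to show $\deg_P(v)\le 2$, only that $\deg_P(v)$ is even. And that evenness comes not from stability but from the defining equation of a pseudo-root. The paper's converse argument is one line: given a polystable pseudo-root $(\E,D)$, Lemma~\ref{lem:phizero} already tells you $\Div(\phi_{(\E,D)})=-\sum_{e\in\E}\Div(w_e)$, so
\[
2D = K_{\Gamma^\E}+\sum_{e\in\E}\Div(w_e)
   = \sum_{v\in V(\Gamma)}\bigl(2\deg_\Gamma(v)-\deg_{E(\Gamma)\setminus\E}(v)-2+2w_\Gamma(v)\bigr)v
     -2\sum_{e\in\E}w_e.
\]
Since the left side is even at every vertex, $\deg_{E(\Gamma)\setminus\E}(v)$ is even for all $v$; setting $P:=E(\Gamma)\setminus\E$ this says exactly that $P$ is cyclic, and the displayed formula is $2D_P$, so $(\E,D)=(\E_P,D_P)$. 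No further appeal to stability, sinks, or Lemma~\ref{lem:stable} is needed for this direction---polystability is used only through Lemma~\ref{lem:phizero}. Your ``main obstacle'' dissolves once you see that the parity observation you sketched at the end \emph{is} the entire argument.
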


\begin{proof}
Let $\phi$ be the acyclic flow associated to the 
principal divisor $-\sum_{e\in \E_P} \Div(w_e)$. We have:
\[
2D_P+\Div(\phi)=\sum_{v\in V(\Gamma)} \left(2\deg_\Gamma(v)-\deg_P(v)-2+2w_\Gamma(v)- \deg_{\E_P}(v)\right)v=K_{\Gamma^\E}.
\]
Moreover, since $\mu$ is the canonical polarization, we have: 
\[
\mu_{\E_P}(v)=\mu(v)+\frac{\deg_{\E_P}(v)}{2}=(D_P)_{\E_P}(v)
\]
for every $v\in V(\Gamma)$. Thus the divisor $(D_P)_{\E_P}$ is $\mu_{\E_P}$-stable on $\Gamma_{\E_P}$, so $(\E_P,D_P)$ is polystable. 
We deduce that $(\E_P,D_P)$ is a polystable pseudo-root of $\Gamma$.

Conversely, let $(\E,D)$ be a polystable pseudo-divisor on $\Gamma$. By Lemma \ref{lem:phizero}:
\begin{eqnarray*}
2D 
&= & K_{\Gamma^\E}+\sum_{e\in \E}\Div(w_e)\\
 &= & \sum_{v\in V(\Gamma)} (\deg_\Gamma(v)-2+2w_\Gamma(v)+ \deg_\E(v))v-2\sum_{e\in \E} w_e\\
  &= & \sum_{v\in V(\Gamma)} (2\deg_\Gamma(v)-\deg_{E(\Gamma)\setminus \E}(v)-2+2w_\Gamma(v))v-2\sum_{e\in \E} w_e.
\end{eqnarray*}
If we set $P:=E(\Gamma)\setminus \E$, we see that $\deg_P(v)$ is even for every $v\in V(\Gamma)$, that is $P$ is a cyclic subgraph of $\Gamma$, and $(\E,D)=(\E_P,D_P)$, as wanted.
\end{proof}

\begin{Rem}\label{rem:cyclic}
Let $(\E,D)$ be a polystable pseudo-root on a graph $\Gamma$. If $(\E,D)=(\E_P,D_P)$ for some cyclic subgraph $P$ of $\Gamma$, then  $P=P_{(\E,D)}$ by Lemma \ref{lem:phizero}. 
\end{Rem}

\begin{Prop}\label{prop:iota*}
Let $\iota\col\Gamma\ra\Gamma'$ be a specialization of graphs. Given a cyclic subgraph $P$ of $\Gamma$, we have  $\iota_*(\E_P,D_P)=(\E_{\iota_*(P)},D_{\iota_*(P)})$ and $\iota_*(\phi_{(\E_P,D_P)})=\phi_{(\E_{\iota_*(P)},D_{\iota_*(P)})}$.
\end{Prop}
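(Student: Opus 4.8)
The plan is to verify the two claimed identities separately, reducing both to straightforward computations with the explicit formula \eqref{eq:DP} for $D_P$ together with the behaviour of pushforward on divisors and degrees. For the first identity, $\iota_*(\E_P,D_P)=(\E_{\iota_*(P)},D_{\iota_*(P)})$, I would start by unwinding the definition of pushforward of a pseudo-divisor. Write $\iota\col\Gamma\to\Gamma'$ and let $\E=\E_P=E(\Gamma)\setminus P$. First I would check the exceptional-set part: $\iota_*(\E_P,D_P)$ has exceptional set $\E_P\cap E(\Gamma')$, which equals $E(\Gamma')\setminus P$ (an edge of $\Gamma'$ survives in $\Gamma$, and it lies in $P$ in $\Gamma$ iff it lies in $\iota_*(P)$ in $\Gamma'$, because $\iota_*(P)$ is by definition the image cyclic subgraph and no edge of $P$ gets contracted without its whole cycle being contracted — here one uses that $P$ is a disjoint union of cycles, so the contracted part of $P$ is a union of full cycles and the surviving edges of $P$ are exactly $P\cap E(\Gamma')=\iota_*(P)$). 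Hence the exceptional set of $\iota_*(\E_P,D_P)$ is $E(\Gamma')\setminus\iota_*(P)=\E_{\iota_*(P)}$, as needed.

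Next I would check the divisor part. Fix a vertex $v'\in V(\Gamma')$; its preimage $\iota^{-1}(v')$ is a connected subgraph of $\Gamma$, contracted by $\iota$. I would compute $\sum_{v\in\iota^{-1}(v')}D_P(v)$ using \eqref{eq:DP}. The terms $\deg_\Gamma(v)$, $\deg_P(v)/2$, $w_\Gamma(v)$ and the constant $-1$ all sum in a controlled way: contracting a connected subgraph $T$ with $b_1(T)$ independent cycles and $\#V(T)$ vertices produces a vertex of the right degree (edges internal to $T$ disappear, external ones are retained) and of weight $w_{\Gamma'}(v')=\sum_{v\in V(T)}w_\Gamma(v)+b_1(T)$; similarly $\deg_P$ restricted to $T$ loses the edges of $P$ internal to $T$, and $b_1(T\cap P)$ of those form cycles. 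Carrying out the bookkeeping — using $\#V(T)-1=\#\{\text{internal edges of }T\}-b_1(T)$ and the analogous identity for $T\cap P$ — the combination $\deg_\Gamma(v)-\tfrac12\deg_P(v)-1+w_\Gamma(v)$ summed over $V(T)$ collapses exactly to $\deg_{\Gamma'}(v')-\tfrac12\deg_{\iota_*(P)}(v')-1+w_{\Gamma'}(v')$, which is the coefficient of $v'$ in $D_{\iota_*(P)}$. The exceptional part $-\sum_{e\in\E_P}w_e$ pushes forward to $-\sum_{e\in\E_{\iota_*(P)}}w_e$ because exceptional vertices over contracted edges of $\E_P$ are themselves contracted (into the corresponding exceptional vertex surviving, or absorbed appropriately), which is exactly the content of the compatibility of $\widehat\iota$ with $\iota$ on exceptional vertices. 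This gives $\widehat\iota_*(D_P)=D_{\iota_*(P)}$.

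For the second identity, $\iota_*(\phi_{(\E_P,D_P)})=\phi_{(\E_{\iota_*(P)},D_{\iota_*(P)})}$, I would avoid recomputing the flow from scratch and instead invoke uniqueness. By Remark~\ref{rem:iota-pseudo-root} (applied along the specialization $\widehat\iota$), $\widehat\iota_*(\phi_{(\E_P,D_P)})$ is an acyclic flow on $\Gamma'^{\E_{\iota_*(P)}}$ satisfying $2\,\widehat\iota_*(D_P)+\Div(\widehat\iota_*(\phi_{(\E_P,D_P)}))=K_{\Gamma'^{\E_{\iota_*(P)}}}$ — here I use that $\iota_*$ is linear on divisors and commutes with taking the canonical divisor along a specialization. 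By the first identity just proved, $\widehat\iota_*(D_P)=D_{\iota_*(P)}$, and $(\E_{\iota_*(P)},D_{\iota_*(P)})$ is a (polystable, hence semistable) root-graph by Proposition~\ref{prop:EP}. Now Theorem~\ref{thm:uniquephi}(2) gives uniqueness of the acyclic flow with that property, so $\widehat\iota_*(\phi_{(\E_P,D_P)})=\phi_{(\E_{\iota_*(P)},D_{\iota_*(P)})}$; restricting to non-exceptional edges (which is what $\iota_*$ of a flow means) gives the stated equality. Alternatively, one can argue directly: by Lemma~\ref{lem:phizero}, $\phi_{(\E_P,D_P)}$ vanishes on non-exceptional edges and is $1$ toward exceptional vertices, and this property is visibly preserved by $\widehat\iota_*$, matching $\phi_{(\E_{\iota_*(P)},D_{\iota_*(P)})}$ by Lemma~\ref{lem:phizero} again.

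The main obstacle I anticipate is the first identity's divisor computation: making the Euler-characteristic bookkeeping precise for the contracted connected subgraph $\iota^{-1}(v')$ — in particular correctly accounting for how $b_1$ of the contracted piece and of its intersection with $P$ interact with the $\deg_P(v)/2$ term and the weight shift — so that everything cancels to the clean formula \eqref{eq:DP} for $\Gamma'$. Once that is in place, the flow statement is essentially free via the uniqueness in Theorem~\ref{thm:uniquephi}. It is likely cleanest to reduce to the case of contracting a single edge and then iterate, since a general specialization is a composition of single-edge contractions and all the relevant data ($\E_P$, $D_P$, $\phi$, the canonical divisor, pushforward) are functorial under composition.
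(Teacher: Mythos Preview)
Your proposal is essentially correct and, once you take your own advice to reduce to single-edge contractions, it coincides with the paper's proof: the paper handles the divisor identity by contracting one edge at a time (splitting into the loop and non-loop cases and tracking the $-[e\in\E_P]$ contribution from the collapsed exceptional vertex), and then deduces the flow identity directly from Lemma~\ref{lem:phizero}.

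Two small points are worth cleaning up. First, your justification that $\E_P\cap E(\Gamma')=\E_{\iota_*(P)}$ via ``no edge of $P$ gets contracted without its whole cycle being contracted'' is false and unnecessary: a single edge of a cycle can certainly be contracted, but the equality holds trivially because $\iota_*(P)=P\cap E(\Gamma')$ by definition and set complements commute with intersection with $E(\Gamma')$. Second, in your uniqueness-based argument for the flow, citing Remark~\ref{rem:iota-pseudo-root} does not by itself give acyclicity of $\widehat\iota_*(\phi_{(\E_P,D_P)})$ under a general specialization (that remark contracts the specific set $P_{(\E,D)}$, where $\phi$ already vanishes); your alternative direct argument via Lemma~\ref{lem:phizero} avoids this issue and is exactly what the paper does.
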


\begin{proof}
First of all, notice that  $\iota_*(\E_P)=\E_{\iota_*(P)}$. We now show that $\iota_*(D_P)=D_{\iota_*(P)}$. We can assume that $\iota$ is the contraction of just one edge $e\in E(\Gamma)$. Let $v_1$ and $v_2$ be the (possibly coinciding) vertices of $\Gamma$ incident to $e$ and let $v_0$ be the vertex of $\Gamma'$ to which $e$ contracts. Moreover, we have that $\iota_*(D_P)(v)=D_{\iota_*P}(v)$ for every $v\in V({\Gamma'}^{\E_{\iota_*(P)}})$ with $v\ne v_0$. If $e$ is not a loop, then
\begin{eqnarray*}
\iota_*(D_P)(v_0)&=& \deg_\Gamma(v_1)+\deg_{\Gamma}(v_2)-\frac{\deg_P(v_1)}{2}-\frac{\deg_P(v_2)}{2}-2+\\
&&+w_\Gamma(v_1)+w_\Gamma(v_2)-[e\in \E]\\
& = &  \deg_{\Gamma'}(v_0)-\left(\frac{\deg_P(v_1)}{2}+\frac{\deg_P(v_2)}{2}+[e\in \E]\right)+w_{\Gamma'}(v_0) \\
&=& \deg_{\Gamma'}(v_0)-\left(\frac{\deg_{\iota_*(P)}(v_0)}{2}+1\right)+w_{\Gamma'}(v_0)\\
& = & D_{\iota_*(P)}(v_0).
\end{eqnarray*}
 On the other hand, if $e$ is a loop, we set $\ol v:=v_1=v_2$. Then
\begin{eqnarray*}
\iota_*(D_P)(v_0)&=& \deg_\Gamma(\ol v)-\frac{\deg_P(\ol v)}{2}-1+w_{\Gamma}(\ol v)-[e\in \E]\\
& = &  \deg_{\Gamma'}(v_0)-\left(\frac{\deg_P(\ol v)}{2}+[e\in \E]\right)+\left(1+w_{\Gamma}(\ol v)\right) \\
&=& \deg_{\Gamma'}(v_0)-\left(\frac{\deg_{\iota_*(P)}(v_0)}{2}+1\right)+w_{\Gamma'}(v_0) \\
& = & D_{\iota_*(P)}(v_0).
\end{eqnarray*}
Summing up, we get  $\iota_*(\E_P,D_P)=(\E_{\iota_*(P)},D_{\iota_*(P)})$. 
By Lemma \ref{lem:phizero}, we also have $\iota_*(\phi_{(\E_P,D_P)})=\phi_{(\E_{\iota_*(P)},D_{\iota_*(P)})}$, and we are done.
\end{proof}

\begin{Cor}\label{cor:graph-spec}
Let $\iota\col\Gamma\ra \Gamma'$ be a specialization of graphs. 
If $(\Gamma,\E,D)$ is a polystable root-graph,  
then $(\Gamma',\iota_*(\E,D))$ is a polystable root-graph, and we have a specialization of polystable root-graphs
$(\Gamma,\E,D)\ge (\Gamma',\iota_*(\E,D))$.
\end{Cor}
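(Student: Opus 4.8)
The plan is to reduce Corollary \ref{cor:graph-spec} to the two preceding results, Proposition \ref{prop:EP} and Proposition \ref{prop:iota*}, plus Proposition \ref{prop:stab-contract}. First I would use Proposition \ref{prop:EP} to write $(\E,D)=(\E_P,D_P)$ for a uniquely determined cyclic subgraph $P$ of $\Gamma$ (with $P=P_{(\E,D)}$ by Remark \ref{rem:cyclic}). Then the image $P':=\iota_*(P)$ is again a cyclic subgraph of $\Gamma'$: indeed $\iota_*$ sends a union of cycles with no common edges to a union of cycles with no common edges, since an edge of $\Gamma'$ corresponds to a non-contracted edge of $\Gamma$, and contracting edges cannot create a shared edge between two formerly disjoint cycles.

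Next I would invoke Proposition \ref{prop:iota*}, which gives $\iota_*(\E_P,D_P)=(\E_{P'},D_{P'})$ and $\iota_*(\phi_{(\E_P,D_P)})=\phi_{(\E_{P'},D_{P'})}$. By the first part of Proposition \ref{prop:EP} applied to $\Gamma'$, the pseudo-divisor $(\E_{P'},D_{P'})$ is a polystable pseudo-root on $\Gamma'$; hence $(\Gamma',\iota_*(\E,D))=(\Gamma',\E_{P'},D_{P'})$ is a polystable root-graph. (Alternatively, polystability of $\iota_*(\E,D)$ follows from Proposition \ref{prop:stab-contract} together with the characterization of polystability via $\mu_\E$-stability of $D_\E$ on $\Gamma_\E$ and the compatibility of the specialization $\Gamma_\E\to\Gamma'_{\E'}$ with that of $\Gamma$; but routing through Proposition \ref{prop:iota*} is cleaner here.)

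Finally I would check that the underlying specialization of triples $(\Gamma,\E,D)\ge(\Gamma',\iota_*(\E,D))$ is in fact a specialization of \emph{semistable root-graphs} in the sense of the Definition preceding $\mathbf{SSR}_{g,n}$, i.e.\ that the induced map $\wh\iota\col\Gamma^\E\ra\Gamma'^{\E'}$ satisfies $\wh\iota_*(\phi_{(\E,D)})=\phi_{(\E',D')}$. But this is exactly the flow-compatibility statement $\iota_*(\phi_{(\E_P,D_P)})=\phi_{(\E_{P'},D_{P'})}$ from Proposition \ref{prop:iota*} (one only needs to note that the specialization $\wh\iota$ induced by $\iota$ at the level of the subdivided graphs restricts to $\iota$ on non-exceptional edges and is the identity near exceptional vertices, which matches the construction of $\iota_*$ on flows). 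This completes the argument, and since $\iota$ itself furnishes the required specialization, we indeed obtain a morphism $(\Gamma,\E,D)\ge(\Gamma',\iota_*(\E,D))$ in $\mathbf{PSR}_{g,n}$.

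The only mildly delicate point — which I would expect to be the main obstacle — is the bookkeeping in the last paragraph: one must be careful that the specialization $\wh\iota\col\Gamma^\E\to\Gamma'^{\E'}$ that is "induced by $\iota$" (in the sense defined earlier, with $\E'=\E\cap E(\Gamma')$) is precisely the one appearing in the definition of specialization of root-graphs, and that pushing forward the flow along $\wh\iota$ agrees with the operation $\iota_*$ on flows used in Proposition \ref{prop:iota*}. Everything else is a direct citation of the two immediately preceding propositions.
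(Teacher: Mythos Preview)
Your proposal is correct and matches the paper's approach exactly: the paper's proof is the single sentence ``The result readily follows from Propositions \ref{prop:EP} and \ref{prop:iota*},'' and your write-up simply unpacks this citation. Your extra care about $\iota_*(P)$ being cyclic and about the induced specialization $\wh\iota$ being the right one is not strictly needed here (the former is built into the definition of arrows in $\CYC_{g,n}$, and the latter is the definition of $\iota_*(\E,D)$), but it does no harm.
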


\begin{proof}
The result readily follows from Propositions \ref{prop:EP} and \ref{prop:iota*}.
\end{proof}

\begin{Prop}\label{prop:polyspec}
Let $(\Gamma,\E,D)\ge (\Gamma',\E',D')$ be a specialization of polystable root-graphs with associated specializations $\iota\col \Gamma\ra \Gamma'$ and $\wh\iota\col\Gamma^{\E}\ra{\Gamma'}^{\E'}$. Then $\E'=\E\cap E(\Gamma')$, so in particular $\wh\iota$ is the specialization induced by $\iota$. 
\end{Prop}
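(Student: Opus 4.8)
The plan is to exploit the fact, established in Proposition \ref{prop:EP} and Remark \ref{rem:cyclic}, that a polystable root-graph is completely determined by its underlying graph together with a cyclic subgraph: if $(\E,D)$ is polystable on $\Gamma$, then $(\E,D)=(\E_P,D_P)$ where $P=P_{(\E,D)}$ is the cyclic subgraph $E(\Gamma)\setminus\E$, and likewise $(\E',D')=(\E_{P'},D_{P'})$ with $P'=E(\Gamma')\setminus\E'$ on $\Gamma'$. So the task reduces to showing that, for a specialization of polystable root-graphs, the subset $\E\subset E(\Gamma)$ is ``cut out'' from $\Gamma'$ in the naive way, i.e.\ $\E'=\E\cap E(\Gamma')$; equivalently, $P'=P\cap E(\Gamma')$, i.e.\ the specialization does not move any edge of $P$ out of $P'$ (beyond what contraction forces) nor drag any non-cycle edge into the cycle.

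First I would recall what a specialization of triples $(\Gamma,\E,D)\ge(\Gamma',\E',D')$ gives us: a specialization $\iota\col\Gamma\ra\Gamma'$ with $\E'\subset\E\cap E(\Gamma')$, and a $\iota$-compatible specialization $\wh\iota\col\Gamma^\E\ra{\Gamma'}^{\E'}$ with $\wh\iota_*(D)=D'$; being a specialization of \emph{root-graphs} we additionally have $\wh\iota_*(\phi_{(\E,D)})=\phi_{(\E',D')}$. Since one inclusion $\E'\subset\E\cap E(\Gamma')$ is automatic, only the reverse inclusion $\E\cap E(\Gamma')\subset\E'$ needs proof. Suppose $e\in\E\cap E(\Gamma')$ but $e\notin\E'$. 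Then $e\in E(\Gamma')\setminus\E'=P'=P_{(\E',D')}$, so $\phi_{(\E',D')}(e)=0$ by Remark \ref{rem:cyclic}. On the other hand $e\in\E$ means $e\in E(\Gamma)$ carries a unique exceptional edge (or rather $e\notin E(\Gamma)\setminus\E$), so an edge $e'$ of $\Gamma^\E$ over $e$ is exceptional, and by Lemma \ref{lem:phizero} (applied to the polystable $(\E,D)$) we have $\phi_{(\E,D)}(e')\ne0$; in fact $\phi_{(\E,D)}(e')=\pm1$. Now I would use the compatibility $\wh\iota_*(\phi_{(\E,D)})=\phi_{(\E',D')}$ together with the description of $\wh\iota$: since $e\in E(\Gamma')$, it is not contracted by $\iota$, so not all edges of $\Gamma^\E$ over $e$ are contracted by $\wh\iota$, hence (by the relation $E({\Gamma'}^{\E'})\subset E(\Gamma^\E)$ and the formula $\iota_*\phi(e'')=\phi(e'')$ for edges $e''\in E({\Gamma'}^{\E'})$) the edge of ${\Gamma'}^{\E'}$ over $e$ —which, as $e\notin\E'$, is just $e$ itself— satisfies $\phi_{(\E',D')}(e)=\wh\iota_*(\phi_{(\E,D)})(e)=\phi_{(\E,D)}(e')\ne0$, contradicting $\phi_{(\E',D')}(e)=0$. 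Hence $\E\cap E(\Gamma')\subset\E'$, giving $\E'=\E\cap E(\Gamma')$.

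Once $\E'=\E\cap E(\Gamma')$ is established, the final clause is essentially definitional: the induced specialization $\wh\iota'\col\Gamma^\E\ra{\Gamma'}^{\E'}$ associated to $\iota$ (in the sense recalled before Remark \ref{rem:minimal-pol}) has $\E'=\E\cap E(\Gamma')$ by construction, and a specialization of triples is uniquely determined by its associated map on the subdivided graphs, so the $\iota$-compatible $\wh\iota$ we started with must coincide with $\wh\iota'$; I would just cite the remark that a specialization of triples is determined by $\wh\iota\col\Gamma^\E\ra{\Gamma'}^{\E'}$. The step I expect to be the main obstacle is the bookkeeping in the contradiction argument: carefully matching the edge $e$ of $\Gamma'$ with the correct edge(s) over it in $\Gamma^\E$ versus ${\Gamma'}^{\E'}$ under $\wh\iota$, and making sure the sign/nonvanishing of $\phi$ transports correctly through $\wh\iota_*$ when $e$ is exceptional on one side but not the other. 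Everything else is a direct appeal to Lemma \ref{lem:phizero}, Remark \ref{rem:cyclic}, and the definition of specialization of root-graphs.
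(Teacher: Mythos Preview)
Your proposal is correct and follows essentially the same argument as the paper: both derive a contradiction from Lemma~\ref{lem:phizero} by observing that an exceptional edge of $\Gamma^\E$ surviving to a non-exceptional edge of ${\Gamma'}^{\E'}$ would force $\phi_{(\E',D')}$ to be simultaneously zero and nonzero on it, using $\wh\iota_*(\phi_{(\E,D)})=\phi_{(\E',D')}$. The paper phrases this as ``if one of the two exceptional edges over $e\in\E$ is contracted by $\wh\iota$, then so is the other'', which is logically equivalent to your contrapositive formulation starting from a hypothetical $e\in(\E\cap E(\Gamma'))\setminus\E'$.
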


\begin{proof}
Let $e_1,e_2$ be different edges of $\Gamma^{\E}$ incident to the same exceptional vertex of $\Gamma^{\E}$. If $\wh{\iota}$ contracts $e_1$, then $\wh{\iota}$ contracts $e_2$ as well, otherwise $e_2$ would be a non-exceptional edge of ${\Gamma'}^{\E'}$ and hence $\phi_{(\E',D')}(e_2)=0$ by Lemma \ref{lem:phizero}. But $\phi_{(\E',D')}(e_2)=\wh\iota_*(\phi_{(\E,D)})(e_2)=1$ by Theorem \ref{thm:uniquephi}, a contradiction.
\end{proof}

\begin{Cor}\label{cor:isocat}
The categories $\mathbf{PSR}_{g,n}$ and $\CYC_{g,n}$ are canonically isomorphic.
\end{Cor}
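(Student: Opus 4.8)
The strategy is to exhibit a pair of functors between $\mathbf{PSR}_{g,n}$ and $\CYC_{g,n}$ and check they are mutually inverse. In one direction, send a polystable root-graph $(\Gamma,\E,D)$ to the pair $(\Gamma, P_{(\E,D)})$, where $P_{(\E,D)}$ is the cyclic subgraph defined in equation \eqref{eq:PED}; this is a cyclic subgraph of $\Gamma$ by Lemma \ref{lem:phizero} (it is precisely the complement of $\E$, and its degrees are even). In the other direction, send $(\Gamma,P)$ to $(\Gamma,\E_P,D_P)$, which is a polystable root-graph by Proposition \ref{prop:EP}. On objects, the two assignments are mutually inverse: Proposition \ref{prop:EP} shows every polystable pseudo-root on $\Gamma$ is of the form $(\E_P,D_P)$, and Remark \ref{rem:cyclic} shows $P_{(\E_P,D_P)}=P$, while conversely the identity $(\E_P,D_P)=(\E_{P_{(\E,D)}},D_{P_{(\E,D)}})$ follows from $P_{(\E,D)}$ being the cyclic subgraph in the statement of Proposition \ref{prop:EP}.

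\emph{Functoriality and faithfulness on morphisms.} I would next check that both assignments respect specializations. A morphism $(\Gamma,P)\ra(\Gamma',P')$ in $\CYC_{g,n}$ is a specialization $\iota\col\Gamma\ra\Gamma'$ with $\iota_*(P)=P'$. By Proposition \ref{prop:iota*} we have $\iota_*(\E_P,D_P)=(\E_{P'},D_{P'})$ and $\iota_*(\phi_{(\E_P,D_P)})=\phi_{(\E_{P'},D_{P'})}$, so this is exactly a specialization of polystable root-graphs (using also Proposition \ref{prop:polyspec} to see that the associated specialization $\wh\iota\col\Gamma^\E\ra{\Gamma'}^{\E'}$ is the one induced by $\iota$, so no extra data is involved). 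Conversely, a morphism $(\Gamma,\E,D)\ge(\Gamma',\E',D')$ in $\mathbf{PSR}_{g,n}$ has an underlying specialization $\iota\col\Gamma\ra\Gamma'$; Corollary \ref{cor:graph-spec} (or directly Proposition \ref{prop:iota*}) shows $\iota_*(P_{(\E,D)})=P_{(\E',D')}$, giving a morphism in $\CYC_{g,n}$. Since a specialization of polystable root-graphs is determined by its underlying graph specialization $\iota$ (by Proposition \ref{prop:polyspec}, $\wh\iota$ is forced), and the same is true for morphisms in $\CYC_{g,n}$, the two correspondences on morphisms are inverse bijections, and both assignments are clearly functorial (composition of specializations of graphs corresponds under $\iota_*$).

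\emph{Main obstacle.} The only genuinely delicate point is making sure the two notions of morphism match up exactly — in particular that a graph specialization $\iota$ with $\iota_*(P)=P'$ automatically upgrades to a morphism of \emph{polystable root-graphs} in the sense of the excerpt (i.e. $\wh\iota_*(\phi_{(\E,D)})=\phi_{(\E',D')}$), and conversely that a morphism in $\mathbf{PSR}_{g,n}$ carries no more information than $\iota$. Both halves are handled by the combination of Proposition \ref{prop:iota*} (which transports the canonical flow correctly) and Proposition \ref{prop:polyspec} (which pins down $\wh\iota$ and shows $\E'=\E\cap E(\Gamma')$), so once these are invoked the verification is routine. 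Putting everything together yields the asserted canonical isomorphism of categories.

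\begin{proof}
Define a functor $F\col\mathbf{PSR}_{g,n}\ra\CYC_{g,n}$ on objects by $F(\Gamma,\E,D)=(\Gamma,P_{(\E,D)})$; this is well-defined since $P_{(\E,D)}=E(\Gamma)\setminus\E$ is a cyclic subgraph of $\Gamma$ by Lemma \ref{lem:phizero} (equivalently, by Proposition \ref{prop:EP}, $(\E,D)=(\E_P,D_P)$ for a cyclic subgraph $P$, and $P=P_{(\E,D)}$ by Remark \ref{rem:cyclic}). On a morphism $(\Gamma,\E,D)\ge(\Gamma',\E',D')$ with underlying graph specialization $\iota\col\Gamma\ra\Gamma'$, set $F(\iota)=\iota$; this is a morphism $(\Gamma,P_{(\E,D)})\ra(\Gamma',P_{(\E',D')})$ in $\CYC_{g,n}$ because $\iota_*(P_{(\E,D)})=P_{(\E',D')}$ by Proposition \ref{prop:iota*} (applied to $P=P_{(\E,D)}$, using Remark \ref{rem:cyclic}).

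Conversely, define $G\col\CYC_{g,n}\ra\mathbf{PSR}_{g,n}$ by $G(\Gamma,P)=(\Gamma,\E_P,D_P)$, which is a polystable root-graph by Proposition \ref{prop:EP}. On a morphism $\iota\col(\Gamma,P)\ra(\Gamma',P')$, i.e. a graph specialization with $\iota_*(P)=P'$, Proposition \ref{prop:iota*} gives $\iota_*(\E_P,D_P)=(\E_{P'},D_{P'})$ and $\iota_*(\phi_{(\E_P,D_P)})=\phi_{(\E_{P'},D_{P'})}$; by Proposition \ref{prop:polyspec} the specialization $\wh\iota\col\Gamma^{\E_P}\ra{\Gamma'}^{\E_{P'}}$ induced by $\iota$ is the unique one compatible with $\iota$, and $\E_{P'}=\E_P\cap E(\Gamma')$, so $\iota$ is a specialization of polystable root-graphs. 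Set $G(\iota)=\iota$.

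Both $F$ and $G$ act as the identity on underlying graph specializations, hence $F\circ G$ and $G\circ F$ are the identity on morphisms once they are the identity on objects. On objects, $G(F(\Gamma,\E,D))=(\Gamma,\E_{P_{(\E,D)}},D_{P_{(\E,D)}})=(\Gamma,\E,D)$ by Proposition \ref{prop:EP} and Remark \ref{rem:cyclic}, and $F(G(\Gamma,P))=(\Gamma,P_{(\E_P,D_P)})=(\Gamma,P)$ by Remark \ref{rem:cyclic}. Therefore $F$ and $G$ are mutually inverse isomorphisms of categories.
\end{proof}
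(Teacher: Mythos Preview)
Your proof is correct and follows exactly the approach the paper intends: the paper's own proof simply reads ``This is a consequence of Propositions~\ref{prop:EP}, \ref{prop:iota*}, \ref{prop:polyspec},'' and you have unpacked precisely those three ingredients into explicit mutually inverse functors. One tiny remark: in defining $G$ on morphisms you do not actually need Proposition~\ref{prop:polyspec} (that proposition goes the other way, from a specialization of polystable root-graphs to the conclusion that $\wh\iota$ is induced); Proposition~\ref{prop:iota*} alone (or its Corollary~\ref{cor:graph-spec}) already shows that the induced $\wh\iota$ gives a specialization of polystable root-graphs, so your argument is slightly overcited there but not incorrect.
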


\begin{proof}
This is a consequence of Propositions \ref{prop:EP}, \ref{prop:iota*}, \ref{prop:polyspec}. 
\end{proof}

\begin{Lem}
\label{lem:semispec}
Let $(\E,D)$ be a semistable pseudo-root on a graph $\Gamma$. Assume that $V\subset V(\Gamma)$ is a subset with $\beta_{\E,D}(V)=0$ and $E(V,V^c)\not\subset \E$. Then there is a unique semistable pseudo-root $(\E\cup E(V,V^c),D_1)$ on $\Gamma$ admitting a specialization $(\Gamma,\E\cup E(V,V^c),D_1)\ge(\Gamma,\E, D)$ of semistable root-graphs. Moreover, we have $P_{(\E\cup E(V,V^c),D_1)}=P_{(\E,D)}$.
\end{Lem}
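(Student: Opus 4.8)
The plan is to exhibit $D_1$ explicitly, check it has the required properties, and then show it is the only possibility. I will write $\E_0 = E(V,V^c)\setminus\E$; since $E(V,V^c)\not\subset\E$ we have $\E_0\neq\emptyset$, and in particular $V$ is a proper nonempty subset of $V(\Gamma)$. Set $\E_1:=\E\cup E(V,V^c)=\E\sqcup\E_0$. The graph $\Gamma^{\E_1}$ is obtained from $\Gamma^\E$ by inserting one exceptional vertex $w_e$ on each $e\in\E_0$, splitting $e$ into a half $e'$ meeting the endpoint $v_1(e)\in V$ and a half $e''$ meeting the endpoint in $V^c$; let $\wh\iota\col\Gamma^{\E_1}\to\Gamma^\E$ be the specialization contracting all the halves $e'$, whose induced specialization of graphs is $\id_\Gamma$. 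I define $D_1$ on $\Gamma^{\E_1}$ by $D_1(w)=-1$ on every exceptional vertex, $D_1(v)=D(v)$ for $v\in V^c$, and $D_1(v)=D(v)+\deg_{\E_0}(v)$ for $v\in V$; equivalently, $D_1$ is the unique divisor with value $-1$ on each new exceptional vertex satisfying $\wh\iota_*(D_1)=D$. Correspondingly I define a flow $\phi_1$ on $\Gamma^{\E_1}$ by $\phi_1=\phi_{(\E,D)}$ on the edges $\Gamma^{\E_1}$ shares with $\Gamma^\E$ and $\phi_1(e')=\phi_1(e'')=1$, oriented towards $w_e$, on each new pair of exceptional edges.

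The first point I would establish is that $\phi_{(\E,D)}$ has value $1$ on every edge of $E(V,V^c)\setminus\E$ oriented into $V$. This comes from $\beta_{\E,D}(V)=0$: by the elementary dictionary relating $\beta_{\E,D}(\cdot)$ on $\Gamma_\E$ to $\beta_D(\cdot)$ on $\Gamma^\E$, this equality becomes $\beta_D(\wh V)=0$ for the proper nonempty subset $\wh V\subset V(\Gamma^\E)$ obtained from $V$ by adjoining the exceptional vertices lying on the $\E$-edges incident to $V$; since $E_{\Gamma^\E}(\wh V,\wh V^c)$ consists exactly of the edges of $E(V,V^c)\setminus\E$ together with one half of each $\E$-edge in $E(V,V^c)$, Lemma \ref{lem:beta-phi} gives the claim. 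Using this and $2D+\Div(\phi_{(\E,D)})=K_{\Gamma^\E}$, a vertex-by-vertex comparison yields $2D_1+\Div(\phi_1)=K_{\Gamma^{\E_1}}$. Next I would check that $\phi_1$ is acyclic: each new vertex $w_e$ is a local sink of $\phi_1$, so no non-null $\phi_1$-cycle can traverse a new edge, and such a cycle would therefore be a non-null $\phi_{(\E,D)}$-cycle in $\Gamma^\E$, which is impossible. Hence $(\E_1,D_1)$ is a pseudo-root. To get semistability I would use that $|\phi_1(e)|\le1$ everywhere --- on new edges by construction, on old ones by Lemma \ref{lem:phi-lessone} --- together with the identity $\beta_{D_1}(W)=\tfrac12\sum(1-\phi_1(e))$, valid for any flow solving $2D_1+\Div(\phi_1)=K_{\Gamma^{\E_1}}$, the sum being over the oriented edges of $E(W,W^c)$ with target in $W$; this makes every $\beta_{D_1}(W)$ nonnegative, so $(\E_1,D_1)$ is semistable, and Theorem \ref{thm:uniquephi}(2) identifies $\phi_1$ with $\phi_{(\E_1,D_1)}$.

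I would then verify that $\wh\iota_*(\phi_{(\E_1,D_1)})=\phi_{(\E,D)}$: clear on edges outside $\E_0$, and for $e\in\E_0$ the image $\wh\iota_*(\phi_1)(e)=\phi_1(e'')$ equals $1$ oriented into $V$, matching the first point; hence $(\Gamma,\E_1,D_1)\ge(\Gamma,\E,D)$ is a specialization of semistable root-graphs. For uniqueness, suppose $(\E_1,D_1')$ is a semistable pseudo-root admitting a specialization of semistable root-graphs onto $(\Gamma,\E,D)$. Its underlying graph morphism is $\id_\Gamma$, so the associated specialization $\Gamma^{\E_1}\to\Gamma^\E$ must contract exactly one of the two halves of each $e\in\E_0$ and nothing else. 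Were it to contract the $V^c$-half $e''$ for some $e$, then pushing forward the value $\phi_{(\E_1,D_1')}(e')=1$ (directed into $w_e$, by Theorem \ref{thm:uniquephi}(1)) would force $\phi_{(\E,D)}$ to take the value $1$ on $e$ oriented \emph{out} of $V$, contradicting the first point. So the associated specialization is $\wh\iota$, and $\wh\iota_*(D_1')=D$ then forces $D_1'=D_1$.

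Finally, $P_{(\E_1,D_1)}=P_{(\E,D)}$ follows because $\phi_{(\E_1,D_1)}$ agrees with $\phi_{(\E,D)}$ on the common edges and is $\pm1$ on the new exceptional edges, while $\phi_{(\E,D)}$ already has value $1$ on $\E_0$; so the two flows vanish on the same set of edges. I expect the main obstacle to be the orientation bookkeeping in $2D_1+\Div(\phi_1)=K_{\Gamma^{\E_1}}$ and, in the mirror role, in the uniqueness step --- one must confirm that the half of each new edge the specialization is free to contract is the one meeting $V$, which is exactly what $\beta_{\E,D}(V)=0$ forces through Lemma \ref{lem:beta-phi}. The remaining steps reduce to the structure results already proved (Theorem \ref{thm:uniquephi}, Lemmas \ref{lem:phi-lessone} and \ref{lem:beta-phi}) and to the routine $\beta$-versus-$\phi$ dictionary.
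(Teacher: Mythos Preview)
Your proof is correct and follows essentially the same architecture as the paper's: the same explicit $D_1$, the same extension $\phi_1$ of $\phi_{(\E,D)}$, the same use of Lemma~\ref{lem:beta-phi} (via $\beta_D(\wh V)=\beta_{\E,D}(V)=0$) to pin down the orientation of $\phi_{(\E,D)}$ on $E(V,V^c)\setminus\E$, and the same verification of $2D_1+\Div(\phi_1)=K_{\Gamma^{\E_1}}$, acyclicity, and $P_{(\E_1,D_1)}=P_{(\E,D)}$.

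The one genuine difference is in how semistability and uniqueness of $(\E_1,D_1)$ are obtained. The paper outsources both to \cite[Proposition~5.3]{AAPT}, which already asserts that this $D_1$ is the unique semistable pseudo-divisor on $\Gamma$ specializing (as a pseudo-divisor) to $(\E,D)$; the paper then only needs to check the root-graph condition. You instead prove semistability directly via the identity $\beta_{D_1}(W)=\tfrac12\sum(1-\phi_1(e))$ combined with $|\phi_1|\le1$, and you prove uniqueness by the orientation argument on which half of each new edge must be contracted. Both are correct and short; your route makes the argument self-contained within the paper, at the cost of a few extra lines, while the paper's route is quicker but relies on the external reference.
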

\begin{proof}
By \cite[Proposition 5.3]{AAPT} the pseudo-divisor $(\E\cup E(V,V^c),D_1)$ on $\Gamma$, where
\begin{equation*}\label{eq:D1}
D_1(v)=\begin{cases}
D(v)+\text{deg}_{E(V,V^c)\setminus\E}(v)&\text{ if $v \in V$,}\\
D(v)&\text{ if $v \notin V$,}\\
-1 &\text{ if $v$ is exceptional,}
\end{cases}
\end{equation*}
is the unique semistable pseudo-divisor on $\Gamma$ admitting a specialization of pseudo-divisors $(\E\cup E(V,V^c),D_1)\ra(\E,D)$. The associated specialization $\wh\iota\col \Gamma^{\E\cup E(V,V^c)}\ra \Gamma^{\E}$ gives rise to a natural inclusion $E(\Gamma^\E)\subset E(\Gamma^{\E\cup E(V,V^c)})$. We define the flow $\phi_1$ on $\Gamma^{\E\cup E(V,V^c)}$ such that $(\phi_1)|_{\ora{E}(\Gamma^\E)}=\phi_{(\E,D)}$ and $\phi_1(e)=1$ for every  $e\in \ora{E}(\Gamma^{\E\cup E(V,V^c)})\setminus \ora{E}(\Gamma^\E)$ such that $t(e)$ is an exceptional vertex. 
We define
\[
\widetilde{V}=V \cup\{w_e | e\in \E\cap(E(V)\cup E(V,V^c))\},
\] 
where $w_e$ is the exceptional vertex inside $e\in \E$.
 It is easy to check that $\beta_D(\widetilde{V})=\beta_{\E,D}(V)=0$.
So, by Lemma \ref{lem:beta-phi},
we have that $\phi_{(\E,D)}(e)=1$ for every $e\in \ora{E}(V,V^c)\setminus \ora{\E}\subset \ora{E}(\widetilde{V},\widetilde{V}^c)$ such that $t(e)\in V$. We deduce that  $\wh\iota_*(\phi_1)=\phi_{(\E,D)}$.

Notice that $\phi_1$ is an acyclic flow, since $\phi_{(\E,D)}$ is acyclic and $\wh\iota^S_*(P)$ is a non-null $\phi_{(\E,D)}$-cycle on $\Gamma^\E$ for every non-null $\phi_1$-cycle $P$ on $\Gamma^{\E\cup E(V,V^c)}$.

Next, we already observed that
$\phi_{(\E,D)}(e)=1$ for every $e\in \ora{E}(V,V^c)\setminus\ora{\E}$ such that $t(e)\in V$. Then for every $v\in V(\Gamma^{\E\cup E(V,V^c)})$ we have:
\begin{equation*}
\Div(\phi_1)(v)=\begin{cases}
\Div(\phi)(v)-2\text{deg}_{E(V,V^c)\setminus \E}(v)&\text{ if $v \in V$,}\\
\Div(\phi)(v)&\text{ if $v \notin V$,}\\
2 &\text{ if $v$ is exceptional}
\end{cases}
\end{equation*}
and hence
\begin{equation*}
2D_1(v)+\Div(\phi_1)(v)=\begin{cases}
2D(v)+\Div(\phi)(v)&\text{ if $v \in V(\Gamma)$,}\\
0 &\text{ if $v$ is exceptional.}
\end{cases}
\end{equation*}
Since $2D+\Div(\phi)=K_{\Gamma^\E}$, it follows that
 $2D_1+\Div(\phi_1)=K_{\Gamma^{\E\cup E(V,V^c)}}$, and hence $(\E\cup E(V,V^c),D_1)$ is a semistable pseudo-root on $\Gamma$.  By construction, we have a specialization of semistable root-graphs $(\Gamma,\E\cup E(V,V^c),D_1)\ge(\Gamma,\E,D)$ and $P_{(\E\cup E(V,V^c),D_1)}=P_{(\E,D)}$.
\end{proof}

\begin{Prop}\label{prop:minimal-root}
 Let $(\Gamma,\E,D)$ be a semistable root-graph. Then:
 \begin{itemize}
     \item[(1)] $(\Gamma,\pol(\E,D))$ is a polystable root-graph; it is the unique minimal polystable root-graph  admitting a specialization of semistable root-graphs to $(\Gamma,\E,D)$;
     \item[(2)] $P_{(\E,D)}=P_{\pol(\E,D)}$.
 \end{itemize}
\end{Prop}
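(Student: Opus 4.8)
The plan is to prove (1) and (2) together: first produce, by iterating Lemma \ref{lem:semispec}, a polystable root-graph specializing to $(\Gamma,\E,D)$ with the same associated subgraph $P_{(\E,D)}$, and then identify it with $\pol(\E,D)$. If $(\E,D)$ is polystable there is nothing to do. Otherwise, since $(\E,D)$ is semistable one has $\beta_{\E,D}(V)\ge 0$ for every $V\subseteq V(\Gamma)$ (immediate from the identity between $\beta_{\E,D}$ and $\beta_D$ used in the proof of Lemma \ref{lem:semispec} together with the $\mu$-semistability of $D$ on $\Gamma^{\E}$), so failure of polystability forces a subset $V\subsetneq V(\Gamma)$ with $\beta_{\E,D}(V)=0$ and $E(V,V^c)\not\subset\E$. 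Lemma \ref{lem:semispec} then provides a semistable pseudo-root $(\E_1,D_1):=(\E\cup E(V,V^c),D_1)$ on $\Gamma$, a specialization of semistable root-graphs $(\Gamma,\E_1,D_1)\ge(\Gamma,\E,D)$, and $P_{(\E_1,D_1)}=P_{(\E,D)}$. As $\E\subsetneq\E_1\subseteq E(\Gamma)$ and $E(\Gamma)$ is finite, iterating this step terminates, and it can terminate only at a \emph{polystable} pseudo-root $(\E_0,D_0)$ on $\Gamma$; composing the specializations gives a specialization of semistable root-graphs $(\Gamma,\E_0,D_0)\ge(\Gamma,\E,D)$ with $P_{(\E_0,D_0)}=P_{(\E,D)}$.

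It then remains to show $(\E_0,D_0)=\pol(\E,D)$. Since the specialization above is in particular one of pseudo-divisors and $(\E_0,D_0)$ is $\mu$-polystable, Remark \ref{rem:minimal-pol} gives $(\E_0,D_0)\ge\pol(\E,D)\ge(\E,D)$; write $(\E',D')=\pol(\E,D)$, so $\E'\subseteq\E_0$, and let $\wh\iota\colon\Gamma^{\E_0}\to\Gamma^{\E'}$ be the induced specialization, $\wh\iota_*(D_0)=D'$. Pushing forward the relation $2D_0+\Div(\phi_{(\E_0,D_0)})=K_{\Gamma^{\E_0}}$ and using, exactly as in Remark \ref{rem:iota-pseudo-root}, that $\wh\iota_*$ is linear on divisors, preserves the canonical divisor, and sends acyclic flows to acyclic flows, one obtains $2D'+\Div(\wh\iota_*\phi_{(\E_0,D_0)})=K_{\Gamma^{\E'}}$. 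Hence $(\Gamma,\pol(\E,D))$ is a polystable root-graph, $\wh\iota_*\phi_{(\E_0,D_0)}=\phi_{(\E',D')}$ by the uniqueness in Theorem \ref{thm:uniquephi}, and $(\Gamma,\E_0,D_0)\ge(\Gamma,\E',D')$ is a specialization of \emph{polystable} root-graphs; Proposition \ref{prop:polyspec} then forces $\E_0=\E'$, so $\wh\iota$ is an isomorphism and $(\E_0,D_0)=\pol(\E,D)$. This yields the first assertion of (1), and, since $P$ was unchanged at each step of the iteration, also (2): $P_{\pol(\E,D)}=P_{(\E_0,D_0)}=P_{(\E,D)}$.

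For the minimality statement in (1), let $(\Gamma'',\E'',D'')$ be any polystable root-graph admitting a specialization of semistable root-graphs to $(\Gamma,\E,D)$, with underlying graph specialization $\iota\colon\Gamma''\to\Gamma$. By Corollary \ref{cor:graph-spec}, $(\Gamma,\iota_*(\E'',D''))$ is a polystable root-graph and $(\Gamma'',\E'',D'')\ge(\Gamma,\iota_*(\E'',D''))$, and the given specialization factors through it, so $(\Gamma,\iota_*(\E'',D''))\ge(\Gamma,\E,D)$ (flow compatibility for this factor again following from Theorem \ref{thm:uniquephi}). Now $\iota_*(\E'',D'')$ is a $\mu$-polystable pseudo-divisor on $\Gamma$ dominating $(\E,D)$, so Remark \ref{rem:minimal-pol} gives $\iota_*(\E'',D'')\ge\pol(\E,D)$; both being polystable root-graphs on $\Gamma$, this is automatically a specialization of polystable root-graphs. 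Composing, $(\Gamma'',\E'',D'')\ge(\Gamma,\pol(\E,D))$. Since $\pol(\E,D)$ is itself a polystable root-graph specializing to $(\Gamma,\E,D)$, it is the minimum among such, hence the unique minimal one.

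The construction and termination of the iteration are immediate, so the real work lies in the flow bookkeeping: I expect the main obstacle to be checking that every specialization occurring above is a specialization of \emph{root-graphs}, not merely of pseudo-divisors, so that Remark \ref{rem:minimal-pol} and Proposition \ref{prop:polyspec} apply — concretely, verifying that the pseudo-divisor $\pol(\E,D)$ supplied abstractly by Remark \ref{rem:minimal-pol} underlies a (polystable) root-graph, and, in the minimality part, producing the factorization of the given specialization through $(\Gamma,\iota_*(\E'',D''))$ and checking $\iota_*$-compatibility of the unique flows. These reduce to local computations of the same kind already performed in Remark \ref{rem:iota-pseudo-root} and the proof of Lemma \ref{lem:semispec}.
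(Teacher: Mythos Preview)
Your iteration via Lemma \ref{lem:semispec} is exactly the paper's. The difference is in identifying the endpoint $(\E_0,D_0)$ with $\pol(\E,D)$: the paper simply notes that this chain of specializations is, at the level of pseudo-divisors, the chain of \cite[Equation~(21) and Proposition~5.4]{AAPT} (each step of Lemma \ref{lem:semispec} being built on \cite[Proposition~5.3]{AAPT}), and that reference shows the chain terminates at $\pol(\E,D)$. Items (1) and (2) then follow directly from Lemma \ref{lem:semispec}. You instead try an independent identification via Remark \ref{rem:minimal-pol} and Proposition \ref{prop:polyspec}, and this is where there is a genuine gap.

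To invoke Proposition \ref{prop:polyspec} on $(\E_0,D_0)\ge\pol(\E,D)$ you must first know this is a specialization of \emph{root-graphs}, hence that $\psi:=\wh\iota_*\phi_{(\E_0,D_0)}$ is acyclic. You appeal to Remark \ref{rem:iota-pseudo-root}, but that remark contracts only the edges of $P_{(\E,D)}$, on which the flow vanishes; here $\wh\iota$ contracts \emph{exceptional} edges, on which $\phi_{(\E_0,D_0)}=\pm1$ by Lemma \ref{lem:phizero}, and contracting such edges does not preserve acyclicity in general. Concretely, for each $e\in\E_0\setminus\E'$ the surviving half-edge carries $\psi=\pm1$, and nothing you cite rules out a positive $\psi$-cycle in $\Gamma_{\E'}$; so your closing claim that the obstacle ``reduces to local computations of the same kind'' as in Remark \ref{rem:iota-pseudo-root} and Lemma \ref{lem:semispec} is not correct. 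The same gap recurs in your minimality argument: a pseudo-divisor specialization between two polystable root-graphs on the same $\Gamma$ is \emph{not} automatically a root-graph specialization --- indeed, by Corollary \ref{cor:isocat} (equivalently Proposition \ref{prop:polyspec}), two polystable root-graphs on the same $\Gamma$ are comparable as root-graphs only when they coincide, so the assertion you need is precisely the conclusion you are after. The clean fix is the paper's one-line observation that your iteration \emph{is} the one of \cite{AAPT}.
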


\begin{proof}
A semistable pseudo-root $(\E,D)$ on $\Gamma$ is polystable if and only if  
$\beta_{\E,D}(V)>0$ 
for every subset $V\subset V(\Gamma)$ such that $E(V,V^c)\not\subset \E$.
Therefore, by Lemma \ref{lem:semispec}, we can construct a sequence of specializations of semistable root-graphs
\[
(\Gamma,\E_k,D_k)>(\Gamma,\E_{k-1},D_{k-1})>\dots >(\Gamma,\E_0,D_0)=(\Gamma,\E,D),
\]
 where $(\E_k,D_k)$ is polystable and $(\E_j,D_j)$ is not polystable for $j<k$. The sequence of specializations above is the same as in \cite[Equation 21]{AAPT}. By \cite[Proposition 5.4]{AAPT} we have that $(\E_k,D_k)=\pol(\E,D)$, which proves (1). By Lemma \ref{lem:semispec} we also have $P_{(\E_i,D_i)}=P_{(\E,D)}$ for every $i\in\{1,\dots,k\}$, which proves (2).    
\end{proof}

\begin{Cor}\label{cor:tree}
If $(\Gamma,\E,D)$ is a semistable root-graph, then $P_{(\E,D)}$ is a cyclic subgraph of $\Gamma$. In particular no connected component of $\ol{P}_{(\E,D)}$ is a tree with at least one edge. 
\end{Cor}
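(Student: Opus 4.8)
The statement to establish is that $P_{(\E,D)}$ is a cyclic subgraph of $\Gamma$, and consequently that no connected component of $\ol P_{(\E,D)}$ is a tree with at least one edge. The plan is to deduce this from the already-settled structure of polystable root-graphs via the polystabilization operation. First, I would record that, by the remark following equation \eqref{eq:PED}, the flow $\phi_{(\E,D)}$ is nonzero on every exceptional edge, so $P_{(\E,D)}$ consists only of non-exceptional edges and can legitimately be regarded as a subgraph of $\Gamma$ (and of $\Gamma_\E$); this is what makes the phrase ``$P_{(\E,D)}$ is a cyclic subgraph of $\Gamma$'' meaningful in the first place.

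The main step is the reduction to the polystable case. By Proposition \ref{prop:minimal-root}, $(\Gamma,\pol(\E,D))$ is a polystable root-graph and $P_{(\E,D)} = P_{\pol(\E,D)}$; hence it suffices to analyze $P_{(\E',D')}$ for a polystable root-graph $(\Gamma,\E',D')$. For such a root-graph, Proposition \ref{prop:EP} gives $(\E',D') = (\E_P,D_P)$ for a cyclic subgraph $P$ of $\Gamma$, and Remark \ref{rem:cyclic} identifies $P_{(\E_P,D_P)} = P$. Chaining the equalities $P_{(\E,D)} = P_{\pol(\E,D)} = P_{(\E_P,D_P)} = P$ shows that $P_{(\E,D)}$ is a cyclic subgraph of $\Gamma$, which is the first assertion.

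The ``in particular'' is then immediate: $\ol P_{(\E,D)} = \Gamma^\E_{\mc F,\leg}$ with $\mc F = E(\Gamma^\E)\setminus P_{(\E,D)}$, so its underlying legless graph has edge set $P_{(\E,D)}$, which by the first part is a union of cycles of $\Gamma$ with pairwise no common edge; thus any connected component of $\ol P_{(\E,D)}$ meeting an edge contains a whole cycle and so has positive first Betti number, hence is not a tree. I do not anticipate a genuine obstacle here: the argument is purely formal once Proposition \ref{prop:minimal-root} and the classification of polystable pseudo-roots (Proposition \ref{prop:EP}, Remark \ref{rem:cyclic}) are available. The only point requiring a moment's care is the identification of $P_{(\E,D)}$ with an honest subgraph of $\Gamma$ rather than of $\Gamma^\E$, which is precisely what the preceding remark guarantees.
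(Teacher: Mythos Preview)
Your proposal is correct and follows essentially the same route as the paper: the paper's proof is the one-liner ``The result follows from Remark \ref{rem:cyclic} and Proposition \ref{prop:minimal-root},'' and you have spelled out exactly that chain (passing through Proposition \ref{prop:EP} to justify the hypothesis of Remark \ref{rem:cyclic}, and making the ``in particular'' clause explicit). The only minor imprecision is your reference to ``the remark following equation \eqref{eq:PED}'': the observation that $P_{(\E,D)}$ contains no exceptional edges and hence sits inside $\Gamma$ is the Remark appearing after Notation \ref{not:unique-flow} (and ultimately rests on Theorem \ref{thm:uniquephi}(1)), not Remark \ref{rem:iota-pseudo-root}, but the content you invoke is correct.
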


\begin{proof}
The result follows from Remark \ref{rem:cyclic} and  Proposition \ref{prop:minimal-root}.
\end{proof}

\subsection{Spin pseudo-roots}


In this section we consider an additional structure to  root-graphs. This is a sign function which, as we shall see later, is the combinatorial counterpart for the parity of the number of sections of a sheaf on curves.

\begin{Def}
Let $\Gamma$ be a graph. A \emph{spin semistable  pseudo-root} on $\Gamma$ is a triple $(\E,D,s)$, where $(\E,D)$ is a semistable pseudo-root on $\Gamma$ and $s\col V(\Gamma/P_{(\E,D)})\ra \mathbb Z/2\mathbb Z$ is a function, called \emph{sign function on $(\Gamma,\E,D)$}, such that $s(v)=0$ for every vertex $v\in V(\Gamma/P_{(\E,D)})$ of weight $w_{\Gamma/P_{(\E,D)}}(v)=0$. We say that $(\E,D,s)$ is \emph{polystable} (respectively, \emph{$v_0$-quasistable for some $v_0\in V(\Gamma)$}) if so is $(\E,D)$. A  \emph{spin semistable} (\emph{polystable, $v_0$-quasistable}) \emph{root-graph} is a tuple $(\Gamma,\E,D,s)$, where $\Gamma$ is a graph and $(\E,D,s)$ is a spin semistable (polystable, $v_0$-quasistable) pseudo-root on $\Gamma$.
We say that $(\Gamma,\E,D,s)$ is \emph{even} or \emph{odd}, respectively, if so is $\sum_{v\in V(\Gamma/P_{(\E,D)})} s(v)$.
\end{Def}

\begin{Lem}\label{lem:iota-s}
If $\iota\col (\Gamma,\E,D)\ra (\Gamma',\E',D')$ is a specialization of semistable root-graphs,
then $P_{(\E',D')}=\iota_*(P_{(\E,D)})$. 
In particular, we have an induced specialization $\wt\iota\col \Gamma/P_{(\E,D)}\ra \Gamma'/P_{(\E',D')}$ giving rise to a commutative diagram
\[
\SelectTips{cm}{11}
\begin{xy} <16pt,0pt>:
\xymatrix{
  \Gamma  \ar[d] \ar[rr]^{\iota}    & & \Gamma' \ar[d]  \\
\Gamma/P_{(\E,D)}\ar[rr]^{\wt\iota}&& \Gamma'/P_{(\E',D')}
 }
\end{xy}
\]
\end{Lem}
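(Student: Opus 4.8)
The statement has two parts: first that $P_{(\E',D')}=\iota_*(P_{(\E,D)})$, and second that this yields a well-defined induced specialization $\wt\iota$ fitting into the commutative square. The second part is essentially formal once the first is established: a specialization $\iota\col\Gamma\ra\Gamma'$ contracts a set of edges, and if the cyclic subgraph $P_{(\E,D)}$ pushes forward to $P_{(\E',D')}$ under $\iota_*$, then the contraction of $\Gamma\ra\Gamma/P_{(\E,D)}$ is compatible with $\iota$ in the sense that there is a unique induced map $\wt\iota$ making the square commute; this is the standard fact that contractions commute when the edge sets being contracted are compatible. So I will focus on proving $P_{(\E',D')}=\iota_*(P_{(\E,D)})$.

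For the first part, recall that by definition of a specialization of semistable root-graphs, the associated specialization $\wh\iota\col\Gamma^\E\ra{\Gamma'}^{\E'}$ satisfies $\wh\iota_*(\phi_{(\E,D)})=\phi_{(\E',D')}$. Now $P_{(\E,D)}=\{e\in E(\Gamma^\E)\mid \phi_{(\E,D)}(e)=0\}$ and similarly for $P_{(\E',D')}$. The key point is that $\wh\iota_*$ on flows is given by $\wh\iota_*(\phi)(e)=\phi(e)$ for $e\in E({\Gamma'}^{\E'})\subset E(\Gamma^\E)$ (the pushforward of a flow just restricts to the non-contracted edges, by the definition given in the Preliminaries). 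Therefore, for a non-contracted edge $e$, we have $e\in P_{(\E',D')}$ iff $\phi_{(\E',D')}(e)=0$ iff $\wh\iota_*(\phi_{(\E,D)})(e)=0$ iff $\phi_{(\E,D)}(e)=0$ iff $e\in P_{(\E,D)}$. This shows $P_{(\E',D')}$, viewed as a set of edges of ${\Gamma'}^{\E'}\subset\Gamma^\E$, equals $P_{(\E,D)}\cap E({\Gamma'}^{\E'})$. It remains to check that every edge of $P_{(\E,D)}$ contracted by $\wh\iota$ is accounted for correctly, i.e. that $\iota_*(P_{(\E,D)})$ — the image cyclic subgraph in $\Gamma'$ — is exactly $P_{(\E',D')}$: the contracted edges of $P_{(\E,D)}$ simply disappear under $\iota_*$, and one checks that contracting edges of a cyclic subgraph yields a cyclic subgraph (a cycle either contracts to a point or remains a cycle), consistent with Corollary \ref{cor:tree} applied to $(\Gamma',\E',D')$. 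Combining, $\iota_*(P_{(\E,D)})=P_{(\E,D)}\cap E({\Gamma'}^{\E'})=P_{(\E',D')}$ as subgraphs of ${\Gamma'}^{\E'}$.

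Once $P_{(\E',D')}=\iota_*(P_{(\E,D)})$ is known, the induced specialization is constructed as follows. The specialization $\iota\col\Gamma\ra\Gamma'$ corresponds to contracting a subset $A\subset E(\Gamma)$ with $\Gamma'\cong\Gamma/A$. Write $\bar\iota_1\col\Gamma\ra\Gamma/P_{(\E,D)}$ and $\bar\iota_2\col\Gamma'\ra\Gamma'/P_{(\E',D')}$ for the two quotient maps. Because $\iota_*(P_{(\E,D)})=P_{(\E',D')}$, the composite $\Gamma\ra\Gamma'\ra\Gamma'/P_{(\E',D')}$ contracts exactly the edges in $A\cup P_{(\E,D)}$ (with the caveat that edges of $P_{(\E,D)}$ lying in $A$ are already contracted), which is the same set contracted by the composite $\Gamma\ra\Gamma/P_{(\E,D)}\ra (\Gamma/P_{(\E,D)})/\iota_*(\text{rest})$. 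Hence there is a canonical specialization $\wt\iota\col\Gamma/P_{(\E,D)}\ra\Gamma'/P_{(\E',D')}$ with $\wt\iota\circ\bar\iota_1=\bar\iota_2\circ\iota$, which is precisely the commutative square asserted.

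\textbf{Main obstacle.} The genuinely substantive point — and the only place a real argument is needed rather than bookkeeping — is the compatibility $\wh\iota_*(\phi_{(\E,D)})(e)=\phi_{(\E,D)}(e)$ for non-contracted edges together with the fact that this forces the zero-locus to match. This is not hard, but one must be careful that $\wh\iota$ may contract some non-exceptional edges, and one must verify that no edge of $P_{(\E,D)}$ gets "reborn" with nonzero flow value in $\Gamma'$ and, conversely, that no edge outside $P_{(\E,D)}$ becomes a zero-flow edge after pushforward. Both directions follow immediately from the explicit description of $\wh\iota_*$ on flows (restriction to surviving edges) combined with $\wh\iota_*(\phi_{(\E,D)})=\phi_{(\E',D')}$, so in fact the argument is short; the "obstacle" is purely one of setting up the identifications $E({\Gamma'}^{\E'})\subset E(\Gamma^\E)$ and $E(\Gamma')\subset E(\Gamma)$ carefully and tracking cyclic subgraphs through them.
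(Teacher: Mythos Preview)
Your proposal is correct and is precisely the argument the paper intends: the authors write only ``The proof is left to the reader,'' and the natural reader's proof is exactly the one you give, namely that $\wh\iota_*(\phi_{(\E,D)})=\phi_{(\E',D')}$ together with the definition $\iota_*\phi(e)=\phi(e)$ on surviving edges forces the zero-sets to match, after which the induced map on quotients is formal. Your careful handling of the identification $E({\Gamma'}^{\E'})\subset E(\Gamma^\E)$ and the observation that edges of $P_{(\E,D)}$ are non-exceptional (so the bookkeeping between $E(\Gamma')$ and $E({\Gamma'}^{\E'})$ causes no trouble) is exactly what is needed.
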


\begin{proof}
The proof is left to the reader.
\end{proof}

\begin{Def}
A spin semistable root-graph  $(\Gamma,\E,D,s)$ \emph{specializes} to a  spin semistable root-graph  $(\Gamma',\E',D',s')$, and we write  $(\Gamma,\E,D,s)\ge(\Gamma',\E',D',s')$, if there is a specialization of semistable root-graphs $(\Gamma,\E,D)\ge(\Gamma',\E',D')$ such that $s'=\wt\iota_*(s)$, where $\wt\iota\col \Gamma/P_{(\E,D)}\ra {\Gamma'}/P_{(\E',D')}$ is the induced specialization (see Lemma \ref{lem:iota-s}) and $\wt\iota_*(s)$ is the sign function on $(\Gamma',\E',D')$ taking $u'\in V(\Gamma'/P_{(\E',D')})$ to 
\[
\wt\iota_*(s)(u')=\underset{u\in \wt\iota^{-1}(u')}{\sum_{u\in  V(\Gamma/P_{(\E,D)})}} s(u).
\]
\end{Def}

We consider the category $\mathbf{SS}_{g,n}$ whose objects are spin semistable root-graphs, where the arrows are given by the specializations of spin semistable root-graphs. 

We consider the sub-categories $\mathbf{SP}_{g,n}$ and $\mathbf{SPQ}_{g,n}$ of $\mathbf{SS}_{g,n}$ whose objects are, respectively, spin polystable and spin $v_0$-quasistable root-graphs. 
We let $\mathcal{SP}_{g,n}$ and $\mathcal{SPQ}_{g,n}$ be, respectively, the sub-posets
$\mathbf{SP}_{g,n}/\sim$ and $\mathbf{SPQ}_{g,n}/\sim$.

\begin{Rem}\label{rem:parity}
The parity of a spin semistable  pseudo-root is preserved under specializations. Moreover, every even (respectively, odd) spin root-graph admits a specialization to $(\Gamma,\emptyset,D,s)$, where $\Gamma$ is the graph with one vertex $v$ and no edges, $D=(g-1)v$ and $s(v)=0$ (respectively, $s(v)=1$). Hence the poset $\mathcal{SPQ}_{g,n}$ has exactly two connected components $\mathcal{SPQ}^+_{g,n}$ and $\mathcal{SPQ}^-_{g,n}$ corresponding to even and odd spin root-grahs.
\end{Rem}

\begin{Rem}\label{rem:spin-graphs-cat}
Using Corollary \ref{cor:isocat}, it is easy to check that the category of spin graphs of genus $g$ and $n$ legs, as defined in \cite{CMP1}, is canonically isomorphic to the category $\mathbf{SP}_{g,n}$. The isomorphism sends a spin graph $(\Gamma,P,s)$, where $\Gamma$ is a graph, $P$ is a cyclic subgraph of $\Gamma$ and $s\col V(\Gamma/P)\ra \mathbb Z/2\mathbb Z$ is a sign function,  to $(\Gamma,D_P,\E_P,s)$.
\end{Rem}

\begin{Rem}\label{rem:D-split}
Let $(\Gamma,\emptyset,D,s)$ be a spin semistable root graph such that $\beta_D(V)=0$ for a proper subset $V\subset V(\Gamma)$.  Set $\Gamma_1=\Gamma(V)$,  $\Gamma_2=\Gamma(V^c)$.
Define $D_1^V:=D_1$ and $D_2^V:=D_2$ as in equation \eqref{eq:D1D2}. By the claim in the proof of Theorem \ref{thm:uniquephi} we have that $(\Gamma_i,\emptyset,D_i^V)$ is a semistable root-graph for $i=1,2$. By Lemma \ref{lem:beta-phi} we have $\phi_{(\E,D)}(e)=1$, $\forall\;e\in \ora{E}(\Gamma_1,\Gamma_2)$ such that $t(e)\in V$. Then $P_{(\emptyset,D_i^V)}=P_{(\emptyset,D)}\cap \Gamma_i$, so there is an inclusion $\Gamma_i/P_{(\emptyset,D_i^V)}\subset \Gamma/P_{(\emptyset,D)}$. 
We define the spin semistable root-graph   $(\Gamma_i,\emptyset,D_i^V,s_i^V)$, where $s_i^V$ be the restriction of $s$ to $V(\Gamma_i/P_{(\emptyset,D_i^V)})$.

Assume that $\iota\col(\Gamma',\emptyset,D',s')\ra (\Gamma,\emptyset,D,s)$ is a specialization of spin semistable root-graphs and set $V':=\iota^{-1}(V)\subset V(\Gamma')$. By Proposition \ref{prop:stab-contract} we have  $\beta_{D'}(V')=\beta_D(V)=0$. Set  $\Gamma'_1=\Gamma(V')$ and $\Gamma'_2=\Gamma({V'}^c)$. 
By construction, there is a specialization
$(\Gamma'_i,\emptyset,D_i^{V'},s_i^{V'})\ge (\Gamma_i,\emptyset,D_i^V,s_i^V)$ of spin semistable root-graphs for $i=1,2$.
\end{Rem}

\begin{Def}\label{def:spin-pol}
Given a spin semistable pseudo-root $(\E,D,s)$ on a graph $\Gamma$, we define $\pol(\E,D,s)$ as the spin polystable  pseudo-root $(\pol(\E,D),\pol(s))$ on $\Gamma$, where $\pol(s)=s$ (view as a sign function on $(\Gamma,\pol(\E,D))$ thanks to the identification $P_{(\E,D)}=P_{\pol(\E,D)}$, see Proposition \ref{prop:minimal-root} (2)).
\end{Def}

\begin{Prop}\label{prop:minimal-signed-root}
 Let $(\Gamma,\E,D,s)$ be a spin semistable  root-graph. We have that
      $(\Gamma,\pol(\E,D,s))$ is the unique minimal spin polystable root-graph admitting a specialization of spin semistable root-graphs to $(\Gamma,\E,D,s)$.
\end{Prop}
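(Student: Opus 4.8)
The plan is to bootstrap everything off Proposition \ref{prop:minimal-root}, which already treats the sign-free situation, and to exploit the fact that the distinguished cyclic subgraph $P_{(\E,D)}$ is left unchanged when one passes to $\pol(\E,D)$; since the sign function lives on the contracted graph $\Gamma/P_{(\E,D)}$, this is exactly what lets it be transported for free. The proof then splits into an ``existence'' part (that $(\Gamma,\pol(\E,D,s))$ is spin polystable and specializes to $(\Gamma,\E,D,s)$) and a ``minimality'' part (that every spin polystable root-graph above $(\Gamma,\E,D,s)$ lies above it).

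For existence I would argue as follows. By Proposition \ref{prop:minimal-root}, $(\Gamma,\pol(\E,D))$ is a polystable root-graph, it admits a specialization of semistable root-graphs $\kappa\colon(\Gamma,\pol(\E,D))\ge(\Gamma,\E,D)$, and $P_{\pol(\E,D)}=P_{(\E,D)}$. The last equality is precisely what makes $\pol(s)=s$ a legitimate sign function on $(\Gamma,\pol(\E,D))$ in the sense of Definition \ref{def:spin-pol}, so $(\Gamma,\pol(\E,D,s))$ is a genuine spin polystable root-graph. The specialization of graphs underlying $\kappa$ is the identity on $\Gamma$ (it only contracts the exceptional edges created by enlarging $\E$), hence by Lemma \ref{lem:iota-s} the induced specialization $\wt\iota\colon \Gamma/P_{\pol(\E,D)}\ra \Gamma/P_{(\E,D)}$ is the identity. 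Therefore $\wt\iota_*(\pol(s))=\pol(s)=s$, and $\kappa$ upgrades to a specialization of spin semistable root-graphs $(\Gamma,\pol(\E,D,s))\ge(\Gamma,\E,D,s)$.

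For minimality I would take an arbitrary spin polystable root-graph $(\Gamma',\E',D',s')$ with a specialization $\iota\colon(\Gamma',\E',D',s')\ge(\Gamma,\E,D,s)$ of spin semistable root-graphs, forget the signs to obtain a specialization of semistable root-graphs $\iota_0\colon(\Gamma',\E',D')\ge(\Gamma,\E,D)$, and invoke the minimality clause of Proposition \ref{prop:minimal-root} to factor $\iota_0$ as $\iota_0=\kappa\circ\jmath$ for a specialization of semistable root-graphs $\jmath\colon(\Gamma',\E',D')\ge(\Gamma,\pol(\E,D))$. Applying Lemma \ref{lem:iota-s} to this factorization and using again that the induced map of $\kappa$ on the $P$-contractions is the identity (together with $P_{\pol(\E,D)}=P_{(\E,D)}$), the induced map of $\jmath$ on $\Gamma/P$ agrees with that of $\iota_0$. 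Since $\iota$ respects the signs, pushing $s'$ forward along this common map yields $s=\pol(s)$; this is exactly the condition for $\jmath$ to be a specialization of spin semistable root-graphs $(\Gamma',\E',D',s')\ge(\Gamma,\pol(\E,D,s))$. Hence $(\Gamma,\pol(\E,D,s))$ lies below every spin polystable root-graph above $(\Gamma,\E,D,s)$, so it is the unique minimal one.

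The step I expect to be the main obstacle is producing, in the minimality part, a factoring $\iota_0=\kappa\circ\jmath$ which is genuinely compatible with the chosen $\iota_0$ rather than with some automorphic twist of it: Proposition \ref{prop:minimal-root} is phrased at the level of the poset $\mathcal{SSR}_{g,n}$ of isomorphism classes, and if one only extracts ``some'' specialization onto $(\Gamma,\pol(\E,D))$ the sign bookkeeping can fail because an automorphism of $(\Gamma,\E,D)$ need not fix $s$. I expect this to be handled by going back to the explicit construction of $\pol(\E,D)$ in the proof of Proposition \ref{prop:minimal-root} as an iterated application of Lemma \ref{lem:semispec} — each step of which preserves $P$ and therefore is automatically sign-compatible — so that the minimality argument becomes an induction mirroring that construction; alternatively one could prove a semistable version of the factoring in Lemma \ref{lem:polyspec}. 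This is the point that deserves to be written out carefully, everything else being formal.
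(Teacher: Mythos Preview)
Your proposal is correct and follows the same approach as the paper: bootstrap from Proposition~\ref{prop:minimal-root} and use $P_{\pol(\E,D)}=P_{(\E,D)}$ to transport the sign for free. The paper's proof is a two-sentence version of your existence argument and essentially leaves the minimality/factoring step implicit; your concern about automorphisms interfering with the factorization is reasonable in principle but is a non-issue here because the specialization $(\Gamma,\pol(\E,D))\ge(\Gamma,\E,D)$ produced by Proposition~\ref{prop:minimal-root} (via the chain of Lemma~\ref{lem:semispec}) has the identity of $\Gamma$ as its underlying graph specialization, so the induced map on $\Gamma/P$ is literally the identity and no ambiguity arises.
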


\begin{proof}
By Proposition \ref{prop:minimal-root} we have that  $(\Gamma,\pol(\E,D))$ is the minimal polystable root-graph admitting a specialization to $(\Gamma,\E,D)$ and, since $P_{(\E,D)}=P_{\pol(\E,D)}$, the sign function $\pol(s)=s$ is the unique sign function for which there is a specialization of spin semistable root-graphs $(\Gamma,\pol(\E,D,s))\ge (\Gamma,\E,D,s)$.   
\end{proof}

The transformation $\pol$ gives rise to a commutative diagram:
\[
\SelectTips{cm}{11}
\begin{xy} <16pt,0pt>:
\xymatrix{
 \mathbf{SPQ}_{g,n}\ar[r]\ar[d]^{\pol}& \mathbf{QR}_{g,n}  \ar[d]^{\pol} \ar[r]     & \mathbf{QD}_{\mu,g,n} \ar[d]^{\pol}  \\
\mathbf{SP}_{g,n}\ar[r] &\mathbf{PSR}_{g,n}\ar[r]& \mathbf{PSD}_{\mu,g,n}
 }
\end{xy}
\]

\section{Moduli of tropical theta characteristics}

In this section we introduce and study the moduli space of tropical theta characteristics. We describe an important refinement  of this space.

\subsection{Cones associated to semistable pseudo-roots}

First of all, we need to introduce the cone associated to a semistable root-graph.

 Consider a semistable root-graph $(\Gamma,\E,D)$. Consider the vector space $\mathbb R^{E(\Gamma^\E)}$ with coordinates $x_e$ for $e\in E(\Gamma^\E)$. For every oriented edge $e\in \ora{E}(\Gamma^\E)$, we will abuse notation writing $x_e$ for the coordinates associated to the unoriented edge associated to $e$.  
 We define  $\lambda_{(\Gamma,\E,D)}\subset \mathbb R^{E(\Gamma^\E)}$ as the cone obtained by intersecting $\mb R^{E(\Gamma^\E)}_{\ge0}$ with the linear subspace $\Lambda_{(\Gamma,\E,D)}$  defined by the equations:
\begin{equation}\label{eq:phieq}
\Lambda_{(\Gamma,\E,D)}: \;\;\sum_{e\in \gamma}\phi_{(\E,D)}(e)x_e=0
\end{equation}
where $\gamma$ runs over the set of oriented cycles of $\Gamma^\E$. 

\begin{Rem}\label{rem:containing}
Notice that if $(\E,D)$ is semistable, then $\mathcal L_\E\subset \Lambda_{(\Gamma,\E,D)}$.
\end{Rem}

We let $\lambda^\circ_{(\Gamma,\E,D)}$ be the interior of $\lambda_{(\Gamma,\E,D)}$, that is, $\lambda^\circ_{(\Gamma,\E,D)}=\lambda_{(\Gamma,\E,D)}\cap\mathbb R^{E(\Gamma^\E)}_{>0}$. 
We also set
\begin{equation}\label{eq:bED}
b_0(\E,D):=b_0(\Gamma_\E/P_{(\E,D)})
\quad \;\text{ and } \;\quad
b_1(\E,D):=b_1(\Gamma_\E/P_{(\E,D)}).
\end{equation}

Notice that the number of independent equations in \eqref{eq:phieq} is $b_1(\Gamma^\E/P_{(\E,D)})$, which is equal to $b_1(\E,D)-b_0(\E,D)+1+\#\E$. Hence
\begin{equation}\label{eq:dim-lambda}
\dim\lambda_{(\Gamma,\E,D)}=\# E(\Gamma)-b_1(\E,D)+b_0(\E,D)-1.
\end{equation}

We define
\[
\Sigma_{(\Gamma,\E,D)}=\{\lambda_{(\Gamma',\E',D')} | (\Gamma',\E',D')\in \mathbf{SSR}_{g,n} \text{ and } (\Gamma,\E,D)\ge(\Gamma',\E',D') \}.
\]

\begin{Thm}\label{thm:face-morphism}
Let $(\Gamma,\E,D)$ be a semistable pseudo-root. 
\begin{itemize}
    \item[(1)] If $(\Gamma,\E,D)\ge (\Gamma',\E',D')$ is a specialization of semistable root-graphs, then
     we have 
a face morphism $\lambda_{(\Gamma',\E',D')}\subset \lambda_{(\Gamma,\E,D)}$.
\item[(2)] Every face of $\lambda_{(\Gamma,\E,D)}$ is equal to a cone $\lambda_{(\Gamma',\E',D')}$ for some specialization of semistable root-graphs $(\Gamma,\E,D)\ge(\Gamma',\E',D')$.
\item[(3)] We have a decomposition
\begin{equation}\label{eq:deco-lambda}
\lambda_{(\Gamma,\E,D)}=\underset{(\Gamma,\E,D)\ge (\Gamma',\E',D')}{\coprod_{(\Gamma',\E',D')\in {\bf SSR}_{g,n}}} \lambda^\circ_{(\Gamma',\E',D')}.
\end{equation}
\item[(4)]
The set $\Sigma_{(\Gamma,\E,D)}$ is a fan whose support is $\lambda_{(\Gamma,\E,D)}$.
\end{itemize}
\end{Thm}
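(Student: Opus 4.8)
The plan is to prove the four statements in order, each building on the previous ones, since (2) needs (1), (3) follows by combining (1)--(2) with a dimension count, and (4) is essentially a formal consequence of (1)--(3).

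For (1), I would start from the specialization of triples $(\Gamma,\E,D)\ge(\Gamma',\E',D')$, which comes with the associated specialization $\wh\iota\col\Gamma^\E\ra{\Gamma'}^{\E'}$ and the injection $E({\Gamma'}^{\E'})\hookrightarrow E(\Gamma^\E)$ on edge sets. This injection identifies $\mathbb R^{E({\Gamma'}^{\E'})}$ with the coordinate subspace of $\mathbb R^{E(\Gamma^\E)}$ where the coordinates $x_e$ for the contracted edges vanish, and it identifies the face of $\mathbb R^{E(\Gamma^\E)}_{\ge 0}$ cut out by $x_e=0$ (for $e$ contracted) with $\mathbb R^{E({\Gamma'}^{\E'})}_{\ge 0}$. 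The content of the step is then to check that under this identification the equations \eqref{eq:phieq} for $(\Gamma',\E',D')$ match the equations \eqref{eq:phieq} for $(\Gamma,\E,D)$ restricted to this face. This uses two facts: that $\wh\iota_*(\phi_{(\E,D)})=\phi_{(\E',D')}$ by the definition of specialization of semistable root-graphs (Notation \ref{not:unique-flow} plus the uniqueness in Theorem \ref{thm:uniquephi}), and that every oriented cycle $\gamma'$ of ${\Gamma'}^{\E'}$ lifts to an oriented cycle $\gamma$ of $\Gamma^\E$ (insert the contracted edges, all of which carry coordinate $0$ on the face) while, conversely, every oriented cycle of $\Gamma^\E$ either projects to a cycle of ${\Gamma'}^{\E'}$ or is contracted entirely into a single vertex, in which case its defining equation restricted to the face is the trivial equation $0=0$. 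Thus $\Lambda_{(\Gamma',\E',D')}=\Lambda_{(\Gamma,\E,D)}\cap\mathbb R^{E({\Gamma'}^{\E'})}$, and intersecting with the positive orthant gives exactly that $\lambda_{(\Gamma',\E',D')}$ is the face of $\lambda_{(\Gamma,\E,D)}$ supported on those coordinate hyperplanes. I expect this cycle-lifting/contraction bookkeeping to be the main obstacle — it is conceptually easy but requires care about which cycles of $\Gamma^\E$ become null after contraction, and about the role of exceptional edges (where $\phi=1$ by Theorem \ref{thm:uniquephi}(1)).

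For (2), I would argue conversely: a face of $\lambda_{(\Gamma,\E,D)}$ is obtained by setting some subset $\mc F\subset E(\Gamma^\E)$ of coordinates to zero. I would first show that such a subset must be ``admissible'', i.e. it must be the set of edges contracted by some specialization: the coordinates forced to zero together with the relations \eqref{eq:phieq} determine a face, and using that $|\phi_{(\E,D)}(e)|\le 1$ (Lemma \ref{lem:phi-lessone}) and the structure of $\phi$-cycles, if one coordinate in a cycle that is forced to zero then the relation propagates — but here we only need that the contracted edge set $\mc F$ defines a specialization of the underlying graph $\Gamma^\E\ra\Gamma^\E/\mc F$, which after re-grouping exceptional vertices corresponds to a specialization of triples $(\Gamma,\E,D)\ge(\Gamma',\E',D')$ with $\wh\iota$ the contraction of $\mc F$. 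Then I would check the resulting $(\Gamma',\E',D')$ is again semistable with $\wh\iota_*(\phi_{(\E,D)})=\phi_{(\E',D')}$ (this is where Proposition \ref{prop:stab-contract} and Remark \ref{rem:iota-pseudo-root}-type arguments enter, i.e. pushing forward $\mu$-semistability and acyclicity of the flow along contractions), and invoke (1) to conclude the face equals $\lambda_{(\Gamma',\E',D')}$. The subtlety is that contracting a non-null edge might a priori break acyclicity of the pushed-forward flow; one must observe that contracting edges of $\lambda_{(\Gamma,\E,D)}$ — i.e. edges whose coordinate is zero on that face — is compatible with the relations \eqref{eq:phieq}, which is exactly the statement that no non-null $\phi_{(\E,D)}$-cycle can have all its edges forced to zero, hence $\wh\iota_*\phi$ remains acyclic.

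For (3), the decomposition is now formal: the relative interiors $\lambda^\circ_{(\Gamma',\E',D')}$ for $(\Gamma,\E,D)\ge(\Gamma',\E',D')$ are precisely the relative interiors of the faces of $\lambda_{(\Gamma,\E,D)}$ by (1) and (2), and every polyhedral cone is the disjoint union of the relative interiors of its faces; one must also note that distinct specializations give distinct faces (because the specialization of a semistable root-graph is determined by the edges of $\Gamma^\E$ it contracts, as recorded in the paragraph after the definition of specialization of triples, together with the fact that $(\Gamma',\E',D')$ and $\phi_{(\E',D')}$ are then determined). Finally, for (4), to see $\Sigma_{(\Gamma,\E,D)}$ is a fan I would check the two axioms: it is closed under taking faces — a face of $\lambda_{(\Gamma',\E',D')}$ is some $\lambda_{(\Gamma'',\E'',D'')}$ by (2) applied to $(\Gamma',\E',D')$, and transitivity of specializations of semistable root-graphs (which I would note holds because $\wh\iota_*$ is functorial on flows) puts it in $\Sigma_{(\Gamma,\E,D)}$; and the intersection of two cones $\lambda_{(\Gamma_1,\E_1,D_1)}$ and $\lambda_{(\Gamma_2,\E_2,D_2)}$ in the list is a common face, because intersecting the two coordinate-subspace-plus-relations descriptions again yields a subset of coordinates set to zero inside $\lambda_{(\Gamma,\E,D)}$, hence a face of it, hence of each. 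That the support is $\lambda_{(\Gamma,\E,D)}$ is immediate from (3). I would present (4) compactly, citing (1)--(3) for the bulk of the work.
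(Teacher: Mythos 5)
Your proposal is essentially the paper's argument: part (1) by checking that the equations \eqref{eq:phieq} for $(\Gamma',\E',D')$ are the equations for $(\Gamma,\E,D)$ with the contracted coordinates set to zero (using $\wh\iota_*(\phi_{(\E,D)})=\phi_{(\E',D')}$), and the combinatorial core of (2)--(3) by reading off, from the vanishing coordinates, a contraction that stays inside $\mathbf{SSR}_{g,n}$ (semistability via Proposition \ref{prop:stab-contract}, acyclicity of the pushed-forward flow from the relations). The only structural difference is the order: the paper proves the decomposition (3) directly, by attaching to each point of $\lambda_{(\Gamma,\E,D)}$ the specialization contracting the edges with $x_e=0$, and then deduces (2) and (4); you prove (2) first and get (3) from the standard fact that a cone is the disjoint union of the relative interiors of its faces. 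That derivation also needs the small observation that $\lambda^\circ_{(\Gamma',\E',D')}$ is nonempty (guaranteed by Lemma \ref{lem:refi-flow}, since $\phi_{(\E',D')}$ is acyclic), so that it really is the relative interior of the face $\lambda_{(\Gamma',\E',D')}$.

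Two of your justifications are misstated, though both are repairable by the same standard device. First, an oriented cycle of $\Gamma^\E$ need not either map to a cycle of ${\Gamma'}^{\E'}$ or collapse to a single vertex: two of its vertices can be identified through contracted edges lying \emph{outside} the cycle, so the image is in general only a closed walk. The correct bookkeeping is via the cycle space: closed walks are integral combinations of cycles, so lifting a cycle of ${\Gamma'}^{\E'}$ to a closed walk of $\Gamma^\E$ through the contracted subgraph, and pushing a cycle of $\Gamma^\E$ down to a closed walk of ${\Gamma'}^{\E'}$, still identifies $\Lambda_{(\Gamma',\E',D')}$ with the restriction of $\Lambda_{(\Gamma,\E,D)}$ to the coordinate subspace. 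Second, acyclicity of $\wh\iota_*(\phi_{(\E,D)})$ is not the statement that ``no non-null $\phi_{(\E,D)}$-cycle has all its edges forced to zero'' (there are no non-null $\phi_{(\E,D)}$-cycles at all, since $\phi_{(\E,D)}$ is acyclic); the argument you need, and the one implicit in the paper's proof of (3), is that a non-null positive cycle of $\wh\iota_*(\phi_{(\E,D)})$ in ${\Gamma'}^{\E'}$ lifts to a closed walk in $\Gamma^\E$ whose contracted edges carry coordinate $0$ and whose remaining edges carry positive coordinates at a relative-interior point of the face, so that point would give $\sum_{e}\phi_{(\E,D)}(e)x_e>0$, contradicting \eqref{eq:phieq}. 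With these two repairs your argument goes through and coincides in substance with the paper's.
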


\begin{proof}
We prove (1). If $(\Gamma,\E,D)\ge (\Gamma',\E',D')$ is an isomorphism, the result is clear. Otherwise, 
consider the hyperplane $H\subset\mathbb R^{E(\Gamma^{\E})}$ given in the coordinates $(x_e)_{e\in E(\Gamma^{\E})}$ of $\mathbb R^{E(\Gamma^{\E})}$ by the equation
\[
H: \sum_{e\in E(\Gamma^{\E})\setminus E({\Gamma'}^{\E'})} x_e=0.
\]
Notice that $\lambda_{(\Gamma,\E,D)}$ in contained in one of the two half spaces determined by $H$. Moreover, 
the equations defining the intersection $\lambda_{(\Gamma,\E,D)}\cap H$ are obtained by putting $x_e=0$ in equation  \eqref{eq:phieq}, $\forall e\in E(\Gamma^{\E})\setminus E({\Gamma'}^{\E'})$. These equations are the equations defining $\lambda_{(\Gamma',\E',D')}$, hence $\lambda_{(\Gamma',\E',D')}=\lambda_{(\Gamma,\E,D)}\cap H.$

Let us prove (3). By (1), for every $(\Gamma',\E',D')$ as in the right hand side of equation \eqref{eq:deco-lambda} we have $\lambda^\circ_{(\Gamma',\E',D')}\subset \lambda_{(\Gamma,\E,D)}$. Let us show that $\lambda^\circ_{(\Gamma_1,\E_1,D_1)}\cap \lambda^\circ_{(\Gamma_2,\E_2,D_2)}=\emptyset$ for distinct $(\Gamma_1,\E_1,D_1)$, $(\Gamma_2,\E_2,D_2)$ as in the right hand side of equation \eqref{eq:deco-lambda}. This is true if $(\Gamma_1,\E_1)\ne (\Gamma_2,\E_2)$, because in this case  $\mathbb R^{E({\Gamma_1^{\E_1}})}_{>0}\cap \mathbb R^{E(\Gamma_2^{\E_2})}_{>0}=\emptyset$ (as subspaces of $\mathbb R^{E(\Gamma^\E)}$) and $\lambda^\circ_{(\Gamma_i,\E_i,D_i)}\subset \mathbb R^{E(\Gamma_i^{\E_i})}_{>0}$ for $i=1,2$. If $\Gamma':=\Gamma_1=\Gamma_2$ and $\E':=\E_1=\E_2$, then we can assume that $(\Gamma,\E,D)\ge (\Gamma',\E',D_1)$ and   $(\Gamma,\E,D)\ge (\Gamma',\E',D_2)$ have the same associated specialization $\wh\iota\col \Gamma^\E\ra {\Gamma'}^{\E'}$, otherwise $\mathbb R^{E({\Gamma_1^{\E_1}})}_{>0}\cap \mathbb R^{E(\Gamma_2^{\E_2})}_{>0}=\emptyset$ and we argue as before. Hence $D_1=\wh\iota_*(D)=D_2$, a contradiction. So the right hand side of equation \eqref{eq:deco-lambda} is contained in the left hand side.

On the other hand, let $(x_e)_{e\in E(\Gamma^\E)}\in \lambda_{(\Gamma,\E,D)}$. Let $(\Gamma,\E,D)\ge (\Gamma',\E',D')$ be the specialization of triples determined by the specialization $\wh\iota\col\Gamma^{\E}\ra {\Gamma'}^{\E'}$ contracting the edges $e$ of $E(\Gamma^\E)$ for which $x_e=0$ (so $D'=\wh{\iota}_*(D)$). Since $2D+\Div(\phi_{(\E,D)})=K_{\Gamma^{\E}}$, we have 
$2D'+\Div(\wh{\iota}_*(\phi_{(\E,D)}))=K_{{\Gamma'}^{\E'}}$ and by equation \eqref{eq:phieq} the flow $\wh\iota_*(\phi_{(\E,D)})$ is acyclic. Hence $(\Gamma',\E',D')$ is a semistable root-graph and  $(x_e)_{e\in E({\Gamma'}^{\E'})}\in \lambda_{(\Gamma',\E',D')}^\circ$. This concludes the proof of (3).

Notice that (2) and (4) are consequences of (3) and (1), hence we are done. 
\end{proof}

Recall the  $\phi_{(\E,D)}$-graphs $P_u$ of $\Gamma^\E$, for $u\in V(\Gamma^\E/P_{(\E,D)})$ (see Definition \ref{def:phi-subgraph}).

\begin{Lem-Def}\label{lem-def}
Let $(\Gamma,\E,D)\ge (\Gamma',\E',D')$ be a specialization of triples where $(\Gamma,\E,D)$ is a semistable root-graph. Assume that the  associated specialization $\wh\iota\col \Gamma^{\E}\ra {\Gamma'}^{\E'}$ satisfies one of the following conditions:
\begin{itemize}
    \item[(1)] $\wh\iota$ is the contraction of an edge $e\in P_{(\E,D)}\subset E(\Gamma)$.
    \item[(2)] 
     $\wh\iota$ is the contraction of  $E(P_u,P_{u'})$ for distinct vertices $u,u'\in V(\Gamma^{\E}/P_{(\E,D)})$ such that $E(P_u,P_{u'})$ is a $\phi_{(\E,D)}$-cut of $\Gamma^{\E}$. 
\end{itemize}
Then 
$(\Gamma,\E,D)\ge (\Gamma',\E',D')$ is a specialization of semistable root-graphs. We call  $(\Gamma,\E,D)\ge (\Gamma',\E',D')$  elementary (of
 type $1$ or $2$, respectively, if (1) or (2) holds).
\end{Lem-Def}

\begin{proof}
Since $2D+\Div(\phi_{(\E,D)})=K_{\Gamma^{\E}}$, we have $2D'+\Div(\wh{\iota}_*(\phi_{(\E,D)}))=K_{{\Gamma'}^{\E'}}$. So, by Proposition \ref{prop:stab-contract} we are left to show that $\wh{\iota}_*\phi_{(\E,D)}$ is acyclic. This is clear if  (1) holds, since $\phi_{(\E,D)}(e)=0$ for $e\in P_{(\E,D)}$. If  (2) holds, then $\wh{\iota}_*\phi_{(\E,D)}$ is acyclic by the definition of $\phi_{(\E,D)}$-cut.
\end{proof}

\begin{Cor}\label{cor:facet}
The following properties hold.
\begin{itemize}
    \item[(1)] For a semistable pseudo-root  $(\E,D)$ on a graph $\Gamma$, a cone $\lambda_{(\Gamma',\E',D')}\in \Sigma_{(\Gamma,\E,D)}$ is a facet of $\lambda_{(\Gamma,\E,D)}$ if and only if the associated specialization $(\Gamma,\E,D)\ge  (\Gamma',\E',D')$ is elementary.
     \item[(2)] Every specialization of semistable pseudo-roots can be written (up to isomorphisms) as a chain of elementary specializations.
     \item[(3)] An elementary specialization of semistable root-graphs can not be factored as a non-trivial chain of specializations of semistable root-graphs. 
\end{itemize}  
\end{Cor}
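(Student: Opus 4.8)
The plan is to deduce all three statements from Theorem \ref{thm:face-morphism} together with the dimension formula \eqref{eq:dim-lambda}, which I first rewrite in the form
\[
\dim\lambda_{(\Gamma,\E,D)}=\#E(\Gamma^\E)-b_1(\Gamma^\E/P_{(\E,D)}).
\]
This is immediate from \eqref{eq:dim-lambda} and the identity $b_1(\Gamma^\E/P_{(\E,D)})=b_1(\E,D)-b_0(\E,D)+1+\#\E$ recorded before \eqref{eq:dim-lambda} (equivalently, $\lambda_{(\Gamma,\E,D)}$ spans $\Lambda_{(\Gamma,\E,D)}$, whose codimension in $\mathbb R^{E(\Gamma^\E)}$ is the number of independent equations \eqref{eq:phieq}).

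Part (1) is the core, and I would argue as follows. Fix a specialization $(\Gamma,\E,D)\ge(\Gamma',\E',D')$, assumed not an isomorphism, and let $S\subset E(\Gamma^\E)$ be the set of edges contracted by the associated $\wh\iota$; split $S=S_P\sqcup S_{\mc F}$ according to whether an edge belongs to $P_{(\E,D)}$ or not, and let $\ol{S}_{\mc F}\subset \ol{\Gamma}:=\Gamma^\E/P_{(\E,D)}$ be the image of $S_{\mc F}$. Using $\wh\iota_*(\phi_{(\E,D)})=\phi_{(\E',D')}$, so that $P_{(\E',D')}$ has edge set $P_{(\E,D)}\cap E({\Gamma'}^{\E'})$, one checks ${\Gamma'}^{\E'}/P_{(\E',D')}=\ol{\Gamma}/\ol{S}_{\mc F}$; combining this with the rewritten dimension formula and the general identity $b_1(\ol{\Gamma}/\ol{S}_{\mc F})=b_1(\ol{\Gamma})-b_1(\ol{S}_{\mc F})$ yields
\[
\dim\lambda_{(\Gamma,\E,D)}-\dim\lambda_{(\Gamma',\E',D')}=\#S-b_1(\ol{S}_{\mc F})=\#S_P+\#V(\ol{S}_{\mc F})-b_0(\ol{S}_{\mc F}).
\]
Now $\ol{\Gamma}$ carries the nowhere-zero acyclic flow induced by $\phi_{(\E,D)}$, hence has no loops, so every connected component of $\ol{S}_{\mc F}$ has at least two vertices and $\#V(\ol{S}_{\mc F})-b_0(\ol{S}_{\mc F})\ge 1$ as soon as $S_{\mc F}\ne\emptyset$. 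Therefore this difference equals $1$ in exactly two cases: either $S_{\mc F}=\emptyset$ and $\#S_P=1$, which (as $P_{(\E,D)}\subset E(\Gamma)$) is an elementary specialization of type $1$; or $S_P=\emptyset$ and $\ol{S}_{\mc F}$ is a bundle of parallel edges joining two distinct vertices $u,u'$ of $\ol{\Gamma}$, i.e.\ two $\phi_{(\E,D)}$-graphs $P_u,P_{u'}$, so that $S_{\mc F}\subseteq E(P_u,P_{u'})$; here, since $(\Gamma',\E',D')$ is a semistable root-graph its flow is acyclic, which forces $S_{\mc F}=E(P_u,P_{u'})$ (an uncontracted edge of $E(P_u,P_{u'})$ would become a non-null $\phi$-loop after contraction) and then forces $E(P_u,P_{u'})$ to be a $\phi_{(\E,D)}$-cut (a $\phi$-path between $V(P_u)$ and $V(P_{u'})$ avoiding $E(P_u,P_{u'})$ would become a non-null $\phi$-cycle), so the specialization is elementary of type $2$. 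Conversely, the same computation shows an elementary specialization of type $1$ or $2$ drops the dimension by exactly $1$, hence produces a facet. This proves (1).

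Given (1), parts (2) and (3) are formal. For (2): a specialization $(\Gamma,\E,D)\ge(\Gamma',\E',D')$ presents $\lambda_{(\Gamma',\E',D')}$ as a face of the pointed polyhedral cone $\lambda_{(\Gamma,\E,D)}$ of some codimension $c$; since the face lattice of a pointed polyhedral cone is graded by dimension, there is a chain of faces $\lambda_{(\Gamma',\E',D')}=F_0\subsetneq F_1\subsetneq\cdots\subsetneq F_c=\lambda_{(\Gamma,\E,D)}$ with each $F_i$ a facet of $F_{i+1}$. By Theorem \ref{thm:face-morphism}(2) each $F_i=\lambda_{(\Gamma_i,\E_i,D_i)}$ for a semistable root-graph, and applying the same theorem to the cone $\lambda_{(\Gamma_{i+1},\E_{i+1},D_{i+1})}$ realizes the inclusion $F_i\subset F_{i+1}$ by a specialization $(\Gamma_{i+1},\E_{i+1},D_{i+1})\ge(\Gamma_i,\E_i,D_i)$; here one uses that a specialization out of a fixed semistable root-graph is determined by its set of contracted edges, equivalently by the face it induces (see the proof of Theorem \ref{thm:face-morphism}(1)), so the composition of these specializations is, up to isomorphism, the original one. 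By part (1) each step is elementary, which is (2). For (3): if an elementary specialization $(\Gamma,\E,D)\ge(\Gamma',\E',D')$ factors through a semistable root-graph $(\Gamma_1,\E_1,D_1)$, then Theorem \ref{thm:face-morphism}(1) gives face inclusions $\lambda_{(\Gamma',\E',D')}\subseteq\lambda_{(\Gamma_1,\E_1,D_1)}\subseteq\lambda_{(\Gamma,\E,D)}$, and by (1) the outermost cone has dimension exactly one more than the innermost; since a proper face of a cone has strictly smaller dimension, $\lambda_{(\Gamma_1,\E_1,D_1)}$ coincides with one of the two ends, forcing the corresponding specialization to be an isomorphism.

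The step I expect to be the real obstacle is the forward direction of (1): translating "the face has codimension one" into a combinatorial constraint on $S$, separating the contributions of $P_{(\E,D)}$-edges from the rest, and invoking loop-freeness of $\ol{\Gamma}$ so the Betti-number bookkeeping pins down exactly the type-$1$ and type-$2$ patterns — together with the care needed to see that acyclicity of the target flow upgrades "$S_{\mc F}\subseteq E(P_u,P_{u'})$ with $\ol{S}_{\mc F}$ a parallel bundle" to the full $\phi$-cut condition of Lemma-Definition \ref{lem-def}(2). The reverse direction of (1), and the derivations of (2) and (3), are then essentially formal.
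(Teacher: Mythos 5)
Your proof is correct and follows essentially the same route as the paper: elementary $\Rightarrow$ facet by a dimension count (your single Betti-number formula for $\dim\lambda_{(\Gamma,\E,D)}-\dim\lambda_{(\Gamma',\E',D')}$ packages the paper's two separate computations for types 1 and 2), facet $\Rightarrow$ elementary by showing any other contraction pattern either drops the dimension by more than one or contradicts acyclicity of the pushed-forward flow, and (2)--(3) then follow formally from Theorem \ref{thm:face-morphism}. If anything, your converse is slightly more complete than the paper's, since you explicitly verify the $\phi_{(\E,D)}$-cut condition in the type-2 case, a point the paper's proof leaves implicit.
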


\begin{proof}
Consider an elementary specialization $ (\Gamma,\E,D)\ge (\Gamma',\E',D')$. If $ (\Gamma,\E,D)\ge (\Gamma',\E',D')$ is of type 1, then  $\#E(\Gamma')=\#E(\Gamma)-1$ and $\dim \Lambda_{(\Gamma',\E',D')}=\dim \Lambda_{(\Gamma,\E,D)}$. If $ (\Gamma,\E,D)\ge (\Gamma',\E',D')$ is of type 2, then $\#E(\Gamma')=\#E(\Gamma)-\#E(P_u,P_{u'})$ and 
\[
\dim \Lambda_{(\Gamma',\E',D')}=\dim \Lambda_{(\Gamma,\E,D)}-\#E(P_v,P_{v'})+1.
\]
Hence, in both cases,  $\lambda_{(\Gamma',\E',D')}$ is a facet of $\lambda_{(\Gamma,\E,D)}$. Conversely, assume that $\lambda_{(\Gamma',\E',D')}$ is a facet of $\lambda_{(\Gamma,\E,D)}$ and let $(\Gamma,\E,D)\ge (\Gamma',\E',D')$ and $\wh{\iota}\col \Gamma^\E\ra\Gamma'^{\E'}$ be the associated specializations. In particular, $\wh\iota_*(\phi_{(\E,D)})$ is acyclic. If $(\Gamma,\E,D)\ge (\Gamma',\E',D')$ is not elementary, by what we have already shown we have that there are distinct vertices $u,u'\in V(\Gamma^\E/P_{(\E,D)})$ and a proper non-empty set $F\subset E(P_u,P_{u'})$ such that $\wh \iota$ contracts $E(P_u,P_{u'})\setminus F$ and does not contract $F$. Thus $\wh\iota$ sends each oriented edge in $\ora{F}$ to a positive $\wh{\iota}_*(\phi_{(\E,D)})$-loop in $(\Gamma')^{\E'}$, which is a contradiction. This proves (1). Notice that (2) and (3) are consequences of (1) and Theorem \ref{thm:face-morphism}. 
\end{proof}

\begin{Rem}
If $(\Gamma,\E,D)$ is $v_0$-quasistable, then 
the statements of Theorem 
\ref{thm:face-morphism} and Corollary \ref{cor:facet} hold if we replace the word \emph{semistable} with \emph{$v_0$-quasistable} 
(specializations preserves semistability and quasistability).
\end{Rem}

Recall the definition of the map $\mc T_\E$ in equation \eqref{eq:proy}. 
For a semistable root-graph $(\Gamma,\E,D)$, we define the cone $\kappa_{(\Gamma,\E,D)}$ and its interior $\kappa^\circ_{(\Gamma,\E,D)}$ as:
\[
\kappa_{(\Gamma,\E,D)}=\mathcal{T}_\E(\lambda_{(\Gamma,\E,D)})
\quad\text{ and }\quad
\kappa^\circ_{(\Gamma,\E,D)}=\mathcal{\T}_\E(\lambda_{(\Gamma,\E,D)}\cap\R^{E(\Gamma^\E)}_{>0}).
\]
By construction, we have
\begin{equation}\label{eq:kappa-sigma}
\kappa_{(\Gamma,\E,D)}\subset \sigma_{(\Gamma,\E,D)}\subset \mathbb R^{E(\Gamma^\E)}/\mc L_\E.
\end{equation}


\begin{Rem}\label{rem:simple-root}
Notice that if $(\E,D)$ is simple, then we have $\mc L_\E=0$ in  equation \eqref{eq:proy}, and hence $\lambda_{(\Gamma,\E,D)}=\kappa_{(\Gamma,\E,D)}$.
\end{Rem}

\begin{Rem}\label{rem:kappa-inclusion}
If $(\Gamma,\E,D)\ra (\Gamma',\E',D')$ is a specialization of semistable root-graphs, then
using Remark \ref{rem:inclu} and Theorem \ref{thm:face-morphism}, we deduce that
\[
\kappa_{(\Gamma',\E',D')}=\kappa_{(\Gamma,\E,D)}\cap\sigma_{(\Gamma',\E',D')}=\kappa_{(\Gamma,\E,D)}\cap \mathbb R^{E(\Gamma^{\E'})}/\mc L_{\E'},
\]
and hence we get an inclusion $\kappa_{(\Gamma',\E',D')}\subset \kappa_{(\Gamma,\E,D)}$.
\end{Rem}

\begin{Rem}\label{rem:face-morphism}
As we shall see in Proposition \ref{prop:refiT}, the inclusion $\kappa_{(\Gamma',\E',D')}\subset \kappa_{(\Gamma,\E,D)}$ might fail to be a face morphism when $(\E,D)$ is polystable and $(\E',D')$ is $v_0$-quasistable. 
Nevertheless, 
in Lemma \ref{lem:poly-pseudo-root}, we will see that it is a face morphism if $(\E,D)$ and $(\E',D')$ are polystable pseudo-roots. 
\end{Rem}


\begin{Prop}\label{prop:dim-cell}
We have 
$\dim\kappa_{(\Gamma,\E,D)}=\#E(\Gamma)-b_1(\E,D)$, 
for every semistable root-graph  $(\Gamma,\E,D)$.
\end{Prop}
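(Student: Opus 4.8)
The plan is to compute the dimension of $\kappa_{(\Gamma,\E,D)}=\mathcal T_\E(\lambda_{(\Gamma,\E,D)})$ by comparing it with $\dim\lambda_{(\Gamma,\E,D)}$, which is already recorded in equation \eqref{eq:dim-lambda} as $\#E(\Gamma)-b_1(\E,D)+b_0(\E,D)-1$. Since $\mathcal T_\E$ is the linear quotient map $\mathbb R^{E(\Gamma^\E)}\to\mathbb R^{E(\Gamma^\E)}/\mathcal L_\E$, the rank–nullity theorem gives
\[
\dim\kappa_{(\Gamma,\E,D)}=\dim\lambda_{(\Gamma,\E,D)}-\dim\bigl(\mathcal L_\E\cap\Lambda_{(\Gamma,\E,D)}\bigr),
\]
where I use that the span of $\lambda_{(\Gamma,\E,D)}$ is $\Lambda_{(\Gamma,\E,D)}$ (the cone $\lambda_{(\Gamma,\E,D)}=\Lambda_{(\Gamma,\E,D)}\cap\mathbb R^{E(\Gamma^\E)}_{\ge0}$ is full-dimensional in $\Lambda_{(\Gamma,\E,D)}$ since it contains the all-positive point coming from the quasistable refinement — more precisely, $\lambda^\circ_{(\Gamma,\E,D)}=\lambda_{(\Gamma,\E,D)}\cap\mathbb R^{E(\Gamma^\E)}_{>0}$ is nonempty, which follows from Theorem \ref{thm:face-morphism}(3) applied to the identity specialization, or simply from the fact that $(\Gamma,\E,D)$ itself is a semistable root-graph). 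By Remark \ref{rem:containing} we have $\mathcal L_\E\subset\Lambda_{(\Gamma,\E,D)}$, so the intersection above is just $\mathcal L_\E$, and therefore
\[
\dim\kappa_{(\Gamma,\E,D)}=\dim\lambda_{(\Gamma,\E,D)}-\dim\mathcal L_\E.
\]

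The remaining step is to plug in the two known dimensions. We have $\dim\mathcal L_\E=b_0(\Gamma_\E)-1$ (stated right after equation \eqref{eq:wV}). Combining with equation \eqref{eq:dim-lambda},
\[
\dim\kappa_{(\Gamma,\E,D)}=\bigl(\#E(\Gamma)-b_1(\E,D)+b_0(\E,D)-1\bigr)-\bigl(b_0(\Gamma_\E)-1\bigr)
=\#E(\Gamma)-b_1(\E,D)+b_0(\E,D)-b_0(\Gamma_\E).
\]
So the proof reduces to the identity $b_0(\E,D)=b_0(\Gamma_\E)$, i.e. $b_0(\Gamma_\E/P_{(\E,D)})=b_0(\Gamma_\E)$ (recall the definition \eqref{eq:bED}). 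This holds because $P_{(\E,D)}$ is a subgraph of $\Gamma_\E$ (see the Remark following Notation \ref{not:unique-flow}) and contracting a subgraph of a graph never changes the number of connected components: each connected component of $P_{(\E,D)}$ lies inside a single connected component of $\Gamma_\E$, and contracting it leaves that component connected and touches no other component.

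I expect the main (minor) obstacle to be the bookkeeping around which ambient graph $P_{(\E,D)}$ and its contraction live in — the excerpt deliberately allows $P_{(\E,D)}$ to be viewed inside $\Gamma_\E$, $\Gamma$, or $\Gamma^\E$ — so I will be careful to phrase the identity $b_0(\Gamma_\E/P_{(\E,D)})=b_0(\Gamma_\E)$ using the $\Gamma_\E$-incarnation, for which it is transparent. Everything else is a direct substitution of the already-established formulas for $\dim\lambda_{(\Gamma,\E,D)}$ and $\dim\mathcal L_\E$ together with the rank–nullity theorem applied to $\mathcal T_\E|_{\Lambda_{(\Gamma,\E,D)}}$.
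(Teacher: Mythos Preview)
Your proposal is correct and follows essentially the same approach as the paper's proof: both compute $\dim\kappa_{(\Gamma,\E,D)}=\dim\lambda_{(\Gamma,\E,D)}-\dim\mathcal L_\E$ using the inclusion $\mathcal L_\E\subset\Lambda_{(\Gamma,\E,D)}$ (the paper attributes this to Theorem~\ref{thm:uniquephi}, you to Remark~\ref{rem:containing}, which is the same fact), then plug in \eqref{eq:dim-lambda} and the identity $b_0(\Gamma_\E)=b_0(\Gamma_\E/P_{(\E,D)})=b_0(\E,D)$. Your version is slightly more explicit about the rank--nullity step and about why the cone is full-dimensional in $\Lambda_{(\Gamma,\E,D)}$, but the substance is identical.
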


\begin{proof}
By \eqref{eq:dim-lambda} and the following computation:
\[
\dim\mc L_\E=b_0(\Gamma_\E)-1=b_0(\Gamma_\E/P_{(\E,D)})-1=b_0(\E,D)-1,
\]
we need only to check that $\mc L_\E\subset \Lambda_{(\Gamma,\E,D)}$, which easily follows by Theorem \ref{thm:uniquephi}.
\end{proof}

\begin{Def}
Let $X$ be a tropical curve. A \emph{root} of $X$ is a  divisor $\D$ on $X$ such that the divisor $2\D$ is linearly equivalent to the canonical divisor $K_X$. A root is \emph{unitary, semistable, polystable, $v_0$-quasistable} for some point $p_0$ of $X$, if so is $\D$ as a divisor on $X$. 
\end{Def}

\begin{Lem}\label{lem:comb-type}
Let $\D$ be a unitary semistable root of a tropical curve $X$ with model $\Gamma$. Then the combinatorial type $(\E,D)$ of $\D$ is a pseudo-root on $\Gamma$.
\end{Lem}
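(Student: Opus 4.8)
The claim is that if $\mc D$ is a unitary semistable root of a tropical curve $X$ with model $\Gamma$, then its combinatorial type $(\E, D)$ is a pseudo-root on $\Gamma$. By definition, $(\E, D)$ is a pseudo-root precisely when there exists an acyclic flow $\phi$ on $\Gamma^\E$ with $2D + \Div(\phi) = K_{\Gamma^\E}$. So the plan is to produce such a $\phi$ directly from the rational function witnessing that $2\mc D \sim K_X$, and then verify it is acyclic using semistability.

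\emph{First step:} Since $\mc D$ is a root, there is a rational function $f$ on $X$ with $2\mc D + \Div(f) = K_X$ as divisors on $X$. After possibly refining the model, I may assume $\Gamma$ is chosen so that the support of $\Div(f)$ is contained in $V(\Gamma)$ and also the support of $K_X$ lies in $V(\Gamma)$ (the excerpt allows us to fix such a model). In fact, because $\mc D$ is unitary with combinatorial type $(\E, D)$, the natural model to work with is $\Gamma^\E$: the exceptional vertices $w_e$ (for $e \in \E$) are exactly the interior points of edges carrying a $-1$, and on $\Gamma^\E$ the divisor $\mc D$ is supported on $V(\Gamma^\E)$ and equals $D$. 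I then set $\phi := \phi_f$, the flow on $\Gamma^\E$ recording the slope of $f$ on each oriented edge, as defined just before Remark~\ref{rem:stab-equiv}. The identity $\Div(f) = \Div(\phi_f)$ on $\Gamma^\E$ is standard (the order of $f$ at a vertex is the sum of outgoing slopes, which is exactly $\Div(\phi_f)$ evaluated there), so $2D + \Div(\phi) = K_{\Gamma^\E}$, using that $K_X$ restricted to $V(\Gamma^\E)$ is $K_{\Gamma^\E}$ by the choice of model.

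\emph{Second step — the main obstacle:} I must show $\phi = \phi_f$ can be taken \emph{acyclic}, i.e.\ that there are no non-null $\phi$-cycles on $\Gamma^\E$. A priori $f$ need not have this property. The fix is to replace $f$ by an equivalent rational function with the same divisor but no "circulating" part: concretely, decompose the flow $\phi_f$ as an acyclic flow plus a circulation (a flow with $\Div = 0$) — any integer flow on a graph admits such a decomposition, and subtracting off the circulation changes $\phi_f$ to an acyclic flow $\phi$ with the same divisor, hence still $2D + \Div(\phi) = K_{\Gamma^\E}$. Alternatively, and perhaps more in the spirit of the paper, I would invoke Remark~\ref{rem:stab-equiv}: $\mc D$ is equivalent to a unique $(p_0,\mu)$-quasistable divisor $\mc D'$, whose combinatorial type is known to be a genuine $(v_0,\mu)$-quasistable pseudo-root (by the theory developed in the preceding section, e.g.\ \cite[Proposition 4.6]{AP1}), and then transfer back. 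The cleanest route, though, is the flow-decomposition argument: it directly yields the acyclic flow we need without detouring through quasistable representatives.

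\emph{Wrap-up:} Once $\phi$ is acyclic and satisfies $2D + \Div(\phi) = K_{\Gamma^\E}$, the pseudo-divisor $(\E, D)$ is by definition a pseudo-root on $\Gamma$. It remains only to note that $(\E, D)$ is indeed the combinatorial type of $\mc D$ in the sense defined in the Tropical curves subsection — but this is how $(\E, D)$ was introduced, so there is nothing further to check. The key point I expect to require the most care is ensuring the flow can be chosen acyclic while preserving both its divisor and the fact that it lives on $\Gamma^\E$ (not a finer refinement); this is where the unitarity hypothesis is essential, since it pins down that all the "$-1$" behavior is concentrated at single interior points, so no finer subdivision is forced on us.
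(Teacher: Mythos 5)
Your first step matches the paper's, but the phrase ``after possibly refining the model'' is a warning sign: you are not allowed to refine, because the acyclic flow in the definition of a pseudo-root must live on $\Gamma^\E$ itself. What makes the step work, and what should be said explicitly, is that $\Div(f)=K_X-2\D$ with $\supp(K_X)\subset V(\Gamma)$ and, by unitarity, $\supp(\D)\subset V(\Gamma^\E)$; hence $\supp(\Div(f))\subset V(\Gamma^\E)$ automatically, so $\phi_f$ is a well-defined flow on $\Gamma^\E$ and $2D+\Div(\phi_f)=K_{\Gamma^\E}$, with no refinement needed.

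The genuine gap is your second step. The ``main obstacle'' does not exist: the flow of a rational function is automatically acyclic in the paper's sense. If $e_1,\dots,e_m$ were a non-null $\phi_f$-cycle, then traversing the corresponding closed loop in $X$ the value of $f$ would change by $\sum_i \phi_f(e_i)\,\ell(e_i)>0$ (edge lengths are positive, all slopes are $\ge 0$ and at least one is $>0$), contradicting that $f$ is single-valued; this one-line observation is exactly how the paper concludes. Moreover, the repair you propose instead rests on a false statement: it is not true that every integer flow decomposes as an acyclic flow plus a circulation, i.e.\ that the divisor of an arbitrary flow is always the divisor of an \emph{acyclic} flow, when ``acyclic'' is taken in the paper's sense (no cycle with all values $\ge 0$ and at least one $>0$, zero-valued edges permitted in the cycle). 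For instance, on the graph with two vertices $u,v$ joined by two edges, the divisor $v-u$ is $\Div(\phi)$ for the flow with values $1$ and $0$, yet every flow with divisor $v-u$ admits a non-null cycle, so no acyclic flow has this divisor (equivalently, $v-u$ is not principal on such a metric circle for any edge lengths). The standard network-flow cycle-cancellation argument only removes directed cycles all of whose edges carry strictly positive flow, which is a weaker notion and does not yield what the definition of pseudo-root requires. Finally, the suggested alternative via the quasistable representative $\D'$ is essentially circular: proving that the combinatorial type of $\D'$ is a pseudo-root is the same problem you started with, Proposition 4.6 of [AP1] asserts nothing of the kind, and no mechanism is given for transferring the conclusion from the combinatorial type of $\D'$ back to that of $\D$.
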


\begin{proof}
There is a rational function $f$ on $X$ such that 
$2\D+\Div(f)=K_X$. Since $\mc D$ is unitary, we have $\supp(\D)\subset V(\Gamma^\E)$. But $\supp(K_X)\subset V(\Gamma)$, hence $\supp(\Div(f))\subset V(\Gamma^\E)$. Then  $\Div(\phi_f)$ is a divisor on $\Gamma^\E$, and we have $2D+\Div(\phi_f)=K_{\Gamma^\E}$. Since $\phi_f$ is the flow of a rational function, it is acyclic, and we are done. 
\end{proof}

\begin{Prop}\label{prop:open-cone}
Let $(\Gamma,\E,D)$ be a semistable root-graph. The open cone $\kappa^0_{(\Gamma,\E,D)}$ parametrizes classes of pairs $(X,\D)$,  where $X$ is a $n$-pointed tropical curve with model $\Gamma$ and $\D$ is a semistable unitary root of $X$ with combinatorial  type $(\E,D)$, and $(X,\D)$ is equivalent to $(X',\D')$ if $X=X'$ and $\D$ is linearly equivalent to $\D'$.
\end{Prop}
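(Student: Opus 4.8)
The plan is to exhibit a bijection between the points of $\kappa^\circ_{(\Gamma,\E,D)}$ and the equivalence classes of pairs $(X,\D)$ as described, and to check it is compatible with the cone structure. First I would recall that a point of $\kappa^\circ_{(\Gamma,\E,D)} = \mc T_\E(\lambda_{(\Gamma,\E,D)}\cap \R^{E(\Gamma^\E)}_{>0})$ is, by definition, the image under $\mc T_\E$ of a point $(x_e)_{e\in E(\Gamma^\E)}$ with all $x_e>0$ satisfying the cycle equations \eqref{eq:phieq}. Such a tuple determines an $n$-pointed tropical curve $X$ with model $\Gamma$: the length of an edge $e\in E(\Gamma)\setminus \E$ is $x_e$, and the length of $e\in \E$ is $x_{e_1}+x_{e_2}$ where $e_1,e_2$ are the two exceptional edges over $e$; this is exactly the map of cones induced by $\mc S_\E$ in \eqref{eq:sE}, as recalled for $J^\trop_{\mu,g,n}$ and $P^\trop_{\mu,g,n}$. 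On $X$ one places the unitary divisor $\D$ whose combinatorial type is $(\E,D)$: on each exceptional edge $e\in\E$ one puts a point of multiplicity $-1$ at distance $x_{e_1}$ from the chosen endpoint (so the exceptional vertex of $\Gamma^\E$ is realized as an actual point of $e^\circ$), and $\D(v)=D(v)$ for $v\in V(\Gamma)$.

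Next I would verify that this $\D$ is a semistable unitary root of $X$. The key point is equation \eqref{eq:phieq}: since $2D+\Div(\phi_{(\E,D)})=K_{\Gamma^\E}$ by definition of a semistable root-graph, the flow $\phi_{(\E,D)}$ is acyclic, and the linear subspace $\Lambda_{(\Gamma,\E,D)}$ is precisely the condition that makes $\phi_{(\E,D)}$ (together with the given edge lengths) come from a genuine rational function $f$ on $X$ — this is the standard tropical fact that a flow integrates to a rational function exactly when its ``circulation'' around every oriented cycle vanishes, cf.\ the discussion of $\phi_f$ in the Preliminaries. Then $\Div(f)=\Div(\phi_{(\E,D)})$ (using that $\supp(K_X)\subset V(\Gamma)$), so $2\D+\Div(f)=K_X$, i.e.\ $\D$ is a root; its combinatorial type is $(\E,D)$, which is $\mu$-semistable by hypothesis, hence $\D$ is $\mu$-semistable by the equivalent definition of stability via combinatorial type recalled before Remark \ref{rem:stab-equiv}. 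Two tuples $(x_e)$ and $(x'_e)$ have the same image under $\mc T_\E$ iff they differ by an element of $\mc L_\E$; by Remark \ref{rem:containing} $\mc L_\E\subset\Lambda_{(\Gamma,\E,D)}$, and one checks (using the description of $\mc L_\E$ via the vectors $u_V$ in \eqref{eq:wV}) that moving by $u_V$ corresponds to sliding the $-1$-points along exceptional edges while keeping the edge $e\in E(V,V^c)$ length fixed and only relabelling which exceptional edge carries which portion — i.e.\ it produces a linearly equivalent divisor on the same curve $X$ (twisting by the principal divisor of a function supported on a disconnecting exceptional configuration). Thus the map is well defined into equivalence classes.

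For injectivity, given $(X,\D)$ with model $\Gamma$ and combinatorial type $(\E,D)$, the edge lengths of $X$ together with the position of the unique $-1$-point on each exceptional edge recover a tuple $(x_e)_{e\in E(\Gamma^\E)}$, well defined up to the $\mc L_\E$-ambiguity just described, and by Lemma \ref{lem:comb-type} the root condition forces this tuple to satisfy \eqref{eq:phieq} (the flow $\phi_f$ of the integrating function is acyclic and equals $\phi_{(\E,D)}$ by the uniqueness in Theorem \ref{thm:uniquephi}); linearly equivalent choices of $\D$ give the same point of $\kappa^\circ_{(\Gamma,\E,D)}$ by the same twisting argument. For surjectivity, any semistable unitary root $\D$ on a tropical curve with model $\Gamma$ has a combinatorial type which, by Lemma \ref{lem:comb-type}, is a semistable pseudo-root, so if it equals $(\E,D)$ the pair lies in the image. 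The main obstacle I anticipate is the bookkeeping around $\mc L_\E$: namely checking carefully that the fiber of $\mc T_\E$ through a point of $\lambda^\circ_{(\Gamma,\E,D)}$ corresponds exactly to the linear-equivalence class of $\D$ on the \emph{fixed} curve $X$ — neither smaller (different representatives of the class inside the same combinatorial type do arise from $\mc L_\E$) nor larger (moving by $u_V$ does not change the isometry type of $X$, which requires $E(V,V^c)\subset\E$, precisely the condition defining the generators of $\mc L_\E$). Once this is settled, the statement follows, and one may note it is the exact analogue, in the root setting, of \cite[Proposition 6.9]{AAPT} and \cite[Proposition 5.12]{AP1}, so I would also point the reader there for the parts of the argument that are formally identical.
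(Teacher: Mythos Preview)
Your proposal is correct and the core idea---that the cycle equations \eqref{eq:phieq} are exactly the condition for the acyclic flow $\phi_{(\E,D)}$ to integrate to a rational function on $X$, and hence for $\D$ to be a root---is the same as the paper's. The difference is organizational: the paper invokes \cite[Proposition 4.6]{AAPT} at the outset to say that $\sigma^\circ_{(\Gamma,\E,D)}$ already parametrizes equivalence classes of pairs $(X,\D)$ with $\D$ a unitary semistable divisor of combinatorial type $(\E,D)$ (equivalence being $X=X'$ and $\D\sim\D'$), which absorbs all of the $\mc L_\E$/linear-equivalence bookkeeping you flagged as the main obstacle. With that in hand, the paper only needs to check that the inclusion $\kappa^\circ_{(\Gamma,\E,D)}\subset\sigma^\circ_{(\Gamma,\E,D)}$ singles out precisely the roots, which is the two-line argument using Theorem \ref{thm:uniquephi} you already have. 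Your direct construction works, but citing the ambient result for $\sigma^\circ$ up front lets you skip the part you were most worried about.
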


\begin{proof}
By \cite[Proposition 4.6]{AAPT}, the open cone  $\sigma^\circ_{(\Gamma,\E,D)}$ parametrizes equivalence classes of pairs $(X,D)$, where $X$ is a tropical curve with model $\Gamma$ and $\D$ is a unitary divisor with combinatorial type $(\E,D)$ (hence $\D$ is semistable as a divisor on $X$), and two pairs $(X,\D)$ and $(X',D')$ are equivalent if $X=X'$ and the divisors $\D$ and $\D'$ are linearly equivalent. 

Consider a point $(x_e)_{e\in E(\Gamma^\E)}\in \mathbb R^{E(\Gamma^\E)}_{>0}$ whose class in $\sigma^\circ_{(\Gamma,\E,D)}$ corresponds to a pair $(X,\D)$. A model of $X$ is $\Gamma^\E$, with length function taking an edge $e\in E(\Gamma^\E)$ to $x_e$. If $\D$ is a root of $X$, then there is a rational function $f$ on $X$ such that $2\D+\Div(f)=K_X$. Thus $\Div(\phi_f)$ is a divisor on $\Gamma^\E$ such that $2D+\Div(\phi_f)=K_{\Gamma^\E}$. Hence $\phi_f=\phi_{(\E,D)}$ (by Theorem \ref{thm:uniquephi}) and $\phi_{(\E,D)}$ satisfies equations \eqref{eq:phieq} because it is the flow associated to a rational function on $X$. This implies that $(x_e)_{e\in E(\Gamma^\E)}$ is a point of $\lambda^\circ_{(\Gamma,\E,D)}$, so its class belongs to $\kappa^\circ_{(\Gamma,\E,D)}$. Conversely, if $(x_e)_{e\in E(\Gamma^\E)}$ is a point of $\lambda^\circ_{(\Gamma,\E,D)}$, then equations \eqref{eq:phieq} implies the existence of a rational function $f$ on $X$ such that $\phi_f=\phi_{(\E,D)}$. Thus $2D+\Div(\phi_f)=K_{\Gamma^\E}$, which implies that $2\D+\Div(f)=K_X$, i.e., that $\D$ is a root of $X$.
\end{proof}

\begin{Rem}
A similar argument can be used to prove the statement of Proposition \ref{prop:open-cone} where we replace the word \emph{semistable} either with \emph{polystable}, or with \emph{$v_0$-quasistable}. 
\end{Rem}

\subsection{The moduli space of tropical theta characteristics}

Let $\Gamma$ be a graph. 
Given a subset $\E\subset E(\Gamma)$,
we consider the linear injective map 
\begin{equation}\label{eq:delta}
\delta_\E\col \mb R^{E(\Gamma)}\ra \mathbb{R}^{E(\Gamma^\E)}
\end{equation}
taking $(x_e)_{e\in E(\Gamma)}$ to $(x_{e'})_{e'\in E(\Gamma^\E)}$, where $x_{e'}=x_e$ if the edge $e'\in E(\Gamma^\E)$ is over the edge $e\in E(\Gamma)$. Denote by $\Delta_\E:=\delta_\E(\mathbb R^{E(\Gamma)}_{\ge0})$, so that we have 
\[
\Delta_\E=
\{(x_e)_{e\in E(\Gamma^\E)} | x_{e_1}=x_{e_2}\ge0,  \forall e_1,e_2 \text{ over the same edge } e\in \E \}\subset \mb R^{E(\Gamma^\E)}_{\ge0}.
\]

\begin{Lem}\label{lem:poly-pseudo-root}
Let $(\Gamma,\E,D)$ be a polystable root-graph. 
\begin{itemize}
    \item[(1)] We have isomorphisms:
\[
\mathbb R^{E(\Gamma)}_{\ge0}\stackrel{\underset{\simeq}{\delta_\E}}{\ra}\Delta_\E\stackrel{\underset{\simeq}{\mc T_\E}}{\ra}\kappa_{(\Gamma,\E,D)}\subset \mathbb R^{E(\Gamma^\E)}/\mathcal{L}_\E;
\]
    \item[(2)] If $(\Gamma,\E,D)\ra (\Gamma',\E',D')$ is a specialization of polystable root-graphs, then  
    we have a face morphism $\kappa_{(\Gamma',\E',D')}\ra \kappa_{(\Gamma,\E,D)}$. Conversely, every face of $\kappa_{(\Gamma,\E,D)}$ arises in this way.
\end{itemize}
\end{Lem}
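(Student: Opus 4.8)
The plan is to deduce both parts from the very restricted shape of the flow $\phi_{(\E,D)}$ on a polystable root-graph. By Lemma~\ref{lem:phizero} we have $\phi_{(\E,D)}(e)=0$ exactly on the non-exceptional oriented edges of $\Gamma^\E$, and by Theorem~\ref{thm:uniquephi}(1) it equals $1$ on each oriented exceptional edge whose target is the corresponding exceptional vertex. I would first spell out what this means for the equations \eqref{eq:phieq} cutting out $\Lambda_{(\Gamma,\E,D)}$: an oriented cycle $\gamma$ of $\Gamma^\E$ either meets no exceptional vertex (and then its equation is the trivial $0=0$) or, for each exceptional vertex $w_e$ it passes through, it traverses both edges $e_1,e_2$ over $e$, one towards $w_e$ and one away, so that this vertex contributes exactly $\pm(x_{e_1}-x_{e_2})$ to the equation. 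Hence every equation of $\Lambda_{(\Gamma,\E,D)}$ involves only the ``antidiagonal'' coordinates $x_{e_1}-x_{e_2}$ attached to the exceptional pairs; in particular the diagonal subspace $\delta_\E(\mathbb R^{E(\Gamma)})$ (where $x_{e_1}=x_{e_2}$ on every exceptional pair) is contained in $\Lambda_{(\Gamma,\E,D)}$. On the other hand, reading off \eqref{eq:wV}, each generator $u_V$ of $\mathcal L_\E$ is supported on exceptional edges and satisfies $(u_V)_{e_1}+(u_V)_{e_2}=0$ on every exceptional pair, so $\mathcal L_\E$ sits inside the antidiagonal subspace and meets $\delta_\E(\mathbb R^{E(\Gamma)})$ only in $0$. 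Since $\mathcal L_\E\subseteq\Lambda_{(\Gamma,\E,D)}$ by Remark~\ref{rem:containing}, and a dimension count --- using \eqref{eq:dim-lambda} together with $\dim\mathcal L_\E=b_0(\Gamma_\E)-1$ and the fact that $b_1(\E,D)=0$ for polystable root-graphs (because $P_{(\E,D)}=E(\Gamma)\setminus\E$, so $\Gamma_\E/P_{(\E,D)}$ has no edges) --- gives $\dim\Lambda_{(\Gamma,\E,D)}=\#E(\Gamma)+b_0(\Gamma_\E)-1$, I would conclude the internal direct sum decomposition $\Lambda_{(\Gamma,\E,D)}=\delta_\E(\mathbb R^{E(\Gamma)})\oplus\mathcal L_\E$.

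Part (1) then falls out. The composite $\mathcal T_\E\circ\delta_\E\colon\mathbb R^{E(\Gamma)}\to\mathbb R^{E(\Gamma^\E)}/\mathcal L_\E$ is injective, since $\delta_\E$ is injective and $\delta_\E(\mathbb R^{E(\Gamma)})\cap\ker\mathcal T_\E=\delta_\E(\mathbb R^{E(\Gamma)})\cap\mathcal L_\E=0$; its image is $\mathcal T_\E(\Lambda_{(\Gamma,\E,D)})$, the span of $\kappa_{(\Gamma,\E,D)}$; and $\delta_\E\colon\mathbb R^{E(\Gamma)}_{\ge0}\to\Delta_\E$ is an isomorphism by definition of $\Delta_\E$. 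So it remains only to check $\mathcal T_\E(\Delta_\E)=\kappa_{(\Gamma,\E,D)}=\mathcal T_\E(\lambda_{(\Gamma,\E,D)})$. The inclusion ``$\subseteq$'' is clear from $\Delta_\E\subseteq\lambda_{(\Gamma,\E,D)}$. For ``$\supseteq$'', given $x\in\lambda_{(\Gamma,\E,D)}=\mathbb R^{E(\Gamma^\E)}_{\ge0}\cap\Lambda_{(\Gamma,\E,D)}$, write $x=x'+\ell$ with $x'\in\delta_\E(\mathbb R^{E(\Gamma)})$ and $\ell\in\mathcal L_\E$; since $\ell$ vanishes off the exceptional edges and is antidiagonal there, $x'$ agrees with $x$ (hence is $\ge0$) on non-exceptional edges and satisfies $x'_{e_1}=x'_{e_2}=\tfrac12(x_{e_1}+x_{e_2})\ge0$ on each exceptional pair, so $x'\in\Delta_\E$ and $\mathcal T_\E(x)=\mathcal T_\E(x')$. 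This is exactly the chain of isomorphisms in the statement.

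For part (2) I would transport the face structure across these isomorphisms. By Proposition~\ref{prop:polyspec}, a specialization of polystable root-graphs $(\Gamma,\E,D)\ge(\Gamma',\E',D')$ is induced by a contraction $\iota\colon\Gamma\to\Gamma'$ with $\E'=\E\cap E(\Gamma')$, and thus comes with inclusions $E(\Gamma')\subseteq E(\Gamma)$ and $E({\Gamma'}^{\E'})\subseteq E(\Gamma^\E)$ under which $\delta_\E$ restricts to $\delta_{\E'}$ (extended by zero) and, because $\mathcal L_{\E'}=\mathcal L_\E\cap\mathbb R^{E({\Gamma'}^{\E'})}$ by Remark~\ref{rem:inclu}, $\mathcal T_\E$ restricts to $\mathcal T_{\E'}$. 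This yields a commutative square whose vertical maps are the isomorphisms of part (1), whose top map is the coordinate face morphism $\mathbb R^{E(\Gamma')}_{\ge0}\hookrightarrow\mathbb R^{E(\Gamma)}_{\ge0}$, and whose bottom map is the inclusion $\kappa_{(\Gamma',\E',D')}\subseteq\kappa_{(\Gamma,\E,D)}$ of Remark~\ref{rem:kappa-inclusion}; hence the latter is a face morphism. Conversely, every face of $\kappa_{(\Gamma,\E,D)}$ corresponds via the vertical isomorphism to a coordinate face $\mathbb R^{S}_{\ge0}$ for some $S\subseteq E(\Gamma)$; letting $\iota\colon\Gamma\to\Gamma'$ be the contraction of $E(\Gamma)\setminus S$ (which preserves stability) and $(\Gamma',\E',D')=(\Gamma',\iota_*(\E,D))$, which is a polystable root-graph admitting such a specialization by Corollary~\ref{cor:graph-spec}, the same square identifies $\kappa_{(\Gamma',\E',D')}$ with that face.

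The step I expect to require the most care is the ``antidiagonal'' bookkeeping underpinning everything: verifying precisely that both the cycle equations \eqref{eq:phieq} and all the vectors $u_V$ lie in the span of the differences $x_{e_1}-x_{e_2}$ over exceptional pairs, and that this span is complementary to $\delta_\E(\mathbb R^{E(\Gamma)})$. It is exactly this that makes the dimension count force the clean decomposition $\Lambda_{(\Gamma,\E,D)}=\delta_\E(\mathbb R^{E(\Gamma)})\oplus\mathcal L_\E$, and in turn makes the positivity argument $x'_{e_i}=\tfrac12(x_{e_1}+x_{e_2})\ge0$ work; getting the edge orientations right so that an exceptional vertex really contributes $\pm(x_{e_1}-x_{e_2})$ rather than $\pm(x_{e_1}+x_{e_2})$ is the one point where Theorem~\ref{thm:uniquephi}(1) is indispensable.
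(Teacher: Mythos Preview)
Your proof is correct and takes a genuinely different route from the paper for part~(1). The paper first treats the simple case (where $\mathcal L_\E=0$) by a spanning-tree argument showing $\lambda_{(\Gamma,\E,D)}\subseteq\Delta_\E$, and then handles the general case by induction on $\#\E$: it chooses a minimal disconnecting subset $\E'\subset\E$, splits $\Gamma$ into two pieces, applies the induction hypothesis on each, and then carefully glues the resulting $\mathcal L_{\E_i}$-corrections together with an extra multiple of $u_{V_1}$. Your argument replaces this induction by a single linear-algebraic observation: the direct-sum decomposition $\Lambda_{(\Gamma,\E,D)}=\delta_\E(\mathbb R^{E(\Gamma)})\oplus\mathcal L_\E$, obtained from the antidiagonal description of the cycle equations and a dimension count, which makes the surjectivity of $\mathcal T_\E|_{\Delta_\E}$ immediate via the averaging $x'_{e_i}=\tfrac12(x_{e_1}+x_{e_2})$. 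This is cleaner and more conceptual; the paper's inductive proof is more explicit (it actually constructs the correcting element of $\mathcal L_\E$ step by step), which could be useful if one needed the correction for other purposes, but for the statement at hand your global approach is shorter. Part~(2) is handled the same way in both: transport the face structure of $\mathbb R^{E(\Gamma)}_{\ge0}$ across the isomorphism of part~(1), using Proposition~\ref{prop:polyspec} and Corollary~\ref{cor:graph-spec}.
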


\begin{proof}
We show (1).
It is clear that $\delta_\E\col \mathbb R^{E(\Gamma)}_{\ge0}\ra\Delta_\E$ is an isomorphism. 
 By Lemma \ref{lem:phizero}
 we have that $\Delta_\E\subset \lambda_{(\Gamma,\E,D)}$ and, by equation \eqref{eq:wV}, we have $\Delta_\E\cap\mathcal L_\E=0$.
 Therefore, by restricting $\mathcal T_\E$ to $\Delta_\E$, we obtain an injective linear map
 \begin{equation}\label{eq:restriction}
 \mathcal T_\E\col \Delta_\E\ra \kappa_{(\Gamma,\E,D)}.
 \end{equation}

 We show that the map in equation \eqref{eq:restriction} is surjective.
 Assume first that $(\E,D)$ is simple (recall Remark \ref{rem:simple-root}). Fix an spanning tree $T$ of $\Gamma$ not intersecting $\E$. Hence, for each $e'\in\ora\E$, there exists a unique oriented cycle $\gamma_{e'}$ on $\Gamma$ such that $E(\gamma_{e'})\setminus T=\{e'\}$. Then, seeing $\gamma_{e'}$ as an oriented cycle on $\Gamma^\E$, from the equation: 
 \begin{equation*}
\sum_{e\in \gamma_{e'}}\phi_{(\E,D)}(e)x_e=0,
\end{equation*}
we have that $x_{e_1}=x_{e_2}$ for $e_1, e_2$ over the associated unoriented edge to $e'\in\ora\E$. So, in this case, we also have $\kappa_{(\Gamma,\E,D)}=\lambda_{(\Gamma,\E,D)}\subset\Delta_\E$, concluding the proof of (1) when $(\E,D)$ is simple.

Next, assume that $(\E,D)$ is not simple. It suffices to prove that for every vector $v\in \lambda_{(\Gamma,\E,D)}$ there is a vector $w\in \mathcal L_\E$ such that $v+w\in \Delta_\E$. 
We argue by induction on $\#\E$. Recall that $u_e$ denotes the vector of the canonical basis of $\mathbb R^{E(\Gamma^\E)}$ corresponding to $e\in E(\Gamma^\E)$.
If $\#\E=1$, we have that $\mathcal{L}_\E$ is generated by $u_{e_1}-u_{e_2}$, where $e_1$ and $e_2$ are the edges over the unique edge in $\E$. In this case, we  conclude observing that, given a vector $v\in\lambda_{(\Gamma,\E,D)}$ with coordinates $(x_e)_{e\in E(\Gamma^\E)}$, 
\[
v+\frac{x_{e_2}-x_{e_1}}{2}(u_{e_1}-u_{e_2})\in\Delta_\E.
\]

If $\#\E>1$, we can choose $\E'\subset\E$ such that $\E'$ is a nonempty minimal disconnecting subset of edges of $\Gamma$.
Hence $\Gamma_{\E'}$ has two connected components, which we denote $\Gamma_1$ and $\Gamma_2$. Fix $i\in\{1,2\}$. For a vector $v\in\lambda_{(\Gamma,\E,D)}$ with coordinates $(x_e)_{e\in E(\Gamma^\E)}$, we let $v_i:=\pi_i(v)\in \R^{E(\Gamma_i^{\E_i})}$, where $\E_i=E(\Gamma_i)\cap\E$ and $\pi_i$ is the natural projection $\pi_i\col\mathbb R^{E(\Gamma^\E)}\ra \mathbb R^{E(\Gamma_i^{\E_i})}$. Since $v$ satisfies equations \eqref{eq:phieq} for cycles in $\Gamma$,  we see that $v_i$ satisfies equations \eqref{eq:phieq} for cycles in $\Gamma_i$, for $i=1,2$. Moreover, 
 set:
\[
P_i=E(\Gamma_i)\setminus\E_i
\quad
\text{ and }
\quad
D_i=D|_{\Gamma_i}-\sum_{v\in V(\Gamma_i)}\deg_{\E'}(v)v.
\]
Then $P_i$ is a cyclic subgraph, because $P=E(\Gamma)/\mathcal{E}$ is cyclic and $P=P_1\sqcup P_2$, and, by equation \eqref{eq:DP}, we have $(\E_i,D_i)=(\E_{P_i},D_{P_i})$. By Proposition \ref{prop:EP}, we have that  $(\E_i,D_i)$ is a polystable root on $\Gamma_i$ , and hence $v_i\in\lambda_{(\Gamma_i,\E_i,D_i)}$.\par

By the induction hypothesis, there exists $w_i\in\mathcal{L}_{\E_i}$ such that $v_i+w_i\in\Delta_{\E_i}$. Notice that for each $V\subset V(\Gamma_i)$ such that $E(\Gamma_i)\cap E(V,V^c)\subset\E_i$, we have that $E(V,V^c)\subset\E_i\cup\E'$. Then each vector in $\mathcal{L}_{\E_i}$ is the projection in $\R^{E(\Gamma_i^{\E_i})}$ of a vector in $\mathcal{L}_\E\cap\R^{E(\Gamma^\E)\setminus E(\Gamma_{_{3-i}}^{^{\E_{3-i}}})}$. Let $u_i$ be a vector in $\mathcal{L}_\E\cap\R^{E(\Gamma^\E)\setminus E(\Gamma_{_{3-i}}^{^{\E_{3-i}}})}$ such that its projection to $\R^{E(\Gamma_i^{\E_i})}$ is $w_i$. Define the vector $u:=v+u_1+u_2$.
Then $u$ projects to $v_i+w_i$ in $\R^{E(\Gamma_i^{\E_i})}$ via $\pi_i$, since $\pi_i(u_{3-i})=0$. So the coordinates $(y_e)_{e\in E(\Gamma^\E)}$ of $u$, for any edge $e\in\E\setminus\E'$, satisfy 
\begin{equation}
\label{eq:y=}
    y_{e_1}=y_{e_2},
\end{equation}
where $e_1$, $e_2$ are over $e$. 
Set $V_1=V(\Gamma_1)$. We have that $E(V_1,V_1^c)=\E'$ and $u_{V_1}\in \R^{E(\Gamma^\E)\setminus(E(\Gamma_1^{\E_1})\cup E(\Gamma_2^{\E_2}))}$ (recall Equation \eqref{eq:wV}). If $\#\E'=1$, with $e_1$ and $e_2$ the edges over the unique edge $e\in \E'$, then $u_{V_1}=u_{e_1}-u_{e_2}$ and hence we are done since:
\[
u+\frac{x_{e_2}-x_{e_1}}{2}u_{V_1}\in \Delta_\E.
\]

We are left with the case $\#\E'\ge2$.
For every two different edges $e',e''\in\E'$,  we can choose an oriented cycle $\gamma_{e',e''}$ of $\Gamma$ such that $E(\gamma_{e',e''})\cap\E'=\{e',e''\}$ (recall that $\E'$ is minimal and hence $V_1$ and $V_1^c$ induce connected subgraphs). Since $\mc L_\E\subset \Lambda_{(\Gamma,\E,\D)}$ by Remark \ref{rem:containing}, we have that the vector $u\in \Lambda_{(\Gamma,\E,D)}$ (recall Equation \eqref{eq:phieq}).
Then:
\begin{equation*}
\sum_{e\in \gamma_{e',e''}}\phi_{(\E,D)}(e)y_e=0,
\end{equation*}
(where we see $\gamma_{e',e''}$ as an oriented cycle in $\Gamma^\E$) and so, if $e'_1,e'_2$ are over $e'$, and $e''_1,e''_2$ over $e''$, with $e'_1, e''_1$  incident to $V(\Gamma_1)$, we get:
\[
0=\sum_{e\in \gamma_{e',e''}}\phi_{(\E,D)}(e)y_e=y_{e'_1}-y_{e'_2}-(y_{e''_1}-y_{e''_2})+\sum_{e\in \gamma_{e',e''}\setminus \E'}\phi_{(\E,D)}(e)y_e.
\]
By Equation \eqref{eq:y=} and Lemma \ref{lem:phizero} we have that $\sum_{e\in \gamma_{e',e''}\setminus \E'}\phi_{(\E,D)}(e)y_e=0$ and hence
\begin{equation}\label{eq:con}
   y_{e'_1}-y_{e'_2}=y_{e''_1}-y_{e''_2}.
\end{equation}

  Fix an edge $e'\in\E'$ and let $e'_1,e'_2$ be the different edges of $\Gamma^\E$ over $e'$, with $e'_1$ incident to $V(\Gamma_1)$.  The vector $u+\frac{y_{e'_2}-y_{e'_1}}{2}u_{V_1}$, with coordinates $(z_e)_{e\in E(\Gamma^\E)}$, satisfies $z_{e_1}=y_{e_1}=y_{e_2}=z_{e_2}$ for every $e_1,e_2$ over an edge $e\in\E\setminus\E'$ and 
  \[
   z_{e''_1}=y_{e''_1}+\frac{y_{e'_2}-y_{e'_1}}{2}=y_{e''_2}-\frac{y_{e'_2}-y_{e'_1}}{2}=z_{e''_2}
  \]
for every edge $e''\in \E'$ (where in the second equality we used \eqref{eq:con}).\par

Moreover, for every edge $e\in \E$ we have that  $2z_{e_1}=2z_{e_2}=z_{e_1}+z_{e_2}=x_{e_1}+x_{e_2}$, where $e_1,e_2$ are the edges over $e$. Then the vector $v+u_1+u_2+\frac{y_{e'_2}-y_{e'_1}}{2}u_{V_1}$ has non-negative coordinates, so it is contained in $\Delta_\E$. This concludes the proof of (1).
 
Let $(\Gamma,\E,D)\ge (\Gamma',\E',D')$ be a specialization of polystable root-graphs. If $\Gamma\ra \Gamma'$ is the induced specialization, then we have a natural face morphism $\mathbb R^{E(\Gamma')}_{\ge0}\ra\mathbb R^{E(\Gamma)}_{\ge0}$ which,
by the first part of the proof can be identified with a face morphism $\kappa_{(\Gamma',\E',D')}\ra \kappa_{(\Gamma,\E,D)}$. It is clear that all the faces arises in this way, since every face of $\mathbb R^{E(\Gamma)}_{\ge0}$ is of type $\mathbb R^{E(\Gamma')}_{\ge0}$, for some graph $\Gamma'$ and some specialization $\Gamma\ra \Gamma'$.
\end{proof}

We are now in a position to introduce the following generalized cone complex.

\begin{Def}
We define the generalized cone complex, called the \emph{moduli space of tropical theta characteristics}, as
\[
T^{\trop}_{g,n}:=\underset{\longrightarrow}{\lim}\;\kappa_{(\Gamma,\E,D)}=\coprod_{[\Gamma,\E,D]} \kappa^\circ_{(\Gamma,\E,D)}/\Aut(\Gamma,\E,D),
\]
where the limit is taken over the category $\mathbf{PSR}_{g,n}$ and the union is taken over all equivalence classes $[\Gamma,\E,D]$ in $\mathcal{PSR}_{g,n}$.
\end{Def}
\begin{Rem}
  It is worth noting that although $\mathbb{R}^{E(\Gamma)}_{\geq0}$ is isomorphic to $\kappa_{(\Gamma,\E,D)}$ we take quotient via the group $\Aut(\Gamma,\E,D)$ which is a (possibly proper) subgroup of  $\Aut(\Gamma)$
\end{Rem}

\begin{Prop}\label{prop:modularT}
The generalized cone complex $T^{\trop}_{g,n}$ parametrizes equivalence classes of pairs $(X,\D)$, where $X$ is a stable $n$-pointed  tropical curve of genus $g$, and $\D$ is a polystable root of $X$, and where $(X,\D)$ is equivalent to $(X',\D')$ if there is an isomorphism $\iota\col X\ra X'$ such that $\iota_*(\D)$ is linearly equivalent to $\D'$.
\end{Prop}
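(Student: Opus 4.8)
The plan is to assemble this from two already-established local facts: the modular interpretation of each open cone $\kappa^\circ_{(\Gamma,\E,D)}$ from Proposition \ref{prop:open-cone} (in its polystable version, promised in the Remark following it), and the face-morphism structure of $T^\trop_{g,n}$ coming from Lemma \ref{lem:poly-pseudo-root}(2). First I would recall that, by definition, $T^\trop_{g,n}=\coprod_{[\Gamma,\E,D]}\kappa^\circ_{(\Gamma,\E,D)}/\Aut(\Gamma,\E,D)$, the union running over equivalence classes in $\mathcal{PSR}_{g,n}$. So a point of $T^\trop_{g,n}$ lies in exactly one open cone $\kappa^\circ_{(\Gamma,\E,D)}$ up to the automorphism action, and by the polystable analogue of Proposition \ref{prop:open-cone} it corresponds to a class of a pair $(X,\D)$ with $X$ a tropical curve having model $\Gamma^\E$ (lengths given by the coordinates) and $\D$ a polystable unitary root of $X$ with combinatorial type $(\E,D)$, two such being identified when $X=X'$ and $\D\sim\D'$. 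Conversely, given any stable $n$-pointed tropical curve $X$ of genus $g$ and any polystable root $\D$ on $X$, Lemma \ref{lem:comb-type} (applied with "polystable" in place of "semistable", noting polystable implies semistable and hence unitary) shows the combinatorial type $(\E,D)$ of $\D$ is a polystable pseudo-root on a suitable model $\Gamma$, so $(\Gamma,\E,D)\in\mathbf{PSR}_{g,n}$, and the length data place the pair in the corresponding cone $\kappa_{(\Gamma,\E,D)}$.

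The remaining point is that this assignment is well-defined and bijective across the gluing, i.e.\ that the equivalence relation on pairs $(X,\D)$ generated by isomorphisms $\iota\colon X\to X'$ with $\iota_*(\D)\sim\D'$ matches exactly the identifications built into the colimit. For this I would argue as follows. If $(X,\D)$ and $(X',\D')$ are equivalent in the stated sense, an isomorphism $\iota$ carrying $\D$ to something linearly equivalent to $\D'$ identifies the minimal models and their combinatorial types; after refining to a common model, $\iota$ induces an isomorphism of root-graphs $(\Gamma,\E,D)\cong(\Gamma',\E',D')$ in $\mathbf{PSR}_{g,n}$, hence acts through $\Aut(\Gamma,\E,D)$ on the cone, and the two points of $\R^{E(\Gamma^\E)}_{\ge0}$ (equivalently of $\kappa_{(\Gamma,\E,D)}$) get identified in the colimit — here one also uses that within a fixed cone two coordinate vectors give the same point of $T^\trop_{g,n}$ precisely when the associated divisors are linearly equivalent, which is the content of the open-cone description. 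Conversely, passing to a boundary face of $\kappa_{(\Gamma,\E,D)}$ corresponds, by Lemma \ref{lem:poly-pseudo-root}(2), to a specialization $(\Gamma,\E,D)\ge(\Gamma',\E',D')$ of polystable root-graphs, which on the level of pairs is exactly the degeneration obtained by letting the lengths of the contracted edges tend to zero; this is the standard ``same tropical curve, same divisor'' identification and does not change the equivalence class of $(X,\D)$. Combining, the points of $T^\trop_{g,n}$ are in bijection with equivalence classes of pairs $(X,\D)$ as claimed.

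I expect the only genuinely delicate step to be the compatibility of the two notions of equivalence at the faces: one must check that the face morphisms $\kappa_{(\Gamma',\E',D')}\to\kappa_{(\Gamma,\E,D)}$ furnished by Lemma \ref{lem:poly-pseudo-root}(2) really do realize, on the modular side, the operation of contracting length-zero edges of $(X,\D)$ compatibly with the polystable combinatorial type, and in particular that $\pol$ behaves well under such contractions — but this is essentially Corollary \ref{cor:graph-spec} together with Proposition \ref{prop:iota*}, so it is bookkeeping rather than a new difficulty. Accordingly, I would phrase the proof as: ``The argument is parallel to that of Proposition \ref{prop:modularP} (and \cite[Proposition 6.14]{AAPT}), replacing the cones $\sigma_{(\Gamma,\E,D)}$ by $\kappa_{(\Gamma,\E,D)}$ and invoking the polystable version of Proposition \ref{prop:open-cone} together with Lemma \ref{lem:poly-pseudo-root}(2) for the face structure,'' and then spell out the two directions of the bijection as above.
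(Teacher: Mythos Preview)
Your proposal is correct and follows essentially the same route as the paper: invoke Lemma~\ref{lem:comb-type} to see that the combinatorial type of a polystable root is a polystable pseudo-root, use (the polystable version of) Proposition~\ref{prop:open-cone} cell-by-cell, and handle the equivalence relation via the $\Aut(\Gamma,\E,D)$-action on each open cone. The only superfluous part is your discussion of face morphisms and specializations: since $T^\trop_{g,n}$ is presented as the disjoint union of the open cells $\kappa^\circ_{(\Gamma,\E,D)}/\Aut(\Gamma,\E,D)$, the set-theoretic bijection is checked entirely within a single open cell, and the paper's proof accordingly never invokes Lemma~\ref{lem:poly-pseudo-root}(2).
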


\begin{proof}
Let $X$ be a $n$-pointed stable tropical curve of genus $g$ and let $\Gamma:=\Gamma_X$ be the genus-$g$ stable  graph with $n$ legs which is a model of $X$.
   By   Lemma \ref{lem:comb-type}, 
   a polystable root divisor $\D$ of $X$ has combinatorial type  
   equal to a polystable pseudo-root $(\E,D)$ on $\Gamma$.  Therefore, by Lemma \ref{prop:open-cone}, we see that $(X,\D)$ corresponds to a point in $\kappa^\circ_{(\Gamma,\E,D)}$, and hence to a point in $T^\trop_{g,n}$. If $(X,\D)$ and $(X',\D')$ are  equivalent, then there is an isomorphism $\iota\col X\to X'$ such that $\iota_*(\D)$ is equivalent to $\D'$. 
   Consider the three points $p_1,p_2,p_3$ in $\kappa^\circ_{(\Gamma,\E,D)}$ corresponding to $(X,\D)$, $(X',\iota_*(\D))$, $(X',\D')$. The points $p_1$ and $p_2$ will get identified in  $\kappa^\circ_{(\Gamma,\E,D)}/\Aut(\Gamma,\E,D)$, while $p_2$ and $p_3$ are equal in $\kappa^\circ_{(\Gamma,\E,D)}$. Hence, $p_1$ and $p_3$ will correspond to the same point in
   \[
   \kappa_{\Gamma,\E,D}^\circ/\Aut(\Gamma,\E,D)\subset T_{g,n}^{\trop}.
   \]
   
If $(X,\D)$ and $(X',\D')$ correspond to the same point  contained in some cell $\kappa^\circ_{(\Gamma,\E,D)}/\Aut(\Gamma,\E,D)$ of $T^\trop_{g,n}$, then there is an isomorphism $\Gamma=\Gamma_X\stackrel{\iota}{\cong}\Gamma_{X'}$ such that $\iota_*(\E,D)=(\E',D')$. Moreover, it follows  that the metrics of $X$ and $X'$ are equal, hence $\iota$ induces an isomorphism of tropical curves $\iota\col X\to X'$. This implies that $(X',\D')$ and $(X',\iota_*(\D))$ are the same point in $\kappa_{(\Gamma,\E,D)}^\circ$. By Lemma \ref{prop:open-cone}, the divisors $\D'$ and $\iota_*(\D)$ are equivalent, so $(X,\D)$ and $(X',\D')$ are equivalent, as required.\par

Conversely, given a triple $(\Gamma,\E,D)$ in $\mathbf{PSR}_{g,n}$, using again Lemma \ref{prop:open-cone} we see that every point in $\kappa_{(\Gamma,\E,D)}^\circ$ corresponds to a pair $(X,\D)$, where $X$ is a tropical curve and $\D$ is a polystable root of $X$. It is easy to see that we have established a one-to-one correspondence between the set of points of $T_{g,n}^\trop$ and the set of equivalence classes of pairs $(X,\D)$.
\end{proof}

Given a tropical curve $X$ with underlying graph $\Gamma$, and a cyclic subgraph $P$ of $\Gamma$, we let $\E_P:=E(\Gamma)\setminus E(P)$, and we define the divisor $\D_P$ on $X$ as: 
\begin{equation}\label{eq:DPoly}
\D_P=\sum_{v\in V(\Gamma)} \left(\deg_X(v)-\frac{\deg_P(v)}{2}-1+w_X(v)\right)v-\sum_{e\in \E_P}p_e,
\end{equation}
where $p_e$ is the mid-point of the edge $e$. Notice that the combinatorial type of $\D_P$ is $(\E_P,D_P)$, 
hence  $\D_P$ is a polystable divisor on $X$ by Lemma \ref{prop:EP}.

Recall that a theta characteristic on a tropical curve is the class $[\mc D]$ of a divisor $\mc D$ such that $2\mc D$ is equivalent to $K_X$.

\begin{Prop}\label{prop:equiv}
Let $X$ be a tropical curve with underlying graph $\Gamma$. 
Then, the divisor $\D_P$ is a polystable root of $X$ for every cyclic subgraph $P$ of $\Gamma$. Conversely, every theta characteristic on $X$ is the class of a divisro $\D_P$, for a unique  cyclic subgraph $P$.
\end{Prop}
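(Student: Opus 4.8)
The plan is to prove that $P\mapsto[\D_P]$ is a well-defined bijection from the set of cyclic subgraphs of $\Gamma$ onto the set of theta characteristics of $X$. For the first assertion I would start from what has already been observed before the statement, namely that $\D_P$ has combinatorial type $(\E_P,D_P)$, which by Proposition \ref{prop:EP} is a polystable pseudo-\emph{root}; thus $\D_P$ is polystable and it remains only to see that $\D_P$ is a root, i.e.\ that $2\D_P\sim K_X$. Here I would write down the rational function directly: by Lemma \ref{lem:phizero} the flow $\phi:=\phi_{(\E_P,D_P)}$ vanishes on every non-exceptional edge of $\Gamma^{\E_P}$ and equals $1$ on every oriented edge whose target is exceptional, so the continuous piecewise-linear function $f$ on $X$ which vanishes outside the edges of $\E_P$ and on each such edge $e$ is the tent of slope $\pm1$ peaked with height $\ell(e)/2$ at its midpoint $p_e$ has flow $\phi_f=\phi$. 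Arguing as in the proof of Proposition \ref{prop:open-cone}, the identity $2D_P+\Div(\phi)=K_{\Gamma^{\E_P}}$ (the defining identity of the pseudo-root $(\E_P,D_P)$) together with $\supp(\Div f)\subseteq V(\Gamma^{\E_P})$ yields $2\D_P+\Div(f)=K_X$, so $\D_P$ is a polystable root.

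For surjectivity I would take a theta characteristic $[\mc D]$ and pass, via Remark \ref{rem:stab-equiv}, to a polystable divisor $\mc D''\sim\mc D$. Then $2\mc D''\sim2\mc D\sim K_X$, so $\mc D''$ is a polystable — hence unitary and semistable — root; by Lemma \ref{lem:comb-type} its combinatorial type is a pseudo-root, and being polystable it is a polystable pseudo-root, so by Proposition \ref{prop:EP} it equals $(\E_P,D_P)$ for a unique cyclic subgraph $P$ of $\Gamma$. Now $\mc D''$ and $\D_P$ share the combinatorial type $(\E_P,D_P)$: they agree on $V(\Gamma)$ and, on each $e\in\E_P$, each carries a single $-1$ in the interior of $e$, located at a point $q_e$ for $\mc D''$ and at the midpoint $p_e$ for $\D_P$. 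Since $p_e-q_e$ is a principal divisor on $X$ — it is the divisor of the piecewise-linear function supported on the subsegment of $e$ between $q_e$ and $p_e$ and of slope $\pm1$ there — the difference $\mc D''-\D_P=\sum_{e\in\E_P}(p_e-q_e)$ is principal, and therefore $[\mc D]=[\mc D'']=[\D_P]$.

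For injectivity, suppose $\D_P\sim\D_{P'}$. Both $\D_P$ and $\D_{P'}$ are polystable divisors lying in one linear-equivalence class, which by Remark \ref{rem:stab-equiv} also contains a (unique) $(p_0,\mu)$-quasistable divisor, say of combinatorial type $(\E',D')$; as observed at the very beginning of the proof of Theorem \ref{thm:refiJ}, the combinatorial type of \emph{any} polystable divisor in that class equals $\pol(\E',D')$. Hence $(\E_P,D_P)=(\E_{P'},D_{P'})$; since $\E_P=E(\Gamma)\setminus E(P)$ determines $E(P)$ and a cyclic subgraph is determined by its edge set, this forces $P=P'$.

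The step I expect to be the main obstacle is surjectivity, and the difficulty is conceptual rather than computational: two polystable roots of $X$ can have the \emph{same} combinatorial type without being equal as divisors, since the $-1$'s are free to move within the interiors of the edges of $\E_P$, so one cannot simply identify the polystable representative $\mc D''$ with $\D_P$. The point that makes the argument work is that sliding a $-1$ along an edge changes a divisor only by a principal divisor, which upgrades ``same combinatorial type'' to ``same linear-equivalence class''. The remaining ingredients — reading off $f$ from $\phi_{(\E_P,D_P)}$, and deducing injectivity from the invariance of the polystable combinatorial type already isolated in the proof of Theorem \ref{thm:refiJ} — are routine.
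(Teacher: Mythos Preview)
Your argument for the first assertion and for injectivity is correct, and the overall strategy for surjectivity is sound, but the step you flagged as the main obstacle contains a genuine error. The claim that each $p_e-q_e$ is principal, witnessed by ``the piecewise-linear function supported on the subsegment of $e$ between $q_e$ and $p_e$ and of slope $\pm1$ there'', is false: a function that vanishes outside $[q_e,p_e]$ and has nonzero constant slope on that segment is not continuous at one of the endpoints, and in fact $p_e-q_e$ is in general \emph{not} principal when $e$ lies on a cycle of $\Gamma$ (already on a single loop, $p-q$ is principal only for $p=q$). The conclusion $\D''\sim\D_P$ is nevertheless true, but you must use that $\D''$ is a \emph{root}, not merely that it has the right combinatorial type. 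A direct fix: choose rational functions $f'',f_P$ with $2\D''+\Div(f'')=K_X$ and $2\D_P+\Div(f_P)=K_X$; by Theorem~\ref{thm:uniquephi} and Lemma~\ref{lem:phizero} both have flow $\phi_{(\E_P,D_P)}$, so on a common refinement their slopes agree everywhere except on the segment between $q_e$ and $p_e$ on each $e\in\E_P$, where one has slope $+1$ and the other $-1$. Hence $f''-f_P$ has all slopes in $\{0,\pm2\}$, and $h:=(f''-f_P)/2$ is a rational function with $\Div(h)=\D_P-\D''$. Alternatively, invoke Lemma~\ref{lem:poly-pseudo-root} together with Proposition~\ref{prop:open-cone}: they show that the forgetful map $\kappa^\circ_{(\Gamma,\E_P,D_P)}\to\tau_\Gamma^\circ$ is a bijection, so polystable roots of a fixed combinatorial type on a fixed $X$ form a single linear-equivalence class.

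For comparison, the paper's own proof is shorter but not self-contained: it exhibits the explicit equivalence
\[
\D_P\ \sim\ \sum_{v\in V(\Gamma)}\Bigl(\tfrac{\deg_P(v)}{2}-1+w_X(v)\Bigr)v+\sum_{e\in\E_P}p_e
\]
and then quotes \cite[Proposition~3.6]{CMP2}, which already establishes that these latter divisors give a complete set of representatives for the theta characteristics of $X$. Your route, once repaired as above, has the advantage of staying entirely within the present paper.
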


\begin{proof}
Consider a cyclic subgraph $P$ of $\Gamma$. 
Given $e\in \E_P$ incident to the vertices $v_1,v_2$ of $\Gamma$, we let $f_e$ be the rational function on $X$ with slope $1$ over the segments $[v_1,p_e]$ and  $[v_2,p_e]$, and slope $0$ everywhere else. We have
\[
\D_P+\sum_{e\in \E_P} \Div(f_e)=\sum_{v\in V(\Gamma)}\left(\frac{\deg_P(v)}{2}-1+w_X(v)\right)v+\sum_{e\in \E_P} p_e.
\]
 Thus, by Theorem \cite[Proposition 3.6]{CMP2}, we deduce  that every theta characteristic has exactly one representative equal to $\D_P$. 
\end{proof}

The following corollary is an immediate consequence of Propositions \ref{prop:modularT} and \ref{prop:equiv}, and justifies the name ``moduli space of theta characteristics" given to $T_{g,n}^\trop$.

\begin{Cor}
The generalized cone complex $T^{\trop}_{g,n}$ parametrizes equivalence classes of pairs $(X,[\D])$ where $X$ is a stable $n$-pointed  tropical curve of genus $g$ and $[\D]$ is a theta characteristic on $X$, and where $(X_1,[\D_1])$ is equivalent to $(X_2,[\D_2])$ if there is an isomorphism $\iota\col X_1\ra X_2$ such that $[\iota_*(\D_1)]=[\D_2]$.
\end{Cor}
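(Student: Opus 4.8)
The plan is to deduce the statement by composing the two cited propositions, the only substantive point being a routine compatibility check between the two notions of equivalence. First I would recall from Proposition \ref{prop:modularT} that the points of $T^{\trop}_{g,n}$ are in bijection with equivalence classes of pairs $(X,\D)$, where $X$ is a stable $n$-pointed tropical curve of genus $g$ and $\D$ is a polystable root of $X$, two such pairs $(X,\D)$ and $(X',\D')$ being identified precisely when there is an isomorphism $\iota\col X\ra X'$ with $\iota_*(\D)$ linearly equivalent to $\D'$. I would then introduce the map sending the class of $(X,\D)$ to the class of $(X,[\D])$; since $\D$ is a root we have $2\D\sim K_X$, so $[\D]$ is a theta characteristic, and the target equivalence relation is the one in the statement.

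Next I would check that this map is well defined and bijective. Well-definedness is immediate: if $\iota\col X\ra X'$ satisfies $\iota_*(\D)\sim \D'$, then $[\iota_*(\D)]=[\D']$, so $(X,[\D])$ and $(X',[\D'])$ are equivalent in the target. For surjectivity I would invoke Proposition \ref{prop:equiv}: an arbitrary theta characteristic on $X$ equals $[\D_P]$ for some cyclic subgraph $P$ of the underlying graph, and $\D_P$ is a polystable root, so the class of $(X,\D_P)$ is sent to the class of $(X,[\D_P])$. For injectivity, if the classes of $(X,\D)$ and $(X',\D')$ with $\D,\D'$ polystable roots are sent to equivalent pairs, there is an isomorphism $\iota\col X\ra X'$ with $[\iota_*(\D)]=[\D']$, i.e. $\iota_*(\D)\sim \D'$, which is exactly the relation of Proposition \ref{prop:modularT}; hence the two classes coincide.

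I would also make explicit why no collapsing occurs on the fibres, using the uniqueness clause of Proposition \ref{prop:equiv}: over a fixed $X$, two linearly equivalent polystable roots would be polystable representatives $\D_P$, $\D_{P'}$ of the same theta characteristic, forcing $P=P'$ and hence $\D_P=\D_{P'}$; so $\D\mapsto[\D]$ is already a bijection from polystable roots on $X$ to theta characteristics on $X$, and the isomorphism part of the two equivalence relations then agrees verbatim. The main (and only mild) obstacle is precisely this use of the uniqueness statement, which guarantees that passing from a polystable root to its class loses no information; everything else is a formal consequence of Propositions \ref{prop:modularT} and \ref{prop:equiv}.
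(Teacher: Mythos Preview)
Your proposal is correct and follows exactly the approach the paper takes: the paper simply states that the corollary is an immediate consequence of Propositions \ref{prop:modularT} and \ref{prop:equiv}, and you have spelled out the routine bijection and compatibility check that makes this immediate. Your additional remark about the uniqueness clause in Proposition \ref{prop:equiv} (forcing $\D_P=\D_{P'}$ for linearly equivalent polystable roots on a fixed $X$) is a helpful clarification, though as you note the injectivity argument already goes through directly from the definitions of the two equivalence relations.
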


By Propositions \ref{prop:modularP} and \ref{prop:modularT} it is clear that there is   an inclusion 
\[
\psi^\trop_T\col T^\trop_{g,n}\hookrightarrow P^\trop_{\mu,g,n},
\]
which is a morphism of generalized cone complexes. 
Indeed, for every genus-$g$ stable graph $\Gamma$ with $n$ legs  and a polystable pseudo-root $(\E,D)$ on $\Gamma$, the inclusion $\kappa_{(\Gamma,\E,D)}\subset \sigma_{(\Gamma,\E,D)}$ 
in equation \eqref{eq:kappa-sigma} factors
the composition:
\[
\kappa_{(\Gamma,\E,D)}\ra T_{g,n}^\trop \stackrel{\psi^\trop_T}{\longrightarrow}P_{\mu,g,n}^\trop.
\]
We can view $T_{g,n}^\trop$ as a generalized sub cone complex of $P_{\mu,g,n}^\trop$. The restriction of the forgetful morphism $\pi_P^\trop\col P_{\mu,g,n}^\trop\ra M_{g,n}^\trop$ to $T_{g,n}^\trop$ gives rise to a forgetful morphism of generalized cone complexes 
\[
\pi^\trop_T\col T_{g,n}^\trop\ra M_{g,n}^\trop
\]
taking the point parametrizing the class of a pair $(X,\D)$, for a tropical curve $X$ and a polystable square root $\D$ of $X$ to the point parametrizing the class of $X$.

\subsection{Refining the moduli space $T_{g,n}^\trop$}
 
We construct a refinement of the moduli space $T_{g,n}^\trop$ of polystable tropical theta characteristics using $v_0$-quasistability, as we did for the universal tropical Jacobian $P_{\mu,g,n}^\trop$.

%
Recall that, if $(\Gamma,\E,D)$ is a simple semistable graph, then $\kappa_{(\Gamma,\E,D)}=\lambda_{(\Gamma,\E,D)}$ (see Remark \ref{rem:simple-root}). By Lemma \ref{thm:face-morphism}, if $(\Gamma,\E,D)\ge (\Gamma',\E',D')$ is a specialization of simple semistable root-graphs, then we have a face inclusion $\lambda_{(\Gamma',\E',D')}\ra \lambda_{(\Gamma,\E,D)}$, and every face of $\lambda_{(\Gamma,\E,D)}$ arises in this way.

\begin{Def}
We define the generalized cone complex, called the \emph{moduli space of simple semistable roots}, as
\[
\Theta^{\trop}_{g,n}:=\underset{\longrightarrow}{\lim}\;\lambda_{(\Gamma,\E,D)}=\coprod_{[\Gamma,\E,D]} \lambda^\circ_{(\Gamma,\E,D)}/\Aut(\Gamma,\E,D),
\]
where the limit is taken over the sub-category of $\mathbf{SR}_{g,n}$ and the union is taken over all equivalence classes $[\Gamma,\E,D]$ in the poset $\mathcal{SR}_{g,n}$.
\end{Def}

Recall that a quasistable pseudo-divisor is simple. 

\begin{Def}
The \emph{moduli space of quasistable roots} is the  sub-cone complex $R_{g,n}^{\trop}\subset \Theta^{\trop}_{g,n}$ obtained by restricting $\mathbf{SR}_{g,n}$ to the sub-category $\mathbf{QR}_{g,n}$, that is,
\[
R^{\trop}_{g,n}:=\underset{\longrightarrow}{\lim}\;\lambda_{(\Gamma,\E,D)}=\coprod_{[\Gamma,\E,D]} \lambda^\circ_{(\Gamma,\E,D)}/\Aut(\Gamma,\E,D),
\]
where the limit is taken over the category $\mathbf{QR}_{g,n}$ and the union is taken over all equivalence classes $[\Gamma,\E,D]$ in the poset $\mathcal{QR}_{g,n}$.
\end{Def}



\begin{Prop}\label{prop:modularR}
The generalized cone complex $R^{\trop}_{g,n}$ parametrizes equivalence classes of pairs $(X,\D)$, where $X$ is a stable $n$-pointed  tropical curve of genus $g$, and $\D$ is a $p_0$-quasistable root of $X$, and two pairs $(X,\D)$ and $(X',\D')$ are equivalent if there is an isomorphism $\iota\col X\ra X'$ such that $\iota_*(\D)=\D'$.
\end{Prop}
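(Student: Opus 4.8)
The plan is to mirror the proof of Proposition \ref{prop:modularT}, replacing polystable pseudo-roots with $v_0$-quasistable ones and using the stronger rigidity that comes with quasistability. First I would fix a stable $n$-pointed tropical curve $X$ of genus $g$ with stable model $\Gamma=\Gamma_X$, and let $\D$ be a $p_0$-quasistable root of $X$. Since a quasistable divisor is unitary, Lemma \ref{lem:comb-type} applies and the combinatorial type $(\E,D)$ of $\D$ is a pseudo-root on $\Gamma$; by construction $(\E,D)$ is $v_0$-quasistable, so $(\Gamma,\E,D)\in \mathbf{QR}_{g,n}$. Then I would invoke Proposition \ref{prop:open-cone} (in its $v_0$-quasistable version, noted in the remark following it): the open cone $\kappa^\circ_{(\Gamma,\E,D)}=\lambda^\circ_{(\Gamma,\E,D)}$ (equality by Remark \ref{rem:simple-root}, since quasistable pseudo-divisors are simple) parametrizes classes of pairs $(X,\D)$ with model $\Gamma$ and combinatorial type $(\E,D)$, where the length function on $\Gamma^\E$ records the edge coordinates, and two such pairs are identified when the tropical curves coincide and the divisors are linearly equivalent. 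Passing to the quotient by $\Aut(\Gamma,\E,D)$ produces a point of $R^{\trop}_{g,n}$, and this assignment is well-defined on equivalence classes: an isomorphism $\iota\col X\ra X'$ with $\iota_*(\D)=\D'$ induces an isomorphism of root-graphs identifying the corresponding points in $\kappa^\circ_{(\Gamma,\E,D)}/\Aut(\Gamma,\E,D)$, exactly as in the three-point argument of Proposition \ref{prop:modularT}.

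For injectivity of the induced map from equivalence classes to points of $R^{\trop}_{g,n}$, I would argue that if $(X,\D)$ and $(X',\D')$ land on the same point, that point lies in a single cell $\kappa^\circ_{(\Gamma,\E,D)}/\Aut(\Gamma,\E,D)$, so there is an isomorphism $\iota\col \Gamma_X\cong \Gamma_{X'}$ with $\iota_*(\E,D)=(\E',D')$, and the equality of the corresponding points forces the length functions to match, hence $\iota$ is an isomorphism of tropical curves. Then $(X',\D')$ and $(X',\iota_*(\D))$ define the same point of $\kappa^\circ_{(\Gamma',\E',D')}$, and by Proposition \ref{prop:open-cone} the divisors $\D'$ and $\iota_*(\D)$ are linearly equivalent; but here I would note the crucial extra input from Remark \ref{rem:stab-equiv}: among divisors linearly equivalent to a given one there is a \emph{unique} $(p_0,\mu)$-quasistable representative. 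Since both $\D'$ and $\iota_*(\D)$ are $p_0$-quasistable and linearly equivalent, they are in fact \emph{equal}, which is precisely the strict equivalence relation in the statement ($\iota_*(\D)=\D'$, not merely linearly equivalent). For surjectivity, every point of $R^{\trop}_{g,n}$ lies in some $\kappa^\circ_{(\Gamma,\E,D)}=\lambda^\circ_{(\Gamma,\E,D)}$ with $(\Gamma,\E,D)\in \mathbf{QR}_{g,n}$, and Proposition \ref{prop:open-cone} again exhibits a pair $(X,\D)$ realizing it.

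The main obstacle I anticipate is making precise the compatibility between the strata decomposition of $R^{\trop}_{g,n}$ as a generalized cone complex and the passage to isomorphism classes: one must be careful that the cones $\lambda_{(\Gamma,\E,D)}$ are glued along face morphisms indexed by specializations of $v_0$-quasistable root-graphs (Theorem \ref{thm:face-morphism} together with the remark that it holds with \emph{semistable} replaced by \emph{$v_0$-quasistable}), so that the colimit topology correctly identifies boundary points, and that the $\Aut(\Gamma,\E,D)$-action is the one actually relevant (as flagged in the remark following the definition of $T^{\trop}_{g,n}$, this group may be a proper subgroup of $\Aut(\Gamma)$). Once this bookkeeping is in place, the argument is a routine transcription of the proof of Proposition \ref{prop:modularT}, with the single genuinely new ingredient being the uniqueness of the quasistable representative from Remark \ref{rem:stab-equiv}, which upgrades ``linearly equivalent'' to ``equal'' and thereby accounts for the finer equivalence relation in the statement.
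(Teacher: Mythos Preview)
Your proposal is correct and follows the same approach as the paper, which simply says to repeat the proof of Proposition \ref{prop:modularT} verbatim with ``polystable'' replaced by ``$p_0$-quasistable'', $\kappa_{(\Gamma,\E,D)}$ by $\lambda_{(\Gamma,\E,D)}$, and $\mathbf{PSR}_{g,n}$ by $\mathbf{QR}_{g,n}$. In fact you are more careful than the paper on one point: the equivalence relation in Proposition \ref{prop:modularR} requires $\iota_*(\D)=\D'$ rather than merely linear equivalence as in Proposition \ref{prop:modularT}, and you correctly identify the uniqueness of the $(p_0,\mu)$-quasistable representative (Remark \ref{rem:stab-equiv}) as the ingredient that upgrades the conclusion; the paper's ``verbatim'' leaves this step implicit.
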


\begin{proof}
We can repeat verbatim the proof of Proposition \ref{prop:modularT}, replacing  the words ``polystable" with ``$p_0$-quasistable",  $\kappa_{(\Gamma,\E,D)}$ with $\lambda_{(\Gamma,\E,D)}$, and $\mathbf{PSR}_{g,n}$ with $\mathbf{QR}_{g,n}$.
\end{proof}

By \cite[Proposition 5.12]{AP1} and Proposition \ref{prop:modularR}, we have an inclusion
\[
\psi_R^\trop\col R_{g,n}^\trop\ra J_{\mu,g,n}^\trop
\]
which is a map of generalized cone complexes. Indeed, for every $v_0$-pseudo-root, the inclusion $\lambda_{(\Gamma,\E,D)}\subset \mb R^{E(\Gamma^\E)}_{\ge0}\cong\tau_{(\Gamma,\E,D)}$ 
factors the composition
\[
\lambda_{(\Gamma,\E,D)}\ra R_{g,n}^\trop\stackrel{\psi_R^\trop}{\longrightarrow}J_{\mu,g,n}^\trop. 
\]

Moreover,  
the composition
\[
R_{g,n}^\trop\stackrel{\psi_R^\trop}{\longrightarrow}J_{\mu,g,n}^\trop\stackrel{\rho_P^\trop}{\longrightarrow} P_{\mu,g,n}^\trop.
\]
factors through the inclusion $\psi_T^\trop\col T_{g,n}^\trop\hookrightarrow P_{\mu,g,n}^\trop$, because, if $\D$ is a $p_0$-quasistable root on a tropical curve $X$, then a polystable divisor $\D'$ on $X$ equivalent to $\D$ is clearly a polystable root of $X$. The induced map
\[
\rho_T^\trop\col R_{g,n}^\trop\ra T_{g,n}^\trop
\]
is a morphism of generalized cone complexes. Indeed, for every $v_0$-quasistable pseudo-divisor $(\E,D)$ on a graph $\Gamma$, we have that Proposition \ref{prop:minimal-signed-root} together with Remarks \ref{rem:simple-root} and \ref{rem:kappa-inclusion} give rise to an inclusion $\lambda_{(\Gamma,\E,D)}=\kappa_{(\Gamma,\E,D)}\subset \kappa_{(\Gamma,\pol(\E,D))}$ which factors the composition 
\[
\lambda_{(\Gamma,\E,D)}\ra R_{g,n}^\trop\stackrel{\rho_T^\trop}{\longrightarrow}T_{g,n}^\trop. 
\]


\begin{Lem}\label{lem:poly-root-spec}
Let $(\Gamma,\E,D)\ge(\Gamma',\E',D')$ be a specialization of semistable root-graphs. 
If $(\E,D)$ is polystable and $(\E',D')$ is $v_0$-quasistable, then there exists a specialization of semistable roots-graphs $(\Gamma,\E,D)\ge(\Gamma',\pol(\E',D'))$ factoring $(\Gamma,\E,D)\ge(\Gamma,\E',D')$.
\end{Lem}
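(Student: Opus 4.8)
The plan is to reduce this to the analogous statement for pseudo-divisors, Lemma \ref{lem:polyspec}, and then upgrade it to a statement about root-graphs using the uniqueness of the flow and the behavior of $P_{(\E,D)}$ under $\pol$. Concretely, write $\iota\col\Gamma\ra\Gamma'$ for the induced specialization of graphs. Since $(\E,D)$ is polystable on $\Gamma$ and $(\E',D')$ is $(v_0,\iota_*(\mu))$-quasistable on $\Gamma'$, Lemma \ref{lem:polyspec} furnishes a specialization of triples $(\Gamma,\E,D)\ge(\Gamma',\pol(\E',D'))$ factoring the given $(\Gamma,\E,D)\ge(\Gamma',\E',D')$. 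What remains is to verify that this new specialization of triples is in fact a specialization of \emph{semistable root-graphs}, i.e.\ that the associated specialization $\wh\iota'\col\Gamma^\E\ra{\Gamma'}^{\pol(\E',D')}$ satisfies $\wh\iota'_*(\phi_{(\E,D)})=\phi_{\pol(\E',D')}$.

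The key step is to track the flows through the factorization $\Gamma^\E\ra{\Gamma'}^{\wt{\E}}\ra{\Gamma'}^{\E'}$, where $(\wt{\E},\wt{D})=\iota_*(\E,D)$. By Corollary \ref{cor:graph-spec}, $(\Gamma',\iota_*(\E,D))$ is again a polystable root-graph and $(\Gamma,\E,D)\ge(\Gamma',\iota_*(\E,D))$ is a specialization of polystable root-graphs; in particular the first map pushes $\phi_{(\E,D)}$ to $\phi_{\iota_*(\E,D)}$. Now I use the original hypothesis that $(\Gamma,\E,D)\ge(\Gamma',\E',D')$ is a specialization of \emph{semistable} root-graphs, so the composite $\Gamma^\E\ra{\Gamma'}^{\E'}$ pushes $\phi_{(\E,D)}$ to $\phi_{(\E',D')}$. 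By the uniqueness of the acyclic flow (Theorem \ref{thm:uniquephi}), there is only one acyclic flow on each graph realizing the equation $2D+\Div(\phi)=K$, so once I know $\wh\iota'_*(\phi_{(\E,D)})$ is an acyclic flow on ${\Gamma'}^{\pol(\E',D')}$ satisfying $2\pol(\E',D')+\Div(\cdot)=K_{{\Gamma'}^{\pol(\E',D')}}$, it must equal $\phi_{\pol(\E',D')}$. Acyclicity is inherited because $\wh\iota'$ factors through a specialization of the polystable root-graph $\iota_*(\E,D)$, whose flow is acyclic, and contractions of acyclic flows are acyclic; the divisor equation is automatic from $2D+\Div(\phi_{(\E,D)})=K_{\Gamma^\E}$ together with linearity of push-forward on divisors (as in Remark \ref{rem:iota-pseudo-root}).

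I expect the main obstacle to be bookkeeping rather than conceptual: one must be careful that the specialization $(\Gamma',\iota_*(\E,D))\ge(\Gamma',\pol(\E',D'))$ supplied by the minimality in Remark \ref{rem:minimal-pol} is compatible with the identity on $\Gamma'$ and that the resulting $\wh\iota'$ is indeed the composite of the two specializations of $\Gamma^\E$, so that the flow computation composes correctly. A secondary point is confirming that $\pol(\E',D')$ is genuinely a polystable pseudo-root (not merely a polystable pseudo-divisor) — but this follows from Proposition \ref{prop:minimal-root}(1), which says $\pol$ of a semistable root-graph is a polystable root-graph, applied to $(\Gamma',\E',D')$. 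Combining these, the factorization $(\Gamma,\E,D)\ge(\Gamma',\pol(\E',D'))\ge(\Gamma',\E',D')$ is the desired chain of specializations of semistable root-graphs.
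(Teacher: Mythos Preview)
Your reduction to Lemma~\ref{lem:polyspec} and the overall framework (push $\phi_{(\E,D)}$ forward along $\wh j\colon\Gamma^\E\to{\Gamma'}^{\wh\E}$ with $(\wh\E,\wh D)=\pol(\E',D')$, verify $2\wh D+\Div(\wh j_*\phi_{(\E,D)})=K_{{\Gamma'}^{\wh\E}}$, check acyclicity, then invoke uniqueness from Theorem~\ref{thm:uniquephi}) is sound in principle, but the acyclicity step fails. The assertion ``contractions of acyclic flows are acyclic'' is false: on a triangle with vertices $a,b,c$ and flow $1$ on $a\to b$, $b\to c$, $a\to c$, contracting the edge $ac$ produces a non-null $\phi$-cycle. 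In your situation the second map $k\colon{\Gamma'}^{\wt\E}\to{\Gamma'}^{\wh\E}$ (with $(\wt\E,\wt D)=\iota_*(\E,D)$) contracts exactly one exceptional edge over each $e\in\wt\E\setminus\wh\E$, and the surviving partner carries flow $\pm1$; nothing you wrote prevents these from assembling into a positive cycle in ${\Gamma'}^{\wh\E}$. Notice that your acyclicity argument uses neither the hypothesis that $(\Gamma,\E,D)\ge(\Gamma',\E',D')$ is a specialization of \emph{root}-graphs nor Proposition~\ref{prop:minimal-root}, which is a sign it cannot succeed.

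The paper closes this gap by proving the stronger fact that $\wh j$ is actually \emph{induced by} $\iota$ (equivalently $\wt\E=\wh\E$, so $k=\id$). The argument is a direct flow-value contradiction: if $\wh j$ contracted one exceptional edge $e_1$ over some $e\in\E$ but not its partner $e_2$, then $e_2$ would be non-exceptional in ${\Gamma'}^{\wh\E}$, hence $\phi_{(\wh\E,\wh D)}(e_2)=0$ by Lemma~\ref{lem:phizero}; but $e_2$ is not contracted by $\wh\iota'$ either, so chasing through the equalities $\wh\iota_*(\phi_{(\E,D)})=\phi_{(\E',D')}=\wh\iota'_*(\phi_{(\wh\E,\wh D)})$ (the first from the hypothesis, the second from Proposition~\ref{prop:minimal-root}) forces $\phi_{(\E,D)}(e_2)=0$, contradicting Theorem~\ref{thm:uniquephi}(1). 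Once $\wh j$ is induced by $\iota$, Corollary~\ref{cor:graph-spec} applies directly.
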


\begin{proof}
We let $\iota\col \Gamma\ra \Gamma'$ and $\wh\iota\col  \Gamma^{\E}\ra {\Gamma'}^{\E'}$ be the specializations associated to the specialization  $(\Gamma,\E,D)\ge(\Gamma',\E',D')$. We set $(\wh\E,\wh D)=\pol(\E',D')$.
By Proposition \ref{prop:minimal-root}, we know that there is a specialization of semistable root-graphs $(\Gamma',\wh\E,\wh D)\ge (\Gamma',\E',D')$. We let $\wh \iota'\col {\Gamma'}^{\wh\E}\ra {\Gamma'}^{\E'}$ be the associated specialization. By Lemma \ref{lem:polyspec} we have a specialization of triples $(\Gamma,\E,D)\ge(\Gamma',\wh\E,\wh D)$ factoring $(\Gamma,\E,D)\ge (\Gamma',\E',D')$. We let $\wh j\col \Gamma^{\E}\ra {\Gamma'}^{\wh \E}$ be the induced specialization. 
By Corollary \ref{cor:graph-spec}, we need only to prove that $\wh j$ is induced by $\iota$.  
 Consider an edge $e\in \E$ and let $e_1,e_2$ be the exceptional edges  of $\Gamma^{\E}$ over $e$. We have to prove that, if $e_1$ is contracted by $\wh j$, then $e_2$ is contracted by $\wh j$ as well.  If $e_1$ was contracted by $\wh j$ and $e_2$ was not, then $\phi_{(\wh \E,\wh D)}(e_2)=0$ by Lemma \ref{lem:phizero}. Using that $\wh\iota_*(\phi_{(\E,D)})=\phi_{(\E',D')}=\wh{\iota}'_*(\phi_{(\wh \E,\wh D)})$, we would have: 
\[
|\phi_{(\E, D)}(e_2)|=|\phi_{(\E',D')}(e_2)|=|\phi_{(\wh \E,\wh D)}(e_2)|=0.
\]
But we would get a contradiction to Lemma \ref{lem:phizero}, since $e_2$ is an exceptional edge of $\Gamma^\E$.
\end{proof}

\begin{Prop}\label{prop:refiT}
The morphism of generalized cone complexes $\rho_T^\trop\col R_{g,n}^\trop\ra T_{g,n}^\trop$ is a refinement. For every polystable pseudo-root $(\Gamma,\E,D)$, we have decompositions:
\[
\kappa^\circ_{(\Gamma,\E,D)}=\underset{(\E,D)=\pol(\E',D')}
{\coprod_{(\E',D')\in \mathbf{QR}_{g,n}}} \lambda^\circ_{(\Gamma,\E',D')}
\quad \text{ and } \quad
\kappa_{(\Gamma,\E,D)}=\underset{(\Gamma,\E,D)\ge (\Gamma',\E',D')}
{\coprod_{(\Gamma',\E',D')\in \mathbf{QR}_{g,n}}}\lambda^\circ_{(\Gamma',\E',D')}.
\]
where in the last formula $\ge$ means specialization of semistable root-graphs.
\end{Prop}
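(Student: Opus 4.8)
The plan is to mimic the proof of Theorem \ref{thm:refiJ}, replacing its ingredients by the root-graph analogues developed in this paper. That $\rho_T^\trop$ is a morphism of generalized cone complexes has already been shown above, via the inclusions $\lambda_{(\Gamma,\E',D')}=\kappa_{(\Gamma,\E',D')}\subset\kappa_{(\Gamma,\pol(\E',D'))}$ coming from Remarks \ref{rem:simple-root} and \ref{rem:kappa-inclusion} and Proposition \ref{prop:minimal-root}. To see that it is a refinement, I would invoke the modular descriptions in Propositions \ref{prop:modularR} and \ref{prop:modularT}: a point of $R_{g,n}^\trop$ is the class of a pair $(X,\D')$ with $\D'$ a $p_0$-quasistable root of $X$, a point of $T_{g,n}^\trop$ is the class of a pair $(X,\D)$ with $\D$ a polystable root, and $\rho_T^\trop$ sends $(X,\D')$ to $(X,\D)$, where $\D$ is the polystable divisor equivalent to $\D'$; such $\D$ is again a root since $2\D\sim 2\D'\sim K_X$. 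Surjectivity then follows because the unique $p_0$-quasistable divisor equivalent to a polystable root (Remark \ref{rem:stab-equiv}) is itself a root, and injectivity follows because an isomorphism of tropical curves identifying the images carries one quasistable root to a divisor equivalent to the other, hence, by the uniqueness in Remark \ref{rem:stab-equiv}, to the other itself.

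For the first decomposition I would argue as in Theorem \ref{thm:refiJ}. If $\D$ is a polystable root equivalent to a $p_0$-quasistable root $\D'$, then by the proof of \cite[Theorem 5.9(1)]{AAPT} the combinatorial type of $\D$ is $\pol(\E',D')$, where $(\E',D')$ is the combinatorial type of $\D'$; and $(\E',D')$ is a pseudo-root by Lemma \ref{lem:comb-type}, so $(\Gamma,\E',D')\in\mathbf{QR}_{g,n}$. Together with the modular description of the open cones $\kappa^\circ$ and $\lambda^\circ$ (Proposition \ref{prop:open-cone} and the subsequent remark, for polystable and for $v_0$-quasistable roots), this shows that $\kappa^\circ_{(\Gamma,\E,D)}$ is covered by the cells $\lambda^\circ_{(\Gamma,\E',D')}$ with $(\Gamma,\E',D')\in\mathbf{QR}_{g,n}$ and $\pol(\E',D')=(\E,D)$. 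The inclusion $\lambda^\circ_{(\Gamma,\E',D')}\subset\kappa^\circ_{(\Gamma,\E,D)}$ follows from \cite[Proposition 6.9]{AAPT} combined with Remark \ref{rem:containing}: since $\mathcal L_\E\subset\Lambda_{(\Gamma,\E,D)}$, one gets $\sigma^\circ_{(\Gamma,\E,D)}\cap\kappa_{(\Gamma,\E,D)}=\kappa^\circ_{(\Gamma,\E,D)}$, while $\tau^\circ_{(\Gamma,\E',D')}\subset\sigma^\circ_{(\Gamma,\pol(\E',D'))}$ by \cite[Proposition 6.9]{AAPT}. Disjointness of the cells is immediate from the uniqueness of the quasistable representative of a polystable root.

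For the second decomposition I would use Lemma \ref{lem:poly-pseudo-root}(2): every face of $\kappa_{(\Gamma,\E,D)}$ equals $\kappa_{(\Gamma',\E'',D'')}$ for a specialization of polystable root-graphs $(\Gamma,\E,D)\ge(\Gamma',\E'',D'')$, so $\kappa_{(\Gamma,\E,D)}=\coprod\kappa^\circ_{(\Gamma',\E'',D'')}$ over such specializations. Applying the first decomposition to each summand and re-indexing yields the stated formula. The re-indexing rests on two facts: by Lemma \ref{lem:poly-root-spec} a specialization of semistable root-graphs $(\Gamma,\E,D)\ge(\Gamma',\E',D')$ with $(\E,D)$ polystable and $(\E',D')$ $v_0$-quasistable factors through $(\Gamma',\pol(\E',D'))$; and conversely, by Proposition \ref{prop:minimal-root}, every $(\Gamma',\E',D')\in\mathbf{QR}_{g,n}$ admits the specialization $\pol(\E',D')\ge(\E',D')$, so that $(\Gamma,\E,D)\ge(\Gamma',\pol(\E',D'))$ implies $(\Gamma,\E,D)\ge(\Gamma',\E',D')$. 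Uniqueness of $\pol(\E',D')$ (Remark \ref{rem:minimal-pol}) guarantees that no cell appears twice.

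The step I expect to require the most care is the bookkeeping with the acyclic flows: not every $v_0$-quasistable pseudo-divisor refining a given polystable pseudo-root is itself a pseudo-root, so one must ensure that both decompositions are indexed by $\mathbf{QR}_{g,n}$ and not by all of $\mathbf{QD}_{\mu,g,n}$. It is precisely Lemma \ref{lem:poly-root-spec} (the root-graph analogue of Lemma \ref{lem:polyspec}), together with Lemma \ref{lem:comb-type} and Proposition \ref{prop:open-cone}, that handles this, and this is where the argument genuinely departs from the proof of Theorem \ref{thm:refiJ}.
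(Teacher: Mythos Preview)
Your proposal is correct and follows essentially the same approach as the paper: establish bijectivity via the modular descriptions and uniqueness of the quasistable representative, then prove the two decompositions by adapting the argument of Theorem \ref{thm:refiJ}, using the chain $\lambda^\circ_{(\Gamma,\E',D')}=\kappa^\circ_{(\Gamma,\E',D')}=\kappa_{(\Gamma,\E',D')}\cap\sigma^\circ_{(\Gamma,\E',D')}\subset\kappa_{(\Gamma,\pol(\E',D'))}\cap\sigma^\circ_{(\Gamma,\pol(\E',D'))}=\kappa^\circ_{(\Gamma,\pol(\E',D'))}$ (via Remark \ref{rem:kappa-inclusion} and \cite[Proposition 6.9]{AAPT}) for the first, and Proposition \ref{prop:minimal-root} together with Lemma \ref{lem:poly-root-spec} for the second. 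Your final remark correctly identifies Lemma \ref{lem:poly-root-spec} as the point where the argument genuinely departs from Theorem \ref{thm:refiJ}.
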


\begin{proof}
Notice the morphism of cone complexes $\rho_T^\trop$ is bijective because in an equivalence class of a polystable divisor $\D$ on a tropical curve $X$ there is exactly one $(v_0,\mu)$-quasistable representative; such a representative is a root of $X$ if and only if $\D$ is a root of $X$. Thus $\rho_T^\trop$ is a refinement.

The stated decompositions of $\kappa^\circ_{(\Gamma,\E,D)}$ and $\kappa_{(\Gamma,\E,D)}$ can be proved exactly as in Theorem \ref{thm:refiJ},  where we use the following computation for the former:
\[
\lambda^\circ_{(\Gamma,\E',D')} =\kappa^\circ_{(\Gamma,\E',D')} 
 = \kappa_{(\Gamma,\E',D')}\cap\sigma^\circ_{(\Gamma,\E',D')} \subset \kappa_{(\Gamma,\pol(\E',D'))}\cap \sigma^\circ_{(\Gamma,\pol(\E',D'))} =\kappa^\circ_{(\Gamma,\pol(\E',D'))}
\]
(recall Remark \ref{rem:kappa-inclusion}), and we use 
Proposition \ref{prop:minimal-root} and Lemma \ref{lem:poly-root-spec} for the latter.
\end{proof}

\section{Moduli of spin quasistable roots}

Given a spin semistable root-graph $(\Gamma,\E,D,s)$, we define the cones 
\begin{equation}\label{eq:spin-cones}
\lambda_{(\Gamma,\E,D,s)}:=\lambda_{(\Gamma,\E,D)}
\quad \text{ and } \quad
\kappa_{(\Gamma,\E,D,s)}:=\kappa_{(\Gamma,\E,D)}
\end{equation}
and their interiors:
\[
\lambda^\circ_{(\Gamma,\E,D,s)}:=\lambda^\circ_{(\Gamma,\E,D)}
\quad \text{ and } \quad
\kappa_{(\Gamma,\E,D,s)}^\circ=\kappa_{(\Gamma,\E,D)}^\circ.
\]
If $(\Gamma,\E,D,s)\ge (\Gamma',\E',D',s')$ is a specialization of spin polystable root-graphs, using Lemma \ref{lem:poly-pseudo-root} we get a face morphism $\kappa_{(\Gamma',\E',D',s')}\subset \kappa_{(\Gamma,\E,D,s)}$ and every face of $\kappa_{(\Gamma,\E,D,s)}$ arises in this way.

\begin{Def}
We define the generalized cone complex, called the \emph{moduli space of spin tropical curves}, as
\[
S^{\trop}_{g,n}:=\underset{\longrightarrow}{\lim}\;\kappa_{(\Gamma,\E,D,s)}=\coprod_{[\Gamma,\E,D,s]} \kappa^\circ_{(\Gamma,\E,D,s)}/\Aut(\Gamma,\E,D,s),
\]
where the limit is taken over the category $\mathbf{SP}_{g,n}$ and the union is taken over all equivalence classes $[\Gamma,\E,D,s]$ in $\mathcal{SP}_{g,n}$. 
\end{Def}

Recall that if $\D$ is a unitary semistable root of a tropical curve $X$ with model $\Gamma_X$, then its combinatorial type is a pseudo-root of $\Gamma_X$ (see Lemma \ref{lem:comb-type}). 

\begin{Prop}\label{prop:modularS}
The generalized cone complex $S_{g,n}^\trop$ parametrizes classes of triples $(X,\D,s)$, where $X$ is a stable $n$-pointed tropical curve of genus $g$, together with a polystable root $\D$ of $X$ with a sign function $s$ on the combinatorial type $(\E,D)$ of $\D$, where two triples $(X,\D,s)$ and $(X',\D',s')$ are equivalent if $(X,\D)$ and $(X',\D')$ are equivalent and $\wh\iota_*(s)=s'$, where $\wh\iota\col \Gamma_X^{\E}\ra \Gamma_{X'}^{\E'}$ is the induced isomorphism.
\end{Prop}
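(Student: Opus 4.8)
The plan is to mimic the proof of Proposition \ref{prop:modularT} almost word for word, carrying the sign function along as a passive decoration of each cone. First I would fix a spin polystable root-graph $(\Gamma,\E,D,s)$ with $\Gamma$ a stable genus-$g$ graph with $n$ legs and recall from \eqref{eq:spin-cones} that $\kappa_{(\Gamma,\E,D,s)}=\kappa_{(\Gamma,\E,D)}$. By the polystable variant of Proposition \ref{prop:open-cone} (the one indicated in the remark immediately following it), a point of $\kappa^\circ_{(\Gamma,\E,D)}$ is exactly a pair $(X,\D)$ with $X$ an $n$-pointed tropical curve with model $\Gamma$ and $\D$ a polystable root of $X$ of combinatorial type $(\E,D)$, two pairs being identified when $X=X'$ and $\D\sim\D'$. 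Since $\D$ determines its combinatorial type $(\E,D)$, and $P_{(\E,D)}$ (hence the graph $\Gamma/P_{(\E,D)}$ on which $s$ lives) is determined by $(\E,D)$ via the uniqueness of $\phi_{(\E,D)}$ in Theorem \ref{thm:uniquephi}, attaching the fixed sign function $s$ involves no further choice; thus a point of $\kappa^\circ_{(\Gamma,\E,D,s)}$ corresponds precisely to a triple $(X,\D,s)$ of the claimed kind.

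Next I would descend to $\coprod_{[\Gamma,\E,D,s]}\kappa^\circ_{(\Gamma,\E,D,s)}/\Aut(\Gamma,\E,D,s)$ and check that the induced equivalence on triples is the stated one. Given equivalent triples $(X,\D,s)$ and $(X',\D',s')$, pick an isomorphism $\iota\colon X\to X'$ with $\iota_*\D$ linearly equivalent to $\D'$ and $\wh\iota_*(s)=s'$; its graph part carries $(\E,D)$ to $(\E',D')$, so $(\Gamma,\E,D,s)\cong(\Gamma',\E',D',s')$ as spin polystable root-graphs, and running the three-point argument of Proposition \ref{prop:modularT} (the points over $(X,\D,s)$ and $(X',\iota_*\D,\wh\iota_*s)$ differ by an element of $\Aut(\Gamma,\E,D,s)$, while $(X',\iota_*\D,\wh\iota_*s)$ and $(X',\D',s')$ already coincide in $\kappa^\circ$) shows the two triples define the same point of $S^\trop_{g,n}$. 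Conversely, if two triples define the same point they lie in a common cell, so there is an isomorphism $\Gamma_X\cong\Gamma_{X'}$ sending $(\E,D)$ to $(\E',D')$ and $s$ to $s'$ which, after matching edge lengths, upgrades to an isomorphism of tropical curves realizing the equivalence. Here I would also note (via Lemma \ref{lem:iota-s}) that the isomorphism $\wh\iota\colon\Gamma_X^{\E}\to\Gamma_{X'}^{\E'}$ appearing in the equivalence induces the isomorphism $\wt\iota$ of contracted graphs $\Gamma_X/P_{(\E,D)}\to\Gamma_{X'}/P_{(\E',D')}$ with respect to which $\wh\iota_*(s)$ is computed, so the relation is well-posed.

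The only genuinely new point — and the place I would flag as needing care — is the interplay of the two automorphism groups. Over the cell $\kappa^\circ_{(\Gamma,\E,D)}/\Aut(\Gamma,\E,D)$ of $T^\trop_{g,n}$ the preimage in $S^\trop_{g,n}$ is the union of the cells $\kappa^\circ_{(\Gamma,\E,D)}/\Aut(\Gamma,\E,D,s)$, one for each $\Aut(\Gamma,\E,D)$-orbit of sign functions $s$, where $\Aut(\Gamma,\E,D,s)$ is the (in general proper) stabilizer of $s$ inside $\Aut(\Gamma,\E,D)$, exactly as flagged in the remark following the definition of $T^\trop_{g,n}$. An automorphism of $(\Gamma,\E,D)$ that moves $s$ does not identify two points of a single $S^\trop_{g,n}$-cell, but it is precisely what furnishes an equivalence $(X,\D,s)\sim(X',\D',s')$ with $s'=\wh\iota_*(s)\ne s$ in the argument above; keeping these two roles apart is essentially all the proof requires, everything else being the verbatim content of Proposition \ref{prop:modularT} with $(\E,D)$ replaced by $(\E,D,s)$, $\mathbf{PSR}_{g,n}$ by $\mathbf{SP}_{g,n}$, and $\kappa_{(\Gamma,\E,D)}$ by $\kappa_{(\Gamma,\E,D,s)}$.
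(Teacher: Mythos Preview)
Your proposal is correct and matches the paper's own approach exactly: the paper simply says the argument is similar to that of Proposition \ref{prop:modularT} and leaves it to the reader. Your write-up supplies precisely that similar argument, carrying the sign function along as a decoration and handling the $\Aut(\Gamma,\E,D,s)$ versus $\Aut(\Gamma,\E,D)$ distinction with the appropriate care.
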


\begin{proof}
The argument is similar to the one used to prove Proposition \ref{prop:modularT} and is left to the reader. 
\end{proof}

\begin{Rem}
The moduli space $S^{\trop}_{g,n}$ is isomorphic (as a generalized cone complex) to the moduli space of tropical spin curves introduced in \cite{CMP1}, which parametrizes classes of triples $(X,P,s)$ where $X$ is a stable $n$-pointed tropical curve of genus $g$, together with a cyclic subgraph $P$ of $\Gamma_X$ and a sign function $s\col V(\Gamma_X/P)\ra \mathbb Z/2\mathbb Z$. The isomorphism takes the class of a triples $(X,P,s)$ to $(X,\D_P,s)$ (we use Remark \ref{rem:cyclic} and Proposition \ref{prop:equiv}). Indeed, Lemma \ref{lem:poly-pseudo-root} guarantees that the given bijection is a morphism of generalized cone complexes. 
\end{Rem}

By Propositions \ref{prop:modularT} and \ref{prop:modularS} we obtain a forgetful morphism of generalized cone complexes 
\[
\phi_S^\trop\col S_{g,n}^\trop\ra T_{g,n}^\trop
\]
taking the point parametrizing the class of a triples $(X,\D,s)$ to the point of $T_{g,n}^\trop$ parametrizing the class of the pair $(X,\D)$.
We also have the morphism of cone complexes
\[
\pi^\trop_S=\pi^\trop_T\circ\phi^\trop_S\col S_{g,n}^\trop\ra M_{g,n}^\trop
\]
taking the point parametrizing the class of $(X,\D,s)$ to the point parametrizing $X$.

\subsection{Refining the moduli space of spin tropical curves}

Recall that if $(\Gamma,\E,D,s)$ is simple (for example, $v_0$-quasistable), then $\lambda_{(\Gamma,\E,D,s)}=\kappa_{(\Gamma,\E,D,s)}$ (see Remark \ref{rem:simple-root}). By Lemma \ref{thm:face-morphism}, if $(\Gamma,\E,D,s)\ge (\Gamma',\E',D',s')$ is a specialization of spin simple semistable root-graphs, then we have a face inclusion $\lambda_{(\Gamma',\E',D',s')}\ra \lambda_{(\Gamma,\E,D,s)}$, and every face of $\lambda_{(\Gamma,\E,D,s)}$ arises in this way.

\begin{Rem}
It is worth mentioning that Lemma-Definition \ref{lem-def} and Corollary \ref{cor:facet} can be  rephrased and extended to spin semistable root-graphs and spin $v_0$-quasistable root-graphs.
\end{Rem}

\begin{Def}
We define the generalized cone complex, called the \emph{moduli space of spin quasistable roots}, as
\[
Q^{\trop}_{g,n}:=\underset{\longrightarrow}{\lim}\;\lambda_{(\Gamma,\E,D,s)}=\coprod_{[\Gamma,\E,D,s]} \lambda^\circ_{(\Gamma,\E,D,s)}/\Aut(\Gamma,\E,D,s),
\]
where the limit is taken over the category $\mathbf{SPQ}_{g,n}$ and the union is taken over all equivalence classes $[\Gamma,\E,D,s]$ in $\mathcal{SPQ}_{g,n}$.
\end{Def}

\begin{Rem}\label{rem:Q+Q-}
By Remark \ref{rem:parity}, $Q^{\trop}_{g,n}$ has two connected components $(Q^{\trop}_{g,n})^+$ and $(Q^{\trop}_{g,n})^-$ corresponding, respectively, to the posets $\mathcal{SPQ}^+_{g,n}$ and $\mathcal{SPQ}^-_{g,n}$. 
\end{Rem}

\begin{Prop}\label{prop:modularQ}
The generalized cone complex $Q_{g,n}^\trop$ parametrizes classes of triples $(X,\D,s)$, where $X$ is a stable $n$-pointed tropical curve of genus $g$, together with a $p_0$-quasistable root $\D$ of $X$ with a sign function $s$ on the combinatorial type $(\E,D)$ of $\D$, where two triples $(X,\D,s)$ and $(X',\D',s')$ are equivalent if $(X,\D)$ and $(X',\D')$ are equivalent and $\wh\iota_*(s)=s'$, where $\wh\iota\col \Gamma_{X}^{\E}\ra \Gamma_{X'}^{\E'}$ is the induced isomorphism.
\end{Prop}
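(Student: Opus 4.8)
The plan is to follow the template already used for Propositions \ref{prop:modularT}, \ref{prop:modularR} and \ref{prop:modularS}, combining the $v_0$-quasistable bookkeeping of $R_{g,n}^\trop$ with the sign-function bookkeeping of $S_{g,n}^\trop$. First I would start from a triple $(X,\D,s)$ as in the statement, set $\Gamma:=\Gamma_X$, and note that since $\D$ is quasistable it is unitary and semistable, so by Lemma \ref{lem:comb-type} its combinatorial type $(\E,D)$ is a pseudo-root on $\Gamma$, and it is $v_0$-quasistable because $\D$ is $p_0$-quasistable. Hence $(\Gamma,\E,D,s)$ is a spin $v_0$-quasistable root-graph, i.e.\ an object of $\mathbf{SPQ}_{g,n}$. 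Using the $v_0$-quasistable variant of Proposition \ref{prop:open-cone} (noted in the Remark following it), the pair $(X,\D)$ corresponds to a point of $\lambda^\circ_{(\Gamma,\E,D)}=\lambda^\circ_{(\Gamma,\E,D,s)}$, so recording $s$ the triple $(X,\D,s)$ determines a point of $\lambda^\circ_{(\Gamma,\E,D,s)}/\Aut(\Gamma,\E,D,s)\subset Q_{g,n}^\trop$.

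Next I would verify that this assignment descends to equivalence classes and is injective, running the three-point argument from the proof of Proposition \ref{prop:modularT}: if $(X,\D,s)$ and $(X',\D',s')$ are equivalent via $\iota\col X\ra X'$ with $\iota_*(\D)=\D'$ (equality on the nose, as for $R_{g,n}^\trop$) and $\wh\iota_*(s)=s'$, then the induced graph isomorphism lies in $\Aut(\Gamma,\E,D,s)$, so the two points coincide in the quotient; the only addition to the cited argument is the observation that $\wh\iota_*(s)$ is precisely the transported sign function. Conversely, if two triples land on the same point of a cell $\lambda^\circ_{(\Gamma,\E,D,s)}/\Aut(\Gamma,\E,D,s)$, there is an isomorphism $\Gamma_X\cong\Gamma_{X'}$ carrying $(\E,D,s)$ to $(\E',D',s')$ and matching edge lengths, hence an isomorphism of tropical curves under which divisors and signs correspond. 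Surjectivity is again Proposition \ref{prop:open-cone} in the quasistable form: every point of $\lambda^\circ_{(\Gamma,\E,D,s)}$ comes from a pair $(X,\D)$ with $\D$ a $v_0$-quasistable root of type $(\E,D)$, and the datum $s$ attached to the cell supplies the sign function. Putting these together gives the claimed bijection between points of $Q_{g,n}^\trop$ and equivalence classes of triples.

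I expect no serious obstacle: the argument is formal once the $v_0$-quasistable analogue of Proposition \ref{prop:open-cone} is in hand, and that analogue is already flagged in the Remark after that proposition. The one point deserving a line of care is precisely that analogue, namely that among unitary divisors of combinatorial type $(\E,D)$ the locus of \emph{roots} is cut out on $\mathbb R^{E(\Gamma^\E)}_{>0}$ exactly by the cycle equations \eqref{eq:phieq}, via the unique flow $\phi_{(\E,D)}$ of Theorem \ref{thm:uniquephi}; this is the same verification performed inside the proof of Proposition \ref{prop:open-cone}, so it transfers without change. In practice the proof can be stated very briefly, as a transcription of the proof of Proposition \ref{prop:modularS} with ``polystable'' replaced by ``$p_0$-quasistable'', $\kappa_{(\Gamma,\E,D,s)}$ by $\lambda_{(\Gamma,\E,D,s)}$, $\mathbf{SP}_{g,n}$ by $\mathbf{SPQ}_{g,n}$, and Proposition \ref{prop:modularT} by Proposition \ref{prop:modularR}.
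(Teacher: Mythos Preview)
Your proposal is correct and follows exactly the approach the paper intends: the paper's own proof simply states that the argument is similar to that of Proposition \ref{prop:modularT} and is left to the reader. Your transcription, replacing polystable by $p_0$-quasistable, $\kappa$ by $\lambda$, $\mathbf{SP}_{g,n}$ by $\mathbf{SPQ}_{g,n}$, and invoking the quasistable variant of Proposition \ref{prop:open-cone}, is precisely what is meant.
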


\begin{proof}
The argument is similar to the one used to prove Proposition \ref{prop:modularT} and is left to the reader.
\end{proof}

By Propositions \ref{prop:modularR}, \ref{prop:modularS}, and \ref{prop:modularQ}, we obtain a commutative diagram of morphisms of generalized cone complexes:
\[
\SelectTips{cm}{11}
\begin{xy} <16pt,0pt>:
\xymatrix{
Q_{g,n}^\trop \ar[r]^{\phi_Q^\trop}\ar[d]^{\rho_S^\trop}& R_{g,n}^\trop  \ar[d]^{\rho_T^\trop}       \\
S_{g,n}^\trop\ar[r]^{\phi_S^\trop} &T_{g,n}^\trop
 }
\end{xy}
\]
Let us describe the new morphisms appearing in the diagram. Consider a point $q$ of $Q^\trop_{g,n}$ parametrizing the class of a triple $(X,\D,s)$. 
The map $\phi_Q^\trop$ takes $q$ to the point parametrizing the class of the pair $(X,\D)$. This is a morphism of cone complexes by the definition of the cone $\lambda_{(\Gamma,\E,D,s)}$ in equation \eqref{eq:spin-cones}.

If $\D'$ is a polystable divisor on $X$ equivalent to $\D$ and $(\E,D)$ is the combinatorial type of $\D$, then the combinatorial type of $\D'$ is $\pol(\E,D)$. The map $\phi_S^\trop$ takes $q$ to the point parametrizing the class $(X,\D',\pol(s))$. By the definition of the cone $\kappa_{(\Gamma,\E,D,s)}$ in equation \eqref{eq:spin-cones} it is easy to see that $\phi_S^\trop$ is  a morphism of generalized cone complexes.

\begin{Lem}\label{lem:signed-poly-root-spec}
Let $(\Gamma,\E,D,s)\ge(\Gamma',\E',D',s')$ be a specialization of spin semistable root-graphs.  
If $(\E,D,s)$ is polystable and $(\E',D',s')$ is $v_0$-quasistable, then there exists a specialization of spin semistable  root-graphs $(\Gamma,\E,D,s)\ge(\Gamma',\pol(\E',D',s'))$ factoring $(\Gamma,\E,D,s)\ge(\Gamma',\E',D',s')$.
\end{Lem}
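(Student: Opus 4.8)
The plan is to reduce to the non-spin statement of Lemma~\ref{lem:poly-root-spec} and then argue that the sign data is automatically carried along. Write $\iota\col\Gamma\to\Gamma'$ and $\wh\iota\col\Gamma^\E\to{\Gamma'}^{\E'}$ for the specializations underlying $(\Gamma,\E,D,s)\ge(\Gamma',\E',D',s')$, so that $s'=\wt\iota_*(s)$ for the map $\wt\iota\col\Gamma/P_{(\E,D)}\to\Gamma'/P_{(\E',D')}$ induced as in Lemma~\ref{lem:iota-s}. Forgetting the sign functions, $(\Gamma,\E,D)\ge(\Gamma',\E',D')$ is a specialization of semistable root-graphs with $(\E,D)$ polystable and $(\E',D')$ being $v_0$-quasistable, so Lemma~\ref{lem:poly-root-spec} produces a specialization of semistable root-graphs $j\col(\Gamma,\E,D)\ge(\Gamma',\pol(\E',D'))$ factoring $(\Gamma,\E,D)\ge(\Gamma',\E',D')$; set $(\wh\E,\wh D):=\pol(\E',D')$.

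The crucial observation is that $P_{(\wh\E,\wh D)}=P_{(\E',D')}$ by Proposition~\ref{prop:minimal-root}(2), so the $P$-quotient graphs coincide, $\Gamma'/P_{(\wh\E,\wh D)}=\Gamma'/P_{(\E',D')}$. As recorded in the proof of Lemma~\ref{lem:poly-root-spec}, the graph-level specialization underlying $j$ is again $\iota$; hence, by Lemma~\ref{lem:iota-s}, both $\wt j\col\Gamma/P_{(\E,D)}\to\Gamma'/P_{(\wh\E,\wh D)}$ and $\wt\iota$ are the unique map induced by $\iota$ fitting into the commutative square of Lemma~\ref{lem:iota-s}, and therefore $\wt j=\wt\iota$. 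Consequently $\wt j_*(s)=\wt\iota_*(s)=s'$, and $s'$ is precisely $\pol(s')$ read on $(\Gamma',\wh\E,\wh D)$ via the identification $P_{(\E',D')}=P_{\pol(\E',D')}$ of Definition~\ref{def:spin-pol}. This means $(\Gamma,\E,D,s)\ge(\Gamma',\pol(\E',D',s'))$ is a specialization of spin semistable root-graphs.

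Finally, $(\Gamma',\pol(\E',D',s'))\ge(\Gamma',\E',D',s')$ is a specialization of spin semistable root-graphs by Proposition~\ref{prop:minimal-signed-root} (or directly: the underlying specialization $(\Gamma',\wh\E,\wh D)\ge(\Gamma',\E',D')$ from Lemma~\ref{lem:poly-root-spec} has identity graph map, and $P_{(\wh\E,\wh D)}=P_{(\E',D')}$ forces the induced map on $P$-quotients to be the identity, which carries $\pol(s')=s'$ to $s'$). Composing, $(\Gamma,\E,D,s)\ge(\Gamma',\pol(\E',D',s'))$ factors $(\Gamma,\E,D,s)\ge(\Gamma',\E',D',s')$, as required.

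I expect the only real obstacle to be the bookkeeping that guarantees $\wt j=\wt\iota$; once one notes $P_{(\wh\E,\wh D)}=P_{(\E',D')}$, all the sign-function compatibilities are formal and the geometric content sits entirely in Lemma~\ref{lem:poly-root-spec}.
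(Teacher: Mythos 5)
Your proof is correct and follows essentially the same route as the paper: apply Lemma~\ref{lem:poly-root-spec} to the underlying semistable root-graphs, use the identification $P_{(\E',D')}=P_{\pol(\E',D')}$ from Proposition~\ref{prop:minimal-root} to see that the induced quotient specializations agree (the paper phrases your ``$\wt j=\wt\iota$'' step as a commutative diagram of specializations via Lemma~\ref{lem:iota-s}), and conclude $\wt\iota_*(s)=s'=\pol(s')$, with Proposition~\ref{prop:minimal-signed-root} supplying the second leg of the factorization.
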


\begin{proof}
 Set $(\wh\E,\wh D):=\pol(\E',D')$. 
By Lemma \ref{lem:poly-root-spec} we have a specialization of semistable root-graphs $(\Gamma,\E,D)\ge (\Gamma',\wh\E,\wh D)$ factoring the specialization of semistable root-graphs $(\Gamma,\E,D)\ge(\Gamma',\E',D')$. By Proposition \ref{prop:minimal-root} we have an identification $P_{(\E',D')}=P_{(\wh \E,\wh D)}$. Hence we get a commutative diagram of specializations:
\[
\SelectTips{cm}{11}
\begin{xy} <16pt,0pt>:
\xymatrix{
 \Gamma/P_{(\E,D)}\ar[drr]^{\wt\iota'} \ar[rr]^{\wt\iota}    & &  \ar[d]^{\cong} \Gamma'/P_{(\E',D')} \\
   & &\Gamma'/P_{(\wh\E,\wh D)}
 }
\end{xy}
\]
where $\wt\iota$ and $\wt\iota'$ are the specializations of Lemma \ref{lem:iota-s}. Hence 
$\wt\iota'_*(s)=\wt\iota_*(s)=s'=\pol(s')$,
and we are done.
\end{proof}

\begin{Thm}\label{thm:refiS}
The morphism of generalized cone complexes $\rho_S^\trop\col Q_{g,n}^\trop\ra S_{g,n}^\trop$ is a refinement. For every spin polystable pseudo-root $(\Gamma,\E,D,s)$, we have  decompositions:
\[
\kappa^\circ_{(\Gamma,\E,D,s)}=\underset{(\E,D,s)=\pol(\E',D',s')}
{\coprod_{(\E',D',s')\in \mathbf{SPQ}_{g,n}}} \lambda^\circ_{(\Gamma,\E',D',s')}
\quad\text{ and }\quad
\kappa_{(\Gamma,\E,D,s)}=\underset{(\Gamma,\E,D,s)\ge (\Gamma',\E',D',s')}
{\coprod_{(\Gamma',\E',D',s')\in \mathbf{SPQ}_{g,n}}}\lambda^\circ_{(\Gamma',\E',D',s')},
\]
where in the last formula $\ge$ means specialization of spin semistable pseudo-roots.
\end{Thm}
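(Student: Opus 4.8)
The plan is to run the argument of Theorem \ref{thm:refiJ} and Proposition \ref{prop:refiT} one level further, carrying along the sign function. Recall from equation \eqref{eq:spin-cones} that $\kappa_{(\Gamma,\E,D,s)}=\kappa_{(\Gamma,\E,D)}$ and $\lambda_{(\Gamma,\E',D',s')}=\lambda_{(\Gamma,\E',D')}$, so the two claimed decompositions are, as decompositions of polyhedra, literally the ones furnished by Proposition \ref{prop:refiT}; what must be checked is only that decorating the indexing triples by sign functions neither merges nor splits any cone. The key observation is that, once the spin polystable root-graph $(\Gamma,\E,D,s)$ is fixed, a $v_0$-quasistable pseudo-root $(\E',D')$ with $\pol(\E',D')=(\E,D)$ carries a \emph{unique} sign function $s'$ with $\pol(\E',D',s')=(\E,D,s)$, namely $s'=s$ viewed on the common vertex set $V(\Gamma/P_{(\E',D')})=V(\Gamma/P_{(\E,D)})$, the identification coming from $P_{(\E',D')}=P_{\pol(\E',D')}$ (Proposition \ref{prop:minimal-root}(2)) and Definition \ref{def:spin-pol}. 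Hence the index set of each signed decomposition is in canonical bijection with that of the corresponding unsigned one.

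First I would prove that $\rho_S^\trop$ is a refinement. By Propositions \ref{prop:modularR}, \ref{prop:modularS} and \ref{prop:modularQ}, $\rho_S^\trop$ sends the class of a triple $(X,\D,s)$ with $\D$ a $p_0$-quasistable root to the class of $(X,\D',s)$, where $\D'$ is the unique polystable divisor on $X$ equivalent to $\D$ (Remark \ref{rem:stab-equiv}); as in Proposition \ref{prop:refiT}, $\D'$ is a root precisely because $\D$ is, and $s$ makes sense on the combinatorial type of $\D'$ because that type is $\pol$ of the combinatorial type of $\D$, the two graphs $P$ agreeing. Conversely, from a polystable root $\D'$ on $X$ there is a unique $(p_0,\mu)$-quasistable divisor $\D$ equivalent to it, again a root, and $s$ transfers back along the same identification; this produces the inverse map on equivalence classes, so $\rho_S^\trop$ is bijective, hence a refinement. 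That it is a morphism of generalized cone complexes, realized cone-by-cone by the inclusions $\lambda_{(\Gamma,\E',D',s')}=\kappa_{(\Gamma,\E',D',s')}\subset\kappa_{(\Gamma,\pol(\E',D',s'))}$ obtained from Proposition \ref{prop:minimal-signed-root} together with Remarks \ref{rem:simple-root} and \ref{rem:kappa-inclusion}, is already recorded before the statement.

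Next I would establish the decomposition of $\kappa^\circ_{(\Gamma,\E,D,s)}$. For $(\Gamma,\E',D',s')\in\mathbf{SPQ}_{g,n}$ with $\pol(\E',D',s')=(\E,D,s)$, Remark \ref{rem:kappa-inclusion} and the chain of (in)equalities used in the proof of Proposition \ref{prop:refiT} give $\lambda^\circ_{(\Gamma,\E',D',s')}=\kappa^\circ_{(\Gamma,\E',D')}\subset\kappa^\circ_{(\Gamma,\pol(\E',D'))}=\kappa^\circ_{(\Gamma,\E,D,s)}$; disjointness of distinct such cones is inherited from Proposition \ref{prop:refiT}, since the underlying $(\E',D')$ already determine disjoint interiors and the sign is then forced; and their union exhausts $\kappa^\circ_{(\Gamma,\E,D,s)}$ by the unsigned decomposition together with the uniqueness of $s'$ recorded above. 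For the second decomposition I would expand $\kappa_{(\Gamma,\E,D,s)}$ as the disjoint union of the $\kappa^\circ_{(\Gamma',\E'',D'',s'')}$ over spin polystable specializations $(\Gamma,\E,D,s)\ge(\Gamma',\E'',D'',s'')$ (the definition of $S_{g,n}^\trop$, using Lemma \ref{lem:poly-pseudo-root}), substitute the first decomposition into each term, and check that every $(\Gamma',\E',D',s')$ appearing is reached by a specialization of spin semistable root-graphs $(\Gamma,\E,D,s)\ge(\Gamma',\E',D',s')$: this follows from Lemma \ref{lem:signed-poly-root-spec} together with the existence of the specialization $\pol(\E',D',s')\ge(\E',D',s')$ from Proposition \ref{prop:minimal-signed-root}, exactly as in the last lines of the proof of Theorem \ref{thm:refiJ}.

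The proof is thus entirely parallel to the unsigned statements, and I expect the only delicate point to be purely organizational: verifying that attaching the (forced) sign function does not disturb the cone structure, i.e.\ that the forgetful functor $\mathbf{SPQ}_{g,n}\to\mathbf{QR}_{g,n}$ is compatible with the cones and their faces. This is immediate from $\lambda_{(\Gamma,\E,D,s)}=\lambda_{(\Gamma,\E,D)}$ together with the fact, isolated in Lemma \ref{lem:signed-poly-root-spec} and Proposition \ref{prop:minimal-signed-root}, that the sign is transported uniquely and functorially along specializations with prescribed target; so no genuinely new geometric input beyond Proposition \ref{prop:refiT} is required.
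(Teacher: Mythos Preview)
Your proposal is correct and follows essentially the same route as the paper's proof: both reduce to Proposition \ref{prop:refiT} via the identification $\lambda_{(\Gamma,\E,D,s)}=\lambda_{(\Gamma,\E,D)}$, observe that the sign function is uniquely transported along $\pol$ thanks to $P_{(\E',D')}=P_{\pol(\E',D')}$, and invoke Proposition \ref{prop:minimal-signed-root} and Lemma \ref{lem:signed-poly-root-spec} for the closed decomposition. Your write-up is more explicit about why no cones are merged or split when the sign is attached, but the underlying argument is the same.
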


\begin{proof}
We know that the map $\rho_T^\trop$ is a bijection. 
Given a sign $s$ on a polystable pseudo-root $(\E,D)$ on a graph $\Gamma$ and a $v_0$-quasistable pseudo-root $(\E',D')$ on $\Gamma$ such that $\pol(\E',D')=(\E,D)$, we have that $s'=s$ is the unique sign on $(\E',D')$ such that $\pol(s')=s$ (we use that, by Proposition \ref{prop:minimal-root}, $P_{(\E,D)}=P_{(\E',D')}$). It follows that $\rho^\trop_S$ is bijective as well, hence it is a refinement.

For the stated decompositions, we can argue as in the proof of Proposition \ref{prop:refiT}, using Proposition \ref{prop:minimal-signed-root} and Lemma \ref{lem:signed-poly-root-spec} for the second decomposition.
\end{proof}

We sum up the morphisms of generalized cone complexes constructed so far in the following commutative diagram:
\[
\SelectTips{cm}{11}
\begin{xy} <16pt,0pt>:
\xymatrix{
Q_{g,n}^\trop \ar[r]^{\phi_Q^\trop}\ar[d]^{\rho_S^\trop}& R_{g,n}^\trop  \ar[d]^{\rho_T^\trop} \ar[r]^{\psi_R^\trop}     & J_{\mu,g,n}^\trop \ar[d]^{\rho_P^\trop}\ar[dr]^{\pi_J^\trop}  & \\
S_{g,n}^\trop\ar[r]^{\phi_S^\trop} &T_{g,n}^\trop\ar[r]^{\psi^\trop_T}& P_{\mu,g,n}^\trop\ar[r]^{\pi_P^\trop} & M_{g,n}^\trop
 }
\end{xy}
\]
In the diagram, the $\rho^\trop$ maps are refinements, the $\pi^\trop$ and the $\phi^\trop$ maps are forgetful maps, and the $\psi^\trop$ maps are injections.
We conclude the section with an example illustrating the decompositions given by Theorem \ref{thm:refiS}.

\begin{Exa}\label{exa:injective}
Consider a graph $\Gamma$ with two vertices connected by two edges. Set $V(\Gamma)=\{v_0,v_1\}$ and $E(\Gamma)=\{e_0,e_1\}$. Consider the polystable pseudo-root $(\E,D)$ on $\Gamma$, where $\E=E(\Gamma)$ and 
\[
D=v_0+v_1-w_0-w_1.
\]
Here $w_i\in V(\Gamma^\E)$ is the vertex in the interior of $e_i$. Then $\phi_{(\E,D)}$ has value 1 over every $e\in\ora{E}(\Gamma^\E)$ oriented as in Figure \ref{fig1} and $P_{(\E,D)}$ has no edges. Consider the sign function $s:V(\Gamma/P_{(\E,D)})\ra \mathbb Z/2\mathbb Z$ which has value 0 on every $v\in V(\Gamma/P_{(\E,D)})=V(\Gamma)$.
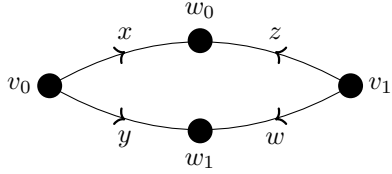
\begin{figure}[h]
    \centering
    \begin{tikzpicture}
    \draw (0,0) to [bend right] (4,0);
    \draw (0,0) to [bend left] (4,0);
    \node at (0,0) [circle, fill]{};
    \node at (-0.4,0) {$v_0$};
    \node at (4.4,0) {$v_1$};
    \node at (4,0) [circle, fill]{};
    \node at (2,0.6) [circle, fill]{};
    \node at (2,1) {$w_0$};
    \node at (2,-0.6) [circle, fill]{};
       \node at (2,-1) {$w_1$};
    \node at (1,-0.7) {$y$};
    \node at (1,0.7) {$x$};
    \node at (3,-0.7) {$w$};
    \node at (3,0.7) {$z$};
    \draw[->, line  width=0.3mm] (0.9975,0.449) to (1.0025,0.4514);
    \draw[->, line  width=0.3mm] (3.0025,0.449) to (2.9975,0.4514);
    \draw[->, line  width=0.3mm] (0.9975,-0.449) to (1.0025,-0.4514);
    \draw[->, line  width=0.3mm] (3.0025,-0.449) to (2.9975,-0.4514);
    \end{tikzpicture}
\caption{The refinement of $\Gamma$.}
\label{fig1}
\end{figure}

We have the injection $\kappa_{(\Gamma,\E,D,s)}=\kappa_{(\Gamma,\E,D)}=\T_\E(\lambda_{(\Gamma,\E,D)})\subset \mathbb R^4/{\mc L_\E}$, where $\mc L_\E$  is the one-dimensional linear space spanned by the vector with coordinates $(1,1,-1,-1)$. By Lemma \ref{lem:poly-pseudo-root}, we have  identifications $\mathbb R^2_{\geq0}\stackrel{\delta_\E}{\cong}\Delta_{\E}\stackrel{\mathcal T_\E}{\cong}\kappa_{(\Gamma,\E,D,s)}\subset \mathbb R^2$. We get a commutative diagram:
\[
\SelectTips{cm}{11}
\begin{xy} <16pt,0pt>:
\xymatrix{
\R_{\geq0}^2  \ar[d]^{\id} \ar[rr]^{\delta_\E}     &&   \Delta_{\E} \ar[d]\ar[rr]^{\T_\E} && \kappa_{(\Gamma,\E,D,s)}\ar[d]^{\alpha}  \\
\mathbb R^2_{\geq0}\ar[rr]^{\delta_\E} && \mathbb R^4_{\geq0}\ar[rr]^{\T_\E} &&\sigma_{(\Gamma,\E,D)}
 }
\end{xy}
\]
where $\alpha$ is the inclusion in equation \eqref{eq:kappa-sigma}.
We let $(u,v)$ be the coordinates of $\mathbb R^2$ and $(x,y,z,w)$ the coordinates of $\mathbb R^4$.
The $v_0$-quasistable spin pseudo-roots $(\E',D',s')$ of $\Gamma$ such that  $\pol(\E',D',s')=(\E,D,s)$ are: 
\[
\begin{array}{ll}
(\E_1,D_1,s)=(\{e_1\},v_0-w_0,s),
\;\;
(\E_2,D_2,s)=(\{e_2\},v_0-w_1,s),
\;\;
(\E_3,D_3,s)=(\emptyset,v_0-v_1,s).
\end{array} 
\]
In Figure \ref{fig:quasi} we draw a picture of the underlying pseudo-roots.
\begin{figure}[h!]
    \centering
    \begin{tikzpicture}
    \draw (0,0) to [bend right] (2,0);
    \draw (0,0) to [bend left] (2,0);
    \draw[fill] (0, 0) node {} (0, 0) circle (0.1);
    \node at (-0.4,0) {$v_0$};
    \node at (2.4,0) {$v_1$};
    \draw[fill] (2, 0) node {} (2, 0) circle (0.1);
    \draw[fill] (1, 0.3) node {} (1, 0.3) circle (0.1);
    \node at (1,0.6) {$w_0$};

    \draw[->, line  width=0.3mm] (0.99875,-0.2875) to (1.00125,-0.2875);
    \node at (1,-0.6) {$y$};
    \node at (0.5,0.4) {$x$};
  
    \node at (1.5,0.4) {$z$};
    \draw[->, line  width=0.3mm] (0.49875,0.2245) to (0.50125,0.2257);
    \draw[->, line  width=0.3mm] (1.50125,0.2245) to (1.49875,0.2257);
    
     \draw (4,0) to [bend right] (6,0);
    \draw (4,0) to [bend left] (6,0);
    \draw[fill] (4, 0) node {} (4, 0) circle (0.1);
    \node at (3.6,0) {$v_0$};
    \node at (6.4,0) {$v_1$};
    \draw[fill] (6, 0) node {} (6, 0) circle (0.1);
    \node at (5,0.6) {$x$};
   \draw[fill] (5, -0.3) node {} (5, -0.3) circle (0.1);
       \node at (5,-0.6) {$w_1$};
    \node at (4.5,-0.4) {$y$};
    \node at (5.5,-0.4) {$w$};
       \draw[->, line  width=0.3mm] (4.99875,0.2875) to (5.00125,0.2875);
   \draw[->, line  width=0.3mm] (4.49875,-0.2245) to (4.50125,-0.2257);
    \draw[->, line  width=0.3mm] (5.50125,-0.2245) to (5.49875,-0.2257);

       \draw (8,0) to [bend right] (10,0);
    \draw (8,0) to [bend left] (10,0);
    \draw[fill] (8, 0) node {} (8, 0) circle (0.1);
    \node at (7.6,0) {$v_0$};
    \node at (10.4,0) {$v_1$};
    \draw[fill] (10, 0) node {} (10, 0) circle (0.1);
    \node at (9,0.6) {$x$};
       \node at (9,-0.6) {$y$};
       \draw[->, line  width=0.3mm] (8.99875,0.2875) to (9.00125,0.2875);
       \draw[->, line  width=0.3mm] (8.99875,-0.2875) to (9.00125,-0.2875);
    \end{tikzpicture}
\caption{The pseudo-roots $(\E_1,D_1)$,$(\E_2,D_2)$, $(\E_3,D_3)$.}
\label{fig:quasi}
\end{figure}

Given a point $(x,y,z,0)\in\lambda^\circ_{(\E_1,D_1,s)}$, we have $x-z-y=0$. Hence the points $(x,y,z,0)\in\lambda^\circ_{(\E,D,s)}$ and $(\frac{x+z}{2},\frac{y}{2},\frac{x+z}{2},\frac{y}{2})$ get identified with the same point $[\frac{x+z}{2},\frac{y}{2},\frac{x+z}{2},\frac{y}{2}]\in \kappa_{(\Gamma,\E,D)}\subset\R^4/\mathcal{L}_\E$. Under the identification $\kappa_{(\Gamma,\E,D)}\cong \mathbb R^2_{\ge0}$, the point $[\frac{x+z}{2},\frac{y}{2},\frac{x+z}{2},\frac{y}{2}]$ corresponds to the point $(u,v)=(\frac{x+z}{2},\frac{y}{2})\in\R^2_{\geq0}$. Thus we get  $u>v$, since $y=x-z$. It is easy to see that for every $u>v>0$, the point $(u,v)$ corresponds to the point $(u+v,2v,u-v,0)\in\lambda^\circ_{(\E_1,D_1,s)}$. A similar argument can be done for $(\E_2,D_2,s)$ and $(\E_3,D_3,s)$. Hence 
\[
\kappa^\circ_1:=\kappa^\circ_{(\Gamma,\E_1,D_1,s)}\cong\{(u,v)\in\R^2_{>0}\;|\;u>v\}
\]
\[
\kappa^\circ_2:=\kappa^\circ_{(\Gamma,\E_2,D_2,s)}\cong\{(u,v)\in\R^2_{>0}\;|\;u<v\}
\]
\[
\kappa^\circ_3:=\kappa^\circ_{(\Gamma,\E_3,D_3,s)}\cong\{(u,v)\in\R^2_{>0}\;|\;u=v\}.
\]
The decomposition in Theorem \ref{thm:refiS} of $\kappa^\circ_{(\E,D,s)}$ is $\kappa^\circ_{(\Gamma,\E,D,s)}
=\kappa^\circ_1\sqcup\kappa^\circ_2\sqcup \kappa^\circ_3$.

Next, for $i\in\{1,2\}$, consider the specialization $\iota_i\col\Gamma\to\Gamma_i:=\Gamma/\{e_{3-i}\}$.
 There is just one $v_0$-quasistable spin pseudo-root on $\Gamma_i$, which is  $(\E_i,D'_i,s'_i)=(\{e_i\},v_0-w_{i-1}, \iota_{i*}(s))$. Consider the specialization $\iota_3\col\Gamma\to\Gamma_3:=\Gamma/\{e_1,e_2\}$. There is just one spin pseudo-divisor on $\Gamma_3$, which is  $(\E_3,D'_3,s'_3)=(\emptyset,0, \iota_{3*}(s))$. We draw a picture of these psedo-roots in Figure \ref{fig:cont}.
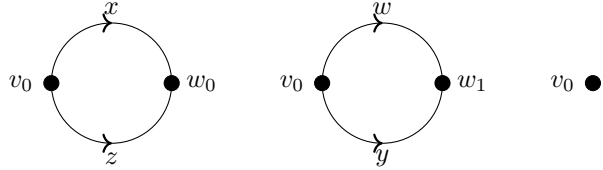
\begin{figure}[h]
    \centering
    \begin{tikzpicture}
    
    \draw (0.8,0) circle (0.8);
    \draw[fill] (0, 0) node {} (0, 0) circle (0.1);
    \node at (-0.4,0) {$v_0$};
    
    \draw[fill] (1.6, 0) node {} (1.6, 0) circle (0.1);
   
    \node at (2,0) {$w_0$};
  
    \draw[->, line  width=0.3mm] (0.79875,-0.8) to (0.80125,-0.8);
        \draw[->, line  width=0.3mm] (0.79875,0.8) to (0.80125,0.8);
   
    \node at (0.8,1) {$x$};
   
    \node at (0.8,-1) {$z$};
  
   \draw (4.4,0) circle (0.8);
   
    \draw[fill] (3.6, 0) node {} (3.6, 0) circle (0.1);
    \node at (3.2,0) {$v_0$};
  
    \draw[fill] (5.2, 0) node {} (5.2, 0) circle (0.1);
  
    \node at (5.6,0) {$w_1$};
 
    \draw[->, line  width=0.3mm] (4.39875,-0.8) to (4.40125,-0.8);
        \draw[->, line  width=0.3mm] (4.39875,0.8) to (4.40125,0.8);
   
    \node at (4.4,1) {$w$};

    \node at (4.4,-1) {$y$};
    
       \draw[fill] (7.2, 0) node {} (7.2, 0) circle (0.1);
     \node at (6.8,0) {$v_0$};

    \end{tikzpicture}
\caption{The pseudo-roots on $\Gamma_1,\Gamma_2,\Gamma_3$.}
\label{fig:cont}
\end{figure}

\noindent
The decomposition of $\kappa_{(\Gamma,\E,D,s)}\cong\R^2_{\geq0}$ is as prescribed by Theorem \ref{thm:refiS}, since:
\begin{align*}
    \kappa'^\circ_1:= \kappa^\circ_{(\Gamma_1,\E'_1, D'_1,s'_1)}&\cong\{(u,0);u\in\R_{>0}\}\\
    \kappa'^\circ_2:=\kappa^\circ_{(\Gamma_2,\E'_2,D'_2,s'_2)}&\cong\{(0,v);v\in\R_{>0}\}\\
    \kappa'^\circ_3:=\kappa^\circ_{(\Gamma_3,\E'_3,D'_3,s'_3)}&\cong\{(0,0)\}.\\
\end{align*}
The construction is illustrated in Figure \ref{fig2}, where we draw a section of the relevant cones.
\begin{figure}[h!]
    \centering
    \begin{tikzpicture}
    \draw (-6,2)--(-2,2);
       \draw[fill] (-6, 2) node {} (-6, 2) circle (0.1);
       \draw[fill] (-2, 2) node {} (-2, 2) circle (0.1);
      \draw[fill] (-4, 2) node {} (-4, 2) circle (0.1);
     \node at (-0.8,0.8) {$\sigma_{(\Gamma,\E,D)}$};
     \node at (-4,1) {$\kappa_{(\Gamma,\E,D,s)}$};
     \node at (-1.9,2.4) {$\kappa'^\circ_2$};
     \node at (-6,2.4) {$\kappa'^\circ_1$};
     \node at (-5,2.2) {$\kappa^\circ_1$};
     \node at (-3,2.2) {$\kappa^\circ_2$};
     \node at (-4,2.4) {$\kappa^\circ_3$};
    \node at (-0.9,2) {$\stackrel{\alpha}{\longrightarrow}$};
    \draw (0,0) rectangle(4,4);
    \node at (-0.2,0) {$x$};
    \node at (4.2,0) {$w$};
    \node at (-0.2,4) {$z$};
    \node at (4.2,4) {$y$};
    \draw (0,0) -- (4,4);
    \draw[line width=1pt] (0,2) -- (4,2);
    \node at (1.5,3) {$\tau_{(\Gamma,\E_1,D_1)}$};
    \node at (2.5,1) {$\tau_{(\Gamma,\E_2,D_2)}$};
     \node at (3,2.7) {\rotatebox{45}{$\tau_{(\Gamma,\E_3,D_3)}$}};
    \end{tikzpicture}
\caption{The decomposition of $k_{(\Gamma,\E,D,s)}$.}
\label{fig2}
\end{figure}
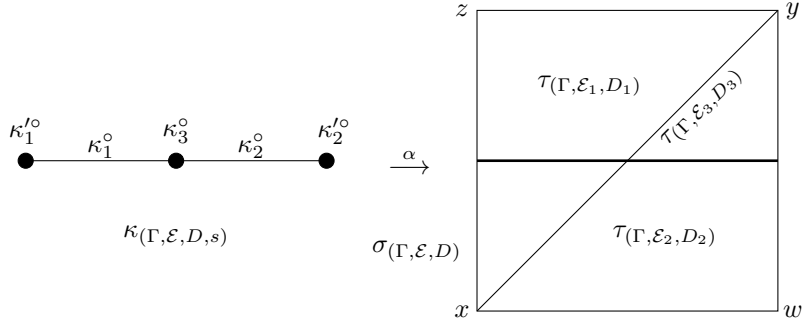
\end{Exa}

\section{Moduli of quasistable spin curves}

\subsection{Esteves universal Jacobian}\label{sec:Esteves-univ}

A \emph{(pointed) curve} will always mean a reduced connected projective scheme of dimension $1$ defined over an algebraically closed field $k$, whose singularities are nodes, together with a tuple of smooth points. We consider curves of genus $g\ge2$. The dual graph of  a curve $X$ is denoted $\Gamma_X$; we 
let $p_e$ be the node of $X$ corresponding to an edge $e\in E(\Gamma)$.

A \emph{chain of rational components} is a curve which is a union of smooth connected rational components $E_1,\dots E_n$ such that $E_i\cap E_j=\emptyset$ for $|i-j|>1$ and $\#(E_i\cap E_{i+1})=1$ for $i=1,\dots,n-1$.  A \emph{semistable modification} $X$ of a curve $C$ is a curve $X$ which is the union of the normalization $C_\Delta$ of $C$ at a subset $\Delta\subset C$ of nodes and chains $E_p=\cup_{1\le i\le n_p} E_{p,i}$ of rational components (called \emph{exceptional components}), for $p\in \Delta$, such that $\#(E_p\cap C_\Delta)=2$ and $\#(E_{p,1}\cap C_\Delta)=\#(E_{p,n_p}\cap C_\Delta)=1$. A \emph{quasistable modification} is a semistable modification such that $n_p=1$ for every $p\in \Delta$. There is a map $X\ra C$ contracting the exceptional components.

Recall that a pointed curve $X$ is \emph{stable} 
if so is its dual graph. 
 We say that $X$
 is \emph{quasistable} if it is a quasistable modification of a stable curve.  
 Given a curve $X$, we denote by $\st(X)$ the \emph{stable model} of $X$, obtained by contracting all exceptional components of $X$. Similarly, we can define the stable model $\st(\mc X)\ra S$ for a family $\mc X\ra S$ of nodal curves.


Let $I$ be a torsion-free rank-$1$ sheaf on a nodal curve $X$. Consider a smooth point $p_0\in X$ and a polarization $\mu$ on the dual graph $\Gamma$ of $X$. For every subcurve $Y$ of $X$, we set $\mu(Y)=\sum_{v\in V_Y}\mu(v)$, where $V_Y\subset V(\Gamma)$ is the set of vertices corresponding to the components of $X$ contained in $Y$, and $\delta_Y:=\delta_{V_Y}=|Y\cap \ol{X\setminus Y}|$. Let $I_Y$ be the restriction of $I$ to $Y$ modulo torsion. We define:
\[
\beta_Y(I)=\deg I_Y-\mu(Y)+\frac{1}{2}\delta_Y.
\]

 We say that $I$ is \emph{$(p_0,\mu)$-quasistable} if $\beta_Y(I)\ge0$ for every proper subcurve $Y$ where the inequality is strict if $p_0\in Y$. The definition extends to families: given a family $\pi\col\mc X\ra S$ of nodal curves with a section $\sigma$ of $\pi$ through its smooth locus and a polarization $\mu$ on $\mc X/S$, we say that a sheaf $\mc I$ on $\mc X/S$ is \emph{$(\sigma,\mu)$-quasistable} if the restriction of $\mc I$ to every fiber $\pi^{-1}(s)$ is $(\sigma(s),\mu|_{\pi^{-1}(s)})$-quasistable for every geometric point $s\in S$.

The construction also extends to the universal setting, giving rise to a universal compactified Jacobian $\overline{\mc J}_{\mu,g,n}$. This is a   proper
and separated Deligne-Mumford stack over $\overline{\mc M}_{g,n}$ 
whose sections over a $k$-scheme $S$ are:
\[
\ol{\J}_{\mu,g,n}(S)=\frac{\left\{(\pi\col \mc X\ra S,\mc I)\big |\begin{array}{l} \pi\text{ is a family of stable $n$-pointed genus-$g$  curves}\\ \text{with sections $(\sigma_i)$ and $\mc I$ is a $(\sigma_1,\mu)$-quasistable}\\ \text{torsion-free rank-1 sheaf on $\mc X/S$}\end{array}\right\}}{\sim}
\] 
where $(\pi\col \mc X\ra S,\mc I)\sim(\pi'\col \mc X'\ra S,\mc I')$ if there is an $S$-isomorphism $f\col \mc X\to \mc X'$ and an invertible sheaf $\mc L$ on $S$, such that $\sigma'_i=f\circ\sigma_i$ are the sections of $\pi'$ and $\mc I\cong f^*\mc I'\otimes\pi^*\mc L$. The morphisms between two objects $(\pi\col \mc X\ra S,\mc I)$ and $(\pi'\col \mc X'\ra S',\mc I')$ are given by Cartesian diagrams:
\[
\SelectTips{cm}{11}
\begin{xy} <16pt,0pt>:
\xymatrix{
\mc X \ar[r]^{f}\ar[d]^{\pi}& \mc X'\ar[d]^{\pi'} \\
S\ar[r]^{h} & S'
 }
\end{xy}
\]
such that there is an isomorphism $f^*\mc I'\cong \mc I\otimes \pi^*(\mathcal L)$ for some invertible sheaf $\mathcal L$ on $S$, and $f\circ\sigma_i=\sigma'_i\circ h$.
We refer to \cite[Theorems A and B]{M15} and \cite[Corollary 4.4 and Remark 4.6]{KP} for more details. 

We now give another useful presentation of the stack $\ol{\J}_{\mu,g,n}$. Consider the category $\ol{\mc P}_{\mu,g,n}$ fibered in grupoids over the category of $k$-schemes, whose sections over a $k$-scheme $S$ are:
\[
\ol{\mc P}_{\mu,g,n}(S)=\frac{\left\{(\pi\col \mc X\ra S,\mc L) \big | \begin{array}{l} \pi\text{ is a family of $n$-pointed genus-$g$ quasistable}\\ \text{curves with sections $(\sigma_i)$ and $\mc L$ is a $(\sigma_1,\mu)$-} \\ \text{quasistable invertible sheaf on $\mc X/S$ with}\\ \text{degree $-1$ on exceptional components}\end{array}\right\}}{\sim}
\] 
where the equivalence relation and the morphisms between objects are the ones introduced above. 

\begin{Prop}\label{prop:isoJP}
There is an isomorphism $\ol{\mc J}_{\mu,g,n}\cong \ol{\mc P}_{\mu,g,n}$. 
\end{Prop}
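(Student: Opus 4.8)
The plan is to construct quasi-inverse functors between $\ol{\mc J}_{\mu,g,n}$ and $\ol{\mc P}_{\mu,g,n}$ over the category of $k$-schemes, using the classical dictionary between torsion-free rank-$1$ sheaves on a nodal curve and invertible sheaves on its quasistable modifications. Concretely, given a family $(\pi\col\mc X\ra S,\mc I)$ in $\ol{\mc J}_{\mu,g,n}(S)$, the sheaf $\mc I$ fails to be locally free exactly along a closed subscheme $\Delta\subset\mc X$ finite over $S$ contained in the non-smooth locus; the plan is to blow up $\mc X$ along $\Delta$ (equivalently, take the quasistable modification $\mc X'\ra\mc X$ inserting one exceptional $\mathbb P^1$ over each point of $\Delta$) and set $\mc L=\sigma^*\mc I/(\textnormal{torsion})$ where $\sigma\col\mc X'\ra\mc X$ is the modification; one checks $\mc L$ is invertible, has degree $-1$ on each exceptional component, and that its $(\sigma_1,\mu)$-quasistability on $\mc X'$ is equivalent to the $(\sigma_1,\mu)$-quasistability of $\mc I$ on $\mc X$ via the relation $\beta_Y(\mc I)=\beta_{Y'}(\mc L)$ for the corresponding subcurves (using $\mu^{\mc E}$ being zero on exceptional vertices, exactly the bookkeeping already recorded in the paragraph on pseudo-divisors and in Proposition \ref{prop:stab-contract}). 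Conversely, given $(\pi'\col\mc X'\ra S,\mc L)$ in $\ol{\mc P}_{\mu,g,n}(S)$, one pushes forward: let $c\col\mc X'\ra\mc X=\st_{\textnormal{ex}}(\mc X')$ be the contraction of the exceptional components (this is the stable model only after also contracting semistable-but-not-exceptional tails, but here quasistable curves have no such tails over the stable model, so $c$ contracts precisely the exceptional $\mathbb P^1$'s) and set $\mc I=c_*\mc L$; because $\mc L$ has degree $-1$ on each contracted $\mathbb P^1$, the pushforward $c_*\mc L$ is torsion-free rank-$1$ of the correct degree, and it is $(\sigma_1,\mu)$-quasistable by the same $\beta$-comparison.

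The key steps, in order: (i) recall/prove that for a nodal curve $X$ and a torsion-free rank-$1$ sheaf $I$, there is a canonical quasistable modification $X'\ra X$ (one exceptional $\mathbb P^1$ over each non-locally-free point of $I$) and a canonical invertible sheaf $L$ on $X'$ of degree $-1$ on each exceptional component with $c_*L\cong I$ — this is standard (cf. the references to Esteves, Melo, Kass--Pagani cited in the text), and on the level of a single geometric point it is essentially the statement that a torsion-free sheaf is the pushforward of a line bundle from a partial normalization/blowup; (ii) check that $I\mapsto(X',L)$ and $(X',L)\mapsto(c_*L, X=\text{contraction})$ are mutually inverse on objects, and that they respect the twist by pullbacks of line bundles on $S$ (so they descend to the quotients by $\sim$); (iii) verify functoriality in $S$, i.e.\ compatibility with the Cartesian-diagram morphisms, which reduces to base-change statements: blowing up a flat family in a subscheme finite over the base commutes with base change, and $c_*$ commutes with base change because on each fiber $R^1c_*L=0$ (degree $-1$ on a $\mathbb P^1$); (iv) match the quasistability conditions on the two sides via $\beta_Y(I)=\beta_{Y'}(L)$, where $Y'\subset X'$ is the preimage of $Y$ together with the exceptional components lying ``inside'' $Y'$ — this is exactly the computation in Proposition \ref{prop:stab-contract} transported to curves, using that $\mu^{\mc E}$ vanishes on exceptional vertices and $\delta$ changes appropriately. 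Steps (i) and (ii) give the equivalence on objects; (iii) upgrades it to an equivalence of categories fibered in groupoids, hence an isomorphism of stacks.

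The main obstacle I expect is step (iii), the base-change/functoriality bookkeeping: one must ensure that the non-locally-free locus $\Delta\subset\mc X$ of $\mc I$ is a well-behaved closed subscheme (finite and unramified over $S$, contained in the locus where $\pi$ is not smooth) so that its blowup is again a family of quasistable curves flat over $S$, and one must check that forming $\mc L$ and, in the other direction, forming $c_*\mc L$ both commute with arbitrary base change $h\col S\to S'$ — the delicate point being that ``torsion-free rank-$1$'' and ``invertible of degree $-1$ on exceptional components'' are fiberwise conditions, so one needs cohomology-and-base-change ($R^1c_*\mc L=0$ on fibers, and $c_*\mc L$ flat over $S$ with torsion-free fibers) together with the fact that the modification is an isomorphism away from a set finite over $S$. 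All of this is implicit in the cited literature (\cite[Theorems A, B]{M15}, \cite[Corollary 4.4, Remark 4.6]{KP}), so the proof in the paper is likely to be short, essentially: ``the two categories fibered in groupoids are equivalent by the blow-up/contraction construction, and we check the quasistability conditions correspond under $\beta_Y(\mc I)=\beta_{Y'}(\mc L)$.'' I would write it at that level of detail, citing Esteves and Melo for the single-curve dictionary and Proposition \ref{prop:stab-contract} for the stability comparison, and spelling out only the $\beta$-identity and the degree-$(-1)$ normalization explicitly.
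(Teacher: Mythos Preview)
Your proposal is correct and matches the paper in one direction: from $\ol{\mc P}_{\mu,g,n}$ to $\ol{\mc J}_{\mu,g,n}$ the paper also takes the contraction $\psi\col\mc X\ra\st(\mc X)$ and pushes forward, $(\pi,\mc L)\mapsto(\st(\pi),\psi_*\mc L)$. In the other direction the paper takes a different, cleaner route: rather than blowing up along the non-locally-free locus $\Delta$ and forming $\sigma^*\mc I/(\textnormal{torsion})$, it sets $\mc X':=\mathbb P_{\mc X}(\mc I^\vee)=\Proj(\Sym(\mc I^\vee))$ with its tautological sheaf $\mc O_{\mc X'}(-1)$. This produces the same quasistable modification (over a node where $\mc I$ is not locally free, $\mc I^\vee$ is locally $2$-generated, so the fiber of $\Proj$ is a $\mathbb P^1$, and $\mc O(-1)$ has degree $-1$ there), but the $\Proj$ packaging is manifestly functorial and commutes with arbitrary base change by construction, which dissolves exactly the obstacle you identified in step~(iii): there is no need to put a scheme structure on $\Delta$ by hand or to check separately that the blowup and the torsion-quotient behave well in families. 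The paper then outsources the verification that the two constructions are mutually inverse and preserve $(\sigma_1,\mu)$-quasistability to \cite[Theorems 3.1, 4.1 and Propositions 5.4, 5.5]{EP}, rather than carrying out the $\beta$-comparison via Proposition~\ref{prop:stab-contract} that you sketch; your route for that step is fine too, just more explicit.
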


\begin{proof}
Given $(\pi\col\mc X\ra S,\mc L)\in \ol{\mc P}_{\mu,g,n}(S)$, we take $(\st(\pi)\col \st(\mc X)\ra S,\psi_*(\mc L))$, where 
 $\psi\col \mc X\ra \st(\mc X)$ is the contraction morphism and the sections of $\st(\pi)$ are given by composing the sections of $\pi$ with $\psi$.
Conversely, given $(\pi\col \mc X\ra S,\mc I)\in\ol{\mc J}_{\mu,g,n}(S)$, 
we define $\mathbb P_{\mc X}(\mc I^\vee):=\Proj(\Sym(\mc I^\vee))$, with its natural structure morphism $\psi\col\mathbb P_{\mc X}(\mc I^\vee)\ra \mc X$, and we take  
$(\pi\circ\psi\col\mathbb P_{\mc X}(\mc I^\vee)\ra S,\mc O_{\mathbb P_{\mc X}(\mc I^\vee)}(-1))$, where the sections of $\pi\circ\psi$ are given by pulling-back the sections of $\pi$ via $\psi$. The fact that we have established an isomorphism between $\ol{\mc J}_{\mu,g,n}$ and $\ol{\mc P}_{\mu,g,n}$ follows from \cite[Theorems 3.1, 4.1]{EP} and a small modification of \cite[Propositions 5.4, 5.5]{EP}. 
\end{proof}

From now on, we will identify $\ol{\mc J}_{\mu,g,n}$ and $\ol{\mc P}_{\mu,g,n}$. For every pair $(X,L)$, where $X$ is a pointed quasistable curve of genus $g$ and $L$ is a $(\sigma,\mu)$-quasistable invertible sheaf on $X$ with degree $-1$ on exceptional components, we let $[X,L]$ be the geometric point of $\ol{\mc J}_{\mu,g,n}$ parametrizing $(X,L)$.

Let $X\ra C$ be a quasistable modification of a curve $C$ and let $L$ be an invertible sheaf on $X$ with degree $-1$ on exceptional components. 
We define the \emph{combinatorial type} $(\Gamma_C,\E_L, D_L)$, where  $(\E_L,D_L)$ is the pseudo-divisor on the dual graph $\Gamma_C$ of $C$ such that $\E_L\subset E(\Gamma_C)$ is the set of nodes of $C$ to which the exceptional components of $X$ contract and $D_L$ is given by the multidegree of $L$. One can see that $L$ is $(p_0,\mu)$-quasistable if and only if its combinatorial type is $(v_0,\mu)$-quasistable, where $v_0\in V(\Gamma)$ is the vertex corresponding to the component of $X$ containing $p_0$. We let:
\begin{equation}\label{eq:c-type}
\tau\col \ol{\mathcal J}_{\mu,g,n} \ra \mc{QD}_{\mu,g,n}
\end{equation}
  be the function taking $[X,L]$ to $(\Gamma_{\st(X)},\E_L,D_L)$.

 By \cite[Proposition 6.4]{AP1}, the moduli space $\ol{\mc J}_{\mu,g,n}$ admits a 
 stratification indexed by $\mc{QD}_{\mu,g,n}$ whose strata 
 $\mc J_{(\Gamma,\E,D)}$ are given by the fibers of $\tau$, i.e., 
 for every $(\Gamma,\E,D)\in \mc{QD}_{\mu,g,n}$, 
 \[
 \mc J_{(\Gamma,\E,D)}=\{[X,L]\big| (\Gamma_{\st(X)},\E_L,D_L)\cong (\Gamma,\E,D)\}.
 \]


Next, consider a quasistable modification $X\ra C$ of a curve $C$ and an invertible sheaf $L$ on $X$ with degree $-1$ on exceptional components. Assume that the combinatorial type of $(X,L)$ is a semistable root-graph. Let
 $\{P_u\}_{u\in V(\Gamma^\E/P_{(\E,D)})}$ be the set of $\phi_{(\E,D)}$-graphs of $\Gamma^\E$ (see Definition \ref{def:phi-subgraph}). 

\begin{Def}
The \emph{$\phi_{(\E,D)}$-curves} of $X$ are the subcurves $Z_u$ of $X$ whose dual graph corresponds to $P_u$, for $u\in V(\Gamma^\E/P_{(\E,D)})$. Notice that any exceptional component of $X$ is a  $\phi_{(\E,D)}$-curve.
\end{Def}


Notice that the two different $\phi_{(\E,D)}$-subcurves intersect each other at a (possibly empty) subset of nodes of $X$, and we have $X=\cup_{u\in V(\Gamma^\E/P_{(\E,D)})} Z_u$. 


\begin{Not}\label{not:Nv}
Given $u\in V(\Gamma^\E/P_{(\E,D)})$ we denote by $\mc N_u$ the subset of vertices $u'\in V(\Gamma^\E/P_{(\E,D)})$ such that $\phi_{(\E,D)}(e)=-1$ for some (and hence all) edges $e\in \ora{E}(P_{u'},P_u)$ such that $t(e)\in V(P_u)$.
\end{Not}

 Hence for every $u\in V(\Gamma^\E/P_{(\E,D)})$ we get
\begin{equation}\label{eq:Vw}
\mc O_{Z_u}\Big(\underset{t(e)\in V(P_u)}{\sum_{e\in \ora{E}(P_u^c,P_u)}} \phi_{(\E,D)}(e)p_e\underset{t(e)\in V(P_u)}{-\sum_{e\in \ora{E}(P_u^c,P_u)}}p_e\Big)=\mc O_{Z_u}\Big(-2\sum_{u'\in \mc N_u}\;\sum_{e\in E(P_{u'},P_u)} p_e\Big).
\end{equation}

\begin{Def}\label{def:semi-spin-curve}
A \emph{semistable spin curve} is a pair $(X,L)$, where $X$ is a quasistable curve,
and $L$ is an invertible sheaf on $X$ such that: 
\begin{itemize}
\item[(1)] the restriction of $L$ to any exceptional component has degree $-1$.
    \item[(2)] the combinatorial type of $(X,L)$ is a semistable root-graph. 
    \item[(3)] 
     for every $\phi_{(\E_L,D_L)}$-curve $Z_u$ of $X$ we have an isomorphism $\wt L_u^{\otimes 2}\cong\omega_{Z_u}$, where 
\begin{equation}\label{eq:isow}
   \wt L_u:=L|_{Z_u}\otimes \mc O_{Z_u}\Big(-\sum_{u'\in \mc N_u}\sum_{e\in E(P_{u'},P_u)} p_e\Big).
     \end{equation}
\end{itemize}
We say that $(X,L)$ is \emph{polystable} and \emph{$p_0$-quasistable}, respectively, if so is its combinatorial type. 
\end{Def}

\begin{Def}\label{def:comb-type}
Let $(X,L)$ be a semistable spin curve and $(\Gamma,\E,D)$ be its combinatorial type. We define the sign function $s_L\col V(\Gamma/P_{(\E,D)})\ra \mathbb Z/2\mathbb Z$ taking $u\in V(\Gamma/P_{(\E,D)})$ to $s_L(u)$, the parity of $h^0(Z_u,\wt L_u)$. The semistable spin root-graph $(\Gamma,\E,D,s_L)$ is called the \emph{spin combinatorial type} of $(X,L)$.
 We say that $(X,L)$ is \emph{even} or \emph{odd} if so is $(\Gamma,\E,D,s_L)$.
\end{Def}

Notice that $s_L$ is indeed a sign function: if the weight $w(u)$ is zero for some $u\in V(\Gamma/P_{(\E,D)})$, then by Corollary \ref{cor:tree} we have that $Z_u\cong \mathbb P^1$ and $\wt L_u\cong \mc O_{\mathbb P^1}(-1)$, hence $s_L(u)=0$. 

\subsection{Spin curves and their moduli}

The moduli space of spin curves $\overline{\mc S}_{g,n}$ compactifies the moduli space $\mc S_{g,n}$ of isomorphism classes of pairs $(X,L)$, where $X$ is a $n$-pointed smooth curve of genus $g$ and $L$ is an invertible sheaf on $X$ such that $L^{\otimes 2}\cong\omega_X$. 
The moduli space $\ol{\mc S}_{g,n}$ is finite of degree $2^{2g}$ over $\ol{\mc M}_{g,n}$, hence $\ol{\mc S}_{g,n}$ has dimension $3g-3+n$.

We use a different, but equivalent, modular description of the moduli space $\ol{\mc S}_{g,n}$, which better suits our purposes. The closed points of $\ol{\mc S}_{g,n}$ are isomorphism classes of pairs $(X,L)$, where $X$ is a $n$-pointed quasistable curve and $L$ is an invertible sheaf
with degree $-1$ on exceptional components of $X$ and $L|_Y^{\otimes 2}(-2\sum_{E\in \Ex(X)}\sum_{p\in Y\cap E}p)\cong\omega_Y$, where $Y$ is the complement of the union of the exceptional components of $X$. Notice that $Y$ could be non-conneted. The space $\ol{\mc S}_{g,n}$ has two connected components, $\ol{\mc S}^+_{g,n}$ and $\ol{\mc S}^-_{g,n}$, corresponding to even and odd spin curves.

 \begin{Rem}\label{rem:S-strat}
 If $(X,L)$ is a pair parametrized by a point of $\ol{\mc S}_{g,n}$, then $(X,L)$ is a polystable spin curve. In fact, by \cite[Theorem 3.4.3]{CMP1},
 the moduli space $\ol{\mc S}_{g,n}$ admits a stratification indexed by the poset $\mc{SP}_{g,n}$. Recall that $\mc{SP}_{g,n}$ is isomorphic to the poset of isomorphism classes of triples $(\Gamma,P,s)$, for a graph $\Gamma$, a cyclic subgraph $P$ of $\Gamma$, and a sign function $s\col V(\Gamma/P)\ra \mathbb Z/2\mathbb Z$ (see Remark \ref{rem:spin-graphs-cat}).  The strata are defined, for every triple $(\Gamma,P,s)$, as: 
\[
 \mc S_{(\Gamma,P,s)}=\{[X,L]\in \ol{\mc S}_{g,n}\big| (\Gamma_{\st(X)},\E_L,D_L,s_L)\cong (\Gamma,\E_P,D_P,s)\}.
 \]
 \end{Rem}


The moduli space $\mc S_{g,n}$ of theta characteristics on $n$-pointed smooth curves of genus $g$ naturally sits inside $\ol{\mc J}_{\mu,g,n}$. 
We define $\ol{\mc Q}_{g,n}$ as  the closure of $\mc S_{g,n}$ inside $\ol{\mc J}_{\mu,g,n}$. By definition, we have inclusions over $\ol{\mc M}_{g,n}$:
\[
\ol{\mc S}_{g,n}\supset\mc S_{g,n}\subset\ol{\mc Q}_{g,n}\subset \ol{\mc J}_{\mu,g,n}. 
\]
In particular, the dimension of $\ol{\mc Q}_{g,n}$ is $3g-3+n$. Moreover, $\ol{\mc Q}_{g,n}$ is a
 Deligne-Mumford separated stack, since so is $\ol{\mc J}_{\mu,g,n}$.

 Our next goal is to give a modular interpretation of the space $\ol{\mc Q}_{g,n}$ using root graphs. Before, we need to prove an easy result.

\begin{Lem}\label{lem:refi-flow} 
A flow $\phi$ on a graph is acyclic if and only if there are positive integers $h_e$, for each $e\in E(\Gamma)$ such that
\[
\sum_{e\in \gamma}\phi(e)h_e=0
\]
for every oriented cycle $\gamma$ of $\Gamma$. In particular, if $\wh \Gamma$ is the refinement of $\Gamma$ obtained by inserting $h_e-1$ vertices over the edge $e$, then
$\Div(\phi)$ is principal as a divisor on $\wh\Gamma$.
\end{Lem}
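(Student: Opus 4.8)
The plan is to prove both directions of the equivalence, with the forward direction (acyclic $\Rightarrow$ existence of positive weights) being the substantive one.

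\textbf{The easy direction.} First I would observe that if positive integers $h_e$ exist with $\sum_{e\in\gamma}\phi(e)h_e=0$ for every oriented cycle $\gamma$, then $\phi$ is acyclic: any non-null $\phi$-cycle $\gamma$ (in the sense of the paper, a $\phi$-path whose edges form a cycle with $\phi(e_i)\ge 0$ for all $i$ and $\phi(e_j)>0$ for some $j$) would, once oriented consistently, give $\sum_{e\in\gamma}\phi(e)h_e>0$ since each term is $\ge 0$ and at least one is $>0$, contradicting the hypothesis.

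\textbf{The main direction.} Now suppose $\phi$ is acyclic. I would first reduce to the case $\phi(e)\neq 0$ for every $e\in\ora E(\Gamma)$: on the subgraph $\Gamma_0$ of edges where $\phi$ vanishes we may assign $h_e=1$ freely, and any oriented cycle passing through such edges contributes zero from those edges, so it suffices to solve the problem on each component of the contraction $\Gamma/\{\phi=0\}$ — but I should be careful, since contracting changes cycles; more cleanly, I would keep $\Gamma$ as is, set $h_e$ arbitrarily (say $1$) on zero-edges, and concentrate on producing the right values on the support of $\phi$. Restricting attention to the support subgraph $\Gamma'$ (with the induced orientation from $\phi$), acyclicity of $\phi$ means $\Gamma'$ has no directed cycle, i.e. $\Gamma'$ is a DAG. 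The standard device is then: choose a topological sort, i.e. a function $\mathrm{rk}\col V(\Gamma')\to\mathbb Z$ strictly increasing along every oriented edge $e$ (with $\phi(e)>0$), and set $h_e := \mathrm{rk}(t(e))-\mathrm{rk}(s(e))>0$ for each $e$ in the support, where we orient $e$ so that $\phi(e)>0$. Then for any oriented cycle $\gamma=e_1,\dots,e_m$ of $\Gamma$, writing $\varepsilon_i=\pm1$ according to whether $e_i$ is traversed with or against its $\phi$-positive orientation, we get $\sum_i \phi(e_i) h_{e_i}$, and on the support the sum telescopes: $\sum \varepsilon_i(\mathrm{rk}(\text{head})-\mathrm{rk}(\text{tail}))=0$ because going around a cycle returns to the start; on the zero-edges $\phi(e_i)=0$. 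Hence $\sum_{e\in\gamma}\phi(e)h_e=0$. One subtlety: I want $h_e$ to be a \emph{positive integer} for \emph{all} edges, including those where $\phi=0$; there I simply set $h_e=1$, which is consistent with the cycle relation since those terms are $0$ regardless. To make the $h_e$ on the support positive integers, I take $\mathrm{rk}$ to be the integer-valued longest-path function (or any integer topological order), so differences along edges are positive integers.

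\textbf{The final clause.} For the last assertion, in the refinement $\wh\Gamma$ obtained by inserting $h_e-1$ vertices on each edge $e$, each original edge $e$ becomes a path of $h_e$ sub-edges, and $\iota_*\phi$ (pulling $\phi$ up to $\wh\Gamma$, constant $\phi(e)$ on each sub-edge) satisfies: its value along the path over $e$ is $\phi(e)$ on each of the $h_e$ sub-edges. I would exhibit the rational function explicitly: define $f$ on $\wh\Gamma$ to be the piecewise-linear function that on each vertex of $\wh\Gamma$ lying over an original vertex $v$ takes value $\mathrm{rk}(v)$ (extended by $0$ on the components where $\phi$ vanishes, handled separately component by component), and is affine with slope $\phi(e)$ along the subdivided edge over $e$ — the choice $h_e=\mathrm{rk}(t(e))-\mathrm{rk}(s(e))$ on the support is exactly what makes $f$ well-defined and continuous. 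Then $\phi_f = \iota_*\phi$, so $\Div(\iota_*\phi)=\Div(f)$ is principal on $\wh\Gamma$. Alternatively, and more in the spirit of the earlier material, I would note that $\iota_*\phi$ on $\wh\Gamma$ satisfies $\sum_{e\in\gamma}(\iota_*\phi)(e)\cdot 1=0$ for every oriented cycle $\gamma$ of $\wh\Gamma$ (each original cycle now has its $e$-part repeated $h_e$ times with weight $\phi(e)$, total $\phi(e)h_e$, summing to $0$), which by the standard correspondence between flows with zero periods and rational functions exhibits $\iota_*\phi$ as $\phi_f$ for some rational function $f$, hence $\Div(\iota_*\phi)$ principal.

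\textbf{Expected main obstacle.} The only genuine point requiring care is the bookkeeping in the reduction to the support subgraph and ensuring the cycle identity holds for \emph{oriented cycles of $\Gamma$} (which may traverse support-edges in either direction and pass through zero-edges), rather than just for directed cycles of the DAG $\Gamma'$; the telescoping argument handles this, but one must be careful that "oriented cycle of $\Gamma$" in the paper's sense is a closed walk, so the head-to-tail cancellation is valid. Everything else is the classical equivalence between acyclicity, existence of a topological order, and integrality of periods.
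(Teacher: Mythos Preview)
Your approach via a topological ordering is more constructive than the paper's (which argues non-constructively by a dimension count on the linear subspace $\Lambda=\{\sum_{e\in\gamma}\phi(e)x_e=0\}$ together with the observation that, under acyclicity, each cycle equation has terms of both signs), and when it works it hands you the potential function for the final clause directly. However, your telescoping step has a genuine gap for flows that are not $\{0,\pm1\}$-valued.

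You set $h_e=\mathrm{rk}(t(e))-\mathrm{rk}(s(e))$ (with $e$ oriented so that $\phi(e)>0$) and then claim that $\sum_i \phi(e_i)h_{e_i}$ telescopes around any oriented cycle. Computing carefully, each summand equals $|\phi(e_i)|\bigl(\mathrm{rk}(\text{head}_i)-\mathrm{rk}(\text{tail}_i)\bigr)$ in the cycle orientation, and this telescopes only when $|\phi(e_i)|$ is the same for every edge of the cycle. Concretely, take vertices $A,B,C$ with $\phi(A\to B)=2$, $\phi(B\to C)=1$, $\phi(A\to C)=3$ (acyclic) and $\mathrm{rk}(A),\mathrm{rk}(B),\mathrm{rk}(C)=0,1,2$: your recipe gives $h_{AB}=h_{BC}=1$, $h_{AC}=2$, and around the cycle $A\to B\to C\to A$ one finds $2\cdot1+1\cdot1+(-3)\cdot2=-3\neq 0$. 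The fix is that you want $\phi(e)h_e$, not $h_e$, to be a potential difference: let $L$ be the least common multiple of the $|\phi(e)|$ over the support and set
\[
h_e:=\frac{L\bigl(\mathrm{rk}(t(e))-\mathrm{rk}(s(e))\bigr)}{|\phi(e)|}\in\mathbb Z_{>0},
\]
so that $\phi(e)h_e=L\bigl(\mathrm{rk}(\text{head})-\mathrm{rk}(\text{tail})\bigr)$ in either orientation, and the cycle sum genuinely telescopes to zero. In the paper's applications one always has $|\phi(e)|\le 1$ by the earlier lemma, so your original formula happens to suffice there, but the lemma is stated for arbitrary integer flows. The easy direction and the final clause on $\wh\Gamma$ are fine once $h_e$ is corrected.
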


\begin{proof}
Consider the vector space $\mathbb R^{E(\Gamma)}$ with coordinates $\{x_e\}_{e\in E(\Gamma)}$. We define the linear subspace $\Lambda$ of $\mathbb R^{E(\Gamma)}$ given by the equations:
\begin{equation}\label{eq:gammae}
\sum_{e\in \gamma}\phi(e)x_e=0,
\end{equation}
 where $\gamma$ runs over the set of oriented cycles of $\Gamma$. 
 There is a vector $(h_e)_{e\in E(\Gamma)}\in\Lambda\cap \mathbb Z_{>0}^{E(\Gamma)}$ if and only if $\Div(\phi)$ is principal on the refinement of $\Gamma$ obtained by inserting exactly $h_e-1$ vertices in the interior of each edge $e\in E(\Gamma)$. 
 It is clear that $\Lambda\cap \mathbb Z_{>0}^{E(\Gamma)}$ is empty if $\phi$ is not acyclic. 

If $\phi$ is acyclic and nontrivial, then $|V(\Gamma)|\ge2$. The space $\Lambda$ is given by intersecting at most $b_1(\Gamma)$ independent linear hyperspaces in a space of dimension $|E(\Gamma)|$, hence $\dim\Lambda\ge|V(\Gamma)|-1\ge1$. 
Since $\phi$ is acyclic, for every oriented cycle $\gamma$ for which \eqref{eq:gammae} is not a trivial equation, there are two nonempty subset 
$\gamma^+,\gamma^-\subset \gamma$ such that $\phi(e)\ne0$ for $e\in \gamma^+\cup \gamma^-$, so equation \eqref{eq:gammae} reads as:
\[
\sum_{e\in \gamma^+}|\phi(e)|x_e=\sum_{e\in \gamma^-}|\phi(e)|x_e.
\]
We deduce that $\Lambda\cap \mathbb Z_{>0}^{E(\Gamma)}$ is not empty.
\end{proof}

\begin{Def}
A \emph{one-parameter deformation} of a curve $X$ is a family of curves $\pi\col \mc X\ra B=\Spec(k[[t]])$ whose special fiber is $X$. A \emph{twister} of $\mc X$ is an invertible sheaf of $\mc X$ associated to an effective Cartier divisor of $\mc X$ supported on the special fiber. A \emph{smoothing of $X$} is a one-parameter deformation with smooth generic fiber. 
\end{Def}

 \begin{Lem}\label{lem:twister}
Let $\pi\col \mc X\ra B$ be a one-parameter deformation of a curve $X$ with generic fiber $X_\eta$. Let $\Gamma$ and $\Gamma_\eta$ be the dual graphs of $X$ and $X_\eta$. For each edge $e\in E(\Gamma)\setminus E(\Gamma_{\eta})$ we assume that the local \'etale equation of $\mathcal{X}$ at the node of $X$ corresponding to $e$ is $xy=t^{h_e}$, where $x,y$ are local étale coordinates and $h_e$ is a positive integer. Let $T$ be a twister of $\mc X$ and $D_T$ be the divisor on $\Gamma$ given by the multidegree of $T|_X$. Then $D_T=\Div(\phi)$ for some acyclic flow $\phi$ on $\Gamma$ such that $\phi(e)=0$ for every $e\in \ora{E}(\Gamma_\eta)\subset \ora{E}(\Gamma)$. 
Moreover, if $\widehat{\Gamma}$ is the refinement of $\Gamma$ obtained by inserting $h_e-1$ vertices on each edge $e\in E(\Gamma)\setminus E(\Gamma_{\eta})$, then $D_T$, seen as a divisor on $\widehat{\Gamma}$, is principal. 
 \end{Lem}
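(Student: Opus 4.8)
The plan is to realise the twister as $T=\mathcal{O}_{\mathcal X}(Z)$ with $Z=\sum_{v\in V(\Gamma)}a_vC_v$ an effective Cartier divisor supported on the special fiber $X=\pi^{*}(0)=\sum_{v}C_v$, and to read off the multidegree $D_T$ one component at a time. Since $\mathcal{O}_{\mathcal X}(X)\cong\pi^{*}\mathcal{O}_{B}(0)\cong\mathcal{O}_{\mathcal X}$, one may subtract $a_vX$ and compute $D_T(v)=\deg\big(\mathcal{O}_{\mathcal X}(Z-a_vX)|_{C_v}\big)$, where $Z-a_vX=\sum_{w}(a_w-a_v)C_w$ has coefficient $0$ on $C_v$; hence $D_T(v)$ is a sum of local contributions over the nodes of $C_v$. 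The key local input is the description of the Cartier condition on $Z$ at a node $p_e$: if $e\in E(\Gamma_\eta)$ (a node of $X$ that persists in $X_\eta$, where $\mathcal X$ is \'etale-locally $\mathrm{Spec}\,k[[x,y,t]]/(xy)$), the condition forces the two branches to carry equal coefficients, so $a_v=a_w$ for the incident vertices; if $e\notin E(\Gamma_\eta)$, where $\mathcal X$ has the $A_{h_e-1}$ singularity $xy=t^{h_e}$, the condition forces $h_e\mid a_w-a_v$, and the standard computation on the $A_{h_e-1}$ surface gives that $\mathcal{O}_{\mathcal X}(Z-a_vX)|_{C_v}$ has multiplicity $(a_w-a_v)/h_e$ at $p_e$ (and $0$ there when $p_e$ persists, since then $a_v=a_w$; loops contribute $0$). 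Thus $D_T=\Div(\phi)$ for the flow $\phi$ on $\Gamma$ given by $\phi(e)=\big(a_{s(e)}-a_{t(e)}\big)/h_e$, which vanishes on $\ora{E}(\Gamma_\eta)$ (take $h_e=1$ on persisting edges; the formula already gives $0$ there).

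To finish I would invoke Lemma~\ref{lem:refi-flow}. For every oriented cycle $\gamma$ of $\Gamma$,
\[
\sum_{e\in\gamma}\phi(e)\,h_e=\sum_{e\in\gamma}\big(a_{s(e)}-a_{t(e)}\big)=0
\]
by telescoping, with each $h_e$ a positive integer; so Lemma~\ref{lem:refi-flow} gives both that $\phi$ is acyclic and that $\Div(\phi)=D_T$ is principal on the refinement of $\Gamma$ obtained by inserting $h_e-1$ vertices over each edge $e$. Since $h_e=1$ for $e\in E(\Gamma_\eta)$, this refinement is exactly the graph $\widehat\Gamma$ of the statement, completing the proof.

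A cleaner variant for the whole statement avoids the local surface computations by passing first to the partial resolution $\rho\colon\widehat{\mathcal X}\to\mathcal X$ of the $A_{h_e-1}$ singularities at the non-persisting nodes, whose special fiber has dual graph $\widehat\Gamma$ (it is a semistable modification of $X$): there $\rho^{*}T$ is again a twister, with degree $0$ on each exceptional $\mathbb{P}^1$ of $\rho$ (such a curve is contracted by $\rho$) and degree $D_T(v)$ on the strict transform of $C_v$, so its multidegree is $D_T$ viewed in $\Div(\widehat\Gamma)$; since $\widehat{\mathcal X}$ is regular away from the persisting nodes, the classical multidegree formula for twisters on a regular total space expresses this as $\Div(\widehat\phi)$ for a principal flow $\widehat\phi$ on $\widehat\Gamma$ (the persisting nodes contributing nothing, as the twister has equal coefficients on their two branches), and $\phi$ is recovered as the pushforward of $\widehat\phi$ along the contraction $\widehat\Gamma\to\Gamma$. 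Either way, the one genuinely delicate point is the local analysis at the nodes — the Cartier constraints on the coefficients $a_v$ and the local intersection numbers on the $A_{h_e-1}$ surface, together with checking that the non-normality of $\mathcal X$ at the persisting nodes causes no trouble (it does not, precisely because the Cartier condition equalises the two branch-coefficients there).
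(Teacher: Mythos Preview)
Your proof is correct and follows essentially the same approach as the paper: write $T=\mathcal{O}_{\mathcal X}(\sum_v a_vC_v)$, use the local Cartier analysis at each node (equal coefficients at persisting nodes, divisibility $h_e\mid a_w-a_v$ at the others) to define $\phi(e)=(a_{s(e)}-a_{t(e)})/h_e$, and then apply the telescoping identity $\sum_{e\in\gamma}\phi(e)h_e=0$ together with Lemma~\ref{lem:refi-flow}. The only cosmetic differences are that the paper verifies acyclicity directly from the telescoping sum before invoking Lemma~\ref{lem:refi-flow} for principality (you use the lemma for both at once), and your alternative resolution argument via $\widehat{\mathcal X}$ is a valid variant that does not appear in the paper.
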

 
 \begin{proof}
 Denote by $(X_v)_{v\in V(\Gamma)}$ the components of $X$. Since $T$ is supported on $X$, we can write $T=\mc O_{\mc X}(\sum_{v\in V(\Gamma)} \alpha_v X_v)$, for some $\alpha_v\in \mathbb Z$. 
 
  We claim that $\alpha_{s(e)}=\alpha_{t(e)}$ for every $e\in \ora{E}(\Gamma_\eta)\subset \ora{E}(\Gamma)$. 
  The completion of the local ring of $\mathcal{X}$ at the node $p_e$ corresponding to $e$ is
  \[
  \widehat{\mathcal O}_{\mathcal X,p_e}\cong R:=\frac{k[[x,y,t]]}{(xy)},
  \]
  where $x,y$ are local \'etale coordinates.
  Assume that $f$ is the local equation of $T$. In particular, $f$ is not a divisor of $0$ in $R$. Since $T$ is supported on the special fiber, we have that $(t)\subset \sqrt{(f)}$ and hence $t^m\in (f)$, which means that $t^m=f\overline{f}$ for some $\overline{f}\in R$, so there exists an integer $\alpha$ such that $f=t^\alpha u$ with $u$ invertible. This proves that $\alpha_{s(e)}=\alpha_{t(e)}=\alpha$.
  

 
 Next, for a node $p_e$ of $X$, with $e\in \ora{E}(\Gamma)\setminus \ora{E}(\Gamma_{\eta})$, we let $xy=t^{h_e}$ be the local equation of $\mc X$ at $p_e$, for some positive integer $h_e$, where $x,y$ are local \'etale coordinates. Since $T$ is Cartier, there is an integer $a_{e}$ such that   $\alpha_{t(e)}-\alpha_{s(e)}=h_e a_{e}$. Notice $a_e+a_{\overline{e}}=0$ for $e$ and $\overline{e}$ with the same associated edge.  
    This means that 
    \begin{equation}
    \label{eq:twisterXv}
    T|_{X_v}\cong \mc O_{X_v}(\sum_{\substack{e\in \ora{E}(\Gamma)\setminus \ora{E}(\Gamma_\eta)\\  s(e)=v}} a_{e} p_e).
    \end{equation}
      Define the flow $\phi$ on $\Gamma$ such that $\phi(e)=-a_{e}$ for every $e\in \ora{E}(\Gamma)\setminus \ora{E}(\Gamma_\eta)$ and $\phi(e)=0$ for every $e\in \ora{E}(\Gamma_\eta)$. Then we have $D_T=\Div(\phi)$.
  
We show that $\phi$ is acyclic. 
  If $e_1,\dots,e_n$ was a no-null $\phi$-cycle, with $v_i:=s(e_i)=t(e_{i-1})$, then 
 \begin{equation}
 \label{eq:phiprin}
 0=\alpha_{v_1}-\alpha_{v_n}+\sum_{1\le i\le n-1}(\alpha_{v_{i+1}}-\alpha_{v_i})=\sum_{1\le i\le n} h_{e_i} a_{e_i}=-\sum_{1\le i\le n} h_{e_i} \phi(e_i)
 \end{equation}
 where $\phi(e_i)\geq 0$ and $\phi(e_i)$ is not always zero, 
 which is a contradiction. Hence $\phi$ is acyclic. 
 
 The last statement follows from Equation \eqref{eq:phiprin} and Lemma \ref{lem:refi-flow}.
 \end{proof}

\begin{Thm}\label{thm:modularQgn}
Let $X$ be a quasistable $n$-pointed curve and $L$ be an invertible sheaf on $X$. The following conditions are equivalent
\begin{itemize}
    \item[(1)] $[X,L]$ is a geometric point of $\ol{\mc Q}_{g,n}$.
     \item[(2)] 
     $(X,L)$ is a $p_0$-quasistable spin curve.
\end{itemize}  
\end{Thm}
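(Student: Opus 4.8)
The plan is to identify $\ol{\mc Q}_{g,n}$, viewed inside $\ol{\mc J}_{\mu,g,n}\cong\ol{\mc P}_{\mu,g,n}$ (Proposition \ref{prop:isoJP}), with the locus of $p_0$-quasistable spin curves, establishing the two implications separately by means of one-parameter smoothings. Note that in (1) the conditions that $L$ have degree $-1$ on exceptional components and be $(\sigma_1,\mu)$-quasistable are automatic, while in (2) they are part of the definition of a $p_0$-quasistable spin curve, so the real content lies in Definition \ref{def:semi-spin-curve}(1)--(3).

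For (1) $\Rightarrow$ (2): since $\ol{\mc Q}_{g,n}$ is the closure of $\mc S_{g,n}$, a standard trait argument produces a family $(\pi\col\mc X\ra B,\mc L)\in\ol{\mc P}_{\mu,g,n}(B)$ over $B=\Spec k[[t]]$, with special fiber $(X,L)$ and generic fiber $(\mc X_\eta,\mc L_\eta)$ in $\mc S_{g,n}$; thus $\mc X_\eta$ is smooth, $\mc L_\eta^{\otimes2}\cong\omega_{\mc X_\eta}$, and $\mc X$ has local equation $xy=t^{h_e}$ at every node. I would set $T:=\mc L^{\otimes2}\otimes\omega_{\mc X/B}^{\vee}$; it is trivial on the (node-free) generic fiber, hence a twister. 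By Lemma \ref{lem:twister}, the multidegree of $T|_X$ equals $\Div(\psi)$ for an acyclic flow $\psi$ on $\Gamma_X=\Gamma^\E$ (writing $\Gamma=\Gamma_{\st(X)}$, $\E$ for the set of nodes blown up in $X$, and $D$ for the multidegree of $L$); comparing multidegrees gives $\Div(\psi)=2D-K_{\Gamma^\E}$, so $2D+\Div(-\psi)=K_{\Gamma^\E}$ with $-\psi$ acyclic. Hence $(\E,D)$ is a pseudo-root, which is $v_0$-quasistable because $L$ is $(p_0,\mu)$-quasistable, and $-\psi=\phi_{(\E,D)}$ by Theorem \ref{thm:uniquephi}. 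This yields Definition \ref{def:semi-spin-curve}(1)--(2) and $p_0$-quasistability; for (3), I would restrict the isomorphism $\mc L^{\otimes2}\cong\omega_{\mc X/B}\otimes T$ to a $\phi_{(\E,D)}$-curve $Z_u$: adjunction gives $\omega_{\mc X/B}|_{Z_u}\cong\omega_{Z_u}\big(\sum_{e\in E(P_u,P_u^c)}p_e\big)$, while $T|_{Z_u}$ is the line bundle on $Z_u$ prescribed by $\phi_{(\E,D)}$ on the boundary nodes (whose values are $\pm1$ by Theorem \ref{thm:uniquephi} and Figure \ref{fig:pos}); combining these with \eqref{eq:Vw} yields $L^{\otimes2}|_{Z_u}\cong\omega_{Z_u}\big(2\sum_{u'\in\mc N_u}\sum_{e\in E(P_{u'},P_u)}p_e\big)$, i.e. $\wt L_u^{\otimes2}\cong\omega_{Z_u}$.

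For (2) $\Rightarrow$ (1): I would build an explicit smoothing of $(X,L)$ with generic fiber in $\mc S_{g,n}$. Let $(\E,D)$ be the combinatorial type, $\phi=\phi_{(\E,D)}$, and $\{Z_u\}$ the $\phi$-curves. By Remark \ref{rem:PED}, $2\,\underline\deg\wt L_u=K_{P_u}$, so $(\emptyset,\underline\deg\wt L_u)$ is a stable pseudo-root (Lemma \ref{lem:stable}); regarding the attaching nodes as extra marked points, $(Z_u,\wt L_u)$ is a polystable spin curve, hence admits a smoothing $(\mc Z_u\ra B,\wt{\mc L}_u)$ to a smooth pointed curve with a theta characteristic, with $\wt{\mc L}_u|_{Z_u}\cong\wt L_u$. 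Since $\phi$ is acyclic, Lemma \ref{lem:refi-flow} provides positive integers $h_e$, one for each edge joining distinct $\phi$-curves, with $\sum_{e\in\gamma}\phi(e)h_e=0$ for every oriented cycle $\gamma$; over each exceptional node I would take the two exponents equal, so that the special fiber of the resulting family is exactly $X$. Gluing the $\mc Z_u$ along the attaching sections with local equations $xy=t^{h_e}$ produces a family $\mc X\ra B$ of quasistable curves with special fiber $X$ and smooth generic fiber, and after twisting $\wt{\mc L}_u$ by a vertical divisor inducing $\mc O_{Z_u}\big(\sum_{u'\in\mc N_u}\sum_{e\in E(P_{u'},P_u)}p_e\big)$ on the special fiber, the local pieces glue to a global invertible sheaf $\mc L$ on $\mc X$ precisely because the $h_e$ satisfy the cycle relations (Lemma \ref{lem:twister}). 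Then $\mc L|_X\cong L$, $\mc L$ is $(\sigma_1,\mu)$-quasistable (the special-fiber multidegree $D$ is $v_0$-quasistable and the generic fiber is irreducible), and $\mc L^{\otimes2}\cong\omega_{\mc X/B}\otimes T$ for a twister $T$ with $\underline\deg(T|_X)=2D-K_{\Gamma^\E}=\Div(-\phi)$, this last being realizable as a twister exactly by the choice of the $h_e$. Restricting to $\mc X_\eta$ gives $\mc L_\eta^{\otimes2}\cong\omega_{\mc X_\eta}$, so the generic fiber lies in $\mc S_{g,n}$ and therefore $[X,L]\in\ol{\mc Q}_{g,n}$.

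The main obstacle will be the gluing step in (2) $\Rightarrow$ (1): fitting the local smoothings $(\mc Z_u,\wt{\mc L}_u)$ and the twisting divisors together into one family $(\mc X,\mc L)$ with $\mc L|_X\cong L$ and $\mc L^{\otimes2}$ differing from $\omega_{\mc X/B}$ only by a twister. The acyclicity of $\phi_{(\E,D)}$, via Lemmas \ref{lem:refi-flow} and \ref{lem:twister}, is exactly what makes the necessary cocycle conditions solvable, so the argument turns on deploying that hypothesis at the right moment; a secondary technical point is the explicit identification of $T|_{Z_u}$ and the extraction of Definition \ref{def:semi-spin-curve}(3) from \eqref{eq:Vw} in the first implication.
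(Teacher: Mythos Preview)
Your argument for (1)$\Rightarrow$(2) is essentially the paper's: a smoothing gives a twister $T$ with $\mc L^{\otimes2}\otimes T\cong\omega_{\mc X/B}$, Lemma~\ref{lem:twister} shows its multidegree is $\Div(\phi)$ for an acyclic $\phi$, hence $(\E,D)$ is a pseudo-root, and restricting to each $\phi$-curve yields condition~(3).

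The gap is in (2)$\Rightarrow$(1). You propose to smooth each $(Z_u,\wt L_u)$ separately over $B$ and then ``glue the $\mc Z_u$ along the attaching sections with local equations $xy=t^{h_e}$'' to obtain a family with \emph{smooth} generic fiber. These two requirements are incompatible: identifying sections $\sigma^u_e(B)\subset\mc Z_u$ and $\sigma^{u'}_e(B)\subset\mc Z_{u'}$ produces a total space that is nodal along the glued section, so the local equation there is $xy=0$ and the generic fiber is nodal with dual graph $\Gamma^\E/P_{(\E,D)}$, not smooth. (This is precisely the construction used in Proposition~\ref{prop:strata-cont}, where a nodal generic fiber is what is wanted.) A genuine smoothing of the attaching node cannot treat the two branches as coming from independent families $\mc Z_u,\mc Z_{u'}$; and even granting some plumbing-type construction of $\mc X$, your gluing of the twisted $\wt{\mc L}_u$ to a global $\mc L$ is not justified --- Lemma~\ref{lem:twister} describes the multidegree of a twister, it does not address the cocycle obstruction to patching line bundles across the plumbing region.

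The paper avoids this by never decomposing $X$. It passes to a semistable modification $Y\to X$ whose dual graph is the refinement $\wh\Gamma$ of $\Gamma^\E$ furnished by Lemma~\ref{lem:refi-flow}, so that $\Div(\phi_{(\E,D)})$ becomes \emph{principal} on $\wh\Gamma$; condition~(3) of Definition~\ref{def:semi-spin-curve} (via \eqref{eq:Vw} and \eqref{eq:isow}) guarantees that the pullback $\pi_Y^*(\omega_X\otimes L^{-2})$ has exactly this multidegree. Then \cite[Corollary~6.9]{EM} provides a single global smoothing $\mc Y\to B$ together with a twister $T$ restricting to that line bundle on $Y$, and \cite[Remark~3.0.6]{CCC} supplies a square root $\mc M$ of $\omega_{\mc Y/B}\otimes T^{-1}$ with $\mc M|_Y\cong\pi_Y^*L$. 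Contracting the extra exceptional components gives $(\mc X,\pi_*\mc M)$ with smooth generic fiber carrying a theta characteristic. The point is that the global existence of the twister (and hence of the square root) is handled by one citation each, rather than by an ad hoc gluing.
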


\begin{proof}
Assume that $[X,L]$ is a geometric point of $\ol{\mc Q}_{g,n}$.
Set $(\Gamma,\E,D):=\tau([X,L])\in \mc{QD}_{\mu,g,n}$. There is a smoothing  $\pi\col\mc X\ra B$ of $X$  
and an invertible sheaf $\mc L$ on $\mc X/B$ such that $\mc L|_X\cong L$ and $\mc L|_{X_\eta}^{\otimes 2}\cong \omega_{X_{\eta}}$, where $X_\eta$ is the generic fiber of $\pi$.
There is a twister $T$ of $\mc X$ such that
\begin{equation}\label{eq:twister}
\mc L^{\otimes 2}\otimes T\cong \omega_{\mc X}.
\end{equation}
By Lemma \ref{lem:twister}, the multidegree of the invertible sheaf $T$ is a principal divisor $D_T$ on $\Gamma^{\mc E}$ and $D_T=\Div(\phi)$, for an acyclic flow $\phi$ on $\wh \Gamma$. By \eqref{eq:twister}, we deduce the relation $2 D+\Div(\phi)=K_{ \Gamma^{\mc E}}$. 
Hence $(\Gamma,\E,D)$ is a root-graph.  By Remark \ref{rem:PED} and equations \eqref{eq:twisterXv}, 
\eqref{eq:Vw}, \eqref{eq:twister} we have an isomorphism as in equation \eqref{eq:isow}, for every $\phi_{(\E,D)}$-curve $Z_v$ of $X$.

 Next assume that $(X,L)$ is a $v_0$-quasistable spin curve. Let $(\Gamma,\E,D)$ be the combinatorial type of $(X,L)$. By Lemma \ref{lem:refi-flow} there is a refinement $\wh\Gamma$ of $\Gamma$ 
such that $\Div(\phi_{(\E,D)})$ is principal as a divisor on $\wh \Gamma$. Let $Y$ be the nodal curve having $\wh \Gamma$ as dual graph such that $\st(Y)=X$, and $\pi_Y\col Y\ra X$ be the induced contraction map.  By equations 
\eqref{eq:Vw} and \eqref{eq:isow}, the multidegree of $\pi_Y^*(\omega_X\otimes L^{-2})$ is equal to $\Div(\phi_{(\E,D)})$ as a divisor on $\wh \Gamma$. Therefore, by \cite[Corollary 6.9]{EM} there is a smoothing  $\pi\col \mc Y\ra B$ of $Y$ and a  twister $T$ of $\mc Y$ such that $\pi_Y^*(\omega_X\otimes L^{-2})\cong T|_Y$. Hence by \cite[Remark 3.0.6]{CCC} we get an invertible sheaf $\mc M$ on $\mc Y/B$ such that $\mc M|_Y\cong \pi_Y^*(L)$ endowed with an isomorphism $\mc M^{\otimes 2}\otimes T\cong \omega_{\mc Y/B}$. If $\pi\col \mc Y\ra \mc X$ is the contraction of the exceptional components of $Y$ over $X$, then $(\pi_*\mc M)^{\otimes 2}\otimes \pi_*(T)\cong \omega_{\mc X/B}$ and $\pi_*\mc M|_X\cong L$. Hence $[X,L]$ is  a geometric point of $\ol{\mc Q}_{g,n}$ as $(X,L)$ is the limit of a family of theta characteristics on smooth curves.
\end{proof}

The previous result justifies the following definition.

\begin{Def}
We call $\ol{\mc Q}_{g,n}$ the \emph{moduli space of quasistable spin curves}. 
\end{Def}

\subsection{The stratification of $\ol{\mc Q}_{g,n}$}

Next, consider a $v_0$-quasistable spin root-graph $(\Gamma,\E,D,s)$. We let $\mc Q_{(\Gamma,\E,D,s)}$ be the locus in $\ol{\mc Q}_{g,n}$ defined as:
\[
\mc Q_{(\Gamma,\E,D,s)}=\{[X,L]\big| 
(\Gamma_{\st(X)},\E_L,D_L,s_L)\cong(\Gamma,\E,D,s)\}.
\]
We have an inclusion $\mc Q_{(\Gamma,\E,D,s)}\subset \mc J_{(\Gamma,\E,D)}$. Recall the definition of $\widetilde{\mc M}_\Gamma$ in equation \eqref{eq:MGamma-tilde}. Let $\mc C_\Gamma\ra \mc M_\Gamma$ be the universal family over $\mc M_\Gamma$ and set $\widetilde{\mc C}_\Gamma:=\mc C_\Gamma\times_{\mc M_\Gamma}\widetilde{\mc M}_{\Gamma}$. Let $\widetilde{\mc C}_{\Gamma,\E}$ be the partial normalization of $\widetilde{\mc C}_{\Gamma}$ at the nodes corresponding to the edges of $\E$.
 Let $\widetilde{\mc J}_{(\Gamma,\E,D)}$ be the Jacobian over $\widetilde{\mc M}_\Gamma$ parametrizing invertible sheaves of multidegree $D_\E$ over the family $\widetilde{\mc C}_{\Gamma,\E}\to \widetilde{\mc M}_\Gamma$.
Define $\widetilde{\mc Q}_{(\Gamma,\E,D,s)}\subset \widetilde{\mc J}_{(\Gamma,\E,D)}$ by the following fiber diagram
\[
\begin{tikzcd}
\widetilde{\mc Q}_{(\Gamma,\E,D,s)}\ar[r]\ar[d] & \widetilde{\mc J}_{(\Gamma,\E,D)}\ar[d]\\
\mc Q_{(\Gamma,\E,D,s)}\ar[r] & \mc J_{(\Gamma,\E,D)}
\end{tikzcd}
\]

Recall the definition of $b_1(\E,D)$ in equation \eqref{eq:bED}.

\begin{Lem}\label{lem:Porteus}
Every irreducible component of $\widetilde{\mc Q}_{(\Gamma,\E,D,s)}$ has dimension at least $\dim\mc M_\Gamma+b_1(\E,D)$.
\end{Lem}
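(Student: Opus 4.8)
The plan is to exhibit $\widetilde{\mc Q}_{(\Gamma,\E,D,s)}$ as (an open substack of) a locus cut out inside a relative Jacobian by a controlled number of equations, and then apply a dimension estimate of Grothendieck--Riemann--Roch / Porteous type to bound the codimension of that locus from above.

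First I would describe the ambient space carefully. Over $\widetilde{\mc M}_\Gamma$ we have the family $\widetilde{\mc C}_{\Gamma,\E}\to \widetilde{\mc M}_\Gamma$ and the relative Jacobian $\widetilde{\mc J}_{(\Gamma,\E,D)}$ of line bundles of multidegree $D_\E$; this has relative dimension $g(\Gamma_\E)=g-\#\E-b_1(\Gamma)+b_1(\Gamma_\E)$ over $\widetilde{\mc M}_\Gamma$ (more usefully, over $\widetilde{\mc M}_\Gamma$ its fibre is a product of Jacobians of the components, of total dimension $\sum_v w_\Gamma(v)$). The condition defining $\widetilde{\mc Q}_{(\Gamma,\E,D,s)}$ inside $\widetilde{\mc J}_{(\Gamma,\E,D)}$ is, by Theorem \ref{thm:modularQgn} and Definition \ref{def:semi-spin-curve}(3), that on each $\phi_{(\E,D)}$-curve $Z_u$ the twist $\wt L_u$ of the restriction of the universal bundle squares to $\omega_{Z_u}$. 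Since the $\phi$-curves of a curve with dual graph $\Gamma_\E$ correspond to the vertices $u\in V(\Gamma_\E/P_{(\E,D)})$, and since on each connected component $P_u$ the datum is "a square root of a fixed line bundle on $Z_u$", imposing this condition component-by-component cuts out a torsor under the $2$-torsion of $\Pic(Z_u)$ inside the Jacobian of $Z_u$ — but the key point for a dimension bound is only that it is cut out by the vanishing of the difference of two maps to a relative Jacobian, hence by a GRR-type argument has codimension at most the relative dimension of that target. I would therefore set up, for each $u$, the morphism from $\widetilde{\mc J}_{(\Gamma,\E,D)}$ to the relative degree-$0$ Jacobian of the family $\widetilde{\mc C}_{P_u}\to\widetilde{\mc M}_\Gamma$ sending a line bundle to $\wt L_u^{\otimes 2}\otimes\omega_{Z_u}^{-1}$, and take the preimage of the zero section; the locus $\widetilde{\mc Q}_{(\Gamma,\E,D,s)}$ (up to the discrete sign choice $s$) is the intersection of these preimages over all $u$.

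The dimension count then goes: the fibre of $\widetilde{\mc J}_{(\Gamma,\E,D)}\to\widetilde{\mc M}_\Gamma$ has dimension $\sum_{v\in V(\Gamma)} w_\Gamma(v)=\sum_{u} h^1(Z_u,\mc O_{Z_u})$ (since each component of $\Gamma_\E$ of positive weight contributes its geometric genus, and exceptional/rational components contribute nothing), while the target of the combined morphism has fibre dimension $\sum_u h^1(Z_u,\mc O_{Z_u})$ as well — wait, more precisely the target Jacobian for $Z_u$ has dimension equal to the first Betti number of the dual graph of $Z_u$ plus the sum of weights of its vertices. Because the squaring map $\Pic^0(Z_u)\to\Pic^0(Z_u)$ is surjective with finite kernel on the abelian-variety part but is an honest covering on the toric part only up to the $2^{b_1}$ factor, the codimension imposed is exactly $\sum_u \big(\dim\Pic^0(Z_u)-\dim(\text{image of squaring})\big)$, which is at most $\sum_u b_1(\text{dual graph of }Z_u)=b_1(\Gamma_\E/P_{(\E,D)})=b_1(\E,D)$ by equation \eqref{eq:bED}. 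Hence each irreducible component of $\widetilde{\mc Q}_{(\Gamma,\E,D,s)}$ has codimension at most $b_1(\E,D)$ inside $\widetilde{\mc J}_{(\Gamma,\E,D)}$, and since every component of $\widetilde{\mc J}_{(\Gamma,\E,D)}$ dominates $\widetilde{\mc M}_\Gamma$ with fibre dimension $\sum_v w_\Gamma(v)$, we get
\[
\dim \widetilde{\mc Q}_{(\Gamma,\E,D,s)}\ge \dim\widetilde{\mc M}_\Gamma+\sum_{v\in V(\Gamma)}w_\Gamma(v)-b_1(\E,D).
\]
Finally I would reconcile this with the claimed bound $\dim\mc M_\Gamma+b_1(\E,D)$: using $\dim\mc M_\Gamma=\dim\widetilde{\mc M}_\Gamma=3g-3+n-\#E(\Gamma)$ and $g=\sum_v w_\Gamma(v)+b_1(\Gamma)$, one checks the two expressions agree — so really the statement to prove is $\dim\widetilde{\mc Q}_{(\Gamma,\E,D,s)}\ge \dim\mc M_\Gamma+\big(g-b_1(\Gamma)\big)-b_1(\E,D)+(\text{correction})$; I would simply carry out the bookkeeping with the standard identities relating $b_1(\Gamma)$, $b_1(\Gamma_\E)$, $\#\E$ and $b_1(\E,D)$ until the right-hand side becomes $\dim\mc M_\Gamma+b_1(\E,D)$ as stated, being careful that $b_1(\E,D)=b_1(\Gamma_\E/P_{(\E,D)})$ and that $b_1(P_{(\E,D)})$ (which by Corollary \ref{cor:tree} is the "cyclic part") enters the fibre dimension of the Jacobian exactly once on each side.

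The main obstacle I anticipate is making the codimension-$\le b_1(\E,D)$ claim rigorous: one must argue that imposing "$\wt L_u^{\otimes 2}\cong\omega_{Z_u}$" on a possibly reducible, nodal curve $Z_u$ cuts down the Jacobian by at most its first Betti number, i.e. that the only obstruction beyond the surjective abelian-variety squaring is the multiplication-by-$2$ on the $(\mathbb{G}_m)^{b_1}$ part (which is finite-to-one, hence imposes no dimension), while the existence of a square root is guaranteed by the hypothesis that $(X,L)$ came from $\ol{\mc Q}_{g,n}$ — but here we are bounding an arbitrary component, so I would instead invoke Porteous/excess-intersection: $\widetilde{\mc Q}_{(\Gamma,\E,D,s)}$ is the zero locus of a section of a vector bundle of rank $=\sum_u(\text{something}\le\dim\text{Jac}(Z_u))$, or more cleanly the fibre of a morphism of smooth stacks whose target has the stated dimension, so every component of the fibre has codimension at most the relative dimension of the target. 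Getting the relative dimension of that target morphism to be exactly $b_1(\E,D)$ — as opposed to $b_1(\E,D)$ plus the sum of geometric genera, which would give a worse bound — requires factoring the squaring morphism through the generalized Jacobian and isolating the toric part, and this is the step where I expect to spend the most care.
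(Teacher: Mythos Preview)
Your overall strategy --- realize $\widetilde{\mc Q}_{(\Gamma,\E,D,s)}$ as the preimage of the zero section under a morphism to a product of relative Jacobians, then bound codimension above by the relative dimension of the target --- is exactly what the paper does. But your numerics are off in two places, and as written the argument does not close.

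First, the fibre dimension of $\widetilde{\mc J}_{(\Gamma,\E,D)}\to\widetilde{\mc M}_\Gamma$ is the arithmetic genus $g(\Gamma_\E)=\sum_v w_\Gamma(v)+b_1(\Gamma_\E)$, not $\sum_v w_\Gamma(v)$: the fibres of $\widetilde{\mc C}_{\Gamma,\E}$ are still nodal, and the generalized Jacobian of a nodal curve picks up a torus of rank $b_1$ of the dual graph. Second, and more seriously, the codimension bound coming from the map $L\mapsto \wt L_u^{\otimes 2}\otimes\omega_{Z_u}^{-1}$ is not $b_1(\E,D)$; the target $\Pic^0(Z_u)$ has dimension $g(P_u)$, so the preimage of the identity section has codimension at most $g(P_u)$, and the total bound is $\sum_u g(P_u)$. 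Your discussion of the squaring map is a red herring: multiplication by $2$ on $\Pic^0(Z_u)$ is surjective (an isogeny on the abelian part, squaring on the torus), so it contributes nothing to a codimension count; what matters is simply the dimension of the target of the section-comparison map.

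The identity that makes the bound come out right is purely combinatorial: since contracting $P_{(\E,D)}$ preserves genus and the weight of $u\in V(\Gamma_\E/P_{(\E,D)})$ is $g(P_u)$, one has
\[
g(\Gamma_\E)-\sum_u g(P_u)=g(\Gamma_\E/P_{(\E,D)})-\sum_u w_{\Gamma_\E/P_{(\E,D)}}(u)=b_1(\Gamma_\E/P_{(\E,D)})=b_1(\E,D),
\]
and this gives the claimed lower bound $\dim\mc M_\Gamma+b_1(\E,D)$. There is no further bookkeeping to ``reconcile''. One last point you glossed over: the sign function $s$ cuts out $\mc Q_{u,s(u)}\subset\mc Q_u$; here one needs that parity of $h^0(\wt L_u)$ is deformation-invariant, so each $\mc Q_{u,s(u)}$ is a union of irreducible components of $\mc Q_u$ and imposes no further codimension.
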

\begin{proof}
We have that $\widetilde{\mc C}_{\Gamma,\E}$ can be written as a union $\bigcup_{u\in V(\Gamma_\E/P_{(\E,D)})}{\mc Z}_u$, where $\mc Z_u\ra \widetilde{\mc M}_\Gamma$ is a family whose fibers have dual graph isomorphic to the connected component $P_u$ of $\ol{P}_{(\E,D)}$ corresponding to $u$. Let $\sigma_{\mc N_u}$ be the set of sections of $\mc Z_u\ra \widetilde{\mc M}_\Gamma$ whose corresponding legs of $P_u$ is obtained from an edge $e\in E(\Gamma^\E)$ connecting $u$ and a vertex $u'\in \mc N_u$ (see Notation \ref{not:Nv}).
 Let $\mc L$ be the universal invertible sheaf over the family 
\[
\widetilde{\mc  C}_{\Gamma,\E}\times_{\widetilde{\mc M}_{\Gamma}}\widetilde{\mc J}_{(\Gamma,\E,D)}\ra \widetilde{\mc J}_{(\Gamma,\E,D)}. 
\]
Set $\widetilde{\mc Z}_u:=\mc{Z}_u\times_{\widetilde{\mc M}_{\Gamma}}\widetilde{\mc J}_{(\Gamma,\E,D)}$. Let $\mc{L}_u$ be the restriction of $\mc{L}$ to the family $f_u\col \widetilde{\mc Z}_u\ra \widetilde{\mc J}_{(\Gamma,\E,D)}$, and  $\widetilde{\sigma}_{\mc N_u}$ be the set of sections of $f_u$ obtained as the pull-back of the sections in $\sigma_{\mc N_u}$. We define 
\[
\widetilde{\mc L}_u:=\mc{L}_u\otimes \mc O_{\widetilde{\mc Z}_u}(-\sum_{\sigma_i\in \sigma_{\widetilde{\mc N}_u}}\sigma_i(\widetilde{\mc J}_{(\Gamma,\E,D)})).
\]
Finally, let $\mc Q_u$ be the locus of points $p$ of $\widetilde{\mc J}_{(\Gamma,\E,D)}$ where $\widetilde{\mc L}_u^{\otimes 2}|_{f_u^{-1}(p)}\cong \omega_{f_u^{-1}(p)}$.


We claim that the codimension of $\mc Q_u$ in $\widetilde{\mc J}_{(\Gamma,\E,D)}$ is at most the genus $g(P_u)$ of $P_u$. Indeed, 
define $\mc{J}_u$ as the relative Jacobian of $f_u$ parametrizing invertible sheaves of multidegree equal to the multidegree of $\omega_{f_u}$. 
We have that $\mc{J}_u\to \widetilde{\mc J}_{(\Gamma,\E,D)}$ has two sections induced by $\omega_{f_u}$ and by $\widetilde{\mc L}_u^{\otimes 2}$, and $\mc{Q}_u$ is set-theoretically the locus where these sections agree. Since the relative dimension of $\mc{J}_u\to \widetilde{\mc J}_{(\Gamma,\E,D)}$ is $g(P_u)$ and $\mc{J}_u$ is smooth, each irreducible component of $\mc Q_u$ has codimension at most $g(P_u)$. 

Let $\mc Q_{u,s(u)}$ be the locus in $\mc Q_u$ where $h^0(X,\widetilde{\mc L}_v|_X)\equiv s(u)\mod (2)$ for every fiber $X$ of $f_u$. By deformation invariance of the parity of $h^0(X,\widetilde{\mc L}_u|_X)$, each irreducible component of $\mc Q_{u,s(u)}$ is an irreducible component of $\mc Q_u$. Hence   
each irreducible component of  $\mc{Q}_{u,s(u)}$ 
has codimension at most $g(P_u)$ in $\widetilde{\mc J}_{(\Gamma,\E,D)}$. 
  By Theorem \ref{thm:modularQgn} we have $\wt{\mc Q}_{(\Gamma,\E,D,s)}=\bigcap_{u\in V(\Gamma_\E/P_{\E,D})} \mc{Q}_{u,s(u)}$  (set-theoretically). Then, by the claim, $\wt{\mc Q}_{(\Gamma,\E,D,s)}$ has codimension at most $\sum_{u\in V(\Gamma_\E/P_{(\E,D)})}g(P_u)$ in $\widetilde{\mc J}_{(\Gamma,\E,D)}$. Thus:
  \begin{align*}
      \dim \widetilde{\mc Q}_{(\Gamma,\E,D,s)}&\geq \dim \widetilde{\mc J}_{(\Gamma,\E,D)}-\sum_{u\in V(\Gamma_\E/P_{(\E,D)})}g(P_u)\\
                                &=\dim(\mc M_{\Gamma})+g(\Gamma_\E)-\sum_{u\in V(\Gamma_\E/P_{(\E,D)})}g(P_u)\\
                                &=\dim(\mc M_{\Gamma})+g(\Gamma_\E/P_{(\E,D)})-\sum_{u\in V(\Gamma_\E/P_{(\E,D)})}g(P_u)\\
                                &=\dim(\mc M_{\Gamma})+b_1(\Gamma_\E/P_{(\E,D)})\\                                &=\dim(\mc M_{\Gamma})+b_1(\E,D),
  \end{align*}
which concludes the proof.
\end{proof}

\begin{Prop}\label{prop:irred}
Let $(\Gamma,\E,D,s)$ be a $v_0$-quasistable spin root-graph. Given $u\in V(\Gamma/P_{(\E,D)})$, let $P_u$ be the associated connected components of $\ol{P}
_{(\E,D)}$ and set $s_u=s(u)$. Then there are morphisms
\[
\widetilde{\mc Q}_{(\Gamma,\E,D,s)}\stackrel{\rho}{\ra}\prod_{u\in V(\Gamma/P_{(\E,D)})}\mc S_{(P_u,P_u,s_u)}
\stackrel{\theta}{\ra}\mc S_{(\Gamma,P_{(\E,D)},s)}.
\]
where $\theta$ is \'etale and the fibers of $\rho$ are isomorphic to $(k^*)^{b_1(\E,D)}$.
In particular
$\mc Q_{(\Gamma,\E,D,s)}$ is irreducible of codimension $\#E(\Gamma)-b_1(\E,D)$ in $\ol{\mc Q}_{g,n}$.
\end{Prop}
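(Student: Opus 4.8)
The plan is to exhibit $\widetilde{\mc Q}_{(\Gamma,\E,D,s)}$ explicitly as a relative torus torsor over a product of strata of the moduli of spin curves, and then to read off irreducibility and dimension from the well understood space $\ol{\mc S}_{g,n}$ through the chain $\widetilde{\mc Q}_{(\Gamma,\E,D,s)}\xrightarrow{\ \rho\ }\prod_u\mc S_{(P_u,P_u,s_u)}\xrightarrow{\ \theta\ }\mc S_{(\Gamma,P_{(\E,D)},s)}$. To construct $\rho$ I would use the decomposition $\widetilde{\mc C}_{\Gamma,\E}=\bigcup_u\mc Z_u$ over $\widetilde{\mc M}_\Gamma$ from the proof of Lemma~\ref{lem:Porteus}, indexed by $u\in V(\Gamma/P_{(\E,D)})=V(\Gamma_\E/P_{(\E,D)})$, with geometric fibres of $\mc Z_u\to\widetilde{\mc M}_\Gamma$ of dual graph $P_u$, together with the invertible sheaf $\widetilde{\mc L}_u$ on $\widetilde{\mc Z}_u=\mc Z_u\times_{\widetilde{\mc M}_\Gamma}\widetilde{\mc J}_{(\Gamma,\E,D)}$ defined there. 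I would first note that, equipped with the evident sections (the legs of $\Gamma$ lying on $P_u$, the branches at the $\E$-nodes, and the branches at the nodes joining $Z_u$ to the other $Z_{u'}$), each $\mc Z_u\to\widetilde{\mc M}_\Gamma$ is a family of \emph{stable} pointed curves: for $v\in V(P_u)$ one has $2w_{P_u}(v)-2+\deg_{P_u}(v)+L_{P_u}(v)=2w_\Gamma(v)-2+\deg_\Gamma(v)+L_\Gamma(v)>0$, since replacing an edge by a pair of legs does not change this quantity. By Theorem~\ref{thm:modularQgn} and the definition of $\widetilde{\mc Q}_{(\Gamma,\E,D,s)}$, over $\widetilde{\mc Q}_{(\Gamma,\E,D,s)}$ the sheaf $\widetilde{\mc L}_u$ squares to the relative dualizing sheaf of $\mc Z_u$ and has fibrewise parity $s(u)$, so $(\mc Z_u,\widetilde{\mc L}_u)$ restricts to a family of spin curves of constant spin combinatorial type $(P_u,P_u,s_u)$; the resulting moduli maps to the strata $\mc S_{(P_u,P_u,s_u)}$ assemble into $\rho$.

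Next I would construct $\theta$ by the reverse operation of gluing: from spin curves $(Z_u,M_u)$ of type $(P_u,P_u,s_u)$, glue $\coprod_uZ_u$ at the marked points prescribed by the edges of $\Gamma_\E/P_{(\E,D)}$, insert an exceptional $\mathbb P^1$ with $\mc O(-1)$ over each edge of $\Gamma$ outside $P_{(\E,D)}$ (at the marked points prescribed by $\Gamma^{E(\Gamma)\setminus P_{(\E,D)}}$), and take the sheaf whose restriction to $Z_u$ is $M_u\otimes\mc O_{Z_u}\bigl(\sum_{u'\in\mc N_u}\sum_{e\in E(P_{u'},P_u)}p_e\bigr)$, i.e.\ the twist inverting \eqref{eq:isow}. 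Using Proposition~\ref{prop:EP}, Remark~\ref{rem:cyclic} and Corollary~\ref{cor:tree} (which makes the $P_u$ cyclic, hence the $Z_u$ stable) one checks that the glued pair $(X,M)$ is a polystable spin curve with spin combinatorial type $(\Gamma,\E_{P_{(\E,D)}},D_{P_{(\E,D)}},s)=\pol(\Gamma,\E,D,s)$, so $[X,M]\in\mc S_{(\Gamma,P_{(\E,D)},s)}$. The fibre of $\theta$ over $[X,M]$ is recovered by restriction to the non-exceptional $\phi$-curves, so it is a reduced point; since the strata of $\ol{\mc S}_{g,n}$ are smooth and irreducible (Remark~\ref{rem:S-strat}), source and target are smooth of the same dimension — indeed $\sum_u\dim\mc M_{P_u}=\dim\mc M_\Gamma$, again because $\deg_{P_u}(v)+L_{P_u}(v)=\deg_\Gamma(v)+L_\Gamma(v)$ for $v\in V(P_u)$ — and $\theta$ is finite and dominant, hence étale.

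For the fibres of $\rho$ I would fix a geometric point of $\prod_u\mc S_{(P_u,P_u,s_u)}$, hence each $(Z_u,M_u)$; the curve underlying a point of $\widetilde{\mc Q}_{(\Gamma,\E,D,s)}$ over it is the pushout $C'=\bigcup_uZ_u$ along the marked points corresponding to the edges of $\Gamma_\E/P_{(\E,D)}$, so it is determined, and the invertible sheaves on $C'$ with the prescribed restriction to each $Z_u$ form a torsor under $\ker\bigl(\Pic(C')\to\prod_u\Pic(Z_u)\bigr)$, which by the normalization sequence for $C'\to\coprod_uZ_u$ equals $(k^*)^{b_1(\Gamma_\E/P_{(\E,D)})}=(k^*)^{b_1(\E,D)}$ by \eqref{eq:bED}; this torsor is nonempty over any point of the image of $\rho$. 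Thus $\rho$ is a torsor under a relative torus with fibre $(k^*)^{b_1(\E,D)}$ over the irreducible product $\prod_u\mc S_{(P_u,P_u,s_u)}$, whence $\widetilde{\mc Q}_{(\Gamma,\E,D,s)}$ is irreducible of dimension $\dim\mc M_\Gamma+b_1(\E,D)$; since $\widetilde{\mc Q}_{(\Gamma,\E,D,s)}\to\mc Q_{(\Gamma,\E,D,s)}$ is étale surjective, $\mc Q_{(\Gamma,\E,D,s)}$ is irreducible of the same dimension, and as $\dim\mc M_\Gamma=(3g-3+n)-\#E(\Gamma)=\dim\ol{\mc Q}_{g,n}-\#E(\Gamma)$ its codimension in $\ol{\mc Q}_{g,n}$ is $\#E(\Gamma)-b_1(\E,D)$ (in particular Lemma~\ref{lem:Porteus} becomes an equality).

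The main obstacle will be making $\theta$ and the fibre analysis of $\rho$ precise at the level of families and stacks: matching the marked points and the twist in \eqref{eq:isow} canonically, verifying that the glued pair is polystable with \emph{exactly} the predicted combinatorial type, and checking that the automorphisms acting on the gluing data of $C'$ are absorbed so that the fibre of $\rho$ is the full torus $(k^*)^{b_1(\E,D)}$ and not a finite quotient — equivalently, that $\widetilde{\mc Q}_{(\Gamma,\E,D,s)}$ is a genuine torus torsor over $\prod_u\mc S_{(P_u,P_u,s_u)}$.
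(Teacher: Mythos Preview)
Your construction of $\rho$ and $\theta$ is the same as the paper's (the paper simply cites \cite[Proposition~3.6.3]{CMP1} for $\theta$ rather than spelling out the gluing, and describes the fibres of $\rho$ as ``all possible gluings'', which is your normalization sequence made precise). The one substantive difference is in how irreducibility of $\widetilde{\mc Q}_{(\Gamma,\E,D,s)}$ is deduced. The paper does \emph{not} claim that $\rho$ is surjective; instead it feeds in Lemma~\ref{lem:Porteus} (the intersection-theoretic lower bound $\dim\widetilde{\mc Q}_{(\Gamma,\E,D,s)}\ge\dim\mc M_\Gamma+b_1(\E,D)$) to force every irreducible component to dominate the target, and then uses irreducibility of the fibres and of $\prod_u\mc S_{(P_u,P_u,s_u)}$ (citing \cite[Theorem~4.2.4]{CMP1}) to conclude there is only one component. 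You instead want $\rho$ to be a genuine torus torsor over the full product, which needs surjectivity --- and you only asserted that the fibre is nonempty over points \emph{of the image}. Surjectivity is straightforward (given $(Z_u,M_u)_u$, glue the $Z_u$ along the prescribed marked points and choose any gluing data for the untwisted sheaves $M_u\otimes\mc O_{Z_u}(\sum_{u'\in\mc N_u}\sum_e p_e)$; the resulting multidegree is $D_\E$ by Remark~\ref{rem:PED}), and once you add this sentence your argument is complete and Lemma~\ref{lem:Porteus} becomes redundant, as you noted. Your route is more elementary; the paper's has the advantage of not needing to verify the multidegree bookkeeping for surjectivity, at the cost of the separate Porteous-style lemma.
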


\begin{proof}
The map $\theta$ is the natural \'etale morphism defined in the proof of \cite[Proposition 3.6.3]{CMP1}, where we use that $P_{\pol(\Gamma,\E,D)}=P_{(\E,D)}$ by Proposition \ref{prop:minimal-root} and $\pol(s)=s$ by Definition \ref{def:spin-pol}. 

Next, consider a $k$-scheme $S$. A section $(\pi\col\mc X\ra S, \mc L)\in\widetilde{\mc Q}_{(\Gamma,\E,D,s)}(S)$ gives rise to a family $\pi_u\col\mc Z_u\ra S$ of curves whose fiber have dual graph isomorphic to $P_u$. 
Let $\sigma_{\mc N_u}$ be the set of sections of $\pi_u$ whose corresponding legs of $P_u$ is obtained from an edge $e\in E(\Gamma^\E)$ connecting $u$ and a vertex $u'\in \mc N_u$ (see Notation \ref{not:Nv}). The map $\rho$ in the statement takes $(\mc X\ra S, \sigma, \mc L)$ to  
\[
\rho(\mc X\ra S, \mc L)=\prod_{u\in V(\Gamma/P_{(\E,D)})}(\pi_u\col\mc Z_u\ra S,\mc L|_{\mc Z_u}\otimes \mc O_{\mc Z_u}\Big(-\sum_{\sigma_i\in\sigma_{\mc N_u}}\sigma_i(S))\Big).
\]
Indeed, by Theorem \ref{thm:modularQgn}, we have that $\rho(\mc X\ra S, \mc L)\in \prod_{u\in V(\Gamma/P_{(\E,D)})}\mc S_{(P_u,P_u,s_u)}$. 

The fiber of $\rho$ passing through a geometric point $[X, L]$ of $\widetilde{\mc Q}_{(\Gamma,\E,D,s)}$  is given by all possible gluing of $L|_{\mc Z_u}$ to an invertible sheaf on $X$, where gluings at exceptional components of $X$ do not count for the dimension of the fiber. Hence the fibers of $\rho$ are as claimed in the statement.

By \cite[Theorem 4.2.4]{CMP1} we have that $\mc S_{(P_u,P_u,s_u)}$ is irreducible. Hence, from what we have shown so far, we get that  there is a unique irreducible component of $\widetilde{\mc Q
}_{(\Gamma,\E,D,s)}$ that dominates $\prod_{v\in V(\Gamma/P_{(\E,D)})}\mc S_{(P_u,P_u,s_u)}$. The last product has dimension equal to $\dim\mc M_\Gamma$, since it is  \'etale over $\mc S_{(\Gamma,P_{(\E,D),s})}$,
which is \'etale over $\mc M_\Gamma$. 
Therefore, by Lemma \ref{lem:Porteus}, we see that  $\widetilde{\mc Q}_{(\Gamma,\E,D,s)}$ 
is irreducible and its dimension is $\dim \mc M_\Gamma+b_1(\E,D)$. Since $\dim \ol{\mc Q}_{g,n}=3g-3+n$, we can conclude just observing that there is a finite (in fact \'etale) map $\widetilde{\mc Q}_{(\Gamma,\E,D,s)}\ra 
\mc Q_{(\Gamma,\E,D,s)}$. 
\end{proof}

\begin{Prop}\label{prop:pol-spin}
Let $\pi\col\mc X \to B$ be a one-parameter deformation of a curve $X'$, with generic fiber $X$, and $\mc L$ an invertible sheaf  on $\mc X/B$ such that $(X,\mc L|_{X})$ and $(X',\mc L|_{X'})$ are, respectively, semistable spin curves of spin combinatorial types $(\Gamma,\E,D,s)$ and $(\Gamma',\E',D',s')$. Then there is a specialization $(\Gamma',\E',D',s')\ge (\Gamma,\E,D,s)$ of spin semistable root-graphs.
\end{Prop}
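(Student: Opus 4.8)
The plan is to extract from the family $\pi\col\mc X\to B$ the three ingredients of a specialization of spin semistable root-graphs $(\Gamma',\E',D',s')\ge(\Gamma,\E,D,s)$: a specialization of the underlying triples, the compatibility of the canonical flows, and the compatibility of the sign functions. Throughout I may replace $B$ by $\Spec k[[t^{1/m}]]$, which changes neither fiber, and I take $X$ to be the \emph{geometric} generic fiber.

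\emph{Step 1: the specialization of triples.} Passing to the stable models $\st(\mc X)\to B$ (again allowing a base change), the dual graph $\Gamma'$ of $\st(X')$ specializes to the dual graph $\Gamma$ of $\st(X)$ by the usual deformation theory of stable curves, and the exceptional components of the quasistable fibers $X'$, $X$ propagate across the family so that $\E\subset\E'\cap E(\Gamma)$ and there is an $\iota$-compatible specialization $\wh\iota\col{\Gamma'}^{\E'}\to\Gamma^\E$. Since $\mc L$ is a fixed invertible sheaf on $\mc X$ (not a class modulo twisting), the degree of $\mc L$ on the connected subfamily of $\mc X$ swept out by a component of the generic fiber is locally constant on $B$; applying this componentwise gives $\wh\iota_*(D')=D$. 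Hence $(\Gamma',\E',D')\ge(\Gamma,\E,D)$ is a specialization of triples. (When $\mc L|_X$ and $\mc L|_{X'}$ happen to be $(p_0,\mu)$-quasistable sheaves one may instead invoke the stratification of $\ol{\mc J}_{\mu,g,n}$ by $\mc{QD}_{\mu,g,n}$ recalled after \eqref{eq:c-type}, noting that the image of the closed point of $B$ lies in the closure of the image of its generic point.)

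\emph{Step 2: from triples to semistable root-graphs.} It remains to see $\wh\iota_*(\phi_{(\E',D')})=\phi_{(\E,D)}$. As $(\E',D')$ is a semistable pseudo-root, $\phi_{(\E',D')}$ is acyclic, hence so is $\wh\iota_*(\phi_{(\E',D')})$, because a non-null $\wh\iota_*(\phi_{(\E',D')})$-cycle lifts to a non-null $\phi_{(\E',D')}$-cycle. Pushing the identity $2D'+\Div(\phi_{(\E',D')})=K_{{\Gamma'}^{\E'}}$ forward by $\wh\iota_*$, which is linear on divisors and carries $K_{{\Gamma'}^{\E'}}$ to $K_{\Gamma^\E}$, and using $\wh\iota_*(D')=D$, we obtain $2D+\Div(\wh\iota_*(\phi_{(\E',D')}))=K_{\Gamma^\E}$. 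By the uniqueness statement of Theorem \ref{thm:uniquephi}(2) this forces $\wh\iota_*(\phi_{(\E',D')})=\phi_{(\E,D)}$, so the specialization is one of semistable root-graphs and induces $\wt\iota\col\Gamma'/P_{(\E',D')}\to\Gamma/P_{(\E,D)}$ as in Lemma \ref{lem:iota-s}.

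\emph{Step 3: the sign functions, the main obstacle.} We must show $s=\wt\iota_*(s')$, i.e.\ $h^0(Z_u,\wt L_u)\equiv\sum_{u'\in\wt\iota^{-1}(u)}h^0(Z'_{u'},\wt L'_{u'})\pmod 2$ for every $u\in V(\Gamma/P_{(\E,D)})$. Using $\wh\iota_*(\phi_{(\E',D')})=\phi_{(\E,D)}$ and Lemma \ref{lem:iota-s} one checks that, inside the family, the $\phi_{(\E,D)}$-curve $Z_u$ of the generic fiber degenerates to the union $\bigcup_{u'\in\wt\iota^{-1}(u)}Z'_{u'}$ of $\phi_{(\E',D')}$-curves of the special fiber, the nodes joining distinct $Z'_{u'}$ being exactly those over edges of ${\Gamma'}^{\E'}$ contracted by $\wh\iota$. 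Then, after a finite base change and after inserting chains of rational curves over the contracted edges so as to make the relevant twister principal — exactly as in the proof of Theorem \ref{thm:modularQgn} via Lemmas \ref{lem:refi-flow} and \ref{lem:twister} together with \cite[Corollary 6.9]{EM} and \cite[Remark 3.0.6]{CCC} — one produces a family $g\col\mc W_u\to B$ with an invertible sheaf $\mc N_u$ such that $\mc N_u^{\otimes2}\cong\omega_{\mc W_u/B}$, whose restriction to the generic fiber is $\wt L_u$ and whose restriction to the special fiber $W$ is $\wt L'_{u'}$ on each $Z'_{u'}$ and $\mc O(-1)$ on each inserted rational component. Mumford's deformation invariance of the parity of $h^0$ of a square root of the relative dualizing sheaf yields $h^0(Z_u,\wt L_u)\equiv h^0(W,\mc N_u|_W)\pmod2$, and since $\mc N_u$ has degree $-1$ on the inserted $(-1)$-curves, the right-hand side equals $\sum_{u'\in\wt\iota^{-1}(u)}h^0(Z'_{u'},\wt L'_{u'})$ by the standard description of global sections of a line bundle that is negative on a family of $(-1)$-curves inside a nodal curve. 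This gives $s(u)=\wt\iota_*(s')(u)$ for all $u$, so $(\Gamma',\E',D',s')\ge(\Gamma,\E,D,s)$ is a specialization of spin semistable root-graphs. \textbf{The delicate part} is precisely Step 3: identifying the limit of $Z_u$ with $\bigcup_{u'\mapsto u}Z'_{u'}$, matching the twists defining $\wt L_u$ and the $\wt L'_{u'}$ with the chosen twister, and arranging the isomorphism $\mc N_u^{\otimes2}\cong\omega_{\mc W_u/B}$; once these are in place the parity comparison is immediate.
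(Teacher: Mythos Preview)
Your Step~2 contains a genuine gap. You claim that $\wh\iota_*(\phi_{(\E',D')})$ is acyclic because ``a non-null $\wh\iota_*(\phi_{(\E',D')})$-cycle lifts to a non-null $\phi_{(\E',D')}$-cycle''. This is false: when you lift a cycle from $\Gamma^\E$ to ${\Gamma'}^{\E'}$ you must insert paths of contracted edges between consecutive $e_i,e_{i+1}$, and nothing forces $\phi_{(\E',D')}\ge0$ along those inserted paths. The paper's own Remark after Example~\ref{exa:Pbar} gives exactly such a counterexample: with $\Gamma$ the two-vertex two-edge graph, $(\E,D)=(\{e_0\},v_0-w_0)$ and $(\E',D')=(\emptyset,0)$, both are semistable pseudo-roots and there is a specialization of triples $(\Gamma,\E,D)\ge(\Gamma,\E',D')$, yet $\wh\iota_*(\phi_{(\E,D)})$ is a positive flow around the whole cycle of $\Gamma$, hence \emph{not} acyclic. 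Your argument would ``prove'' that every specialization of triples between semistable root-graphs is a specialization of semistable root-graphs, contradicting this example.

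The paper circumvents this by using the geometry, not combinatorics, to pin down the flow. After reducing to $\E=\E'=\emptyset$, in the base case where $(\Gamma,\emptyset,D)$ is stable it writes $\omega_{\mc X}\otimes\mc L^{-2}$ as a twister and invokes Lemma~\ref{lem:twister}: the resulting flow $\phi$ on $\Gamma'$ is acyclic \emph{and vanishes on all edges of $\Gamma$} (because those nodes persist generically), so $\phi=\phi_{(\emptyset,D')}$ by Theorem~\ref{thm:uniquephi} and $\iota_*\phi=0=\phi_{(\emptyset,D)}$ is automatic. The non-stable case is then handled by induction via the splitting of Remark~\ref{rem:D-split}. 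For the sign compatibility the paper does not argue by direct parity invariance as you outline; instead it builds, after a degree-$2$ base change and an explicit twister, a genuine family of Cornalba spin curves with the same generic fiber, and then reads off $\wt\iota_*(s')=s$ from the known stratification of $\ol{\mc S}_{g,n}$ (Remark~\ref{rem:S-strat}). Your Step~3 sketch is a plausible alternative route for the signs, but it is moot until Step~2 is repaired with the twister input.
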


\begin{proof}
We have a specialization of triples  $\iota\col(\Gamma',\E',D')\ge (\Gamma,\E,D)$ (see \cite[Proposition 4.4.2]{CC}). Adding a suitable number of sections, we can make $\pi$ a family of $n$-pointed stable curves. Hence 
we can assume $\E=\E'=\emptyset$.

\smallskip

\emph{Case 1}. Assume that $(\Gamma,\emptyset,D)$ is stable. We have $\mc L|_X^2=\omega_X$, and hence $\omega_{\mc X}\otimes \mc L^{-2}\cong T$ for some twister $T$ of $\mc X$. Let $D_T$ be the divisor on $\Gamma'$ given by the multidegree of $T|_{X'}$. By Lemma \ref{lem:twister} we have $D_T=\Div(\phi)$, for some acyclic flow $\phi$ on $\Gamma'$ such that $\phi(e)=0$ for every $e\in \ora{E}(\Gamma)\subset \ora{E}(\Gamma')$. Since $2D'+\Div(\phi)=K_{\Gamma'}$, we deduce that $\phi=\phi_{(\emptyset,D')}$ (we use Theorem \ref{thm:uniquephi}).  We have $\iota_*(\phi_{(\emptyset,D')})=\iota_*(\phi)=0=\phi_{(\emptyset,D)}$, which means that $(\Gamma',\emptyset,D')\ge (\Gamma,\emptyset,D)$ is a specialization of root-graphs.

Set $(\wt\E',\wt D')=\pol(\emptyset,D')$. To conclude, we show the existence of a specialization of polystable root-graphs    $(\Gamma',\wt\E',\wt D',\pol(s'))\ge (\Gamma,\emptyset,D,s)$.  
Indeed, if $\wt{\iota}\col\Gamma'/P_{(\wt \E',\wt D')}\ra\Gamma/P_{(\emptyset,D)}$ is the  associated specialization, then 
$\wt\iota_*(s')=\wt \iota_*(\pol(s'))=s$ 
 (we use the identification  $\Gamma'/P_{(\emptyset,D')}\cong\Gamma'/P_{(\wt \E',\wt D')}$).

For every edge $e\in E(\Gamma')\setminus E(\Gamma)$ let $k_e$ be the positive integer such that the local \'etale equation of $\mc X$ at the node of $X'$ corresponding to $e$ is $xy=t^{k_e}$, where $x,y$ are local \'etale coordinates. Let $\wh{\Gamma}'$ be the refinement of $\Gamma'$ obtained by inserting $k_e-1$ vertices in the interior of each $e\in E(\Gamma')\setminus E(\Gamma)$. Notice that $\phi_{(\emptyset,D')}$ induces a natural acyclic flow on $\wt{\Gamma}'$, which we continue to denote by $\phi_{(\emptyset,D')}$.
 By Lemma \ref{lem:twister}, we have that
 $\Div(\phi_{(\emptyset,D')})$ is a principal divisor on $\wh\Gamma'$, i.e., $\Div(\phi_{(\emptyset,D')})=\sum_{v\in V(\wh{\Gamma}')} a_v \Div(v)$ for $a_v\in \mathbb Z$. 
 Notice that $\phi(\wh e)=a_{s(\wh e)}-a_{t(\wh e)}$, for every $\wh e\in \ora{E}(\wh{\Gamma}')$ and $k_e\phi(e)=a_{s(e)}-a_{t(e)}$ for every $e\in \ora{E}(\Gamma')$. 
 Recall that $|\phi(e)|\le 1$ for every $e\in \ora{E}(\wh{\Gamma}')$, by Theorem \ref{thm:uniquephi}.
 Let $\wt{\Gamma}'$ be the refinement of $\Gamma'$ obtained from $\wh{\Gamma}'$ by inserting one vertex in the interior of any edge $e\in E(\wh{\Gamma}')$. For every $e\in \ora{E}(\Gamma')$ we let $f_0,\dots,f_{2k_e-1}\in\ora{E}(\wt{\Gamma}')$ be the oriented edges over $e$, and 
 we set 
 $v_{e,i}:=t(f_{i-1})=s(f_i)$, for every $i=1,\dots,2k_e-1$. If  $\phi(e)=-1$, we set
 \[
 a_{e,i}:=
 \begin{cases}
 a_{s(e)}+i & \text{ for }i=1,\dots,k_e-1 \\
a_{t(e)} & \text{ for } i=k_e,\dots,2k_e-1.
  \end{cases}
\] 
 If $\phi(e)=0$, we set $a_{e,i}:=a_{s(e)}=a_{t(e)}$. For each vertex $\wt{v}\in V(\wt{\Gamma}')$ we define $b_{v}$ as 
 \[
 b_{\wt{v}}:=
 \begin{cases}
  a_{e,i}& \text{ if  $\wt v=v_{e,i}$ for some $e\in \ora{E}(\Gamma')$ and  $i=1,\ldots, 2k_e-1$,}\\
  a_{v}&\text{ if $\wt{v}=v\in V(\Gamma')\subset V(\wt{\Gamma}')$}.
 \end{cases}
 \]
 We note that $b_{\wt{v}}$ is well defined. In Figure \ref{fig:a} we illustrate the construction for $\phi(e)=-1$. 
 \begin{figure}[h]
    \centering
    \begin{tikzpicture}
    \draw (-7.2,0) -- (7.2,0);
    \draw[fill] (-7.2, 0) node {} (-7.2, 0) circle (0.1);
    \draw[fill] (-3.6, 0) node {} (-3.6, 0) circle (0.1);
    \draw[fill] (0, 0) node {} (0, 0) circle (0.1);
    \draw[fill] (3.6, 0) node {} (3.6, 0) circle (0.1);
     \draw[fill] (7.2, 0) node {} (7.2, 0) circle (0.1);
    
       \draw[->, line  width=0.3mm] (-5.4,0) to (-5.39,0);
       \draw[->, line  width=0.3mm] (-1.8,0) to (-1.79,0);
        \draw[->, line  width=0.3mm] (1.8,0) to (1.81,0);
       \draw[->, line  width=0.3mm] (5.4,0) to (5.41,0);
        \node at (-5.4,0.3) {$1$};
         \node at (-1.8,0.3) {$1$};
          \node at (1.8,0.3) {$1$};
         \node at (5.4,0.3) {$1$};
         \node at (-7.2,-0.3) {$a_{s(e)}$};
           \node at (7.2,-0.3) {$a_{t(e)}$};
        \node at (-3.6,-0.3) {$a_{s(e)}+1$};
       \node at (3.6,-0.3) {$a_{s(e)}+3$};
         \node at (0,-0.3) {$a_{s(e)}+2$};
         
         \draw (-7.2,-1.5) -- (7.2,-1.5);
    \draw[fill] (-7.2, -1.5) node {} (-7.2, -1.5) circle (0.1);
    \draw[fill] (-3.6, -1.5) node {} (-3.6, -1.5) circle (0.1);
    \draw[fill] (0, -1.5) node {} (0, -1.5) circle (0.1);
    \draw[fill] (3.6, -1.5) node {} (3.6, -1.5) circle (0.1);
     \draw[fill] (7.2, -1.5) node {} (7.2, -1.5) circle (0.1);
     \draw[fill] (-5.4, -1.5) node {} (-5.4, -1.5) circle (0.08);
    \draw[fill] (-1.8, -1.5) node {} (-1.8, -1.5) circle (0.08);
    \draw[fill] (1.8, -1.5) node {} (1.8, -1.5) circle (0.08);
     \draw[fill] (5.4, -1.5) node {} (5.4, -1.5) circle (0.08);
      \draw[->, line  width=0.3mm] (-6.3,-1.5) to (-6.29,-1.5);
       \draw[->, line  width=0.3mm] (-4.5,-1.5) to (-4.49,-1.5);
        \draw[->, line  width=0.3mm] (-2.7,-1.5) to (-2.69,-1.5);
       \draw[->, line  width=0.3mm] (-0.9,-1.5) to (-0.89,-1.5);
       \draw[->, line  width=0.3mm] (0.9,-1.5) to (0.91,-1.5);
        \draw[->, line  width=0.3mm] (2.7,-1.5) to (2.71,-1.5);
       \draw[->, line  width=0.3mm] (4.5,-1.5) to (4.51,-1.5);
        \draw[->, line  width=0.3mm] (6.3,-1.5) to (6.31,-1.5);
        \node at (-7.2,-1.8) {$a_{s(e)}$};
        \node at (-5.4,-1.8) {$a_{s(e)}+1$};
          \node at (-3.6,-1.8) {$a_{s(e)}+2$};
            \node at (-1.8,-1.8) {$a_{s(e)}+3$};
            \node at (0,-1.8) {$a_{t(e)}$};
              \node at (1.8,-1.8) {$a_{t(e)}$};
                \node at (3.6,-1.8) {$a_{t(e)}$};
                  \node at (5.4,-1.8) {$a_{t(e)}$};
                    \node at (7.2,-1.8) {$a_{t(e)}$};
    \end{tikzpicture}
\caption{The case $\phi(e)=-1$.}
\label{fig:a}
\end{figure}
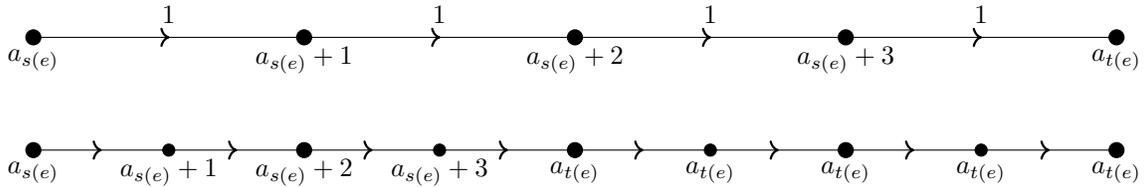

 Consider the diagram 
 \begin{equation}
\SelectTips{cm}{11}
\begin{xy} <16pt,0pt>:
\xymatrix{
 \mc Y\ar[r] \ar[rd]&    \mc X\times_B B\ar[r] \ar[d]& \mc X\ar[d]
 \\
& B \ar[r]^{t\ra t^2} & B
 }
\end{xy}
\end{equation}
where $\mc Y$ is the desingularization of $\mc X\times_B B$ at the nodes of $X'$ corresponding to edges in $E(\Gamma')\setminus E(\Gamma)$. Denote by $\varphi\col \mc Y\ra \mc X$ the induced map.  The dual graph of the special fiber $Y$ of $\mc Y\ra B$ is $\wt{\Gamma'}$. We let $Y_{\wt{v}}$ be the component of $Y$ corresponding to the vertex $\wt{v}\in V(\wt{\Gamma}')$. Define the twister $\wt T$ on $\mc Y$ as:
 \[
 \wt T:=\mc O_{\mc Y}\Big(\sum_{\wt v\in V (\wt{\Gamma}')}b_{\wt v}Y_{\wt v}\Big).
 \]
   Let $\psi\col\mc Y\ra \wt{\mc X}$ be the contraction of the exceptional components of the special fiber of $\mc Y\ra B$ over which $\varphi^*(\mc L)\otimes \wt T$ has degree zero. Set $\wt{\mc L}:=\psi_*(\varphi^*(\mc L)\otimes \wt T)$. 
 Then $(\wt{\mc X},\wt{\mc L})$ is a family of spin curves over $B$ whose generic fiber is isomorphic to $(X,\mc L|_X)$ and the special fiber has spin combinatorial type $(\Gamma',\wt\E',\wt D',\pol(s'))$. By Remark \ref{rem:S-strat} we get a specialization of polystable root-graphs $(\Gamma',\wt\E',\wt D',\pol(s'))\ge (\Gamma,\emptyset,D,s)$, as wanted.

\smallskip
\emph{Case 2}. Assume that $(\Gamma,\emptyset,D)$ is not stable. 
 We argue by induction on $\#E(\Gamma)$. There is a proper subset $V\subset V(\Gamma)$ such that $\beta_D(V)=0$. Set $V':=\iota^{-1}(V)$.   Let $(\Gamma_i,\emptyset, D_i^V,s_i^V)$ and $(\Gamma'_i,\emptyset,D_i^{V'},s_i^{V'})$ be the spin semistable root-graphs as in Remark \ref{rem:D-split}.  
An edge $e\in \mc E(V,V^c)$ gives rise to a section $\sigma_e\col B\ra \mc X$ of $\pi$. We let $\pi_1,\pi_2\col\mc X_i\ra B$ be the two families of curves obtained by taking the normalization of $\mc X$ at the union of $\sigma_e(B)$, $\forall\; e\in E(V,V^c)$. The family $\pi_i$ could fail to be quasistable. Though, we can and will consider the family to be  quasistable simply adding a suitable number of sections to $\pi_i$. Set
\[
\mc M_1:=\mc L|_{\mc X_1}
\text{ and }
\mc M_2:=\mc L|_{\mc X_2}\otimes \mc O_{\mc X_2}\Big(-\sum_{e\in E(V,V^c)} \sigma_e(B)\Big).
\]
 If $X_i$ and $X'_i$ are the generic and special fiber of $\pi_i$, respectively, then $(X_i,\mc M_i|_{X_i})$ and $(X'_i,\mc M_i|_{X'_i})$ are semistable spin curves of spin combinatorial type $(\Gamma_i,\emptyset,D_i^V,s_i^V)$ and $(\Gamma'_i,\emptyset,D_i^{V'},s_i^{V'})$.
By the induction hypothesis, there is a specialization of semistable root-graphs $\iota_i\col(\Gamma'_i,\emptyset,D_i^{V'},s_i^{V'})\ge (\Gamma_i,\emptyset,D_i^V,s_i^V)$. We argue that $\iota_*(\phi_{(\emptyset,D')})=\phi_{(
\emptyset,D)}$: this is clear away from $E(V,V^c)$ by the existence of the specialization $\iota_i$, while $\iota_*(\phi_{(\emptyset,D')})(e)=\phi_{(\emptyset,D)}(e)=\pm1$, $\forall\;e\in \ora{E}(V,V^c)=\ora{E}(V',V'^c)$ by Theorem \ref{thm:uniquephi} (we use the natural inclusion $E(\Gamma)\subset E(\Gamma')$ to identify $\ora{E}(V,V^c)$ with $\ora{E}(V',V'^c)$, and that $\beta_{D'}(V')=0$ by Proposition \ref{prop:stab-contract}). If $\wt\iota\col \Gamma'/P_{(\emptyset,D')}\ra \Gamma/P_{(\emptyset,D)}$ is the associated specialization, we also deduce that $\wt{\iota}_*(s')=s$, and we are done.
\end{proof}

\begin{Prop}\label{prop:strata-cont} 
     Let $(X',L')$ be a semistable spin curves and $(\Gamma',\E',D',s')$ be its spin combinatorial type.  Let $\iota\col(\Gamma',\E',D',s')\ge (\Gamma,\E,D,s)$ be a specialization of spin semistable root-graphs.
    Then there is a one-parameter deformation $\mc X \to  B$ of  $X'$, with generic fiber $X$, and an invertible sheaf $\mc L$ on $\mc X/B$ such that $\mc L|_{X'}\cong L'$ and $(X,\mc L|_{X})$ is a semistable spin curves with spin combinatorial type $(\Gamma,\E,D,s)$.
\end{Prop}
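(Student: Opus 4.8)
The plan is to reproduce the structure of the proof of Proposition \ref{prop:pol-spin}, but run backwards: in the base case the deformation is produced by a twister argument (as in the proof of Theorem \ref{thm:modularQgn}), and the general case is reduced to it by induction. As in the proof of Proposition \ref{prop:pol-spin}, after adding a suitable number of sections we may assume $\E=\E'=\emptyset$. Write $\phi:=\phi_{(\emptyset,D)}$ and $\phi':=\phi_{(\emptyset,D')}$, and let $F\subset E(\Gamma')$ be the set of edges contracted by the specialization $\iota\col\Gamma'\to\Gamma$ associated to $\iota\col(\Gamma',\emptyset,D',s')\ge(\Gamma,\emptyset,D,s)$, so that $\iota_*(\phi')=\phi$ and $\phi'(e)=\phi(e)$ for every $e\in E(\Gamma')\setminus F$. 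By Lemma \ref{lem:stable}, $(\emptyset,D)$ is stable if and only if $\phi=0$, i.e.\ if and only if $\supp(\phi')\subseteq F$. I will argue by induction on $\#E(\Gamma')$, the case $\#E(\Gamma')=0$ being trivial.

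\emph{Case 1: $(\emptyset,D)$ is stable.} Then $\supp(\phi')\subseteq F$. By Lemma \ref{lem:refi-flow} there are positive integers $h_e$, with $h_e=1$ for $e\notin\supp(\phi')$, such that $\Div(\phi')$ is principal on the refinement $\wh\Gamma'$ of $\Gamma'$ obtained by inserting $h_e-1$ vertices over each edge $e$; note that $\wh\Gamma'$ refines $\Gamma'$ only over edges of $\supp(\phi')\subseteq F$. Let $Y'$ be the nodal curve with dual graph $\wh\Gamma'$ and $\st(Y')=X'$, with contraction $\pi_{Y'}\col Y'\to X'$. The multidegree of $\pi_{Y'}^*(\omega_{X'}\otimes L'^{-2})$ is $\Div(\phi')$, which is principal on $\wh\Gamma'$; hence by \cite[Corollary 6.9]{EM} there is a deformation $\mc Y'\to B$ of $Y'$ smoothing exactly the nodes of $Y'$ lying over the nodes $p_e$ of $X'$ with $e\in F$, together with a twister $T$ of $\mc Y'$ such that $T|_{Y'}\cong\pi_{Y'}^*(\omega_{X'}\otimes L'^{-2})$. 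By \cite[Remark 3.0.6]{CCC} there is an invertible sheaf $\mc M$ on $\mc Y'/B$ with $\mc M|_{Y'}\cong\pi_{Y'}^*(L')$ and $\mc M^{\otimes 2}\otimes T\cong\omega_{\mc Y'/B}$. Contracting the exceptional components of $\mc Y'$ over $X'$ by $\psi\col\mc Y'\to\mc X$ and setting $\mc L:=\psi_*(\mc M)$, we obtain $\mc L|_{X'}\cong L'$ and $\mc L^{\otimes 2}\otimes\psi_*(T)\cong\omega_{\mc X/B}$. The generic fiber $X$ of $\mc X\to B$ has dual graph $\Gamma'/F=\Gamma$, and since $\psi_*(T)$ is supported on $X'$ its restriction to $X$ is trivial, so $(\mc L|_X)^{\otimes 2}\cong\omega_X$. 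Thus $\mc L|_X$ is a root of $X$ of combinatorial type $(\Gamma,\emptyset,\iota_*(D'))=(\Gamma,\emptyset,D)$, and its sign function is $s$ by deformation invariance of the parity of $h^0$ (which underlies Remark \ref{rem:parity}) together with the additivity of that parity over the $\phi$-curves.

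\emph{Case 2: $(\emptyset,D)$ is not stable.} Then there is a proper nonempty $V\subsetneq V(\Gamma)$ with $\beta_D(V)=0$. Set $V':=\iota^{-1}(V)$; by Proposition \ref{prop:stab-contract} we have $\beta_{D'}(V')=0$, no edge of $E(V',V'^c)$ is contracted by $\iota$ (so $F\cap E(V',V'^c)=\emptyset$), and $\#E(\Gamma'(V')),\#E(\Gamma'(V'^c))<\#E(\Gamma')$. By Remark \ref{rem:D-split}, $\iota$ restricts to specializations of spin semistable root-graphs between the pieces obtained by splitting along $V'$ and $V$. Normalizing $X'$ at the nodes $p_e$, $e\in E(V',V'^c)$, yields curves $X'_1,X'_2$; with $L'_1:=L'|_{X'_1}$ and $L'_2:=L'|_{X'_2}\otimes\mc O_{X'_2}(-\sum p_e)$ (the sum over the preimages of those nodes), the pairs $(X'_i,L'_i)$ are semistable spin curves of the spin combinatorial types furnished by Remark \ref{rem:D-split}. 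By the inductive hypothesis there are one-parameter deformations $\mc X_i\to B$ of $X'_i$, with generic fibers of the prescribed type, and invertible sheaves $\mc L_i$ with $\mc L_i|_{X'_i}\cong L'_i$. Gluing $\mc X_1$ and $\mc X_2$ along the sections $\sigma_e$ extending the nodes $p_e$, $e\in E(V',V'^c)$, and gluing $\mc L_1$ to $\mc L_2\otimes\mc O_{\mc X_2}(\sum\sigma_e(B))$ (with an appropriate choice of gluing data), we obtain a one-parameter deformation $\mc X\to B$ of $X'$ with generic fiber $X$ of dual graph $\Gamma$, together with $\mc L$ on $\mc X/B$ satisfying $\mc L|_{X'}\cong L'$. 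Since $\beta_D(V)=0$ forces $\phi(e)=1$ for $e\in\ora E(V,V^c)$ with target in $V$ (Lemma \ref{lem:beta-phi}), the $\phi$-curves of $X$ are distributed among the two pieces, and a direct check shows $(X,\mc L|_X)$ is a semistable spin curve of spin combinatorial type $(\Gamma,\emptyset,D,s)$.

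The crux is Case 1, and specifically the use of \cite[Corollary 6.9]{EM} to produce a \emph{partial} smoothing — one smoothing only the nodes over $F$ and leaving every other node of $X'$ intact — compatibly with the prescribed twister; this is possible precisely because $(\emptyset,D)$ being stable forces $\supp(\phi')\subseteq F$, so the required twister $\Div(\phi')$ is supported on edges that become smooth. The remaining verifications — that the reduction to $\E=\E'=\emptyset$ is harmless (cf.\ the analogous reduction in the proof of Proposition \ref{prop:pol-spin}), that the generic fiber in Case 1 carries the sign function $s$, and that the glued family and sheaf in Case 2 have the correct generic combinatorial type — are routine given Remark \ref{rem:D-split}, Remark \ref{rem:S-strat}, and the results of \cite{CMP1} and \cite{CCC}.
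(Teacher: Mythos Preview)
Your overall architecture matches the paper's: reduce to $\E=\E'=\emptyset$, split into the case where $(\emptyset,D)$ is stable and the case where it is not, and in the latter case induct by cutting along a destabilizing set $V$. Case 2 is essentially the paper's Case 2; your ``appropriate choice of gluing data'' is made precise there by a short diagram chase with the exact sequences relating $\Pic(\mc X)$ to $\Pic(\mc X_1)\times\Pic(\mc X_2)$, which is what guarantees one can pick the gluing so that $\mc L|_{X'}\cong L'$ exactly.

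The genuine gap is in Case 1. You invoke \cite[Corollary 6.9]{EM} to produce a \emph{partial} smoothing of $Y'$ (smoothing only the nodes over $F$) together with a twister restricting to $\pi_{Y'}^*(\omega_{X'}\otimes L'^{-2})$. That corollary, as used everywhere else in the paper (Theorem \ref{thm:modularQgn}, and in the paper's own proof of this proposition), yields a \emph{full} smoothing of a connected nodal curve realizing a prescribed twister of principal multidegree; it does not give a family whose generic fiber is singular. Your observation that $\supp(\phi')\subseteq F$ is exactly the compatibility condition needed for such a twister to be Cartier on a partial smoothing, but it does not by itself manufacture the family. The paper closes this gap by first reducing (via Corollary \ref{cor:facet}) to an \emph{elementary} specialization, and then, for the type~2 case, decomposing $X'$ into the connected subcurves $X'_1,\dots,X'_n$ corresponding to the connected components of the subgraph on the contracted edge-set, together with the remaining irreducible components; it applies \cite[Corollary 6.9]{EM} to each $X'_i$ separately (full smoothings of pieces), uses trivial families on the rest, and glues both the families and the twisters via an explicit open cover. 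That decomposition-and-glue step is precisely the missing content in your Case 1. Finally, for the sign function you appeal to ``additivity of parity over the $\phi$-curves''; the paper instead deduces $s_{\mc L|_X}=\wt\iota_*(s')=s$ directly from Proposition \ref{prop:pol-spin} applied to the family just constructed, which is cleaner than invoking an additivity statement that has not been isolated.
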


\begin{proof}
Adding a suitable number of section, we can make $X'$ a $n$-pointed stable  curve, hence 
we can assume $\E=\E'=\emptyset$.
 Let 
 $\wt{\iota}\col \Gamma'/P_{(\emptyset,D')}\ra \Gamma/P_{(\emptyset,D)}$ be the specializations associated to $(\Gamma',\emptyset,D',s')\ge (\Gamma,\emptyset,D,s)$.

\smallskip

\emph{Case 1.} Assume that $(\Gamma,\emptyset,D)$ is stable. Then $\phi_{(\emptyset,D)}=0$ by Lemma \ref{lem:stable}, hence $P_{(\emptyset,D)}=\Gamma$ and  $\#V(\Gamma/P_{(\emptyset,D)})=1$. By Corollary \ref{cor:facet}, we can assume that $\iota$ is elementary. If it is of type $1$, then $\phi_{(\emptyset,D')}=0$, so $(\Gamma',\emptyset,D',s')$ is stable by Lemma \ref{lem:stable}, and we are done by Remark \ref{rem:S-strat}. 


If the specialization $\iota$ is elementary of type $2$, then 
we can write 
$V(\Gamma')=V(P'_1)\cup V(P'_2)$, where $\iota$ is the contraction of $E(P'_1,P'_2)$. Let $\Gamma'_1,\dots,\Gamma'_n$ be the connected components of the subgraph of $\Gamma'$ having $E(P'_1,P'_2)$ as a set of edges, and $\Gamma'_{n+1},\dots,\Gamma'_m$ be the vertices in $V(\Gamma')\setminus \cup_{1\le i\le n} V(\Gamma'_i)$. Let $X'_i$ be the subcurve of $X'$ corresponding to $\Gamma'_i$, for all $i\in\{1,\dots,m\}$.
For $i\in\{1,\dots,n\}$, the multidegree of  $\omega_{X'_i}\otimes L'|_{X'_i}^{-2}$ is equal to the restriction of $\Div(\phi_{(\emptyset,D')})$ to $\Gamma'_i$, which is a principal divisor on $\Gamma'_i$. By \cite[Corollary 6.9]{EM}, there is a smoothing $\mc X_i\ra B$ of $X'_i$ and a twister $T_i$ of $\mc X_i$ such that $T_i|_{X'_i}\cong\omega_{X'_i}\otimes L'|_{X'_i}^{-2}$. For $i\in\{n+1,\dots,m\}$, consider the trivial family $\mc X_i=X'_i\times B$.
Every edge $e\in E(\Gamma)$ gives rise to two sections $\sigma^i_e,\sigma^j_e\col B\ra \mc X_i$, for $i$ and $j$ such that  $\Gamma'_i$ and $\Gamma'_j$ are incident to $e$ (seen as an edge of $\Gamma'$). We let $\pi\col\mc X\ra B$ be the family obtained from $\sqcup \mc X_i$ by identifying $\sigma^i_e(b)$ and $\sigma^j_e(b)$, $\forall\; b\in B$. The generic fiber $X$ of $\pi$ has $\Gamma$ as a dual graph. 

We cover $\mc X$ with two open subsets $U$ and $V$, where $U\cong(\sqcup_{1\le i\le n} \mc X_i)\setminus (\sqcup_{i,e}\sigma^i_e(B))$ and $V$ is the complement of the singular points of $X'$ corresponding to edges $e\in E(P'_1,P'_2)$. We let $T_U$ be the sheaf on $U$ restricting to $T_i|_{U\cap \mc X_i}$ over $U\cap \mc X_i$. Since $\Pic(B)=0$, we have $T_U|_{U\cap V}\cong \mc O_{U\cap V}$, and hence we can glue $T_U$ and $\mc O_V$ along $U\cap V$ to get a sheaf $T$ on $\mc X$ such that $T|_{X'}\cong \omega_{X'}\otimes L'^{-2}$ and $T$ is trivial away from $X'$. By \cite[Remark 3.0.6]{CCC} there is an invertible sheaf $\mc L$ such that $\mc L|_{X'}\cong L'$ and $\mc L^{\otimes 2}\cong \omega_{\mc X/B}\otimes T^{-1}$. Since $\mc L|_X^{\otimes 2}\cong \omega_X$, we have $\E_{\mc L|_X}=\emptyset$ and $D_{\mc L|_X}=K_\Gamma/2=D$. 
By Proposition \ref{prop:pol-spin} we have a specialization $(\Gamma',\emptyset,D',s')\ra (\Gamma,\emptyset,D,s_{\mc L|_X})$ of semistable root-graphs, then $s_{\mc L|_X}=\wt{\iota}_*(s')=s$, and we are done.

\smallskip
\emph{Case 2.} Assume that $(\Gamma,\emptyset,D)$ is not stable. We argue by induction on $\#E(\Gamma)$. There is a proper subset $V\subset V(\Gamma)$ such that $\beta_D(V)=0$. Set $V':=\iota^{-1}(V)\subset V(\Gamma')$ and let $(\Gamma'_i,\emptyset,D_i^{V'},s_i^{V'})\ge (\Gamma_i,\emptyset,D_i^V,s_i^V)$ be the specialization
 of spin semistable root-graphs in Remark \ref{rem:D-split} for $i=1,2$. 
Let $X'_i$ be the subcurve of $X'$ corresponding to $\Gamma'_i$, where we add a marked point in correspondence of any $e\in E(\Gamma'_1,\Gamma'_2)$. Set $L'_i:=L'|_{X'_i}$. Define
\[
M'_1:=L'_1
\; \text{ and } \;
M'_2:=L'_2\otimes\mc O_{X'_2}\Big(-\sum_{e\in E(\Gamma'_1,\Gamma'_2)}  p_e\Big)
\]
  By construction, $(X'_i,M'_i)$ is a semistable spin curve of type $(\Gamma'_i,\emptyset,D_i^{V'},s_i^{V'})$. Let $\pi_i\col\mc X_i\ra B$ be the family obtained by the induction hypothesis applied to the specialization $(\Gamma',\emptyset,D_i^{V'},s_i^{V'})\ge(\Gamma,\emptyset,D_i^V,s_i^V)$.
Every edge $e\in E(\Gamma_1,\Gamma_2)\subset \Gamma$ gives rise to a section $\sigma^i_e$  of $\pi_i$. Let $\pi\col\mc X\ra B$ be the family obtained from $\mc X_1\sqcup \mc X_2$ by identifying a point $\sigma^1_e(b)$ with $\sigma^2_e(b)$, $\forall\;b\in B$. By construction, the generic fiber $X$ of $\pi$ has $\Gamma$ as a dual graph and the special fiber of $\pi$ is $X'$. 

Set $n:=\#E(\Gamma_1,\Gamma_2)=\#E(\Gamma'_1,\Gamma'_2)$. There are exact sequences
\[
0\ra \mc O^*_{\mc X} \ra \nu_*\mc O^*_{\mc X_1\sqcup \mc X_2}\ra \mc K\ra 0
\;\; \text{ and }\;\;
0\ra \mc O_{X'}^*\ra \mc O^*_{X'_1\sqcup X'_2}\ra \prod_{e\in E(\Gamma'_1,\Gamma'_2)} k^*_{p_e}\ra 0
\]
where $\mc K$
is a principal bundle over $B$ with fiber $(k^*)^n$. The associated long exact sequences in cohomology gives rise to a commutative diagram
\[
\SelectTips{cm}{11}
\begin{xy} <16pt,0pt>:
\xymatrix{
  H^0(\mc X,\mc K)\ar[d]^{\rho_0}\ar[r]^{\delta}&    \Pic(\mc X)\ar[r] \ar[d]^{\rho}& \Pic(\mc X_1)\times \Pic(\mc X_2)\ar[d]^{(\rho_1,\rho_2)}\ar[r] & 0
 \\
 (k^*)^n\ar[r]& \Pic(X') \ar[r] & \Pic(X'_1)\times  \Pic(X'_2)\ar[r] &  0 
 }
\end{xy}
\]
where $\rho,\rho_0,\rho_1,\rho_2$ are the natural restriction maps. 
Notice that $\mc K$ and $\prod_{e\in E(\Gamma'_1,\Gamma'_2)} k^*_{p_e}$ are supported on the image $\Sigma\subset \mc X$ of the sections $\sigma_e^i$. Moreover, $\pi|_{\Sigma}$ is just a collection of isomorphisms. Then $\pi|_{\Sigma*}$ is exact, and hence $\pi_*(\mc K)\to (k^*)^n$  is surjective. Since $B$ has only one closed point and only one open set containing this point, we get a surjection $H^0(\mc X,\mc K)\ra (k^*)^n$ which we can identify with $\rho_0$. Hence $\rho_0$ is surjective. It is clear that $\rho,\rho_1,\rho_2$ are surjective as well.

Let $\mc L\in \Pic(\mc X)$ be an invertible sheaf lifting $(\mc L_1,\mc L_2)\in \Pic(\mc X_1)\times \Pic(\mc X_2)$, where 
\[
\mc L_1=\mc M_1
\; \text{ and } \;
\mc L_2=\mc M_2\otimes \mc O_{\mc X_2}\Big(\sum_{e\in E(\Gamma'_1,\Gamma'_2)} \sigma^2_e(B)\Big).
\]
A simple diagram chasing shows the existence of an element $\alpha\in H^0(\mc X,\mc K)$ such that $\rho(\mc L\otimes \delta(\alpha))=L'$.  
The family $\pi \col\mc X\ra B$ and the sheaf $\mc L\otimes \delta(\alpha)\in \Pic(\mc X)$ are as required by the statement.
\end{proof}

\begin{Cor}\label{cor:strata}
The following conditions are equivalent: 
\begin{itemize}
    \item[(1)] $\mc Q_{(\Gamma',\E',D',s')}\cap \ol{\mc Q}_{(\Gamma,\E,D,s)}\ne\emptyset$.
    \item[(2)] $(\Gamma',\E',D',s')\ge (\Gamma,\E,D,s)$ in the poset $\mc{SPQ}_{g,n}$.
    \item[(3)] $\mc Q_{(\Gamma',\E',D',s')}\subset \ol{\mc Q}_{(\Gamma,\E,D,s)}$.
\end{itemize}
\end{Cor}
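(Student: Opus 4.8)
The plan is to establish the cycle of implications $(3)\Rightarrow(1)\Rightarrow(2)\Rightarrow(3)$. The implication $(3)\Rightarrow(1)$ is immediate, since by Proposition \ref{prop:irred} the stratum $\mc Q_{(\Gamma',\E',D',s')}$ is irreducible, hence nonempty, so if it is contained in $\ol{\mc Q}_{(\Gamma,\E,D,s)}$ the intersection in $(1)$ is nonempty. The real content lies in $(1)\Rightarrow(2)$ and $(2)\Rightarrow(3)$, which I would obtain by translating one-parameter degenerations of spin curves into specializations of spin root-graphs, and conversely, using Propositions \ref{prop:pol-spin} and \ref{prop:strata-cont} together with the modular description of $\ol{\mc Q}_{g,n}$ from Theorem \ref{thm:modularQgn}.

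For $(1)\Rightarrow(2)$, I would fix a geometric point $[X',L']$ of $\mc Q_{(\Gamma',\E',D',s')}\cap \ol{\mc Q}_{(\Gamma,\E,D,s)}$. Since $\mc Q_{(\Gamma,\E,D,s)}$ is irreducible by Proposition \ref{prop:irred}, this point is a specialization of its generic point, so by the valuative criterion applied to the Deligne-Mumford stack $\ol{\mc Q}_{g,n}$ there is a discrete valuation ring $R$ and a morphism $B:=\Spec R\ra \ol{\mc Q}_{(\Gamma,\E,D,s)}$ whose generic point maps into $\mc Q_{(\Gamma,\E,D,s)}$ and whose closed point maps to $[X',L']$. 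Pulling back the universal object over $\ol{\mc J}_{\mu,g,n}\cong \ol{\mc P}_{\mu,g,n}$ (Proposition \ref{prop:isoJP}) yields a one-parameter deformation $\mc X\ra B$ of $X'$ with generic fiber $X$ and an invertible sheaf $\mc L$ on $\mc X/B$ with $\mc L|_{X'}\cong L'$, such that $(X,\mc L|_X)$ is a semistable spin curve of spin combinatorial type $(\Gamma,\E,D,s)$ and $(X',\mc L|_{X'})$ is a semistable spin curve of spin combinatorial type $(\Gamma',\E',D',s')$. Proposition \ref{prop:pol-spin} then produces a specialization $(\Gamma',\E',D',s')\ge (\Gamma,\E,D,s)$ of spin semistable root-graphs; since both endpoints are $v_0$-quasistable, this is a specialization in the poset $\mc{SPQ}_{g,n}$, which is $(2)$.

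For $(2)\Rightarrow(3)$, I would assume $(\Gamma',\E',D',s')\ge (\Gamma,\E,D,s)$ in $\mc{SPQ}_{g,n}$ and take an arbitrary geometric point $[X',L']$ of $\mc Q_{(\Gamma',\E',D',s')}$, so that $(X',L')$ is a $v_0$-quasistable spin curve with spin combinatorial type $(\Gamma',\E',D',s')$ by Theorem \ref{thm:modularQgn}. Applying Proposition \ref{prop:strata-cont} to the given specialization provides a one-parameter deformation $\mc X\ra B$ of $X'$ with generic fiber $X$ and an invertible sheaf $\mc L$ on $\mc X/B$ with $\mc L|_{X'}\cong L'$ such that $(X,\mc L|_X)$ is a semistable spin curve of spin combinatorial type $(\Gamma,\E,D,s)$. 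As $(\Gamma,\E,D,s)$ is $v_0$-quasistable, $(X,\mc L|_X)$ is a $v_0$-quasistable spin curve, hence $[X,\mc L|_X]\in \mc Q_{(\Gamma,\E,D,s)}$ by Theorem \ref{thm:modularQgn}. The deformation defines a morphism $B\ra \ol{\mc Q}_{g,n}$ carrying the generic point to $[X,\mc L|_X]$ and the closed point to $[X',L']$, so $[X',L']\in \ol{\mc Q}_{(\Gamma,\E,D,s)}$; since $[X',L']$ was arbitrary and $\ol{\mc Q}_{(\Gamma,\E,D,s)}$ is closed, this gives $\mc Q_{(\Gamma',\E',D',s')}\subset \ol{\mc Q}_{(\Gamma,\E,D,s)}$.

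The step I expect to be the main obstacle is $(1)\Rightarrow(2)$: one must ensure that the one-parameter family through $[X',L']$ can be chosen with generic fiber lying in the stratum $\mc Q_{(\Gamma,\E,D,s)}$ itself and not merely in its closure, which relies on the irreducibility of $\mc Q_{(\Gamma,\E,D,s)}$ from Proposition \ref{prop:irred} and a careful application of the valuative criterion on $\ol{\mc Q}_{g,n}$; one then has to identify the spin combinatorial types of both fibers of this family correctly so as to invoke Proposition \ref{prop:pol-spin}, and to check that the resulting specialization of spin semistable root-graphs, having $v_0$-quasistable endpoints, genuinely lies in $\mc{SPQ}_{g,n}$. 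Granting this, $(2)\Rightarrow(3)$ and $(3)\Rightarrow(1)$ are formal.
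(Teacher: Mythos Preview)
Your proposal is correct and follows exactly the approach the paper intends: the paper's own proof is a single sentence asserting that the result ``readily follows from Propositions \ref{prop:pol-spin} and \ref{prop:strata-cont}'', and you have spelled out precisely how---Proposition \ref{prop:pol-spin} gives $(1)\Rightarrow(2)$ via a one-parameter family obtained by the valuative criterion, Proposition \ref{prop:strata-cont} gives $(2)\Rightarrow(3)$, and $(3)\Rightarrow(1)$ is trivial. The only cosmetic point is that the paper's Definition of a one-parameter deformation fixes $B=\Spec(k[[t]])$, so after obtaining a DVR from the valuative criterion you should pass to the completion (or base-change) to match that convention before invoking Proposition \ref{prop:pol-spin}.
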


\begin{proof}
The result readily follows from Propositions \ref{prop:pol-spin} and \ref{prop:strata-cont}.
\end{proof}

We are ready to prove the main result of the section.

\begin{Thm}\label{thm:Q-strat}
The following decomposition is a stratification of $\ol{\mc Q}_{g,n}$ indexed by $\mc {SPQ}_{g,n}$:
\begin{equation}\label{eq:strat}
\ol{\mc Q}_{g,n}=\coprod_{(\Gamma,\E,D,s)\in \mc {SPQ}_{g,n}}\mc Q_{(\Gamma,\E,D,s)}.
\end{equation}
In particular, $\ol{\mc Q}_{g,n}$ has exactly two connected components $\ol{\mc Q}^+_{g,n}$ and $\ol{\mc Q}^-_{g,n}$ corresponding to even and odd quasistable spin curves and:
\begin{itemize}
 \item[(1)] the maps $\ol{\mc Q}^+_{g,n}\ra\ol{\mc M}_{g,n}$ 
    and $\ol{\mc Q}^-_{g,n}\ra\ol{\mc M}_{g,n}$ are finite exactly over the locus of tree-like curves, and hence $\ol{\mc Q}_{g,n}^+$ and $\ol{\mc Q}_{g,n}^-$ are not isomorphic, respectively, to $\ol{\mc S}_{g,n}^+$ and $\ol{\mc S}_{g,n}^-$.
    \item[(2)] $\codim_{\ol{\mc Q}_{g,n}}\mc Q_{(\Gamma,\E,D,s)}= \#E(\Gamma)-b_1(\E,D)$.
    \item[(3)] $\codim_{\ol{\mc Q}_{g,n}}\mc Q_{(\Gamma,\E,D,s)}=1$ if and only if one of the following conditions hold:
    \begin{itemize}
        \item[(a)] $\#V(\Gamma)=\#E(\Gamma)=1$.
        \item[(b)]$\#V(\Gamma)=2$,  $\E=\emptyset$, and $P_{(\E,D)}=0$.
    \end{itemize}    
     \item[(4)]
    we have a reverse-inclusion bijection between the strata of $\ol{\mc Q}_{g,n}$ and the cells of  $Q_{g,n}^{\trop}$  switching dimension and codimension.
\end{itemize}
\end{Thm}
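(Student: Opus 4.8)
\emph{The stratification and the two components.} The decomposition \eqref{eq:strat} is exhaustive and disjoint by Theorem~\ref{thm:modularQgn} and Definition~\ref{def:comb-type}: a geometric point of $\ol{\mc Q}_{g,n}$ is a $v_0$-quasistable spin curve, hence carries a well-defined spin combinatorial type $(\Gamma,\E,D,s)\in\mathcal{SPQ}_{g,n}$, and two points lie in the same piece exactly when their types are isomorphic. Irreducibility of each $\mc Q_{(\Gamma,\E,D,s)}$ and the formula $\codim_{\ol{\mc Q}_{g,n}}\mc Q_{(\Gamma,\E,D,s)}=\#E(\Gamma)-b_1(\E,D)$ are Proposition~\ref{prop:irred}; this is already assertion~(2), and it also yields non-emptiness of every stratum. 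For the closure axiom I will use Corollary~\ref{cor:strata}: it gives $\mc Q_{(\Gamma',\E',D',s')}\subseteq\ol{\mc Q}_{(\Gamma,\E,D,s)}$ if and only if $(\Gamma',\E',D',s')\ge(\Gamma,\E,D,s)$ in $\mathcal{SPQ}_{g,n}$, and since $\ol{\mc Q}_{(\Gamma,\E,D,s)}$ is closed it is the disjoint union of exactly the strata it meets. Thus \eqref{eq:strat} is a stratification indexed by $\mathcal{SPQ}_{g,n}$. Since $\ol{\mc Q}_{g,n}$ is by definition the closure of $\mc S_{g,n}=\mc S^+_{g,n}\sqcup\mc S^-_{g,n}$, each $\mc S^\pm_{g,n}$ being irreducible, and since parity is a specialization invariant (Remark~\ref{rem:parity}) so that the closures of $\mc S^+_{g,n}$ and $\mc S^-_{g,n}$ are disjoint, $\ol{\mc Q}_{g,n}$ has exactly the two connected components $\ol{\mc Q}^\pm_{g,n}=\overline{\mc S^\pm_{g,n}}$, matching the two components $\mathcal{SPQ}^\pm_{g,n}$ of the indexing poset.

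\emph{Codimension one (assertion (3)).} Here I would compute $b_1(\E,D)$ explicitly. Simplicity of $(\E,D)$ makes $\Gamma_\E$ connected, so $b_1(\Gamma_\E)=\#E(\Gamma)-\#\E-\#V(\Gamma)+1$; by Corollary~\ref{cor:tree}, $P_{(\E,D)}$ is a cyclic subgraph of $\Gamma_\E$, and contracting a subgraph lowers the first Betti number by exactly its own, so
\[
b_1(\E,D)=b_1(\Gamma_\E/P_{(\E,D)})=\#E(\Gamma)-\#\E-\#V(\Gamma)+1-b_1(P_{(\E,D)}).
\]
Substituting into the codimension formula gives $\codim_{\ol{\mc Q}_{g,n}}\mc Q_{(\Gamma,\E,D,s)}=\#\E+\#V(\Gamma)+b_1(P_{(\E,D)})-1$, which equals $1$ if and only if $\#\E+\#V(\Gamma)+b_1(P_{(\E,D)})=2$. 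As $\#V(\Gamma)\ge1$, the only possibilities are $(\#\E,\#V(\Gamma),b_1(P_{(\E,D)}))\in\{(0,2,0),(1,1,0),(0,1,1)\}$. In the first, $P_{(\E,D)}$ is cyclic with trivial $b_1$, hence has no edges, i.e.\ $P_{(\E,D)}=0$: case~(b). In the other two, $\#V(\Gamma)=1$, where $\phi_{(\E,D)}$ is the zero flow, forcing every non-exceptional edge into $P_{(\E,D)}$; the Betti constraints then force $\Gamma$ to be a single loop, so $\#V(\Gamma)=\#E(\Gamma)=1$: case~(a). The converse is read off the displayed formula, and non-emptiness is again Proposition~\ref{prop:irred}.

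\emph{Finiteness over tree-like curves (assertion (1)).} The morphism $\ol{\mc Q}_{g,n}\to\ol{\mc M}_{g,n}$ is proper, so it is finite over a stratum $\mc M_\Gamma$ if and only if all its fibres there are finite, if and only if $b_1(\E,D)=0$ for every $v_0$-quasistable spin root-graph with underlying graph $\Gamma$ (the fibre over a curve with dual graph $\Gamma$ is a finite union of pieces, the one of type $(\Gamma,\E,D,s)$ having dimension $b_1(\E,D)$ by Proposition~\ref{prop:irred}). If $\Gamma$ is tree-like then so is $\Gamma_\E$, whence its first Betti number equals its number of loops; each such loop carries $\phi_{(\E,D)}=0$ (a loop with non-zero flow would be a non-null $\phi$-loop, contradicting acyclicity), hence lies in $P_{(\E,D)}$, so $b_1(\Gamma_\E/P_{(\E,D)})=0$ and the map is finite over $\mc M_\Gamma$. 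Conversely, if $\Gamma$ is not tree-like it carries a cycle with at least two edges, and one must produce a $v_0$-quasistable spin root-graph $(\Gamma,\E,D,s)$ with $b_1(\E,D)>0$; I expect \emph{this} to be the main obstacle. The prototype is $\Gamma$ a ``banana'' of two parallel edges, where orienting both edges coherently yields an acyclic $\phi$ with $P_{(\emptyset,D)}=\emptyset$ and a $v_0$-quasistable divisor adapted to $K_\Gamma$; in general I would take a shortest non-loop cycle, orient all but one of its edges coherently to keep the flow acyclic, complete to a $v_0$-quasistable pseudo-divisor by adjusting $D$ at the two distinguished vertices and choosing a sign, and verify that $P_{(\E,D)}$ misses that cycle, so $b_1(\E,D)>0$. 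It then follows that the finiteness locus is precisely the tree-like locus. Since $\ol{\mc S}^\pm_{g,n}\to\ol{\mc M}_{g,n}$ is finite everywhere whereas $\ol{\mc Q}^\pm_{g,n}\to\ol{\mc M}_{g,n}$ is not (for $g\ge2$ the boundary of $\ol{\mc M}_{g,n}$ contains non-tree-like curves), no isomorphism $\ol{\mc Q}^\pm_{g,n}\cong\ol{\mc S}^\pm_{g,n}$ compatible with the forgetful maps can exist, which is the stated non-isomorphism (also visible from the different numbers of boundary divisors produced by~(3)).

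\emph{Duality with $Q^\trop_{g,n}$ (assertion (4)).} Finally I would match the two posets. Both the strata of $\ol{\mc Q}_{g,n}$ and the cells of $Q^\trop_{g,n}$ are indexed by $\mathcal{SPQ}_{g,n}$, via $(\Gamma,\E,D,s)\mapsto\mc Q_{(\Gamma,\E,D,s)}$ and $(\Gamma,\E,D,s)\mapsto\lambda^\circ_{(\Gamma,\E,D,s)}/\Aut(\Gamma,\E,D,s)$. For $v_0$-quasistable $(\E,D)$, equation~\eqref{eq:dim-lambda} together with $b_0(\E,D)=1$ (simplicity again) gives $\dim\lambda_{(\Gamma,\E,D,s)}=\#E(\Gamma)-b_1(\E,D)$, which by~(2) equals $\codim_{\ol{\mc Q}_{g,n}}\mc Q_{(\Gamma,\E,D,s)}$; so the bijection exchanges dimension and codimension. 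For the order, Corollary~\ref{cor:strata} gives $\mc Q_A\subseteq\ol{\mc Q}_B$ iff $A\ge B$, while the $v_0$-quasistable analogue of Theorem~\ref{thm:face-morphism} (the remark following Corollary~\ref{cor:facet}) gives that $\lambda_B$ is a face of $\lambda_A$ iff $A\ge B$; hence $\mc Q_A$ lies in the closure of $\mc Q_B$ exactly when $\lambda_B$ lies in the closure of $\lambda_A$, i.e.\ the bijection reverses inclusions. This is the asserted duality, and completes the proof.
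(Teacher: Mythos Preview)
Your proof follows the paper's approach closely and is correct for the stratification, for (2), (3), and (4); in (3) you use the identity $\codim=\#\E+\#V(\Gamma)+b_1(P_{(\E,D)})-1$ in place of the paper's $\#\E+\#E(P_{(\E,D)})+\#V(\Gamma_\E/P_{(\E,D)})-1$, but these coincide (write $\#V(\Gamma_\E/P)=\#V(\Gamma)-\#E(P)+b_1(P)$), and your case analysis is correct.

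The one genuine gap is the non-tree-like direction of (1), which you yourself flag. Your sketch (``take a shortest non-loop cycle, orient all but one of its edges coherently \dots'') does not obviously produce a $v_0$-quasistable pseudo-root on an arbitrary $\Gamma$: you have not specified $\E$, nor the flow on the rest of the graph, nor why $K_{\Gamma^\E}-\Div(\phi)$ is even at every vertex, nor why the resulting $D$ is $v_0$-quasistable. The paper avoids all of this with a reduction you are missing: by upper semicontinuity of fibre dimension for the proper map $\ol{\mc Q}_{g,n}\to\ol{\mc M}_{g,n}$, it suffices to exhibit a positive-dimensional fibre over \emph{some} generization of $\Gamma$. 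Any non-tree-like $\Gamma$ contracts to a non-tree-like graph with $\#V=2$ (contract all edges of a spanning tree except one lying on the fundamental cycle of a non-loop non-bridge edge). On such a two-vertex $\Gamma'$ one takes two non-loop edges $e_1,e_2$, sets $\E=E(\Gamma')\setminus\{e_1,e_2\}$, and defines $\phi$ by $\phi(e)=1$ whenever $t(e)$ is exceptional or $s(e)=v_0$; then $K_{\Gamma'^{\E}}-\Div(\phi)$ is even everywhere, $D:=\tfrac12(K_{\Gamma'^{\E}}-\Div(\phi))$ is $v_0$-quasistable, $P_{(\E,D)}=\emptyset$, and $b_1(\E,D)=b_1(\Gamma'_\E)=1$. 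Your banana prototype is exactly this construction in the minimal case; the missing idea is that the general case reduces to it rather than requiring a direct construction on $\Gamma$.
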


\begin{proof}
By Proposition \ref{prop:irred} and Corollary \ref{cor:strata}  we have that the decomposition in equation \eqref{eq:strat} is a stratification indexed by $\mc {SPQ}_{g,n}$. The statement about the  connected components follows by Remark \ref{rem:Q+Q-}. By Propositions \ref{prop:dim-cell} and \ref{prop:irred} we have that (2) and (4) hold.

We prove (1). Assume that $\Gamma$ is not a tree-like graph. By upper semicontituity, without loss of generality, we can assume that $V(\Gamma)=2$. Let $e_1,e_2$ be edges of $\Gamma$ that are not loops and set $\E=E(\Gamma)\setminus\{e_1,e_2\}$. 
Let $\phi$ be the acyclic flow
 on $\Gamma$ such that, for $e\in \ora{E}(\Gamma)$, we have $\phi(e)=1$ if either $t(e)$ is an exceptional vertex or  $s(e)=v_0$. 
By construction, we have that $K_{\Gamma^\E}(v)-\Div(\phi)(v)$ is even for every $v\in V(\Gamma^\E)$, hence there is a divisor $D$ on $\Gamma^\E$ such that $2D+\Div(\phi)=K_{\Gamma^\E}$. It is easy to check that $D$ is $v_0$-quasistable.
Then $(\Gamma,\E,D)$ is a $v_0$-quasistable root-graph with $P_{(\E,D)}=0$. If $H$ is the subgraph of $\Gamma$ with $e_1,e_2$ as edges, then we have $b_1(\E,D)=b_1(\Gamma_\E/P_{(\E,D)})=b_1(H)=1$.  We can easily find an even and an odd sign functions on $(\Gamma,\E,D)$, which we denote by $s^+$ and $s^-$. So, by Proposition \ref{prop:irred} the maps $\mc Q_{(\Gamma,\E,D,s^+)}\ra \ol{\mc M}_{g,n}$ and $\mc Q_{(\Gamma,\E,D,s^-)}\ra \ol{\mc M}_{g,n}$ are not finite.

Conversely, let $\Gamma$ be a tree-like grah and $T$ be its spanning tree. Let $(\Gamma,\E,D,s)$ be a $v_0$-quasistable spin root-graph. Then $\Gamma_\E$ is connected, hence $\E$ is formed by loops of $\Gamma$. Since $\phi_{(\E,D)}$ is acyclic and $P_{(\E,D)}$ is cyclic, we have that $\E\cup E(P_{(\E,D)})$ is the set of loops of $\Gamma$. Hence  $b_1(\E,D)=b_1(\Gamma_\E/P_{(\E,D)})=b_1(T)=0$ and, by Proposition \ref{prop:irred} the map $\mc Q_{(\Gamma,\E,D,s)}\ra \ol{\mc M}_{g,n}$ is finite.

We prove (3). We have $\#E(\Gamma)-b_1(\E,D)=1$ if and only if $\#\E+\#E(P_{(\E,D)})+\#V(\Gamma_{\E}/P_{(\E,D)})=2$, which is satisfied if either (a) or (b) holds. Moreover,
\begin{enumerate}
    \item[(i)]  if $\#V(\Gamma_\E/P_{(\E,D)})=1$, $\#\E=1$ and $P_{(\E,D)}=0$, then (a) holds.
    \item[(ii)]  if $\#V(\Gamma_\E/P_{(\E,D)})=1$, $\E=\emptyset$ and $\#E(P_{(\E,D)})=1$, then (a) holds.
    \item[(iii)] if $\#V(\Gamma_\E/P_{(\E,D)})=2$, then $\E=\emptyset$ and $P_{(\E,D)}=0$, and hence (b) holds.
\end{enumerate}
The proof is complete.
\end{proof}

\section*{Acknowledgements}

\noindent
This is a part of the Ph.D. thesis of the third author supervised by the first and second author. 
We thank Ethan Cotterill and Margarida Melo for some discussions and for the positive comments  on a preliminary version of the paper.

\bigskip
\medskip

\noindent{Alex Abreu, Marco Pacini,  and Danny Taboada \\
Universidade Federal Fluminense, Rua Prof. M. W. de Freitas, S/N\\ 
Niter\'oi, Rio de Janeiro, Brazil. 24210-201.}

\end{document}